\def\R{\mathbb{R}}
\DeclareMathOperator{\Var}{Var}
\DeclareMathOperator{\Cov}{Cov}
\DeclareMathOperator{\supp}{supp}
\DeclareMathOperator{\spn}{span}
\DeclareMathOperator{\sgn}{sign}
\newtheorem{theorem}{Theorem}
\newtheorem{proposition}{Proposition}
\newtheorem{lemma}{Lemma}
\newtheorem{corollary}{Corollary}
\newtheorem{definition}{Definition}
\theoremstyle{definition}
\newtheorem{remark}{Remark}
\title{Minimax rates for sparse signal detection under correlation}
\author{Subhodh Kotekal and Chao Gao\thanks{The research of C. Gao is partially supported by NSF CAREER award DMS-1847590 and NSF grant CCF-1934931.} \\ \\ \textit{University of Chicago}}
\date{}
\begin{document}
    \maketitle
    \begin{abstract}
        We fully characterize the nonasymptotic minimax separation rate for sparse signal detection in the Gaussian sequence model with \(p\) equicorrelated observations, generalizing a result of Collier, Comminges, and Tsybakov \cite{collierMinimaxEstimationLinear2017}. As a consequence of the rate characterization, we find that strong correlation is a blessing, moderate correlation is a curse, and weak correlation is irrelevant. Moreover, the threshold correlation level yielding a blessing exhibits phase transitions at the \(\sqrt{p}\) and \(p-\sqrt{p}\) sparsity levels. We also establish the emergence of new phase transitions in the minimax separation rate with a subtle dependence on the correlation level. Additionally, we study group structured correlations and derive the minimax separation rate in a model including multiple random effects. The group structure turns out to fundamentally change the detection problem from the equicorrelated case and different phenomena appear in the separation rate.
    \end{abstract}

    \section{Introduction}

    A broad research program involving the characterization of fundamental limits of testing global null hypotheses against structured alternatives has witnessed vigorous and fruitful activity \cite{liuMinimaxRatesSparse2021, ingsterNonparametricGoodnessoffitTesting2003,hallInnovatedHigherCriticism2010a,donohoHigherCriticismDetecting2004a, caiTwosampleTestHigh2014,collierMinimaxEstimationLinear2017,ingsterDetectionBoundarySparse2010a,mukherjeeGlobalTestingSparse2018,mukherjeeDetectionThresholdsVmodel2018, xuSignalDetectionDegree2021,butuceaDetectionSparseSubmatrix2013a,baraudNonasymptoticMinimaxRates2002b,ingsterMinimaxSignalDetection2012,dumbgenMultiscaleTestingQualitative2001}. The program can be traced back to the seminal work of Ingster (see \cite{ingsterNonparametricGoodnessoffitTesting2003} for an overview) which established the minimax hypothesis testing framework and settled a litany of fundamental questions in the setting of Gaussian models. Attention has largely been focused on testing in signal plus Gaussian noise models such as sequence, regression, and matrix models; independence of the noise is a crucial ingredient to the derivation of minimax separation rates in existing work. Non-Gaussian models exhibiting dependence have been studied recently \cite{mukherjeeDetectionThresholdsVmodel2018, mukherjeeGlobalTestingSparse2018,fromontAdaptiveTestsHomogeneity2011,xuSignalDetectionDegree2021} and striking results have been obtained. Consideration of minimax testing problems in Gaussian models with dependence is quite limited \cite{enikeevaBumpDetectionPresence2020,liuMinimaxRatesSparse2021,hallInnovatedHigherCriticism2010a, caiTwosampleTestHigh2014}. In an effort to address this dearth in the literature, we study the fundamental limits of sparse signal detection in Gaussian sequence mixed models.

    Linear mixed models have remained a mainstay throughout the history of applied statistical practice and are often the first tool the statistician reaches for when faced with correlated data in structured settings. Due to their ubiquity in applications, linear mixed models are arguably the most natural setting one ought to first consider when investigating the effect of dependence on detection limits. An oft-repeated aphorism among practitioners is that mixed models enable a ``borrowing of information" across observations and are thus advantageous when engaging with statistical tasks. This purported advantage often manifests in practical data analysis; parameter estimates and inference conclusions appear to be more reasonable compared to results from alternative fixed effect models. This piece of statistical folklore is frequently offered as justification for the use of a mixed model or a particular experimental design. In a sense, the present paper investigates the folklore in the context of sparse signal detection. 
    
    To study the essence of the phenomenon, we initially consider a simple signal plus noise model with equicorrelated observations,
    \begin{equation}\label{model:additive_model}
        X_i = \theta_i + \sqrt{\gamma} W + \sqrt{1-\gamma} Z_i
    \end{equation}
    for \(1 \leq i \leq p\) where \(\theta \in \R^p\) denotes the fixed effects, \(\gamma \in [0, 1]\) denotes the correlation level, and \(W, Z_1,...,Z_p\) are independent and identically distributed standard Gaussian random variables. Conventionally, \(W\) is referred to as the random effect; its presence immediately implies the observations are equicorrelated. The model implies that the vector of observations is marginally distributed as   
    \begin{equation}\label{model:single_random_effect}
        X \sim N\left(\theta, (1-\gamma)I_p + \gamma \mathbf{1}_p\mathbf{1}_p^\intercal\right)  
    \end{equation}
    where \(I_p \in \R^{p \times p}\) denotes the identity matrix and \(\mathbf{1}_p \in \R^p\) denotes the vector with all entries equal to one. To describe the sparse signal detection problem, for \(1 \leq s \leq p\) and \(\varepsilon > 0\) define the parameter space 
    \begin{equation}\label{parameter:alternative}
        \Theta(p, s, \varepsilon) := \left\{ \theta \in \R^p : ||\theta|| \geq \varepsilon \text{ and } ||\theta||_0 \leq s \right\}.
    \end{equation}
    Having observed \(X\) from the model (\ref{model:single_random_effect}), the sparse signal detection problem is the problem of testing the hypotheses
    \begin{align}
        H_0 &: \theta = 0, \label{problem:test_equicorrelated_1}\\
        H_1 &: \theta \in \Theta(p, s, \varepsilon) \label{problem:test_equicorrelated_2}.
    \end{align}
    The sparsity level is denoted by \(s\) and \(\varepsilon\) controls the separation between the alternative and null hypotheses. Intuitively, the testing problem becomes more difficult as \(\varepsilon\) decreases. The purpose of this paper is to characterize a fundamental quantity known as the minimax separation rate. In preparation for its definition, let us first define the minimax testing risk 
    \begin{equation}\label{def:testing_risk}
        \mathcal{R}(\varepsilon) := \inf_{\varphi} \left\{ P_{0, \gamma}\{\varphi = 1\} + \sup_{\theta \in \Theta(p, s, \varepsilon)} P_{\theta, \gamma} \{\varphi = 0\} \right\},
    \end{equation}
    where the infimum runs over all tests (measurable functions) \(\varphi : \R^p \to \{0, 1\}\). Here, \(P_{\theta, \gamma}\) denotes the probability measure associated with the distribution \(N\left(\theta, (1-\gamma)I_p + \gamma \mathbf{1}_p\mathbf{1}_p^\intercal\right)\). 
    \begin{definition}\label{def:separation_rate}
        We say \(\varepsilon^* = \varepsilon^*(p, s, \gamma)\) is the \text{minimax separation rate} for the hypothesis testing problem (\ref{problem:test_equicorrelated_1})-(\ref{problem:test_equicorrelated_2}) with parameters \((p, s, \gamma)\) if
        \begin{enumerate}[label=(\roman*)]
            \item for every \(\eta \in (0, 1)\) there exists a constant \(C_\eta > 0\) depending only on \(\eta\) such that \(C > C_\eta\) implies \(\mathcal{R}(C\varepsilon^*) \leq \eta\),
            \item for every \(\eta \in (0, 1)\) there exists a constant \(c_\eta > 0\) depending only on \(\eta\) such that \(0 < c < c_\eta\) implies \(\mathcal{R}(c\varepsilon^*) \geq 1-\eta\).
        \end{enumerate}
    \end{definition}
    The minimax separation rate \(\varepsilon^*\) is a nonasymptotic quantity defined for every configuration of \((p, s, \gamma)\) such that \(1 \leq s \leq p\) are natural numbers and \(\gamma \in [0, 1]\). Note it is unique only up to multiplication by universal constants. The minimax separation rate characterizes the fundamental difficulty (up to absolute factors) of the testing problem. Concretely, it denotes the order of the signal magnitude \(||\theta||\) which is necessary and sufficient to distinguish between the null and alternative hypotheses with arbitrarily small testing risk. 

    \subsection{Related work}\label{section:related_work}
    In the simplest setting with \(s = p\) and \(\gamma = 0\), it is well known (see for example \cite{baraudNonasymptoticMinimaxRates2002b}) that
    \begin{equation*}
        \varepsilon^*(p, p, 0)^2 \asymp \sqrt{p}.    
    \end{equation*}
    The test which rejects the null hypothesis when \(||X||^2 > p + C\sqrt{p}\) for a suitably chosen positive constant \(C\) is rate-optimal. Considering the general sparsity case in the independent setting, Collier et al. \cite{collierMinimaxEstimationLinear2017} generalized the results of \cite{baraudNonasymptoticMinimaxRates2002b} and showed
    \begin{equation}\label{rate:independent}
        \varepsilon^*(p, s, 0)^2 \asymp 
        \begin{cases}
            s \log\left(1+\frac{p}{s^2}\right) &\text{if } s < \sqrt{p}, \\
            \sqrt{p} &\text{if } s \geq \sqrt{p}.
        \end{cases}
    \end{equation}
    The minimax separation rate exhibits a phase transition at \(s = \sqrt{p}\). When \(s < \sqrt{p}\), detection is possible for essentially smaller signal magnitudes compared to the dense case \((s = p)\). However, sparsity offers no help in detection once \(s \geq \sqrt{p}\).
    
    In the case \(s = p\) with \(\gamma \in [0, 1)\), a simple diagonalization argument shows that
    \begin{equation*}
        \varepsilon^*(p, p, \gamma)^2 \asymp ||(1-\gamma)I_p + \gamma \mathbf{1}_p\mathbf{1}_p^\intercal||_F. 
    \end{equation*}
    Noting \(||(1-\gamma)I_p + \gamma \mathbf{1}_p\mathbf{1}_p^\intercal||_F \asymp (1-\gamma)\sqrt{p} + (1-\gamma + \gamma p)\), we see that \(\varepsilon^*(p, p, \gamma)=\omega(\varepsilon^*(p, p, 0))\) when \(\gamma = \omega(\frac{1}{\sqrt{p}})\). It is also readily seen that \(\varepsilon^*(p, p, \gamma) \asymp \varepsilon^*(p, p, 0)\) when \(\gamma \lesssim \frac{1}{\sqrt{p}}\) and \(1-\gamma \asymp 1\). In other words, nontrivial correlation is a curse when \(s = p\).
    
    To elaborate on the diagonalization argument, the spectral theorem asserts one can write \((1-\gamma)I_p + \gamma \mathbf{1}_p\mathbf{1}_p^\intercal = Q\Lambda Q^\intercal\) where \(Q \in \R^{p \times p}\) is an orthogonal matrix and \(\Lambda \in \R^{p \times p}\) is a diagonal matrix with diagonal entries given by the eigenvalues of \((1-\gamma)I_p + \gamma \mathbf{1}_p\mathbf{1}_p^\intercal\) in descending order. Thus \(QX \sim N(Q\theta, \Lambda)\). Since \(||Q\theta||^2 = ||\theta||^2\), it immediately follows in the case \(s = p\) that the testing problem (\ref{problem:test_equicorrelated_1})-(\ref{problem:test_equicorrelated_2}) is equivalent to testing \(H_0 : Q\theta = 0\) against \(H_1 : ||Q\theta||^2 \geq \varepsilon^2\). A standard lower bound construction and an analysis of the test which rejects the null hypothesis when \(||QX||^2 > p + C||(1-\gamma)I_p + \gamma \mathbf{1}_{p}\mathbf{1}_p^\intercal||_F\) for a suitable positive constant \(C\) establishes the separation rate. It is readily seen that the same argument furnishes the minimax separation rate for a general covariance matrix \(\Sigma\) when \(s = p\), namely \((\varepsilon^*)^2 \asymp ||\Sigma||_F\) (see for example \cite{liuMinimaxRatesSparse2021}). 

    Notably, the diagonalization argument employed in the \(s = p\) case fails to carry over to the \(s < p\) case. Even though the null hypothesis \(\theta = 0\) is equivalent to \(Q\theta = 0\), it need not be the case when \(s < p\) that \(\theta \in \Theta(p, s, \varepsilon)\) is equivalent to \(Q\theta \in \Theta(p, s, \varepsilon)\). Specifically the sparsity is not preserved, in that \(||\theta||_0 \leq s\) need not be equivalent to \(||Q\theta||_0 \leq s\). Careful study of the interaction between the covariance matrix and the sparsity is needed.

    With the desire to understand the \(s < p\) case, some existing work in the literature has investigated how covariance affects sparse signal detection in other settings. Hall and Jin \cite{hallInnovatedHigherCriticism2010a} consider the observational model \(Y \sim N(\theta, \Omega_p^{-1})\) in which the precision matrix \(\Omega_p\) exhibits polynomial off-diagonal decay. In a Bayesian setup of the testing problem in which the support of \(\theta\) is drawn uniformly at random and coordinates in the support are set equal to an elevated value \(\mu\), Hall and Jin derive a detection boundary in an asymptotic setting with sparsity calibration \(s = p^{1-\beta}\) with \(\beta \in (0, 1)\) denoting a sparsity parameter. The derived detection boundary sharply describes the configurations of \((\mu, \beta)\) for which consistent detection is possible, and the boundary depends on limiting functionals of \(\Omega_p\) as \(p\) grows to infinity. Cai et al. \cite{caiTwosampleTestHigh2014} consider a two-sample version of the testing problem with common precision matrix, and prove asymptotic optimality of a proposed test under a similar Bayesian setup and a sparsity assumption on the precision matrix. The setting of \cite{caiTwosampleTestHigh2014} is also asymptotic and uses a different separation metric defining the alternative. 
    
    The present paper differs from \cite{hallInnovatedHigherCriticism2010a,caiTwosampleTestHigh2014} in significant ways. First, our results are nonasymptotic. Second, our results are minimax in nature. Third, the composite alternative is separated from the null hypothesis in Euclidean norm. Finally, the assumptions of polynomial off-diagonal decay or sparsity of the precision matrix needed in \cite{hallInnovatedHigherCriticism2010a,caiTwosampleTestHigh2014} are not satisfied in the problems we study. 

    Let us return to the setting of testing (\ref{problem:test_equicorrelated_1})-(\ref{problem:test_equicorrelated_2}) with observation (\ref{model:single_random_effect}). Recently, Liu et al. \cite{liuMinimaxRatesSparse2021} showed that if \(s \leq p^{1/5}\) and \(p\) is larger than some absolute constant, then
    \begin{equation*}
        \varepsilon^*(p, s, \gamma)^2 \asymp (1-\gamma) s \log\left(1 + \frac{p}{s^2}\right). 
    \end{equation*}
    In particular, the minimax separation rate satisfies \(\varepsilon^*(p, s, \gamma) = o(\varepsilon^*(p, s, 0))\) when \(1-\gamma = o(1)\) and \(\varepsilon^*(p, s, \gamma) \asymp \varepsilon^*(p, s, 0)\) when \(1-\gamma \asymp 1\). In other words, Liu and coauthors show that correlation is a blessing when \(s \leq p^{1/5}\). In stark contrast, correlation is a curse when \(s = p\) as discussed above. The aforementioned rates suggest the existence of a striking relationship between ambient dimension, sparsity, and correlation. A full characterization of \(\varepsilon^*(p, s, \gamma)\) for all configurations of \(p, s,\) and \(\gamma\) has not yet been established. The purpose of the present paper is to fill this gap in the literature.

    \subsection{Main contributions}\label{section:main_contributions}
    Our first main contribution is Theorem \ref{thm:minimax_rate} in which we obtain a complete characterization of the minimax separation rate, that is,
    \begin{equation*}
        \varepsilon^*(p, s, \gamma)^2 \asymp 
        \begin{cases}
            (1-\gamma) s \log\left(1 + \frac{p}{s^2}\right) &\text{if } s < \sqrt{p}, \\
            (1-\gamma)\sqrt{p} + \frac{(1-\gamma)p^{3/2}}{p-s} \wedge \left(1-\gamma+\gamma p\right) &\text{if } \sqrt{p} \leq s \leq p-\sqrt{p}, \\
            (1-\gamma)\sqrt{p} + (1-\gamma)p\log\left(1 + \frac{p}{(p-s)^2}\right) \wedge \left(1-\gamma+\gamma p\right) &\text{if } p-\sqrt{p} < s \leq p.
        \end{cases}
    \end{equation*}
    This result generalizes the previously known results discussed in Section \ref{section:related_work} and reveals a number of novel phenomena. A direct comparison of \(\varepsilon^*(p, s, \gamma)^2\) and \(\varepsilon^*(p, s, 0)^2\) will reveal the blessing of strong correlation, the curse of moderate correlation, and the irrelevance of weak correlation. Furthermore, it will be seen that the correlation level \(\gamma\) has a subtle effect on the emergence of new phase transitions. We discuss the various correlation regimes in turn.
    
    \begin{itemize}
        \item \textbf{Weak correlation:} 
        Suppose \(\gamma \lesssim \frac{1}{\sqrt{p}}\). Then
        \begin{equation*}
            \varepsilon^*(p, s, \gamma)^2 \asymp 
            \begin{cases}
                s \log\left(1 + \frac{p}{s^2}\right) &\text{if } s < \sqrt{p}, \\
                \sqrt{p} &\text{if } s \geq \sqrt{p}. 
            \end{cases}
        \end{equation*}
        Notice \(\varepsilon^*(p, s, \gamma)^2 \asymp \varepsilon^*(p, s, 0)^2\) and so there is no difference between the settings of no correlation and weak correlation in terms of the minimax separation rate. In other words, weak correlation is irrelevant. 

        \item \textbf{Moderate correlation:} 
        Suppose \(\frac{c}{\sqrt{p}} \leq \gamma \leq \frac{1}{2} \vee \frac{c}{\sqrt{p}}\) for an absolute constant \(c > 1\). Then  
        \begin{equation*}
            \varepsilon^*(p, s, \gamma)^2 \asymp 
            \begin{cases}
                s \log\left(1 + \frac{p}{s^2}\right) &\text{if } s < \sqrt{p}, \\
                \frac{p^{3/2}}{p-s} &\text{if } \sqrt{p} \leq s \leq p - \frac{\sqrt{p}}{\gamma}, \\
                \gamma p &\text{if } p-\frac{\sqrt{p}}{\gamma} < s \leq p.
            \end{cases}
        \end{equation*}
        The separation rate exhibits two phase transitions when the correlation is moderate. The phase transition at \(s = \sqrt{p}\) carries over from the independent setting, but the phase transition at \(s = p - \frac{\sqrt{p}}{\gamma}\) is novel. Note that \(\varepsilon^*(p, s, \gamma)^2 \asymp \varepsilon^*(p, s, 0)^2\) when \(p-s \asymp p\). However, it is readily seen that \(\varepsilon^*(p, s, \gamma)^2 = \omega(\varepsilon^*(p, s, 0)^2)\) for \(p-s = o(p)\) when \(\gamma = \omega(p^{-1/2})\). In other words, moderate correlation is a curse. 
    
        \item \textbf{Strong correlation:} 
        Suppose \(\frac{c}{\sqrt{p}} \vee \frac{1}{2} \leq \gamma \leq 1\) for an absolute constant \(c > 1\). Then
        \begin{equation*}
            \varepsilon^*(p, s, \gamma)^2 \asymp 
            \begin{cases}
                (1-\gamma) s \log\left(1 + \frac{p}{s^2}\right) &\text{if } s < \sqrt{p}, \\
                \frac{(1-\gamma)p^{3/2}}{p-s} &\text{if } \sqrt{p} \leq s \leq p - \sqrt{p}, \\
                (1-\gamma)p \log\left(1 + \frac{p}{(p-s)^2}\right) &\text{if } p - \sqrt{p} < s < p - e^{-(1-\gamma)^{-1}}\sqrt{p}, \\
                p &\text{if } p - e^{-(1-\gamma)^{-1}}\sqrt{p} \leq s \leq p. 
            \end{cases}
        \end{equation*}
        The separation rate now exhibits three phase transitions, namely at the sparsity levels \(\sqrt{p}, p - \sqrt{p}\), and \(e^{-(1-\gamma)^{-1}}\sqrt{p}\). The location of this last phase transition exhibits an exponential rate in \((1-\gamma)^{-1}\). In contrast, polynomial dependence on \(\gamma\) was exhibited in the moderate correlation regime discussed in the previous enumerated point. A direct comparison of \(\varepsilon^*(p, s, \gamma)^2\) to \(\varepsilon^*(p, s, 0)^2\) reveals that, for a given sparsity level, suitably strong correlation is a blessing. A quantitative description is given in Remark \ref{remark:blessing}. Conversely, insufficiently strong correlation turns out to be a curse for signal detection (see Remark \ref{remark:curse}).
    \end{itemize}
    The last enumerated point above hides a curiosity when the correlation is very strong. Notably, a discontinuity at \(s = p\) appears in the minimax separation rate when \(1-\gamma = o\left(\frac{1}{\log(ep)}\right)\). See Remark \ref{remark:discontinuity} for additional discussion.
   
    In Section \ref{section:multiple_random_effects}, we study sparse signal detection in the setting of group structured correlations. Specifically, we consider a mixed model with \(R\) random effects. Each of \(R\) groups contains \(\frac{p}{R}\) observations which are equicorrelated with correlation \(\gamma\). Observations in different groups are independent. Such mixed models are employed when observations exhibit correlations in clustering structures. Leveraging our insight from the \(R = 1\) case, we obtain the minimax separation rate for general \(R\). Salient features of the separation rate are discussed in Section \ref{section:multiple_random_effects}.

    The testing procedure we construct en route to proving Theorem \ref{thm:minimax_rate} requires knowledge of the sparsity level. In Section \ref{section:sparsity_adaptive}, it is shown that an adaptive procedure which achieves the minimax rate can be obtained by scanning over all sparsity levels. In Section \ref{section:rank_one_correlation}, we consider the sparse signal detection problem in mixed models exhibiting different correlation patterns. Specifically, we consider covariance matrices of the form \((1-\gamma)I_p + \gamma vv^\intercal\) for \(v \in \R^p\) with \(||v||=\sqrt{p}\). We obtain a partial characterization of the minimax separation rate.

    \subsection{Notation}
    This section defines frequently used notation. For a natural number \(n\), denote \([n] := \{1,..., n\}\). For \(a, b \in \R\) the notation \(a \lesssim b\) denotes the existence of a universal constant \(c > 0\) such that \(a \leq cb\). The notation \(a \gtrsim b\) is used to denote \(b \lesssim a\). Additionally \(a \asymp b\) denotes \(a \lesssim b\) and \(a \gtrsim b\). The symbol \(:=\) is frequently used when defining a quantity or object. Furthermore, we frequently use \(a \vee b := \max(a, b)\) and \(a \wedge b := \min(a, b)\). We generically use the notation \(\mathbbm{1}_A\) to denote the indicator function for an event \(A\). For a vector \(v \in \R^p\) and a subset \(S \subset [p]\), we sometimes use the notation \(v_S \in \R^{p}\) to denote the vector with coordinate \(i\) equal to \(v_i\) if \(i \in S\) and zero otherwise. In other cases, the notation \(v_S \in \R^{|S|}\) denotes the subvector of dimension \(|S|\) corresponding to the coordinates in \(S\). The context will clarify between the two different notational uses of \(v_S\). In particular, we will frequently make use of the notation \(\mathbf{1}_S := (\mathbf{1}_p)_S\) in this way. Additionally, \(||v||_0 := \sum_{i=1}^{p} \mathbbm{1}_{\{v_i \neq 0\}}\), \(||v||_1 := \sum_{i=1}^{p} |v_i|\), and \(||v||^2 := \sum_{i=1}^{p} v_i^2\). We also frequently make use of the notation \(\bar{v} = p^{-1}\sum_{i=1}^{p} v_i\), though in some cases the notation is used to denote something specified in advance. For a vector \(v \in \R^p\), the notation \(\supp(v)\) refers to the support of \(v\), namely the set \(\{i \in [p] : v_i \neq 0\}\). For two probability measures \(P\) and \(Q\) on a measurable space \((\mathcal{X}, \mathcal{A})\), the total variation distance is defined as \(d_{TV}(P, Q) := \sup_{A \in \mathcal{A}} |P(A) - Q(A)|\). If \(P\) is absolutely continuous with respect to \(Q\), then the \(\chi^2\)-divergence is defined as \(\chi^2(P||Q) := \int_{\mathcal{X}} \left(\frac{dP}{dQ} - 1\right)^2 \, dQ\). For sequences \(\{a_k\}_{k=1}^{\infty}\) and \(\{b_k\}_{k=1}^{\infty}\), the notation \(a_k = o(b_k)\) denotes \(\lim_{k \to \infty} \frac{a_k}{b_k} = 0\) and the notation \(a_k = \omega(b_k)\) is used to denote \(b_k = o(a_k)\). For a matrix \(A \in \R^{m \times n}\), the Frobenius norm of \(A\) is denoted as \(||A||_F = \sqrt{\sum_{i=1}^{m}\sum_{j=1}^{n} a_{ij}^2}\). 
    
    \section{Equicorrelated observations}
    In this section, the testing problem (\ref{problem:test_equicorrelated_1})-(\ref{problem:test_equicorrelated_2}) with observation (\ref{model:single_random_effect}) is studied, culminating in a complete characterization of the minimax separation rate.

    \subsection{Perfect correlation}\label{section:perfect_correlation}
    It is convenient to consider the extreme case \(\gamma = 1\) separately from the case \(\gamma \in [0, 1)\). Indeed, the covariance matrix \((1-\gamma)I_p + \gamma \mathbf{1}_p\mathbf{1}_p^\intercal\) is invertible for \(\gamma \in [0, 1)\), but noninvertible for \(\gamma = 1\). When \(\gamma = 1\), the minimax separation rate is easily established. 
    \begin{proposition}\label{prop:perfect_correlation}
        If \(1 \leq s \leq p\), then 
        \begin{equation*}
            \varepsilon^*(p, s, 1)^2 \asymp 
            \begin{cases}
                0 &\text{if } s < p, \\
                p &\text{if } s = p.
            \end{cases}
        \end{equation*}
    \end{proposition}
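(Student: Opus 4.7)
The entire proof hinges on the degeneracy of the model when $\gamma=1$: we have $X_i = \theta_i + W$ for every $i \in [p]$, where $W \sim N(0,1)$ is a single scalar noise. Consequently, pairwise differences $X_i - X_j = \theta_i - \theta_j$ are noise-free, and equivalently the orthogonal projection $X - \bar{X}\mathbf{1}_p = \theta - \bar{\theta}\mathbf{1}_p$ is observed exactly, while the one-dimensional noise is entirely confined to $\bar{X} = \bar{\theta} + W$. The proof splits into the two cases, each of which is a direct consequence of this decomposition.

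For the case $s < p$, the plan is to exhibit a test with zero testing risk for any $\varepsilon > 0$, which is what is meant by $\varepsilon^*(p,s,1)^2 \asymp 0$. Take the test $\varphi = \mathbbm{1}\{\max_{i,j}|X_i-X_j|>0\}$, i.e. reject iff $X$ is not constant. Under $H_0$, $X = W\mathbf{1}_p$ is constant a.s., so the Type~I error is $0$. Under $H_1$, the two constraints $\|\theta\|_0 \leq s < p$ and $\|\theta\| \geq \varepsilon > 0$ force $\theta$ to have both a zero and a nonzero coordinate, so $\theta$ is non-constant and hence $X = \theta + W\mathbf{1}_p$ is non-constant a.s., making the Type~II error $0$ as well.

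For the case $s = p$, I would prove both the upper and lower bounds at rate $\sqrt{p}$. For the upper bound, consider the test that rejects if either $X - \bar{X}\mathbf{1}_p \neq 0$ or $|\bar{X}| > C$ for a constant $C = C_\eta$. Under $H_0$ the first event has probability zero and the second is bounded by $2e^{-C^2/2}$. Under $H_1$, decompose $\theta = (\theta - \bar{\theta}\mathbf{1}_p) + \bar{\theta}\mathbf{1}_p$. If $\theta - \bar{\theta}\mathbf{1}_p \neq 0$ then the first event occurs a.s.; otherwise $\theta = \bar{\theta}\mathbf{1}_p$, so $\|\theta\|^2 = p\bar{\theta}^2 \geq \varepsilon^2$ and $\bar{X} \sim N(\bar{\theta},1)$, and the Gaussian mean test handles this as soon as $\varepsilon \geq 2C\sqrt{p}$. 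For the lower bound, restrict the alternative to the one-parameter family $\theta = \beta \mathbf{1}_p$ with $|\beta|\sqrt{p} = \varepsilon$. In this family $X$ lives on the line spanned by $\mathbf{1}_p$ and the sufficient statistic is $X_1 \sim N(\beta,1)$, so the standard two-point Le Cam argument gives $\mathcal{R}(\varepsilon) \geq 1 - d_{TV}(N(0,1), N(\beta,1)) \geq 1 - \eta$ whenever $|\beta| = \varepsilon/\sqrt{p}$ is below a small constant.

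There is essentially no hard step here; the only thing to be careful about is the reading of Definition~\ref{def:separation_rate} in the $s < p$ case, where "$\varepsilon^*(p,s,1)^2 \asymp 0$" must be understood as the statement that $\mathcal{R}(\varepsilon) = 0$ for every $\varepsilon > 0$ (rather than a literal substitution of $\varepsilon^* = 0$ into (i)--(ii), which would trivially fail because $0 \in \Theta(p,s,0)$). All the probabilistic content is contained in the observation that, at $\gamma = 1$, the sparsity constraint $s < p$ prevents the adversary from hiding any signal in the unique noisy direction $\mathbf{1}_p$.
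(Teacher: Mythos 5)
Your proof is correct. Case $s < p$ is identical in substance to the paper's: your test $\mathbbm{1}\{\max_{i,j}|X_i - X_j|>0\}$ is literally the event $\{X-\bar{X}\mathbf{1}_p \neq 0\}$ used in the paper, and the argument is the same.

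For the $s = p$ upper bound you take a genuinely different route. The paper uses the single $\chi^2$-type statistic $\|X\|^2$ with a Chebyshev bound on its mean and variance under $H_1$, handling the whole alternative in one stroke. You instead split the alternative along the decomposition $\theta = (\theta - \bar\theta\mathbf{1}_p) + \bar\theta\mathbf{1}_p$: the off-$\mathbf{1}_p$ component is detected exactly (zero error) by the noise-free event, while the on-$\mathbf{1}_p$ component is a one-dimensional Gaussian mean problem for $\bar X \sim N(\bar\theta,1)$. This foreshadows the paper's Problem I / Problem II decomposition (Section \ref{section:problem_decomp}) specialized to $\gamma = 1$, and it buys you cleaner error control: you get subgaussian tails $e^{-C^2/8}$ instead of the polynomial-in-$C$ bound from Chebyshev, at the cost of a slightly more elaborate test. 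The lower bound via the one-parameter family $\theta = \beta\mathbf{1}_p$ and the sufficient statistic $\bar X$ is effectively the paper's two-point argument at $c\mathbf{1}_p$, phrased in terms of $d_{TV}(N(0,1),N(\beta,1))$ rather than an explicit $\chi^2$-divergence computation. Your parenthetical remark about the meaning of ``$\varepsilon^* \asymp 0$'' in the $s<p$ case is a fair point and worth keeping; it is exactly the reading the paper intends.
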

    
    After some thought, both the degeneracy for \(s < p\) and the harsh discontinuity at \(s = p\) in the minimax separation rate is unsurprising. To see this, consider from (\ref{model:additive_model}) that the observation can be written as \(X = \theta + W\mathbf{1}_p\) when \(\gamma = 1\). Projecting to orthogonal subspaces yields \(X - \bar{X}\mathbf{1}_p = \theta - \bar{\theta}\mathbf{1}_p\) and \(\sqrt{p}\bar{X} = \sqrt{p}\bar{\theta} + \sqrt{p}W\). Notice that \(X - \bar{X}\mathbf{1}_p\) is almost surely equal to \(\theta - \bar{\theta}\mathbf{1}_p\), that is to say all of the noise has been removed! Since \(s < p\) and \(||\theta||_0 \leq s\) implies that \(\theta \not \in \spn\{\mathbf{1}_p\}\), it immediately follows that the test which rejects the null hypothesis when \(X - \bar{X}\mathbf{1}_p\) is nonzero achieves zero testing risk. Hence the separation rate is degenerate for \(s < p\). When \(s = p\), the alternative hypothesis now includes some \(\theta\) that lie in \(\spn\{\mathbf{1}_p\}\) and so the statistic \(X - \bar{X}\mathbf{1}_p\) cannot be relied on exclusively. Rather, the potential information in the remaining piece \(\sqrt{p}\bar{X}\) must be incorporated. The testing procedure which rejects when \(||X||^2\) is suitably large can be used, yielding \(\varepsilon^*(p,p,1)^2 \lesssim p\). The lower bound \(\varepsilon^*(p, p, 1)^2 \gtrsim p\) is easily shown by a reduction to a two-point, simple vs simple hypothesis testing problem (see Section \ref{section:proofs} for details).

    Having dealt with the case \(\gamma = 1\), the majority of the remainder of the paper focuses on pinning down the minimax separation rate when \(\gamma \in [0, 1)\). Formal statements of results will specify that \(\gamma \in [0, 1)\), but remarks and exposition frequently do not and so require the reader to rely on context.

    \subsection{Decomposition into a pair of testing problems}\label{section:problem_decomp}
    Our strategy involves decomposing the testing problem (\ref{problem:test_equicorrelated_1})-(\ref{problem:test_equicorrelated_2}) into a pair of testing problems (denoted as Problem I and Problem II) to be studied separately. Once each of the pair has been studied, those results can be assembled to furnish a characterization for \(\varepsilon^*(p, s, \gamma)\). Letting \(1 \leq s \leq p\) and \(\varepsilon > 0\), define the parameter spaces 
    \begin{align}
        \Theta_{\mathcal{I}}(p, s, \varepsilon) &:= \left\{ \theta \in \R^p : ||\theta - \bar{\theta}\mathbf{1}_p|| \geq \varepsilon \text{ and } ||\theta||_0 \leq s \right\}, \label{space:Theta1}\\
        \Theta_{\mathcal{II}}(p, s, \varepsilon) &:= \left\{ \theta \in \R^p : ||\bar{\theta}\mathbf{1}_p|| \geq \varepsilon \text{ and } ||\theta||_0 \leq s\right\}. \label{space:Theta2} 
    \end{align}

    We have defined \(\Theta_{\mathcal{I}}(p, s, \varepsilon)\) and \(\Theta_{\mathcal{II}}(p, s, \varepsilon)\) in this way as we will employ the same strategy of projecting to orthogonal subspaces employed in Section \ref{section:perfect_correlation}. We now define Problem I and Problem II. 
    
    \begin{definition}[Problem I]
        For \(1 \leq s \leq p\) and with observation (\ref{model:single_random_effect}), Problem I is defined to be the testing problem 
        \begin{align}
            H_0 &: \theta = 0, \label{problem:ProblemI_1}\\
            H_1 &: \theta \in \Theta_{\mathcal{I}}(p, s, \varepsilon_1)\label{problem:ProblemI_2}
        \end{align}
        where \(\varepsilon_1 > 0\). 
    \end{definition}
        
    The minimax testing risk for Problem I is defined as 
    \begin{equation*}
        \mathcal{R}_{\mathcal{I}}(\varepsilon_1) := \inf_{\varphi}\left\{P_{0, \gamma}\{\varphi = 1\} + \sup_{\theta \in \Theta_{\mathcal{I}}(p, s, \varepsilon_1)}P_{\theta, \gamma}\{\varphi = 0\} \right\}   
    \end{equation*}
    where the infimum runs over all tests (measurable functions) \(\varphi : \R^p \to \{0, 1\}\). 
    
    \begin{definition}[Problem II]
        For \(1 \leq s \leq p\) and with observation (\ref{model:single_random_effect}), Problem II is defined to be the testing problem
        \begin{align}
            H_0 &: \theta = 0, \label{problem:ProblemII_1} \\
            H_1 &: \theta \in \Theta_{\mathcal{II}}(p, s, \varepsilon_2) \label{problem:ProblemII_2}
        \end{align}
        where \(\varepsilon_2 > 0\). 
    \end{definition}
    
    The minimax testing risk for Problem II is defined as 
    \begin{equation*}
        \mathcal{R}_{\mathcal{II}}(\varepsilon_2) := \inf_{\varphi} \left\{ P_{0, \gamma} \{\varphi = 1\} + \sup_{\theta \in \Theta_{\mathcal{II}}(p, s, \varepsilon_2)} P_{\theta, \gamma}\{\varphi = 0\} \right\}
    \end{equation*}
    where the infimum runs over all tests (measurable functions) \(\varphi : \R^p \to \{0, 1\}\). The minimax separation rates \(\varepsilon_1^* = \varepsilon_1^*(p, s, \gamma)\) and \(\varepsilon_2^* = \varepsilon_2^*(p, s, \gamma)\) for Problem I and Problem II are defined analogously to the definition of \(\varepsilon^*\). Problem I and Problem II are related to our original testing problem (\ref{problem:test_equicorrelated_1})-(\ref{problem:test_equicorrelated_2}) in a simple yet important way. 

    \begin{lemma}\label{lemma:problem_decomposition}
        If \(1 \leq s \leq p\), then \((\varepsilon^*)^2 \asymp (\varepsilon_1^*)^2 + (\varepsilon_2^*)^2\).
    \end{lemma}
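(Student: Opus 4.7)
The plan is to exploit the orthogonal decomposition $\theta = (\theta - \bar\theta \mathbf{1}_p) + \bar\theta \mathbf{1}_p$, which gives the Pythagorean identity $\|\theta\|^2 = \|\theta - \bar\theta\mathbf{1}_p\|^2 + \|\bar\theta\mathbf{1}_p\|^2$. Via this identity, Problem I and Problem II each correspond to one summand of $\|\theta\|^2$, so the original separation rate should be controlled by the maximum of the two separation rates (equivalently, by their sum up to a factor of $2$).

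For the lower bound $(\varepsilon^*)^2 \gtrsim (\varepsilon_1^*)^2 + (\varepsilon_2^*)^2$, I would observe that $\Theta_{\mathcal{I}}(p,s,\varepsilon) \subset \Theta(p,s,\varepsilon)$ and $\Theta_{\mathcal{II}}(p,s,\varepsilon) \subset \Theta(p,s,\varepsilon)$, since $\|\theta\|^2$ dominates each of $\|\theta - \bar\theta\mathbf{1}_p\|^2$ and $\|\bar\theta\mathbf{1}_p\|^2$. Enlarging the composite alternative can only make the testing problem easier, so $\mathcal{R}(\varepsilon) \geq \mathcal{R}_{\mathcal{I}}(\varepsilon)$ and $\mathcal{R}(\varepsilon) \geq \mathcal{R}_{\mathcal{II}}(\varepsilon)$. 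Feeding this into Definition \ref{def:separation_rate} yields $\varepsilon^* \gtrsim \varepsilon_1^*$ and $\varepsilon^* \gtrsim \varepsilon_2^*$, hence $(\varepsilon^*)^2 \gtrsim \max((\varepsilon_1^*)^2, (\varepsilon_2^*)^2) \asymp (\varepsilon_1^*)^2 + (\varepsilon_2^*)^2$.

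For the upper bound $(\varepsilon^*)^2 \lesssim (\varepsilon_1^*)^2 + (\varepsilon_2^*)^2$, I would fix $\eta \in (0,1)$ and, using the definitions of $\varepsilon_1^*$ and $\varepsilon_2^*$, select constants $C_1, C_2 > 0$ and tests $\varphi_1, \varphi_2$ whose risks at levels $C_1 \varepsilon_1^*$ and $C_2 \varepsilon_2^*$ respectively are at most $\eta/2$. Set $\varphi := \varphi_1 \vee \varphi_2$. A union bound controls the Type I error by $\eta$. For the Type II error, suppose $\theta \in \Theta(p,s,\varepsilon)$ with $\varepsilon^2 \geq 2\bigl(C_1^2 (\varepsilon_1^*)^2 + C_2^2 (\varepsilon_2^*)^2\bigr)$; by the Pythagorean identity at least one of $\|\theta - \bar\theta \mathbf{1}_p\|^2 \geq C_1^2 (\varepsilon_1^*)^2$ or $\|\bar\theta \mathbf{1}_p\|^2 \geq C_2^2 (\varepsilon_2^*)^2$ must hold, placing $\theta$ in one of the corresponding restricted alternatives. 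Because $P_{\theta,\gamma}\{\varphi = 0\} \leq \min\bigl(P_{\theta,\gamma}\{\varphi_1 = 0\}, P_{\theta,\gamma}\{\varphi_2 = 0\}\bigr)$, we get $P_{\theta,\gamma}\{\varphi=0\} \leq \eta/2$. Hence $\mathcal{R}(\varepsilon) \leq \eta$ for such $\varepsilon$, which gives $(\varepsilon^*)^2 \lesssim (\varepsilon_1^*)^2 + (\varepsilon_2^*)^2$.

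There is no real obstacle here: both directions are essentially pigeonhole arguments once the Pythagorean decomposition is in place. The only care needed is in tracking constants through Definition \ref{def:separation_rate} so that the quantifiers on $\eta$ and the threshold constants $C_\eta, c_\eta$ for $\varepsilon^*, \varepsilon_1^*, \varepsilon_2^*$ line up properly; in particular one should apply each of $\varepsilon_1^*$ and $\varepsilon_2^*$ at risk level $\eta/2$ rather than $\eta$ to accommodate the union bound on the Type I error.
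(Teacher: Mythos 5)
Your proof is correct and follows essentially the same route as the paper: the Pythagorean decomposition $\Theta(p,s,\varepsilon) \subset \Theta_{\mathcal{I}}(p,s,\varepsilon_1) \cup \Theta_{\mathcal{II}}(p,s,\varepsilon_2)$ when $\varepsilon^2 = \varepsilon_1^2 + \varepsilon_2^2$ plus a union of the two rate-optimal tests at level $\eta/2$ for the upper bound, and the inclusions $\Theta_{\mathcal{I}}, \Theta_{\mathcal{II}} \subset \Theta$ for the lower bound. Your note about budgeting the risk at $\eta/2$ and aligning the constants in Definition \ref{def:separation_rate} is exactly the bookkeeping the paper carries out.
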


    Lemma \ref{lemma:problem_decomposition} validates the strategy of decomposing the original problem into Problem I and Problem II. In particular, it suffices to characterize the minimax separation rates \(\varepsilon_1^*\) and \(\varepsilon_2^*\) separately. In fact, the problem can be further reduced. 

    \begin{lemma} \label{lemma:problemI_equiv}
        If \(1 \leq s \leq \frac{p}{2}\), then \((\varepsilon^*)^2 \asymp (\varepsilon_1^*)^2\).
    \end{lemma}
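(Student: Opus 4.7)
The plan is to combine Lemma \ref{lemma:problem_decomposition} with a direct set-theoretic comparison between the two alternative parameter spaces. Since Lemma \ref{lemma:problem_decomposition} already gives \((\varepsilon^*)^2 \asymp (\varepsilon_1^*)^2 + (\varepsilon_2^*)^2\), and \((\varepsilon_1^*)^2 \lesssim (\varepsilon^*)^2\) is immediate, it suffices to show \((\varepsilon_2^*)^2 \lesssim (\varepsilon_1^*)^2\) whenever \(s \leq p/2\).

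The key observation is that under the sparsity constraint \(\|\theta\|_0 \leq s\), at least \(p - s\) coordinates of \(\theta\) vanish, and on each such coordinate the residual \((\theta - \bar\theta\mathbf{1}_p)_i\) equals \(-\bar\theta\). Consequently,
\begin{equation*}
    \|\theta - \bar\theta\mathbf{1}_p\|^2 \;\geq\; (p - s)\,\bar\theta^2 \;=\; \frac{p-s}{p}\,\|\bar\theta\mathbf{1}_p\|^2 \;\geq\; \frac{1}{2}\,\|\bar\theta\mathbf{1}_p\|^2,
\end{equation*}
where the last inequality uses \(s \leq p/2\). Therefore, for any \(\varepsilon_2 > 0\), I would conclude
\begin{equation*}
    \Theta_{\mathcal{II}}(p, s, \varepsilon_2) \;\subseteq\; \Theta_{\mathcal{I}}\!\left(p, s, \tfrac{\varepsilon_2}{\sqrt{2}}\right).
\end{equation*}

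Since the minimax testing risk is monotone in the alternative parameter space (a smaller alternative makes the supremum smaller, hence the infimum over tests smaller), this inclusion yields \(\mathcal{R}_{\mathcal{II}}(\varepsilon_2) \leq \mathcal{R}_{\mathcal{I}}(\varepsilon_2/\sqrt{2})\). Applying this with \(\varepsilon_2 = C \varepsilon_1^*\) and using the definition of the separation rate \(\varepsilon_1^*\), one obtains \(\varepsilon_2^* \lesssim \varepsilon_1^*\), which combined with Lemma \ref{lemma:problem_decomposition} proves the claim.

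There is no real obstacle here; the only minor subtlety is being careful with the direction of the risk comparison (risk decreases when the alternative shrinks) and with the constants when transferring the comparison from risks to rates through Definition \ref{def:separation_rate}. The sparsity hypothesis \(s \leq p/2\) is used precisely once, to ensure \(p - s \geq p/2\); if \(s\) were allowed to exceed \(p/2\), a signal in \(\Theta_{\mathcal{II}}\) could be nearly proportional to \(\mathbf{1}_p\) on its support, making \(\|\theta - \bar\theta\mathbf{1}_p\|\) arbitrarily small relative to \(\|\bar\theta\mathbf{1}_p\|\), and the inclusion would fail.
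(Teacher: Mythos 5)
Your argument is correct, and it reaches the same conclusion via a slightly different route than the paper. The paper dispenses with Lemma~\ref{lemma:problem_decomposition} entirely: using Corollary~\ref{corollary:orthog_approximation} (i.e.\ \(\|\theta-\bar\theta\mathbf{1}_p\|^2\ge\frac{p-s}{p}\|\theta\|^2\ge\frac12\|\theta\|^2\) once \(s\le p/2\)), it establishes the two-sided chain of inclusions \(\Theta_{\mathcal{I}}(p,s,\varepsilon)\subset\Theta(p,s,\varepsilon)\subset\Theta_{\mathcal{I}}(p,s,\varepsilon/\sqrt2)\), reads off \(\mathcal{R}_{\mathcal{I}}(\varepsilon)\le\mathcal{R}(\varepsilon)\le\mathcal{R}_{\mathcal{I}}(\varepsilon/\sqrt2)\), and concludes \(\varepsilon^*\asymp\varepsilon_1^*\) in one stroke. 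You instead invoke Lemma~\ref{lemma:problem_decomposition} to reduce to showing \(\varepsilon_2^*\lesssim\varepsilon_1^*\), and then prove the one-sided inclusion \(\Theta_{\mathcal{II}}(p,s,\varepsilon)\subset\Theta_{\mathcal{I}}(p,s,\varepsilon/\sqrt2)\) via the direct computation \(\|\theta-\bar\theta\mathbf{1}_p\|^2\ge(p-s)\bar\theta^2=\frac{p-s}{p}\|\bar\theta\mathbf{1}_p\|^2\) (evaluating coordinates off \(\supp(\theta)\), where the residual is exactly \(-\bar\theta\)). That inequality is a standalone fact, not quite Corollary~\ref{corollary:orthog_approximation}, but it plays the same structural role. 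Both arguments hinge on the same phenomenon --- sparsity at level \(s\le p/2\) forces the \(\mathbf{1}_p\)-component to be dominated by the orthogonal component --- and the constants come out the same (\(\sqrt2\)). The paper's version is marginally cleaner since it does not need the full two-sided machinery of Lemma~\ref{lemma:problem_decomposition}, but your reduction is logically self-contained and correct, including the final step passing from the risk inequality to the rate inequality (which uses monotonicity of \(\mathcal{R}_{\mathcal{II}}\) in \(\varepsilon\) together with the lower-bound half of the definition of \(\varepsilon_2^*\)).
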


    Lemma \ref{lemma:problemI_equiv} asserts that the original testing problem is equivalent to Problem I (with respect to minimax separation rates) for \(1 \leq s \leq \frac{p}{2}\). In conjunction with Lemma \ref{lemma:problem_decomposition}, Lemma \ref{lemma:problemI_equiv} implies that one need only characterize \(\varepsilon_2^*\) for the limited range of sparsity levels \(\frac{p}{2} \leq s \leq p\) for the purpose of characterizing \(\varepsilon^*\). However, a full investigation of Problem I for all sparsity levels is still needed.

    \subsection{Problem I}\label{section:ProblemI}

    The main result of this section is the characterization of the minimax separation rate for Problem I. For ease of notation throughout this section, we set 
    \begin{equation} \label{rate:problemI}
        \psi_1^2 := \begin{cases}
            (1-\gamma) s \log\left(1 + \frac{p}{s^2}\right) &\text{if } s < \sqrt{p}, \\
            (1-\gamma)\sqrt{p} &\text{if } s \geq \sqrt{p}. 
        \end{cases}
    \end{equation}
    \begin{theorem}\label{thm:ProblemI}
        Assume \(p \geq 2\). If \(1 \leq s \leq p\) and \(\gamma \in [0,1)\), then \(\varepsilon_1^*(p, s, \gamma)^2 \asymp \psi_1^2\).
    \end{theorem}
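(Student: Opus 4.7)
The key reduction is to pass to the centered observation \(U := X - \bar{X}\mathbf{1}_p = PX\) with \(P := I_p - \frac{1}{p}\mathbf{1}_p\mathbf{1}_p^\intercal\), which annihilates the random effect in (\ref{model:additive_model}) because \(PW\mathbf{1}_p = 0\). This gives \(U \sim N(\eta, (1-\gamma)P)\) with \(\eta := P\theta\), and the alternative of Problem I is exactly \(\|\eta\|\geq \varepsilon_1\). Thus Problem I becomes a sparse detection problem in a \((p-1)\)-dimensional Gaussian at noise scale \(\sqrt{1-\gamma}\), subject to the non-standard constraint that \(\eta\) is the centering of some \(s\)-sparse vector. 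This reduction already suggests that \(\psi_1^2\) should match the \cite{collierMinimaxEstimationLinear2017} rate with noise variance \((1-\gamma)\).

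For the upper bound I would combine a chi-square test with a CCT-style scan applied to \(U\). In the dense regime \(s\geq\sqrt{p}\), the test \(\mathbbm{1}\{\|U\|^2 > (1-\gamma)(p-1) + C(1-\gamma)\sqrt{p}\}\) succeeds by standard \(\chi^2\)-concentration of \(\|U\|^2 \sim (1-\gamma)\chi^2_{p-1}\) under \(H_0\) and the mean-shift \(\|\eta\|^2 \geq \varepsilon_1^2\) under \(H_1\). In the sparse regime \(s<\sqrt{p}\), I would port the CCT scan-plus-threshold construction to \(U\); the geometric inequality
\[
\sum_{i\in\supp(\theta)}\eta_i^2 \;\geq\; \left(1-\tfrac{s}{p}\right)\|\eta\|^2,
\]
which follows by expanding \(\eta_i=\theta_i-\bar\theta\) and applying Cauchy-Schwarz to \(\left(\sum_{i\in\supp(\theta)}\theta_i\right)^2\leq s\|\theta\|^2\), guarantees that a scan over size-\(s\) subsets always captures at least a constant fraction of \(\|\eta\|^2\) throughout this regime and so inherits the CCT threshold at variance \((1-\gamma)\).

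For the lower bound I would bound the \(\chi^2\)-divergence using the Gaussian identity \(\chi^2+1 = \mathbb{E}_{\theta,\theta'}\exp(\theta^\intercal \Sigma_\gamma^{-1}\theta')\) together with the Sherman-Morrison formula
\[
\Sigma_\gamma^{-1} = \frac{1}{1-\gamma}\left(I_p - \frac{\gamma}{1-\gamma+\gamma p}\mathbf{1}_p\mathbf{1}_p^\intercal\right).
\]
Take the prior \(\theta = \mu\mathbf{1}_S\) with \(S\) uniform on \(\binom{[p]}{s'}\) for \(s' := s\wedge\lfloor\sqrt{p}\rfloor\); monotonicity of \(\Theta_{\mathcal{I}}\) in \(s\) lets this single family handle both regimes. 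The identity reduces to
\[
\exp\!\left(-\frac{\mu^2\gamma(s')^2}{(1-\gamma)(1-\gamma+\gamma p)}\right)\mathbb{E}\exp\!\left(\frac{\mu^2|S\cap S'|}{1-\gamma}\right),
\]
whose prefactor is \(\leq 1\) and can be discarded; what remains is exactly the CCT \(\chi^2\)-bound at variance \((1-\gamma)\), which stays \(1+o(1)\) provided \(\mu^2\lesssim(1-\gamma)\log(1+p/(s')^2)\). Since \(\|\eta\|^2 = s'\mu^2(1-s'/p)\asymp s'\mu^2\) when \(s'\leq\sqrt{p}\), this yields the matching lower bound \((1-\gamma)s'\log(1+p/(s')^2)\asymp \psi_1^2\) in both regimes.

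The main obstacle is the non-sparsity of \(\eta=P\theta\): even when \(\theta\) is exactly \(s\)-sparse, \(\eta\) has all \(p\) coordinates nonzero through the off-support mean-shift \(-\bar\theta\), which obstructs any naive reduction to classical sparse detection. The geometric inequality above is the essential workaround on the upper bound side; on the lower bound side, it is crucial that the rank-one correction in \(\Sigma_\gamma^{-1}\) enters with a sign that only \emph{helps}, so the random effect cannot lift the \(\chi^2\) above the CCT benchmark. These two observations together are what make the upper and lower bounds collapse to the single rate \(\psi_1^2\).
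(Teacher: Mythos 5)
Your high-level plan matches the paper's: pass to the projected data, run a $\chi^2$-type test when $s\geq\sqrt{p}$, run a Collier-style thresholding test when $s<\sqrt{p}$ with a geometric lemma ensuring a constant fraction of the signal energy sits on the $s$-sparse support, and for the lower bound use Ingster--Suslina with Sherman--Morrison and a uniform-support prior while exploiting the negative sign on the rank-one correction. Your geometric inequality $\sum_{i\in\supp(\theta)}\eta_i^2\geq(1-s/p)\|\eta\|^2$ is correct and plays the same role as the paper's Corollary \ref{corollary:supp_approximation}, and your lower-bound observations (single prior with effective sparsity $s\wedge\lfloor\sqrt{p}\rfloor$, discarding the helpful prefactor) are exactly the paper's.

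There is, however, one gap in the sparse-regime upper bound. You propose applying the CCT thresholding statistic directly to the centered vector $U=PX$. But $\Cov(U)=(1-\gamma)P$ is not diagonal: the coordinates of $U$ carry a pairwise correlation of order $-1/p$, and the analysis of the test statistic $\sum_i(U_i^2/(1-\gamma)-\alpha_t)\mathbbm{1}_{\{|U_i|/\sqrt{1-\gamma}\geq t\}}$ (type-I concentration and variance control) in Collier et al.\ and in Proposition \ref{prop:supp_tsybakov} requires the coordinates to be independent. There are $\Theta(p^2)$ cross-covariance terms in the variance of the statistic, and dismissing them is not automatic --- it would take a Hermite-expansion / Mehler-type argument to show the $O(1/p)$ pairwise correlations contribute negligibly. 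The paper sidesteps this entirely by working with $\widetilde{X}=\frac{1}{\sqrt{1-\gamma}}PX+\frac{\xi}{\sqrt{p}}\mathbf{1}_p$, where the independent synthetic noise $\xi\sim N(0,1)$ restores $\Cov(\widetilde{X})=I_p$ exactly; the CCT machinery then applies verbatim. You should either add this synthetic-noise step or supply the missing dependence argument. The dense-regime $\chi^2$ test on $\|U\|^2$ is fine as you have it, since that only uses $\|U\|^2\sim(1-\gamma)\chi^2_{p-1}(\cdot)$ and not coordinatewise independence.
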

    Note that the case \(p = 1\) is vacuous as \(\Theta_\mathcal{I}(1, 1, \varepsilon)\) is empty for all \(\varepsilon > 0\), and so trivially \(\varepsilon_1^*(1, 1, \gamma) = 0\). To derive the minimax separation rate, we construct matching (up to absolute constants) upper and lower bounds. To obtain the upper bound, we formulate a testing procedure with appropriate testing risk. At the heart of our procedure is a simple transformation of the data which decorrelates the observations. Recall that \(X \sim N(\theta, (1-\gamma)I_p + \gamma \mathbf{1}_p\mathbf{1}_p^\intercal)\). Drawing \(\xi \sim N(0, 1)\) independently from \(X\), consider the transformation 
    \begin{equation*}
        \widetilde{X} := \frac{1}{\sqrt{1-\gamma}}\left(I_p - \frac{1}{p} \mathbf{1}_p\mathbf{1}_p^\intercal\right) X + \frac{\xi}{\sqrt{p}} \mathbf{1}_p. 
    \end{equation*}
    It is readily seen that \(\widetilde{X} \sim N\left( \frac{\theta - \bar{\theta}\mathbf{1}_p}{\sqrt{1-\gamma}}, I_p \right)\). Our testing procedure will be applied to \(\widetilde{X}\) and depends on whether the ``sparse'' regime (\(s < \sqrt{p}\)) or the ``dense'' regime (\(s \geq \sqrt{p}\)) is in force. For the dense regime, we use a \(\chi^2\)-type test applied to the transformed data. Namely, the test
    \begin{equation}\label{test:problemI_chisquare}
        \varphi^{\chi^2}_{r} := \mathbbm{1}_{\left\{||\widetilde{X}||^2 > p + r\sqrt{p}\right\}}
    \end{equation}
    is used, where \(r\) is set by the statistician to attain a desired level of the testing risk. For the sparse regime, a statistic formulated by Collier et al. \cite{collierMinimaxEstimationLinear2017} can be applied to the transformed data. In detail, for \(t \geq 0\) define 
    \begin{equation*}
        Y_t := \sum_{i = 1}^{p} (\widetilde{X}_i^2 - \alpha_{t})\mathbbm{1}_{\{|\widetilde{X}_i| \geq t\}}
    \end{equation*}
    where 
    \begin{equation}\label{eqn:alpha_t}
        \alpha_t := \frac{E(g^2 \mathbbm{1}_{\{|g| \geq t\}})}{P\{|g| \geq t\}}
    \end{equation}
    with \(g \sim N(0, 1)\). With this testing statistic in hand, define the test 
    \begin{equation}\label{test:problemI_tsybakov}
        \varphi_{t, r} := \mathbbm{1}_{\left\{Y_t > r \right\}}.
    \end{equation}
    where \(r > 0\) is set by the statistician to attain a desired level of testing risk. With these tests, the upper bound \(\varepsilon_1^*(p, s, \gamma)^2 \lesssim \psi_1^2\) can be obtained.

    \begin{proposition}[Upper bound]\label{prop:problemI_upperbound}
        Assume \(p \geq 4\). Let \(1 \leq s \leq p\), \(\gamma \in [0, 1)\). If \(\eta \in (0, 1)\), then there exists a constant \(C_\eta > 0\) depending only on \(\eta\) such that for all \(C > C_\eta\), the testing procedure 
        \begin{equation}\label{test:problemI}
            \varphi_{I}^* = 
            \begin{cases}
                \varphi_{t^*, r^*} &\text{if } s < \sqrt{p}, \\
                \varphi^{\chi^2}_{C^2/2} &\text{if } s \geq \sqrt{p}
            \end{cases}
        \end{equation}
        with \(t^* = \sqrt{2 \log\left(1 + \frac{p}{s^2}\right)}\) and \(r^* = \frac{C^2}{32} s \log\left(1 + \frac{p}{s^2}\right)\) satisfies 
        \begin{equation*}
            P_{0, \gamma}\{\varphi^*_{I} = 1\} + \sup_{\theta \in \Theta_{\mathcal{I}}(p, s, C\psi_1)} P_{\theta, \gamma}\left\{ \varphi^*_{I} = 0\right\} \leq \eta
        \end{equation*}
        Here, \(\varphi^{\chi^2}_{C^2/2}\) and \(\varphi_{t^*, r^*}\) are given by (\ref{test:problemI_chisquare}) and (\ref{test:problemI_tsybakov}) respectively.
    \end{proposition}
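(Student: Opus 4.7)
The plan is to verify the Type I and Type II error bounds for $\varphi_I^*$ case by case, using the decorrelating transformation $\widetilde X \sim N(\mu, I_p)$ with $\mu := (\theta - \bar\theta\mathbf{1}_p)/\sqrt{1-\gamma}$ as the engine to convert the correlated problem into an iid Gaussian sequence problem. A useful bookkeeping fact I would record at the outset is that $\mathbf{1}_p^\intercal \mu = 0$ and $||\mu||^2 = ||\theta - \bar\theta\mathbf{1}_p||^2/(1-\gamma) \geq C^2\psi_1^2/(1-\gamma)$ on the alternative.

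For the dense regime $s \geq \sqrt p$, where $\psi_1^2 = (1-\gamma)\sqrt p$ and $||\mu||^2 \geq C^2\sqrt p$, I would analyze the chi-square test directly. Under $H_0$, $||\widetilde X||^2 \sim \chi^2_p$ and a Laurent--Massart tail bound handles the Type I side once $C_\eta$ is taken sufficiently large. Under $H_1$, I would expand $||\widetilde X||^2 = ||Z||^2 + 2\langle \mu, Z\rangle + ||\mu||^2$ with $Z \sim N(0, I_p)$ and combine the standard $\chi^2_p$ concentration inequality with a Gaussian tail bound on $\langle\mu, Z\rangle$ (of order $||\mu||$) to show that the test statistic exceeds $p + (C^2/2)\sqrt p$ with probability at least $1-\eta/2$ when $C$ is large enough.

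For the sparse regime $s < \sqrt p$, I would apply Chebyshev to $Y_{t^*}$ with $t^* = \sqrt{2\log(1+p/s^2)}$ and $r^* = (C^2/32)s\log(1+p/s^2)$. Under $H_0$, the choice of $\alpha_{t^*}$ forces $E_{0,\gamma}[Y_{t^*}] = 0$, and a direct moment computation using $P(|Z|\geq t^*) \asymp s^2/p$ and $\alpha_{t^*} \asymp t^{*2}$ produces $\Var_{0,\gamma}(Y_{t^*}) \lesssim s^2\log^2(1+p/s^2) \asymp r^{*2}/C^4$, so Chebyshev yields Type I error $\leq \eta/2$.

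The main obstacle is the Type II error in the sparse regime, because the signal $\mu$ is not $s$-sparse: it equals the constant $a := -\bar\theta/\sqrt{1-\gamma}$ on the $p-s$ coordinates outside $S := \supp(\theta)$, so the analysis of \cite{collierMinimaxEstimationLinear2017} does not directly apply. I would resolve this by exploiting $\mathbf{1}_p^\intercal\mu = 0$ together with Cauchy--Schwarz, which gives $(p-s)a^2 \leq s||\mu||^2/p$ and hence $|a| \ll t^*$ whenever $s < \sqrt p$. Consequently the off-support coordinates are statistically close to pure noise, and their contributions to the mean and variance of $Y_{t^*}$ are dominated by the on-support contributions. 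On $S$, the restricted signal $\mu_S$ satisfies $||\mu_S||^2 \geq ||\mu||^2(1-s/p) \asymp ||\mu||^2 \geq C^2 s\log(1+p/s^2)$ with at most $s$ nonzero coordinates, so the expectation lower bound and variance upper bound for $Y_{t^*}$ from Collier, Comminges, and Tsybakov apply and Chebyshev delivers Type II error $\leq \eta/2$.
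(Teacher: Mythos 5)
Your sparse-regime argument has a genuine gap in how it handles the off-support coordinates of \(\mu := (\theta - \bar\theta\mathbf{1}_p)/\sqrt{1-\gamma}\), which all take the common value \(a = -\bar\theta/\sqrt{1-\gamma}\). First, the claim that \(|a|\ll t^*\) does not follow from what you have written: the Cauchy--Schwarz step yields \((p-s)^2a^2\le s\|\mu\|^2\), hence \(a^2\lesssim s\|\mu\|^2/p^2\), but the alternative \(\Theta_{\mathcal I}(p,s,C\psi_1)\) bounds \(\|\theta-\bar\theta\mathbf{1}_p\|\) only from \emph{below}, so \(\|\mu\|\) (hence \(a\)) can be arbitrarily large and the inequality \(|a|\ll t^*\) can fail. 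Second, and more structurally, even when \(a\) is small but nonzero, the variance bound available (Lemma \ref{lemma:tsybakov_variance}, i.e.\ Lemma 20 of \cite{liuMinimaxRatesSparse2021}) is \(\Var\le C_1 t^4\) for every \(0<|\theta|<2t\); the exponentially damped bound \(C_1 t^3 e^{-t^2/2}\) holds only at \(\theta=0\) exactly. Applying the former to the \(p-s\) off-support coordinates gives a variance contribution of order \(p(t^*)^4\asymp p\log^2\left(1+\frac{p}{s^2}\right)\), which swamps \(r^{*2}\asymp C^4 s^2\log^2\left(1+\frac{p}{s^2}\right)\) precisely because \(s<\sqrt p\) forces \(s^2<p\). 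So ``the off-support contributions are dominated by the on-support contributions'' cannot be extracted from the cited lemmas as stated, and your Chebyshev bound on the Type~II error does not close.

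The paper resolves both problems at once via a stochastic-dominance reduction in Proposition \ref{prop:supp_tsybakov}. Since \((g^2-\alpha_t)\mathbbm{1}_{\{|g|\ge t\}}\) is stochastically increasing in \(|m|\) for \(g\sim N(m,1)\), replacing \(\widetilde X_i\) by a fresh standard Gaussian \(Z_i\) for each \(i\notin\supp(\theta)\) only \emph{decreases} \(Y_{t^*}\), hence only increases the Type~II error. The coupled problem has a genuinely \(s\)-sparse mean with squared norm \(\ge \frac{C^2}{4}s\log\left(1+\frac{p}{s^2}\right)\) by Corollary \ref{corollary:supp_approximation}, and the replaced coordinates have mean exactly zero so each contributes only \(C_1 t^3 e^{-t^2/2}\) to the variance—no case analysis in \(|a|\) and no continuity refinement needed. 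To repair your proof you would need either this monotonicity reduction, or (i) a variance bound that is continuous at \(\theta=0\) together with (ii) a separate argument for the case \(|a|\gtrsim t^*\).
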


    Before proceeding to the statement of a matching lower bound, a discussion is in order regarding the use of Collier's (and his coauthors') \cite{collierMinimaxEstimationLinear2017} statistic in the setting of Problem I. Firstly, this statistic was initially formulated in the sparse mean setting with independent noise, that is, in the setting with observation \(\zeta \sim N\left(\mu, I_p\right)\). Collier et al. \cite{collierMinimaxEstimationLinear2017} essentially prove (see also \cite{liuMinimaxRatesSparse2021}) that the statistic \(\sum_{i=1}^{p} (\zeta_i^2 - \alpha_{t^*})\mathbbm{1}_{\{|\zeta_i| \geq t^*\}}\) with \(t^* = \sqrt{2 \log\left(1 + \frac{p}{s^2}\right)}\) furnishes a rate-optimal test for testing \(H_0 : \mu = 0\) against \(H_1 : \mu \in \Theta(p, s, \varepsilon)\) for \(1\leq s \leq p\) and \(\varepsilon > 0\). 
    
    In contrast, we have applied Collier's (and coauthors') statistic to \(\widetilde{X}\). Notice \(\widetilde{X}\) has mean \(\frac{\theta - \bar{\theta}\mathbf{1}_p}{\sqrt{1-\gamma}}\), which need not be \(s\)-sparse. In fact, when \(s < p\) it follows that \(||\theta - \bar{\theta}\mathbf{1}_p||_0 \leq s\) if and only if \(||\theta||_0 \leq s\) and \(\theta \in \spn\{\mathbf{1}_p\}^\perp\). Importantly, we do \emph{not} have that \(\theta-\bar{\theta}\mathbf{1}_p\) is \(s\)-sparse for all \(\theta \in \Theta_{\mathcal{I}}(p, s, \varepsilon_1)\). Thus, at first glance, it is surprising that the statistic of \cite{collierMinimaxEstimationLinear2017} can be used to furnish an optimal test for Problem I in the sparse regime \(s < \sqrt{p}\).

    We extend the applicability of Collier's and coauthors' statistic to the setting where \(\mu\) exhibits a sparse subvector which has norm of the same order (see Proposition \ref{prop:supp_tsybakov}). This extension enables us to use the statistic with the data \(\widetilde{X}\) because Corollary \ref{corollary:supp_approximation} in Section \ref{section:geometric_observations} implies that if \(||\theta||_0 \leq s\) then 
    \begin{equation*}
        ||\theta-\bar{\theta}\mathbf{1}_{\supp(\theta)}||^2 \geq ||\theta||^2 \cdot \frac{p-2s}{p}.
    \end{equation*}
    As we are interested in the case \(s < \sqrt{p}\), it is immediately seen that there exists a sparse sub-vector of \(\theta - \bar{\theta}\mathbf{1}_{p}\) with the same squared Euclidean norm order.  More explicitly, we have shown \(||\theta - \bar{\theta}\mathbf{1}_{\supp(\theta)}||^2 \gtrsim \varepsilon_1^2\) for all \(\theta \in \Theta_{\mathcal{I}}(p, s, \varepsilon_1)\). This result gives some insight into an upper bound for \(\varepsilon_1^*\) when \(s < \sqrt{p}\). Given that \(\theta - \bar{\theta}\mathbf{1}_{\supp(\theta)}\) is \(s\)-sparse, an informed guess based on \cite{collierMinimaxEstimationLinear2017} is \((\varepsilon_1^*)^2 \lesssim (1-\gamma) s \log\left(1 + \frac{p}{s^2}\right)\) for \(s < \sqrt{p}\). Proposition \ref{prop:problemI_upperbound} confirms the correctness of this guess.
     
    The next proposition establishes a matching lower bound (up to absolute constants) for \(\varepsilon^*_1\), thus implying that the testing procedure \(\varphi^*\) is rate optimal. 

    \begin{proposition}[Lower bound]\label{prop:problemI_lowerbound}
        Assume \(p \geq 2\). Let \(1 \leq s \leq p\) and \(\gamma \in [0, 1)\). If \(\eta \in (0, 1)\), then there exists a constant \(c_\eta > 0\) depending only on \(\eta\) such that \(\mathcal{R}_{\mathcal{I}}(c\psi_1) \geq 1-\eta\) for all \(c < c_\eta\).
    \end{proposition}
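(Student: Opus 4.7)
The plan is to invoke Ingster's chi-squared method: construct a prior $\pi$ supported on $\Theta_{\mathcal{I}}(p,s,c\psi_1)$ (perhaps after a routine truncation) whose induced mixture $P_{\pi,\gamma} := \int P_{\theta,\gamma}\,d\pi(\theta)$ satisfies $\chi^2(P_{\pi,\gamma} || P_{0,\gamma}) \leq C_\eta$ for a small constant depending on $\eta$. The conclusion $\mathcal{R}_\mathcal{I}(c\psi_1) \geq 1-\eta$ then follows from the standard bound $\mathcal{R}_\mathcal{I} \geq 1 - \tfrac{1}{2}\sqrt{\chi^2(P_{\pi,\gamma}||P_{0,\gamma})}$. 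A useful preliminary is the Sherman--Morrison computation
\[
    \Sigma_\gamma^{-1} = \frac{1}{1-\gamma}\Pi + \frac{1}{p(1-\gamma+\gamma p)}\mathbf{1}_p\mathbf{1}_p^\intercal, \qquad \Pi := I_p - \frac{1}{p}\mathbf{1}_p\mathbf{1}_p^\intercal,
\]
which combined with the Gaussian chi-squared identity gives
\[
    \chi^2(P_{\pi,\gamma}||P_{0,\gamma}) + 1 = E_{\theta,\theta'\sim\pi}\exp\!\left(\frac{(\theta-\bar\theta\mathbf{1}_p)^\intercal(\theta'-\bar{\theta'}\mathbf{1}_p)}{1-\gamma} + \frac{p\bar\theta\bar{\theta'}}{1-\gamma+\gamma p}\right).
\]

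For the prior I would adapt the Collier--Comminges--Tsybakov construction \cite{collierMinimaxEstimationLinear2017} to the correlated setting: draw $\tau\subseteq[p]$ uniformly among size-$s$ subsets, place i.i.d.\ Rademacher signs $\sigma_i\in\{-1,+1\}$ on $\tau$, and set $\theta_i = \mu\sigma_i\mathbbm{1}_{\{i\in\tau\}}$ with $s\mu^2 \asymp \psi_1^2$; concretely, $\mu^2 \asymp (1-\gamma)\log(1+p/s^2)$ in the sparse regime $s<\sqrt{p}$ and $\mu^2 \asymp (1-\gamma)\sqrt{p}/s$ in the dense regime $s\geq\sqrt{p}$. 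A Hoeffding bound gives $|\sum_{i\in\tau}\sigma_i| = O(\sqrt{s})$ with high probability, so $||\theta-\bar\theta\mathbf{1}_p||^2 \geq s\mu^2(1 - O(s/p)) \gtrsim \psi_1^2$ on a high-probability event, and the prior once truncated to this event lies in $\Theta_\mathcal{I}(p,s,c\psi_1)$. The first term in the exponent produces the familiar expression
\[
    E\exp\!\left(\frac{\theta^\intercal\theta'}{1-\gamma}\right) = E_{\tau,\tau'}\!\left[\cosh\!\left(\frac{\mu^2}{1-\gamma}\right)^{|\tau\cap\tau'|}\right],
\]
which by the hypergeometric moment-generating-function estimate used in \cite{collierMinimaxEstimationLinear2017} is $\leq 1 + O(1)$ for our choice of $\mu$.

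The principal obstacle is the cross term $\exp\!\bigl(p\bar\theta\bar{\theta'}/(1-\gamma+\gamma p)\bigr)$ produced by the rank-one component of $\Sigma_\gamma^{-1}$, which does not vanish under independent Rademacher signs. The cleanest resolution is to \emph{enforce} $\bar\theta = 0$ by conditioning the signs on $\sum_{i\in\tau}\sigma_i = 0$ (valid when $s$ is even), which annihilates the cross term and reduces the chi-squared exactly to the display above in noise variance $(1-\gamma)$; the balanced-sign chi-squared is dominated by its independent-sign counterpart up to constants via a negative-correlation comparison. If independent Rademacher signs are retained instead, the cross term can be absorbed using the deterministic bound $|\bar\theta|\leq \mu s/p$, the sharper typical bound $|\bar\theta| = O(\mu\sqrt{s}/p)$, and the inequality $1-\gamma+\gamma p \geq 1$, which together make the extra exponential factor $O(1)$. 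Either way the heart of the argument is a reduction to the Collier--Comminges--Tsybakov chi-squared bound in noise variance $(1-\gamma)$, mirroring the fact that the inflated noise $1-\gamma+\gamma p$ along the direction $\mathbf{1}_p$ contributes essentially nothing to detection in Problem I. Edge cases (small $s$ and the parity issue for the balanced-sign variant when $s$ is odd) are handled by minor modifications of the construction, or by a direct two-point Le Cam argument for $s=1$.
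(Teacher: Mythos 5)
Your framework is correct — Ingster--Suslina, a sparse-support prior, the Sherman--Morrison decomposition of $\Sigma_\gamma^{-1}$, and the hypergeometric moment bound are exactly the ingredients the paper uses. But you have created an obstacle for yourself that the paper's construction simply does not have, and one of your two workarounds contains a genuine gap.

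The paper's prior, like the original Collier--Comminges--Tsybakov construction, puts the \emph{same positive value} on the random support $S$: $\theta = \sqrt{\tfrac{2}{2-\sqrt 2}}\,\tfrac{c\psi_1}{\sqrt s}\,\mathbf{1}_S$, with no Rademacher signs. Writing $\Sigma_\gamma^{-1} = \tfrac{1}{1-\gamma}I_p - \tfrac{\gamma}{(1-\gamma)(1-\gamma+\gamma p)}\mathbf{1}_p\mathbf{1}_p^\intercal$, the bilinear exponent is
\[
\langle\theta,\Sigma_\gamma^{-1}\widetilde\theta\rangle = \frac{\langle\theta,\widetilde\theta\rangle}{1-\gamma} - \frac{\gamma\,\langle\theta,\mathbf{1}_p\rangle\langle\widetilde\theta,\mathbf{1}_p\rangle}{(1-\gamma)(1-\gamma+\gamma p)}.
\]
With all-positive entries, $\langle\theta,\mathbf{1}_p\rangle\langle\widetilde\theta,\mathbf{1}_p\rangle > 0$ and $\gamma \geq 0$, so the cross term is \emph{non-positive} and can be dropped for free. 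What remains is $\exp\bigl(\tfrac{\mu^2}{1-\gamma}|S\cap\widetilde S|\bigr)$, bounded directly by the hypergeometric moment estimate. The whole "principal obstacle" you describe disappears with zero extra work. (CCT do not use Rademacher signs in their lower bound; this is a misattribution on your part, and it is precisely what creates the difficulty.)

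Your option 2 — keeping free signs and absorbing $\exp\bigl(p\bar\theta\bar\theta'/(1-\gamma+\gamma p)\bigr)$ via the deterministic bound $|\bar\theta| \leq \mu s/p$ and a "typical" $O(\mu\sqrt s/p)$ bound — does not work as written. The deterministic bound alone gives an exponent of order $\gamma\mu^2 s^2/(1-\gamma+\gamma p)$, which for $\gamma\asymp 1$ and $\mu^2\asymp(1-\gamma)\log(1+p/s^2)$ is of order $s^2\log(1+p/s^2)/p$; this approaches $\log(1+p)$ (not $O(1)$) for $s$ near $\sqrt p$. The "typical" bound is not sufficient either: a $\chi^2$-divergence is an exponential moment, so rare events where $\sum_{i\in\tau}\sigma_i$ is large are exactly the ones that can blow it up, and you give no argument that they do not. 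You would need a genuine sub-Gaussian moment computation or a truncation of the prior (and a corresponding total-variation adjustment) — neither of which you supply. Your option 1 (conditioning on $\sum_{i\in\tau}\sigma_i = 0$ plus a negative-correlation comparison) is plausible but is a real technical step that you merely gesture at, and it also requires case-handling for odd $s$. Both routes are strictly harder than the one-line sign observation that makes the cross term harmless.

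One small point in your favor: your $\chi^2$ identity (with equality, not inequality) is correct for Gaussian models with arbitrary priors, and your verification that the prior lands in $\Theta_\mathcal{I}$ is on the right track; the paper does this cleanly via Corollary \ref{corollary:orthog_approximation}, avoiding any Hoeffding/truncation step. I would recommend replacing your prior with the all-positive one, after which your argument collapses to the paper's.
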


    Combining Propositions \ref{prop:problemI_upperbound} and \ref{prop:problemI_lowerbound} yields Theorem \ref{thm:ProblemI} for \(p \geq 4\). The cases \(p = 2,3\) are handled very easily (see  Section \ref{section:proofs}). 
    
    \subsection{Problem II} \label{section:ProblemII}

    The main result of this section is a characterization of the minimax separation rate \(\varepsilon_2^*\) for Problem II. However, since Problem II is interesting only to the extent to which it is informative about \(\varepsilon^*\), Lemma \ref{lemma:problemI_equiv} implies that separation rate for Problem II need only be pinned down for \(s \geq \frac{p}{2}\). Narrowing focus to this regime, we obtain the following theorem. For ease of notation throughout this section, set
    \begin{equation} \label{rate:problemII}
        \psi_2^2 := 
        \begin{cases}
            \frac{(1-\gamma)p^{3/2}}{p-s} \wedge (1-\gamma+\gamma p) &\text{if } \frac{p}{2} \leq s \leq p-\sqrt{p}, \\
            (1-\gamma) p \log\left(1 + \frac{p}{(p-s)^2}\right) \wedge (1-\gamma+\gamma p) &\text{if } p-\sqrt{p} < s \leq p. 
        \end{cases}
    \end{equation}

    \begin{theorem}[Problem II]\label{thm:ProblemII}
        If \(\frac{p}{2} \leq s \leq p\) and \(\gamma \in [0, 1)\), then \(\varepsilon_2^*(p, s, \gamma)^2 \asymp \psi_2^2\).
    \end{theorem}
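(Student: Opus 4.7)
The plan is to establish matching upper and lower bounds on $\varepsilon_2^*$, with distinct tests and priors handling the two operands of the $\wedge$ in the definition of $\psi_2^2$.

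For the upper bound, I would exhibit two tests and reject if either rejects. The first test rejects $H_0$ when $\sqrt{p}|\bar{X}|$ exceeds a suitable constant multiple of $\sqrt{1-\gamma+\gamma p}$. Since $\sqrt{p}\bar{X} \sim N(\sqrt{p}\bar{\theta}, 1-\gamma+\gamma p)$ and any $\theta \in \Theta_{\mathcal{II}}(p,s,\varepsilon_2)$ satisfies $\sqrt{p}|\bar{\theta}| \geq \varepsilon_2$, standard Gaussian tail bounds yield the $(1-\gamma+\gamma p)$ operand. The second test exploits sparsity via the following observation: a Cauchy-Schwarz argument on $\theta$ restricted to $\supp(\theta)$ gives $\|\theta\|^2 \geq p\varepsilon_2^2/s$ for every $\theta \in \Theta_{\mathcal{II}}(p,s,\varepsilon_2)$, with equality at $\theta = c\mathbf{1}_S$ for $|c| = \varepsilon_2\sqrt{p}/s$ and $|S|=s$. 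For such a minimum-norm alternative (setting $m := p-s$), the residual $\theta - \bar{\theta}\mathbf{1}_p$ has squared norm $\varepsilon_2^2 m/s$, and its restriction to $S^c$ is the $m$-sparse vector $-\bar{\theta}\mathbf{1}_{S^c}$ of squared norm $\varepsilon_2^2 m/p$, representing at least the fraction $s/p \geq 1/2$ of the total whenever $s \geq p/2$. Applying the decorrelating transformation $\widetilde{X} \sim N((\theta-\bar{\theta}\mathbf{1}_p)/\sqrt{1-\gamma}, I_p)$ introduced in Section~\ref{section:ProblemI}, I can then use the Collier-style statistic (\ref{test:problemI_tsybakov}) with sparsity parameter $m$ and threshold $t^* = \sqrt{2\log(1+p/m^2)}$ in the regime $m < \sqrt{p}$, and the $\chi^2$-statistic (\ref{test:problemI_chisquare}) in the regime $m \geq \sqrt{p}$, invoking the sparse-subvector extension provided by Proposition~\ref{prop:supp_tsybakov}. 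Rescaling the $\psi_1^2(p, m, 0)$ detection level by the factor $m/(p(1-\gamma))$ recovers the operands $(1-\gamma)p\log(1+p/m^2)$ and $(1-\gamma)p^{3/2}/m$.

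For the lower bound, I plan to use the $\chi^2$-divergence method. A Dirac-mass prior at a single $\theta_0 = c\mathbf{1}_S$ yields the $(1-\gamma+\gamma p)$ lower bound via the Sherman-Morrison identity $\Sigma^{-1} = \frac{1}{1-\gamma}I_p - \frac{\gamma}{(1-\gamma)(1-\gamma+\gamma p)}\mathbf{1}_p\mathbf{1}_p^\intercal$, which gives $\theta_0^\intercal\Sigma^{-1}\theta_0 = \frac{c^2 s(1-\gamma+\gamma m)}{(1-\gamma)(1-\gamma+\gamma p)}$; requiring this to be $O(1)$ with $|c|=\varepsilon_2\sqrt{p}/s$ then forces $\varepsilon_2^2 \lesssim 1-\gamma+\gamma p$. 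The sparsity-aware operand calls for a mixture prior over $s$-sparse configurations with random support pattern, calibrated so that $\chi^2(P_\pi\|P_{0,\gamma}) + 1 = \mathbb{E}_{\theta,\theta'}\exp(\theta^\intercal\Sigma^{-1}\theta') = O(1)$. Via the exchangeability of the support and the explicit form of $\Sigma^{-1}$, the latter reduces to bounding a moment generating function of a hypergeometric-type intersection random variable, which should admit the dual sparse/dense split at $m = \sqrt{p}$ paralleling the Collier et al.~lower bound for (\ref{rate:independent}) applied to the dual parameter $m$.

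The main technical obstacle is this mixture-prior lower bound for the sparsity-aware operand. A naive uniform-over-supports prior $\theta = c\mathbf{1}_S$ with $|S|=s$ fails because the $\mathbf{1}_p$-aligned component of $\theta$ dominates the $\chi^2$-divergence at the claimed threshold. A more delicate construction is needed, one that places the $\mathbf{1}_p$-projection of $\theta$ exactly at the detection boundary while randomizing the structure orthogonal to $\mathbf{1}_p$ so that the resulting $\chi^2$ decouples into a deterministic piece at the boundary and a combinatorial piece controlled by a Collier-style moment generating function bound. Carrying the $(1-\gamma)$ and $(1-\gamma+\gamma p)$ factors through the density ratio of correlated Gaussians and reconciling the two sub-regimes at the $m = \sqrt{p}$ transition is then the remaining bookkeeping.
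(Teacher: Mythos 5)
Your upper bound proposal is essentially the paper's: the linear test $\varphi_r^{\mathbf{1}_p}$ covers the $(1-\gamma+\gamma p)$ operand, and the transformed data $\widetilde{X}$ together with Proposition~\ref{prop:supp_tsybakov} (with the Collier statistic at threshold $\widetilde t$ when $p-s<\sqrt p$, and the $\chi^2$ test when $p-s\geq\sqrt p$) covers the sparsity-aware operand, using that for any $\theta\in\Theta_{\mathcal{II}}(p,s,\varepsilon_2)$ there is a size-$(p-s)$ set $T\subset\supp(\theta)^c$ with $\|(\theta-\bar\theta\mathbf{1}_p)_T\|^2=\bar\theta^2(p-s)\geq\varepsilon_2^2(p-s)/p$. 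You reason via a minimum-norm example $\theta=c\mathbf{1}_S$, but the uniform argument goes through with this generic $T$; no issue.

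The lower bound has a genuine error and a genuine gap. First, the claim that a Dirac mass at $\theta_0=c\mathbf{1}_S$ with $|S|=s<p$ yields the $(1-\gamma+\gamma p)$ lower bound is wrong. Your own Sherman--Morrison computation gives
\begin{equation*}
\theta_0^\intercal\Sigma^{-1}\theta_0=\frac{c^2 s\big(1-\gamma+\gamma(p-s)\big)}{(1-\gamma)(1-\gamma+\gamma p)}
=\frac{\varepsilon_2^2\,p\,\big(1-\gamma+\gamma(p-s)\big)}{s(1-\gamma)(1-\gamma+\gamma p)},
\end{equation*}
so requiring it to be $O(1)$ forces $\varepsilon_2^2\lesssim \frac{s(1-\gamma)(1-\gamma+\gamma p)}{p(1-\gamma+\gamma(p-s))}$, which is smaller than $1-\gamma+\gamma p$ by a factor of $\frac{1-\gamma}{1-\gamma+\gamma(p-s)}$ (with $s\asymp p$); for instance, at $\gamma=\tfrac14$, $s=p-\sqrt p$ this gives $\varepsilon_2^2\lesssim\sqrt p$ rather than the target $\varepsilon_2^2\asymp p$. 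The culprit is exactly what you diagnose in your final paragraph for the mixture: the component of $\theta_0$ orthogonal to $\mathbf{1}_p$ is charged at rate $1/(1-\gamma)$ by $\Sigma^{-1}$, dominating the desired $\mathbf{1}_p$-aligned contribution. The Dirac approach works only at $s=p$, where $\theta_0$ lies in $\spn\{\mathbf{1}_p\}$.

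Second, you correctly identify the mixture-prior lower bound as the obstacle and even name the right mechanism (decouple the $\mathbf{1}_p$-projection from the orthogonal randomization), but you do not supply the construction, and the construction is the crux. The paper's device is an invertible mean shift plus a TV triangle inequality, not a modified mixture prior: with $\theta=\frac{c\psi_2\sqrt p}{s}\mathbf{1}_S$ over uniform $S$, set $\nu^*=\frac{c\psi_2\sqrt p}{s}\mathbf{1}_p\in\spn\{\mathbf{1}_p\}$ and note that the translation $X\mapsto X-\nu^*$ sends the alternative means to the $(p-s)$-sparse vectors $-\frac{c\psi_2\sqrt p}{s}\mathbf{1}_{S^c}$, while the null goes to $P_{-\nu^*,\gamma}$. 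Then
\begin{equation*}
\mathcal{R}_{\mathcal{II}}(c\psi_2)\;\geq\; 1-d_{TV}\!\big(P_{\nu^*,\gamma},P_{0,\gamma}\big)-d_{TV}\!\big(P_{0,\gamma},P_{\widetilde\pi,\gamma}\big),
\end{equation*}
where the first term is small because $\nu^*$ is aligned with $\mathbf{1}_p$ so $\chi^2(P_{\nu^*,\gamma}\|P_{0,\gamma})=\exp\!\big(\tfrac{p\,(\nu^*_1)^2}{1-\gamma+\gamma p}\big)-1$ and $\psi_2^2\leq 1-\gamma+\gamma p$ (this is where the second operand enters); and the second term is controlled by the hypergeometric MGF bound over the dual sparsity $m=p-s$, producing the first operand. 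Both operands of the $\wedge$ emerge simultaneously from one prior via this split, not from two separate priors as you propose. Supplying this mean-shift argument is what closes the gap.
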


    As in the analysis of Problem I, matching upper and lower bounds are constructed to derive the minimax separation rate. A combination of three tests is used. First, the \(\chi^2\)-type test \(\varphi^{\chi^2}_{r}\) defined in (\ref{test:problemI_chisquare}) is used. Second, define the test 
    \begin{equation}\label{test:problemII_linear}
        \varphi^{\mathbf{1}_p}_{r} := \mathbbm{1}_{\{\langle p^{-1/2} \mathbf{1}_p, X \rangle^2 > (1-\gamma+\gamma p)(1 + r)\}}. 
    \end{equation}
    Finally, the test \(\varphi_{t, r}\) defined in (\ref{test:problemI_tsybakov}) based on the statistic formulated by Collier et al. \cite{collierMinimaxEstimationLinear2017} is again used. 

    \begin{proposition}[Upper bound]\label{prop:problemII_upperbound}
        Let \(\frac{p}{2} \leq s \leq p\) and \(\gamma \in [0, 1)\). If \(\eta \in (0, 1)\), then there exists a constant \(C_\eta > 0\) depending only on \(\eta\) such that for all \(C > C_{\eta}\) the testing procedure 
        \begin{equation}\label{test:problemII}
            \varphi_{II}^* = 
            \begin{cases}
                \varphi^{\chi^2}_{C^2/2} \vee \varphi^{\mathbf{1}_p}_{C^2/2} &\text{if } \frac{p}{2} \leq s \leq p-\sqrt{p}, \\
                \varphi_{\widetilde{t}, \widetilde{r}} \vee \varphi^{\mathbf{1}_p}_{C^2/2} &\text{if } p-\sqrt{p} < s < p, \\
                \varphi^{\mathbf{1}_p}_{C^2/2} &\text{if } s = p
            \end{cases}
        \end{equation}
        with \(\widetilde{t} = \sqrt{2 \log\left(1 + \frac{p}{(p-s)^2}\right)}\) and \(\widetilde{r} = \frac{C^2}{8} (p-s)\log\left(1 + \frac{p}{(p-s)^2}\right)\) satisfies 
        \begin{equation*}
            P_{0, \gamma}\{\varphi^*_{II} = 1\} + \sup_{\theta \in \Theta_{\mathcal{II}}(p, s, C\psi_2)}P_{\theta, \gamma}\{\varphi^*_{II} = 0\} \leq \eta.
        \end{equation*}
        Here, the constituent tests in (\ref{test:problemII}) are given by (\ref{test:problemI_chisquare}), (\ref{test:problemI_tsybakov}), and (\ref{test:problemII_linear}). 
    \end{proposition}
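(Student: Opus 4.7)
The plan is to control the type I and type II errors of $\varphi^*_{II}$ separately. Since $\varphi^*_{II}$ is an OR of at most two constituent tests, a union bound controls type I error by the sum of the constituents' type I errors, while the type II error is bounded by the minimum. I would then match each constituent to one branch of the $\wedge$ appearing in $\psi_2^2$ so that at least one constituent rejects with high probability under any $\theta \in \Theta_{\mathcal{II}}(p, s, C\psi_2)$. The structural lemma driving the analysis is: if $||\theta||_0 = s_0 \leq s$ and $||\bar{\theta}\mathbf{1}_p|| \geq \varepsilon_2$, then $\theta - \bar{\theta}\mathbf{1}_p$ takes the common value $-\bar{\theta}$ on the $p - s_0$ coordinates outside $\supp(\theta)$, so
\[
||\theta - \bar{\theta}\mathbf{1}_p||^2 \geq (p - s_0)\bar{\theta}^2 \geq \frac{p-s}{p}||\bar{\theta}\mathbf{1}_p||^2 \geq \frac{p-s}{p}\varepsilon_2^2.
\]
This converts the alternative lower bound on $\bar{\theta}\mathbf{1}_p$ into a guaranteed signal for the decorrelated observation $\widetilde{X} \sim N\bigl((\theta - \bar{\theta}\mathbf{1}_p)/\sqrt{1-\gamma}, I_p\bigr)$.

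For type I error, I would note that under $H_0$ the quantity $\langle p^{-1/2}\mathbf{1}_p, X\rangle^2/(1-\gamma+\gamma p)$ is $\chi^2_1$, $||\widetilde{X}||^2$ is $\chi^2_p$, and $Y_{\widetilde{t}}$ is mean zero with variance of order $(p-s)^2 \widetilde{t}^4$ (since $P\{|g| \geq \widetilde{t}\} \asymp (p-s)^2/p$ and each summand has fourth moment of order $\widetilde{t}^4$). Standard chi-square tail estimates and Chebyshev's inequality thus make each constituent's type I error $\leq \eta/3$ for $C$ large enough.

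For type II in the regime $\frac{p}{2} \leq s \leq p - \sqrt{p}$, I would split on which branch of the $\wedge$ achieves $\psi_2^2$. If $\psi_2^2 \asymp 1-\gamma+\gamma p$, then $p\bar{\theta}^2 \gtrsim C^2(1-\gamma+\gamma p)$, so $\langle p^{-1/2}\mathbf{1}_p, X\rangle^2$ is a noncentral $\chi^2_1$ with signal-to-noise $\gtrsim C^2$, and one-sided Chebyshev yields small type II for $\varphi^{\mathbf{1}_p}_{C^2/2}$. If instead $\psi_2^2 \asymp (1-\gamma)p^{3/2}/(p-s)$, the transfer identity gives $||\theta - \bar{\theta}\mathbf{1}_p||^2 \gtrsim C^2(1-\gamma)\sqrt{p}$, so $\widetilde{X}$ has squared mean norm $\gtrsim C^2\sqrt{p}$, which is exactly the regime in which $\varphi^{\chi^2}_{C^2/2}$ succeeds via the same noncentral $\chi^2_p$ analysis used in Proposition \ref{prop:problemI_upperbound}.

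The main obstacle will be the type II analysis in $p - \sqrt{p} < s < p$ for the $\varphi_{\widetilde{t}, \widetilde{r}}$ branch, because the mean of $\widetilde{X}$ is \emph{not} $(p-s)$-sparse. Instead, at least $p - s_0 \geq p - s$ coordinates share the common value $-\bar{\theta}/\sqrt{1-\gamma}$, which has magnitude $\gtrsim C\widetilde{t}$ by the alternative hypothesis and the definition of $\widetilde{t}$, while the $\leq s$ coordinates in $\supp(\theta)$ are unconstrained. I would argue that each common-value coordinate crosses the threshold $\widetilde{t}$ with probability bounded below, contributing $\approx c^2 - \alpha_{\widetilde{t}} \gtrsim C^2 \widetilde{t}^2$ in expectation (using $\alpha_{\widetilde{t}} \asymp \widetilde{t}^2$), summing to $E[Y_{\widetilde{t}}] \gtrsim (p-s) C^2 \widetilde{t}^2 \gg \widetilde{r}$. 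The delicate step is ruling out adversarial cancellation by the $\leq s$ support coordinates and producing a variance bound dominated by the squared mean; this is exactly what the extended Collier-type analysis in Proposition \ref{prop:supp_tsybakov} supplies, since $\theta - \bar{\theta}\mathbf{1}_{\supp(\theta)}$ is a sparse subvector of $\theta - \bar{\theta}\mathbf{1}_p$ of comparable norm. Finally, $s = p$ is degenerate: $\psi_2^2 = 1-\gamma+\gamma p$, only $\varphi^{\mathbf{1}_p}_{C^2/2}$ is applied, and the first branch of the previous paragraph applies verbatim.
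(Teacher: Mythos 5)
Your proposal follows essentially the same architecture as the paper: decompose the OR test, match each branch of the $\wedge$ in $\psi_2^2$ to a constituent test, use the transfer inequality $\|\theta - \bar\theta\mathbf{1}_p\|^2 \geq \tfrac{p-s}{p}\|\bar\theta\mathbf{1}_p\|^2$ (the paper's display just before Proposition \ref{prop:problemII_upperbound}) to activate the $\chi^2$ test when $\psi_2^2 \asymp (1-\gamma)p^{3/2}/(p-s)$, and delegate the $p-\sqrt{p}<s<p$ case to Proposition \ref{prop:supp_tsybakov}. The paper's Lemmas \ref{lemma:problemII_linear}, \ref{lemma:problemII_chisquare}, and \ref{lemma:problemII_tsybakov} are precisely the constituent analyses you sketch.

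However, your final sentence identifies the wrong sparse subvector. You write that Proposition \ref{prop:supp_tsybakov} applies ``since $\theta - \bar\theta\mathbf{1}_{\supp(\theta)}$ is a sparse subvector of $\theta - \bar\theta\mathbf{1}_p$ of comparable norm.'' The vector $\theta - \bar\theta\mathbf{1}_{\supp(\theta)}$ is supported on $\supp(\theta)$, which has size up to $s > p - \sqrt p$; that is far beyond the restriction $s < 4\sqrt p$ in Proposition \ref{prop:supp_tsybakov}, so that proposition simply does not apply with that subset. Moreover, the ``comparable norm'' claim is false: if $\theta$ is constant on its support of size $s$ then a direct computation gives $\|\theta - \bar\theta\mathbf{1}_{\supp(\theta)}\|^2 = \tfrac{p-s}{p}\,\|\theta - \bar\theta\mathbf{1}_p\|^2$, which is vanishingly small for $s$ close to $p$. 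The correct subvector —\ consistent with the reasoning in the earlier sentences of your paragraph —\ is $(\theta - \bar\theta\mathbf{1}_p)_T = -\bar\theta\mathbf{1}_T$ for any $T \subset \supp(\theta)^c$ with $|T| = p-s$: this is $(p-s)$-sparse with $p-s < \sqrt p$, and $\|{-}\bar\theta\mathbf{1}_T/\sqrt{1-\gamma}\|^2 = (p-s)\bar\theta^2/(1-\gamma) \geq C^2(p-s)\log(1+p/(p-s)^2)$, exactly matching the choice $\widetilde t$. That is what the paper's Lemma \ref{lemma:problemII_tsybakov} uses. With that correction your argument goes through.
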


    The three tests are sensitive to different regions of the parameter space. Firstly, the test \(\varphi_{r}^{\mathbf{1}_p}\) is sensitive to those \(\theta\) with \(||\bar{\theta}\mathbf{1}_p||^2 \gtrsim 1-\gamma+\gamma p\). Secondly, the test \(\varphi_{r}^{\chi^2}\) is sensitive to signals with \(||\bar{\theta}\mathbf{1}_p||^2 \gtrsim \frac{(1-\gamma)p^{3/2}}{p-s}\). Finally, the test \(\varphi_{\widetilde{t}, r}\) is sensitive to signals with \(||\bar{\theta}\mathbf{1}_p||^2 \gtrsim (1-\gamma) p \log\left(1 + \frac{p}{(p-s)^2}\right)\) when \(1 \leq p - s < \sqrt{p}\). 

    Counterintuitively, these last two tests make use of the transformed data \(\widetilde{X}\) despite the fact that its mean is orthogonal to \(\bar{\theta}\mathbf{1}_p\). Recall that the alternative hypothesis parameter space is \(\Theta_{\mathcal{II}}(p, s, \varepsilon_2) = \{\theta \in \R^p : ||\bar{\theta}\mathbf{1}_p|| \geq \varepsilon_2, ||\theta||_0 \leq s\}\) and so separation from the null hypothesis is given along the one-dimensional subspace \(\spn\{\mathbf{1}_p\}\). At first glance, a strategy using \(\widetilde{X}\) appears to be completely hopeless since there may possibly be choices of \(\theta\) in the alternative hypothesis such that \(||\bar{\theta}\mathbf{1}_p||^2\) can be made large while \(||\theta - \bar{\theta}\mathbf{1}_p||^2\) can be made arbitrarily small due to the orthogonality of \(\bar{\theta}\mathbf{1}_p\) and \(\theta - \bar{\theta}\mathbf{1}_p\). In other words, it may possibly be the case that the distribution of \(\widetilde{X}\) can be made to be arbitrarily close to a pure noise distribution despite the fact that the alternative hypothesis is well separated from the null hypothesis. 
    
    However, this line of reasoning neglects the sparsity of \(\theta\). Corollary \ref{corollary:orthog_approximation} in Section \ref{section:geometric_observations}, along with the trivial inequality \(\varepsilon_2^2 \leq ||\bar{\theta}\mathbf{1}_{p}||^2 \leq ||\theta||^2\) for \(\theta \in \Theta_{\mathcal{II}}(p, s, \varepsilon_2)\), implies that 
    \begin{equation}\label{bound:orthog_problemII}
        \left|\left|\frac{\theta - \bar{\theta}\mathbf{1}_p}{\sqrt{1-\gamma}}\right|\right|^2 \geq \frac{\varepsilon_2^2}{1-\gamma} \cdot \frac{p-s}{p}
    \end{equation}
    for all \(\theta \in \Theta_{\mathcal{II}}(p, s, \varepsilon_2)\). With this bound in hand, observe that \(\varphi_{r}^{\chi^2}\) is sensitive to signals with \(||\bar{\theta}\mathbf{1}_p||^2 \gtrsim \frac{(1-\gamma)p^{3/2}}{p-s}\) because (\ref{bound:orthog_problemII}) implies \(\left|\left|\frac{\theta - \bar{\theta}\mathbf{1}_p}{\sqrt{1-\gamma}} \right|\right|^2 \gtrsim \sqrt{p}\). 
    
    Turning our attention to \(\varphi_{t, r}\), it can be intuitively seen that the test is sensitive to signals with \(||\bar{\theta}\mathbf{1}_p||^2 \gtrsim (1-\gamma) p \log\left(1 + \frac{p}{(p-s)^2}\right)\) when \(1 \leq p-s < \sqrt{p}\). Let \(T \subset \supp(\theta)^c\) with \(|T| = p - s\) and note existence is guaranteed since \(|\supp(\theta)^c| \geq p-s\). Consider \(\left|\left| \frac{(\theta - \bar{\theta}\mathbf{1}_p)_{T}}{\sqrt{1-\gamma}} \right|\right|^2 = \frac{||\bar{\theta}\mathbf{1}_p||^2}{1-\gamma} \cdot \frac{p-s}{p} \gtrsim (p-s)\log\left(1 + \frac{p}{(p-s)^2}\right)\). Since \(1 \leq p-s < \sqrt{p}\), it ought to follow from the choice \(t = \widetilde{t} = \sqrt{2\log\left(1 + \frac{p}{(p-s)^2}\right)}\) and from an analogue of our Problem I analysis that Collier and coauthor's statistic applied to \(\widetilde{X}\) furnishes a sensitive test. The proof of Proposition \ref{prop:problemII_upperbound} confirms this intuition. 
    
    The next proposition establishes a matching lower bound (up to absolute constants) for \(\varepsilon_2^*\) when \(\frac{p}{2} \leq s \leq p\). 
    \begin{proposition}[Lower bound]\label{prop:problemII_lowerbound}
        Let \(\frac{p}{2} \leq s \leq p\) and \(\gamma \in [0, 1)\). If \(\eta \in (0, 1)\), then there exists a constant \(c_\eta > 0\) depending only on \(\eta\) such that \(\mathcal{R}_{\mathcal{II}}(c\psi_2) \geq 1-\eta\) for all \(c < c_\eta\). 
    \end{proposition}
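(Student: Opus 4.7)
The plan is to invoke Ingster's reduction: since $\mathcal{R}_{\mathcal{II}}(c\psi_2) \geq 1 - \frac{1}{2}\sqrt{\chi^2(P_\pi \| P_{0,\gamma})}$ for any prior $\pi$ supported on $\Theta_{\mathcal{II}}(p,s,c\psi_2)$, it suffices to exhibit a prior with chi-squared divergence at most $4\eta^2$. Writing $\psi_2^2 = A \wedge B$ with $B := 1-\gamma+\gamma p$ and $A$ the other (sparsity-dependent) piece of the displayed rate, I would prove the lower bounds at $\varepsilon^2 = cA$ and $\varepsilon^2 = cB$ separately and combine. The two key ingredients are the Sherman--Morrison decomposition
\begin{equation*}
\theta^\intercal \Sigma^{-1}\theta = \frac{\|\bar{\theta}\mathbf{1}_p\|^2}{1-\gamma+\gamma p} + \frac{\|\theta - \bar{\theta}\mathbf{1}_p\|^2}{1-\gamma}
\end{equation*}
and the Gaussian chi-squared identity $\chi^2(P_\pi\|P_{0,\gamma}) + 1 = E_{\theta_1,\theta_2 \sim \pi}[\exp(\theta_1^\intercal \Sigma^{-1}\theta_2)]$.

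For both bounds I propose a sign- and support-randomized prior: let $\theta = \epsilon a\,\mathbf{1}_T$, where $a = \varepsilon\sqrt{p}/s$, $T$ is uniform among size-$s$ subsets of $[p]$, and $\epsilon$ is an independent Rademacher sign. Every realization lies in $\Theta_{\mathcal{II}}(p,s,\varepsilon)$ since $\|\bar{\theta}\mathbf{1}_p\| = \varepsilon$ and $\|\theta\|_0 = s$. Averaging over signs converts the cross-product MGF into a $\cosh$:
\begin{equation*}
\chi^2(P_\pi\|P_{0,\gamma}) + 1 = E_{T_1,T_2}[\cosh(\lambda(K - \alpha))],
\end{equation*}
with $K = |T_1 \cap T_2|$ hypergeometric of mean $s^2/p$, $\lambda = \varepsilon^2 p/(s^2(1-\gamma))$, and $\alpha = \gamma s^2/(1-\gamma+\gamma p)$. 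The critical algebraic identity $\lambda(E[K]-\alpha) = \varepsilon^2/(1-\gamma+\gamma p)$ converts the shift into the $B$-factor, so via $\cosh(x+y)\leq 2\cosh(x)\cosh(y)$ and $\cosh(x) \leq e^{x^2/2}$ the problem reduces to showing $E[\exp(\lambda^2(K - E[K])^2/2)] = O(1)$ at both calibrations. At $\varepsilon^2 = cA$ this follows from $\lambda^2\Var(K) \asymp \lambda^2(p-s)^2/p \asymp c^2$ after a sub-Gaussian-in-variance step; at $\varepsilon^2 = cB$ the same holds in the regime where $B$ is the active minimum, and when $p-s$ is very small (say $p-s \lesssim (1-\gamma)/\gamma$) one can alternatively take the point-mass prior at $\theta^* = (\varepsilon\sqrt{p}/s)\mathbf{1}_S$ and directly verify $\theta^{*\intercal}\Sigma^{-1}\theta^* \lesssim 1$.

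The main technical obstacle is the MGF bound $E[\exp(\lambda^2(K - E[K])^2/2)] = O(1)$, since the calibration $\lambda(p-s) \asymp \sqrt{p}$ lies outside the naive sub-Gaussian regime. Serfling-style bounds are too loose (their effective width is the range $p-s$ rather than the true variance $(p-s)^2/p$); I would instead invoke Hoeffding's 1963 domination of the hypergeometric MGF by the binomial MGF and then perform a Bernstein-type expansion keyed to the variance. The log-factor sub-case $p-\sqrt{p} < s < p$ is handled by the same prior but is cleanest when expressed through the complement $T^c$, whose size $p-s < \sqrt{p}$ plays the role that the support size $s$ plays in the sparse lower bound of \cite{collierMinimaxEstimationLinear2017}; the factor $\log(1+p/(p-s)^2)$ then arises from the combinatorial entropy of the complementary support with the roles of $s$ and $p-s$ interchanged.
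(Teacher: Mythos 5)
Your architecture is genuinely different from the paper's, and it is sound up through the reduction to $E_{T_1,T_2}\bigl[\cosh\bigl(\lambda(K-\alpha)\bigr)\bigr]$. The sign-randomized prior places every realization in $\Theta_{\mathcal{II}}(p,s,\varepsilon)$, the Ingster--Suslina identity gives the $\cosh$ form after averaging over $\epsilon_1\epsilon_2$, and your identity $\lambda(E[K]-\alpha)=\varepsilon^2/(1-\gamma+\gamma p)$ is exactly right: it is the $\cosh$-side analogue of what the paper accomplishes by the invertible mean shift $X\mapsto X-\tfrac{c\sqrt{\kappa p}}{s}\mathbf{1}_p$ followed by the $d_{TV}$ triangle inequality. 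The genuine gap is precisely at the step you flag, and your proposed fix does not close it as written. The variance $\Var(K)\asymp s^2(p-s)^2/p^3$ is \emph{not} a uniform sub-Gaussian proxy for $K$; even after Hoeffding's binomial domination, a variance-keyed bound on $E[e^{u(K-E[K])}]$ is Bernstein-type and only holds for $|u|$ in a bounded range, while the Gaussian representation $e^{tx^2}=E_g[e^{\sqrt{2t}gx}]$ integrates over all $g$. Moreover, at the lower-bound calibration the supremum of $\tfrac{\lambda^2}{2}(K-E[K])^2$ over the support of $K$ is of order $\lambda^2 s^2(p-s)^2/p^2$, which grows linearly in $p$ when $s\asymp p/2$; so the bound $E[\exp(\tfrac{\lambda^2}{2}(K-E[K])^2)]=O(1)$ rests entirely on the tail of $K$ beating this and cannot be asserted from $\lambda^2\Var(K)\asymp c^2$ alone. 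Making it rigorous requires truncating $g$ at the Bernstein-validity threshold $|g|\lesssim 1/\lambda$ and separately estimating the Gaussian-tail region against the crude range bound on $K$, with the interacting constants worked out so that $c$ small wins. That is a page of delicate analysis, not a citable lemma.

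There is a cleaner finish inside your own framework, and it is essentially the paper's calculation in disguise. Skip $\cosh(x)\leq e^{x^2/2}$; write $\cosh(\lambda(K-\alpha))=\tfrac12\bigl(e^{\lambda(K-\alpha)}+e^{-\lambda(K-\alpha)}\bigr)$ and bound each one-sided MGF of $K$. After your own re-indexing through the complements $T_1^c,T_2^c$ of size $p-s$ (note $K-E[K]=|T_1^c\cap T_2^c|-E|T_1^c\cap T_2^c|$, the two differing by the constant $2s-p$), each one-sided term is exactly the hypergeometric MGF of Lemma~\ref{lemma:hypergeometric} at sampling size $p-s$, and the Taylor step $e^t-1\leq t+et^2/2$ then delivers the needed $1+O(\eta^2)$ with the $\varepsilon^2/(1-\gamma+\gamma p)$ factor cancelling, never touching an MGF of a square. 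The paper instead performs the mean shift on the data at the outset and splits $d_{TV}$ by the triangle inequality (one term is controlled by Lemma~\ref{lemma:dtv_null_bound} since $\|\nu^*\|^2\lesssim 1-\gamma+\gamma p$, the other by the $(p-s)$-sparse $\chi^2$ computation), which reaches the same hypergeometric bound without ever introducing the $\cosh$.
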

    The proof of the lower bound sheds light on features of the minimax separation rate for Problem II. For ease of notation, let \(\Sigma := (1-\gamma)I_p + \gamma \mathbf{1}_p\mathbf{1}_p^\intercal\). When \(\frac{p}{2} \leq s \leq p\), it turns out it is useful to consider a simple mean shift transformation. The problem of testing \(N(0, \Sigma)\) against \(N(\theta, \Sigma)\) for \(\theta \in \Theta_{\mathcal{II}}(p, s, \varepsilon)\) is clearly equivalent to testing \(N(\nu^*, \Sigma)\) against \(N(\theta - \nu^*, \Sigma)\) where \(\nu^* = \frac{c\psi_2\sqrt{p}}{s} \mathbf{1}_p\) with \(c\) a positive constant to be chosen later. Note that the leading eigenvalue of \(\Sigma\) is \(1-\gamma+\gamma p\) and is associated to the unit eigenvector \(p^{-1/2}\mathbf{1}_p\). Since \(s \asymp p\), we have \(\frac{\sqrt{p}}{s} \asymp p^{-1/2}\) and so \(||\nu^*||^2 \asymp \psi_2^2\). When \(\psi_2^2 \leq 1-\gamma+\gamma p\), it can be shown that \(d_{TV}(N(\nu^*, \Sigma), N(0, \Sigma))\) can be made arbitrarily small by choosing \(c\) suitably small but independently of \(p, s\), and \(\gamma\). Therefore, when \(\psi_2^2 \leq 1-\gamma+\gamma p\) the problem of testing \(N(\nu^*, \Sigma)\) against \(N(\theta - \nu^*, \Sigma)\) is statistically indistinguishable from the problem of testing \(N(0, \Sigma)\) against \(N(\theta - \nu^*, \Sigma)\). This latter problem will show why the terms involving \(p-s\) appear in the separation rate. Before we elaborate on this point, we pause to note that this mean shift transformation explains why \(\varepsilon_2^*\) involves taking a minimum with \(1-\gamma+\gamma p\). Furthermore the utility of \(\varphi_r^{\mathbf{1}_p}\) is clarified as it is sensitive precisely to those signals with \(||\bar{\theta}\mathbf{1}_p||^2 \gtrsim 1-\gamma+\gamma p\).

    Examining the problem of testing \(N(0, \Sigma)\) against \(N(\theta, \Sigma)\) for \(\theta \in \Theta_{\mathcal{II}}(p, s, c\psi_2)\), a lower bound for \(\varepsilon_2^*\) can be constructed by considering the prior distribution \(\pi\) supported on \(\Theta_{\mathcal{II}}(p, s, c\psi_2)\) in which a draw \(\theta \sim \pi\) is obtained by setting \(\theta = \frac{c\psi_2\sqrt{p}}{s} \mathbf{1}_S\) for a uniformly drawn size \(s\) subset \(S \subset [p]\). When \(\psi_2^2 \leq 1 - \gamma + \gamma p\), we can consider the statistically indistinguishable problem of testing \(N(0, \Sigma)\) against \(N(\theta - \nu^*, \Sigma)\) for \(\theta \sim \pi\). It is immediately clear that \(\theta - \nu^* = -\frac{c\psi_2 \sqrt{p}}{s}\mathbf{1}_T\) where \(T\) is a uniformly drawn size \(p-s\) subset \(T \subset [p]\). The original testing problem has transformed into a problem of detecting a \(p-s\) sparse signal, and so the \(p-s\) terms appear in the rate. Thus the juncture in the rate at \(s = p-\sqrt{p}\) is explained as it exactly corresponds to \(p-s = \sqrt{p}\). 

    \subsection{Synthesis}\label{section:synthesis}
    With characterizations of the minimax separation rates of Problem I and Problem II in hand, Lemma \ref{lemma:problem_decomposition} enables a quick deduction about \(\varepsilon^*\). Specifically, combining Proposition \ref{prop:perfect_correlation}, Theorem \ref{thm:ProblemI}, Theorem \ref{thm:ProblemII}, Lemma \ref{lemma:problemI_equiv}, and Lemma \ref{lemma:problem_decomposition} immediately yields a full characterization of \(\varepsilon^*\). Note that while Theorem \ref{thm:ProblemI} does not explicitly cover the case \(p = 1\), it is clear from (\ref{space:Theta1}) that \(\varepsilon_1^*(1, 1, \gamma) = 0\) and so Lemma \ref{lemma:problem_decomposition} implies \(\varepsilon^*(1, 1, 0) \asymp \varepsilon_2^*(1, 1, 0)\). 
    
    \begin{theorem}\label{thm:minimax_rate}
        If \(1 \leq s \leq p\) and \(\gamma \in [0, 1]\), then 
        \begin{equation*}
            \varepsilon^*(p, s, \gamma)^2 \asymp 
            \begin{cases}
                (1-\gamma)s\log\left(1 + \frac{p}{s^2}\right) &\text{if } s < \sqrt{p}, \\
                (1-\gamma)\sqrt{p} + \frac{(1-\gamma)p^{3/2}}{p-s} \wedge (1-\gamma+\gamma p) &\text{if } \sqrt{p} \leq s \leq p-\sqrt{p}, \\
                (1-\gamma)\sqrt{p} + (1-\gamma)p \log\left(1 + \frac{p}{(p-s)^2}\right) \wedge (1-\gamma+\gamma p) &\text{if } p-\sqrt{p} < s \leq p. 
            \end{cases}
        \end{equation*}
    \end{theorem}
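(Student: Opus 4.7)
The proof will be a straightforward synthesis of the ingredients assembled in the preceding sections, so my plan is essentially bookkeeping and case-checking rather than new analysis.

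First I would dispose of the boundary case \(\gamma = 1\) by direct appeal to Proposition \ref{prop:perfect_correlation}. For \(s < p\) the formula in Theorem \ref{thm:minimax_rate} evaluates to \(0\) (every term has a \((1-\gamma)\) factor and the minimum with \(1-\gamma+\gamma p = p\) picks up the \(0\)), matching the proposition. For \(s = p\) with \(\gamma = 1\), the convention that \(p/(p-s)^2 = +\infty\) makes the \(\log\)-term infinite, so the minimum with \(1-\gamma + \gamma p = p\) yields \(p\), again matching the proposition.

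For \(\gamma \in [0,1)\) I would split on sparsity. When \(1 \leq s \leq p/2\), Lemma \ref{lemma:problemI_equiv} gives \((\varepsilon^*)^2 \asymp (\varepsilon_1^*)^2\), and Theorem \ref{thm:ProblemI} identifies the right-hand side with \(\psi_1^2\) from (\ref{rate:problemI}). I then need to check that \(\psi_1^2\) coincides up to constants with the stated formula in the ranges \(s < \sqrt{p}\) and \(\sqrt{p} \leq s \leq p/2\). The first range is immediate. For the second, note \(\psi_1^2 = (1-\gamma)\sqrt{p}\) while the stated rate is \((1-\gamma)\sqrt{p} + \frac{(1-\gamma)p^{3/2}}{p-s}\wedge (1-\gamma+\gamma p)\); since \(s \leq p/2\) forces \(p - s \geq p/2\), the second summand is at most \(2(1-\gamma)\sqrt{p}\) and the whole expression is \(\asymp (1-\gamma)\sqrt{p}\).

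When \(p/2 < s \leq p\), I would apply Lemma \ref{lemma:problem_decomposition} to get \((\varepsilon^*)^2 \asymp (\varepsilon_1^*)^2 + (\varepsilon_2^*)^2\), then invoke Theorem \ref{thm:ProblemI} and Theorem \ref{thm:ProblemII} to replace the right-hand side with \(\psi_1^2 + \psi_2^2\). In this regime \(s \geq \sqrt{p}\) (for \(p \geq 4\)), so \(\psi_1^2 = (1-\gamma)\sqrt{p}\), and plugging in \(\psi_2^2\) from (\ref{rate:problemII}) reproduces the stated formula verbatim in the two sub-ranges \(p/2 < s \leq p - \sqrt{p}\) and \(p - \sqrt{p} < s \leq p\). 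The very small \(p\) cases (\(p \in \{1,2,3\}\), where the inequality chain \(\sqrt{p} \leq s \leq p/2\) can be vacuous or where \(\Theta_\mathcal{I}\) may be empty) are handled by hand: the remark just after Theorem \ref{thm:ProblemI} notes \(\varepsilon_1^*(1,1,\gamma) = 0\) so Lemma \ref{lemma:problem_decomposition} gives \(\varepsilon^*(1,1,\gamma) \asymp \varepsilon_2^*(1,1,\gamma)\), and \(p = 2,3\) fall under the already handled \(s \leq p/2\) or \(s > p/2\) branches with no additional subtlety.

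There is no real obstacle; the only thing to watch is that the \((1-\gamma)\sqrt{p}\) contribution coming from \(\psi_1^2\) is correctly absorbed into (or dominates, as appropriate) the \(\psi_2^2\) term in the middle and right-hand branches of the final formula, and that the minima with \(1-\gamma+\gamma p\) in \(\psi_2^2\) carry through the addition \((\varepsilon_1^*)^2 + (\varepsilon_2^*)^2\) without distortion — both are elementary since \(\psi_1^2 = (1-\gamma)\sqrt{p} \leq 1 - \gamma + \gamma p\).
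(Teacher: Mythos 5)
Your proposal is correct and follows exactly the route the paper takes in Section~\ref{section:synthesis}: dispose of \(\gamma = 1\) via Proposition~\ref{prop:perfect_correlation}, then for \(\gamma \in [0,1)\) combine Lemmas~\ref{lemma:problem_decomposition} and \ref{lemma:problemI_equiv} with Theorems~\ref{thm:ProblemI} and \ref{thm:ProblemII} and check that \(\psi_1^2 + \psi_2^2\) (or \(\psi_1^2\) alone when \(s \leq p/2\)) reproduces the stated piecewise formula, including the observation that \(\frac{(1-\gamma)p^{3/2}}{p-s} \wedge (1-\gamma+\gamma p) \lesssim (1-\gamma)\sqrt{p}\) when \(p-s \gtrsim p\). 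The extra bookkeeping you supply (the \(\sqrt{p}\leq s\leq p/2\) reconciliation and the small-\(p\) edge cases) is the right fill-in for what the paper compresses into a single sentence.
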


    Theorem \ref{thm:minimax_rate} generalizes results in the literature and gives a full description of how ambient dimension, sparsity, and correlation level interact and impact the minimax separation rate for the signal detection problem (\ref{problem:test_equicorrelated_1})-(\ref{problem:test_equicorrelated_2}). The following remarks contextualize and note some novel phenomenon revealed by Theorem \ref{thm:minimax_rate}. 
    
    \begin{remark}[Recovering known results]
        The special cases previously known in the literature are subsumed by Theorem \ref{thm:minimax_rate}. For example, substituting \(\gamma = 0\) to obtain \(\varepsilon^*(p, s, 0)\) immediately recovers the minimax separation rate established by Collier et al. \cite{collierMinimaxEstimationLinear2017} for all \(1 \leq s \leq p\). We also recover \(\varepsilon^*(p, p, \gamma)^2 \asymp ||(1-\gamma)I_p + \gamma \mathbf{1}_p\mathbf{1}_p^\intercal||_F\) for \(\gamma \in [0, 1)\). Furthermore, \(\varepsilon^*(p,s,\gamma)\) matches the rate established by Liu et al. \cite{liuMinimaxRatesSparse2021} for \(s \leq p^{1/5}\).
    \end{remark}
    
    \begin{remark}[Discontinuity]\label{remark:discontinuity}
        Suppose \(1-\gamma \lesssim \frac{1}{\log(ep)}\) and note 
        \begin{equation}\label{regime:moderate_correlation}
            \varepsilon^*(p, s, \gamma)^2 \asymp 
            \begin{cases}
                (1-\gamma)s\log\left(1 + \frac{p}{s^2}\right) &\text{if } s < \sqrt{p}, \\
                \frac{(1-\gamma)p^{3/2}}{p-s} &\text{if } \sqrt{p} \leq s \leq p-\sqrt{p}, \\
                (1-\gamma) p \log\left(1 + \frac{p}{(p-s)^2}\right) &\text{if } p-\sqrt{p} < s < p, \\
                p &\text{if } s = p.
            \end{cases}
        \end{equation}
        Note \(\varepsilon^*(p, p-1, \gamma)^2 \asymp (1-\gamma) p \log\left(1 + p \right) \lesssim p \asymp \varepsilon^*(p, p, \gamma)^2\) when \(1-\gamma \lesssim \frac{1}{\log(ep)}\). A discontinuity in the minimax separation rate emerges at \(s = p\) when \(1 - \gamma = o\left(\frac{1}{\log(ep)}\right)\) as then \(\varepsilon^*(p, p-1, \gamma)^2 = o(\varepsilon^*(p, p, \gamma)^2)\). Notice that the discontinuity disappears once \(1-\gamma \gtrsim \frac{1}{\log(ep)}\). This discontinuity in the correlation regime \(1-\gamma = o\left(\frac{1}{\log(ep)}\right)\) appears to be an extension of the discontinuity phenomenon witnessed in Proposition \ref{prop:perfect_correlation} (in which the extreme case \(\gamma = 1\) was in force). Theorem \ref{thm:minimax_rate} thus reveals not only that detection of completely dense signals (\(s = p\)) can be fundamentally different from detection of non-completely dense signals (\(s < p\)) by virtue of this discontinuity, but that this fundamental difference exists if and only if the correlation is suitably strong.
    \end{remark}

    \begin{remark}[The blessing of strong correlation]\label{remark:blessing}
        Theorem \ref{thm:minimax_rate} enables a comparison between the correlated and independent settings. We determine how strong the correlation must be such that it is a blessing for signal detection. A direct and straight-forward comparison yields the following result. For \(1 \leq s < p\), set 
        \begin{equation*}
            1-\gamma^* := 
            \begin{cases}
                1 &\text{if } s < \sqrt{p}, \\
                \frac{p-s}{p} &\text{if } \sqrt{p} \leq s \leq p - \sqrt{p}, \\
                \frac{1}{\sqrt{p}\log\left(1 + \frac{p}{(p-s)^2}\right)} &\text{if } p-\sqrt{p} < s < p.
            \end{cases}
        \end{equation*}
        If \(\gamma \in [0, 1]\) and \(1 \leq s < p\), then \(1-\gamma = o(1-\gamma^*)\) if and only if \(\varepsilon^*(p, s, \gamma) = o(\varepsilon^*(p, s, 0))\). Notably, the threshold correlation level exhibits phase transitions at the \(\sqrt{p}\) and \(p-\sqrt{p}\) sparsity levels. 
    \end{remark}

    \begin{remark}[The curse of moderate correlation]\label{remark:curse}
        We can also use Theorem 3 to determine the correlation levels which hinder signal detection. Concretely, we characterize for which correlation levels we have \(\varepsilon^*(p, s, \gamma) = \omega(\varepsilon^*(p, s, 0))\). For \(\sqrt{p} \leq s \leq p\), set
        \begin{equation*}
            1-\gamma_* := 
            \begin{cases}
                \frac{p-s}{p} &\text{if } \sqrt{p} \leq s \leq p-\sqrt{p}, \\
                \frac{1}{\sqrt{p}\log\left(1 + \frac{p}{(p-s)^2}\right)} &\text{if } p-\sqrt{p} < s < p, \\
                0 &\text{if } s = p.
            \end{cases}
        \end{equation*}
        Suppose \(\sqrt{p} \leq s \leq p\) and \(\gamma = \omega\left(\frac{1}{\sqrt{p}}\right)\). Then \(1-\gamma = \omega(1-\gamma_*)\) if and only if \(\varepsilon^*(p, s, \gamma) = \omega(\varepsilon^*(p, s, 0))\).  Note that correlation is never a curse in the sparsity regime \(s < \sqrt{p}\), rather it can only be a blessing or irrelevant. 
    \end{remark}

    \begin{remark}[The irrelevance of weak correlation]\label{remark:irrelevancy}
        Remark \ref{remark:curse} requires \(\gamma =\omega\left(\frac{1}{\sqrt{p}}\right)\) in the course of showing that moderate correlation is a curse. When the correlation is weak, meaning \(\gamma \lesssim \frac{1}{\sqrt{p}}\) and \(1-\gamma \asymp 1\), it is immediately seen that \(\varepsilon^*(p, s, \gamma) \asymp \varepsilon^*(p, s, 0)\) for all \(1 \leq s \leq p\).
    \end{remark}

    \begin{remark}[Implications for study design]\label{remark:study_design_comparison}
        In many scientific fields, researchers regularly design and conduct experimental studies. Often, the choice is between a within-subjects (i.e. repeated measures) design or a between-subjects (i.e. between groups) design. If the purpose of the study is to test a global null hypothesis against sparse alternatives, Theorem \ref{thm:minimax_rate} offers some guidance on how to choose.

        To set the stage, the within-subjects study design is described first. Suppose there are \(p\) treatments and researchers are interested in testing the global null hypothesis (all \(p\) treatment effects are zero) against a sparse alternative. In a within-subjects study design, researchers recruit \(n\) individuals and apply all \(p\) treatments to each individual. Since each individual receives each treatment, it is expected that responses to treatments are correlated within each individual. Concretely, letting \(Y_{ij}\) denote the response of individual \(i\) to treatment \(j\), a random effect model is commonly used to model the response,
        \begin{equation*}
            Y_{ij} = \tau_j + \sqrt{\gamma} \omega_i + \sqrt{1-\gamma} \zeta_{ij}
        \end{equation*}
        where \(\tau_j\) denotes the treatment effect of treatment \(j\) and \(\gamma \in [0, 1]\) denotes the common correlation of responses across treatments within individuals. For simplicity, assume \(\gamma\) is known. Further, \(\omega_i\) denotes the individual level random effect, and \(\zeta_{ij}\) denotes the additive noise for the response of individual \(i\) to treatment \(j\). It is assumed \(\{\omega_i\}_{i}\) and \(\{\zeta_{ij}\}_{ij}\) are all independent standard Gaussian random variables. Note that responses for different individuals are assumed to be independent. Letting \(\tau = (\tau_1,...,\tau_p) \in \R^p\), the testing problem of interest is testing \(H_0 : \tau = 0\) against \(H_1: \tau \in \Theta(p, s, \varepsilon)\) where \(1 \leq s \leq p\) and \(\varepsilon > 0\). Appealing to sufficiency, one can average across individuals to obtain (with an abuse of notation) \(Y_j = \tau_j + \frac{\sqrt{\gamma}}{n} \bar{\omega} + \frac{\sqrt{1-\gamma}}{n} \zeta_j\) yielding, in vector form \(Y := (Y_1,...,Y_p)\), the observational model
        \begin{equation}\label{model:within_subjects}
            \sqrt{n} Y \sim N\left(\sqrt{n} \tau, (1-\gamma)I_p + \gamma \mathbf{1}_p\mathbf{1}_p^\intercal \right).
        \end{equation}

        In a between-subjects study design, researchers recruit \(n\) individuals for each of \(p\) treatments, i.e. a total of \(np\) individuals are enrolled. Each individual is assigned to only one of the \(p\) treatments. A natural and common assumption is independence between individuals, and so the observation model is given by 
        \begin{equation*}
            Y_{ij} = \tau_j + \zeta_{ij}. 
        \end{equation*}
        The testing problem of interest is as before. An appeal to sufficiency followed by averaging over individuals in each treatment group yields, in vector form with \(Y := (Y_1,...,Y_p)\), the observational model 
        \begin{equation}\label{model:between_subjects}
            \sqrt{n} Y \sim N\left(\sqrt{n} \tau, I_p\right). 
        \end{equation}

        In essence, researchers are choosing between observation models (\ref{model:within_subjects}) and (\ref{model:between_subjects}) when deciding between a within-subjects study design and a between-subjects study design. If it is known that \(\tau\) is \(s\)-sparse with \(s < \sqrt{p}\), then Remark \ref{remark:blessing} indicates researchers ought to choose the within-subject study design. Explicitly, this is because \(\varepsilon^*(p,s,\gamma)^2 \lesssim \varepsilon^*(p, s, 0)^2\) holds for all \(s < \sqrt{p}\) and \(\gamma \in [0, 1]\). On the other hand, if \(s > \sqrt{p}\), then the correlation must be sufficiently strong in order to recommend the within-subject study design on the basis of minimax separation rates (see Remark \ref{remark:blessing}). In some settings, researchers may be able to select the level of correlation \(\gamma\); Remark \ref{remark:blessing} clearly prescribes how strong \(\gamma\) ought to be and so researchers should always choose to implement a within-subject design. Of course, Theorem \ref{thm:minimax_rate} is relevant only when the goal is testing the global null hypothesis. For multiple testing related goals, Theorem \ref{thm:minimax_rate} has nothing to say. Investigating how \(\gamma\) affects the fundamental limits of multiple testing tasks is a rich direction for further work \cite{fromontFamilyWiseSeparationRates2016}.
    \end{remark}
    
    As seen in the proof of Lemma \ref{lemma:problem_decomposition}, a rate optimal testing procedure is given by taking maximum of the two rate optimal tests in Propositions \ref{prop:problemI_upperbound} and \ref{prop:problemII_upperbound}. At the risk of redundancy but in the interest of clarity, we explicitly state this procedure's optimality in the following proposition. We only explicitly state the result for \(\gamma \in [0, 1)\) as the case \(\gamma = 1\) does not require a synthesis of Problem I and Problem II (see Section \ref{section:perfect_correlation}). In preparation for the statement of the result, set \(\psi^2\) to be the right hand side of Theorem \ref{thm:minimax_rate}.

    \begin{proposition}[Rate optimal test]\label{prop:full_test}
        Let \(1 \leq s \leq p\) and \(\gamma \in [0, 1)\). If \(\eta \in (0, 1)\), then there exists a constant \(C_\eta > 0\) depending only on \(\eta\) such that for every \(C > C_\eta\) the testing procedure 
        \begin{equation}
            \varphi^* := \varphi_{I}^* \vee \varphi_{II}^* \label{test:full_test}
        \end{equation}
        satisfies 
        \begin{equation*}
            P_{0, \gamma}\{\varphi^* = 1\} + \sup_{\theta \in \Theta(p, s, C\psi)} P_{\theta, \gamma}\left\{ \varphi^* = 0 \right\} \leq \eta
        \end{equation*}
        for all \(C > C_\eta\). Here, \(\varphi_{I}^*\) and \(\varphi_{II}^*\) are the tests given by (\ref{test:problemI}) and (\ref{test:problemII}) respectively.
    \end{proposition}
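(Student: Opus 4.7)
The plan is to bound the Type I and Type II errors of $\varphi^* := \varphi_{I}^* \vee \varphi_{II}^*$ separately, routing each through Propositions \ref{prop:problemI_upperbound} and \ref{prop:problemII_upperbound}. The Type I error is handled immediately by the union bound $P_{0,\gamma}\{\varphi^* = 1\} \leq P_{0,\gamma}\{\varphi_{I}^* = 1\} + P_{0,\gamma}\{\varphi_{II}^* = 1\}$; tuning the constant $C$ inside both constituent tests sufficiently large (applying each proposition with error tolerance $\eta/4$) forces each summand to be at most $\eta/4$.

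For the Type II error, I would use the orthogonal decomposition $||\theta||^2 = ||\theta - \bar{\theta}\mathbf{1}_p||^2 + ||\bar{\theta}\mathbf{1}_p||^2$. Combining Lemma \ref{lemma:problem_decomposition} with Theorems \ref{thm:ProblemI} and \ref{thm:ProblemII} yields $\psi^2 \asymp \psi_1^2 + \psi_2^2$, so there is an absolute constant $c_0 > 0$ such that any $\theta \in \Theta(p, s, C\psi)$ satisfies $||\theta - \bar{\theta}\mathbf{1}_p||^2 + ||\bar{\theta}\mathbf{1}_p||^2 \geq c_0 C^2 (\psi_1^2 + \psi_2^2)$. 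A pigeonhole then forces at least one of $||\theta - \bar{\theta}\mathbf{1}_p||^2 \geq (c_0 C^2/2)\psi_1^2$ or $||\bar{\theta}\mathbf{1}_p||^2 \geq (c_0 C^2/2)\psi_2^2$ to hold, placing $\theta$ into $\Theta_{\mathcal{I}}(p, s, C'\psi_1)$ or $\Theta_{\mathcal{II}}(p, s, C'\psi_2)$ respectively, with $C' := C\sqrt{c_0/2} \asymp C$. Because $\{\varphi^* = 0\} = \{\varphi_{I}^* = 0\} \cap \{\varphi_{II}^* = 0\}$, I then bound $P_{\theta,\gamma}\{\varphi^* = 0\}$ by the corresponding individual acceptance probability, which is at most $\eta/2$ by whichever of Propositions \ref{prop:problemI_upperbound} or \ref{prop:problemII_upperbound} applies, provided $C$ is chosen large enough that $C'$ exceeds the tuning constants demanded by those propositions at error tolerance $\eta/2$.

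The only subtlety is that Proposition \ref{prop:problemII_upperbound} applies only when $s \geq \frac{p}{2}$, so $\varphi_{II}^*$ is undefined for $s < \frac{p}{2}$. In that regime, Lemma \ref{lemma:problemI_equiv} gives $\psi \asymp \psi_1$; further, the sparsity constraint $||\theta||_0 \leq s \leq \frac{p}{2}$ combined with Cauchy--Schwarz yields $||\bar{\theta}\mathbf{1}_p||^2 \leq (s/p)||\theta||^2 \leq ||\theta||^2/2$, so $||\theta - \bar{\theta}\mathbf{1}_p||^2 \geq ||\theta||^2/2$ and $\theta$ automatically lies in $\Theta_{\mathcal{I}}(p, s, C'\psi_1)$. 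The argument then collapses to a direct appeal to Proposition \ref{prop:problemI_upperbound} (with $\varphi_{II}^*$ treated as the identically-zero test). There is no deep obstacle here: this is a mechanical synthesis of prior results by way of orthogonal decomposition and pigeonhole---essentially a rephrasing of Lemma \ref{lemma:problem_decomposition} at the level of tests---and the only care needed is bookkeeping absolute constants so that the single tuning parameter $C$ of the combined procedure dominates those required by both constituent propositions.
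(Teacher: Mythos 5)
Your proposal is correct and takes essentially the same route as the paper: orthogonal decomposition of $\Theta(p,s,C\psi)$ into $\Theta_{\mathcal{I}} \cup \Theta_{\mathcal{II}}$, a union bound on Type I error, and a term-by-term appeal to Propositions \ref{prop:problemI_upperbound} and \ref{prop:problemII_upperbound} at a reduced tolerance. The paper streamlines the constant bookkeeping by using $\psi^2 \geq \psi_1^2 \vee \psi_2^2$ directly (rather than $\psi^2 \asymp \psi_1^2 + \psi_2^2$ plus a pigeonhole), giving explicit $C_\eta = \sqrt{2}\bigl(C_{\eta/2,1}\vee C_{\eta/2,2}\bigr)$ with no hidden absolute constant $c_0$; you also handle the $s < p/2$ case explicitly (where $\varphi_{II}^*$ is undefined and one falls back on Lemma \ref{lemma:problemI_equiv}), a point the paper's proof silently elides.
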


    \section{Multiple random effects}\label{section:multiple_random_effects}
    In this section, we study sparse signal detection under group structured correlation by considering a model with multiple random effects. Suppose we have \(p\) observations with each observation belonging exclusively to one of \(R\) equally-sized groups. For ease, we assume \(R\) divides \(p\) and so each group is of size \(\frac{p}{R}\). Let \(B_1,...,B_R \subset [p]\) denote the groups and note that \(B_1,...,B_R\) are mutually disjoint. Let \(B : [p] \to [R]\) denote the group membership function, that is, \(B(i)\) denotes the group label of individual \(i\). It is assumed the group membership function is known, as is frequently the case when employing mixed models in applications. Letting \(\gamma \in [0, 1]\) denote the correlation level, consider the following Gaussian sequence mixed model 
    \begin{equation}
        X_i = \theta_i + \sqrt{\gamma} W_{B(i)} + \sqrt{1-\gamma}Z_i \label{model:multiple_random_effects_additive}
    \end{equation}
    where \(\theta = (\theta_1,...,\theta_p) \in \R^p\) denotes the fixed effects and \(W_1,...,W_R,Z_1,...,Z_p\) are independent and identically distributed standard Gaussian random variables. Note that \(W_1,...,W_R\) are the random effects, and their presence induces correlation only between those individuals in the same group. Explicitly, 
    \begin{equation*}
        \Cov(X_i, X_j) = 
        \begin{cases}
            1 & \text{if } i = j, \\
            \gamma &\text{if } i \neq j \text{ and } B(i) = B(j), \\
            0 &\text{otherwise.}
        \end{cases}
    \end{equation*}
    In vector form, the marginal distribution of \(X := (X_1,...,X_p)\) is given by 
    \begin{equation}\label{model:multiple_random_effects}
        X \sim N\left(\theta, (1-\gamma)I_p + \gamma \sum_{k=1}^{R} \mathbf{1}_{B_k}\mathbf{1}_{B_k}^\intercal\right).
    \end{equation}
    Here, \(\mathbf{1}_{B_k} \in \R^{p}\) denotes the vector with those coordinates in \(B_k\) equal to one and the remaining coordinates equal to zero. With this model in hand, a sparse signal detection problem can be formulated with the parameter space (\ref{parameter:alternative}). Concretely, given the observation \(X\) following the model (\ref{model:multiple_random_effects}), the sparse signal detection problem of interest is 
    \begin{align}
        H_0 &: \theta = 0, \label{problem:multiple_random_effects_1}\\
        H_1 &: \theta \in \Theta(p, s, \varepsilon) \label{problem:multiple_random_effects_2}
    \end{align}
    where \(1 \leq s \leq p\) and \(\varepsilon > 0\). A minimax testing risk for this testing problem can be defined analogously to (\ref{def:testing_risk}). At the risk of notational abuse but in the interest in brevity, in this section we overload notation and refer to the minimax testing risk of problem (\ref{problem:multiple_random_effects_1})-(\ref{problem:multiple_random_effects_2}) also as \(\mathcal{R}(\varepsilon)\). We can define a minimax separation rate for the testing problem (\ref{problem:multiple_random_effects_1})-(\ref{problem:multiple_random_effects_2}) similar to Definition \ref{def:separation_rate}. However, now the separation rate will exhibit a dependence on the number of groups \(R\). 

    \begin{definition}
        We say \(\varepsilon^* = \varepsilon^*(p, s, \gamma, R)\) is the minimax separation rate for the hypothesis testing problem (\ref{problem:multiple_random_effects_1})-(\ref{problem:multiple_random_effects_2}) with parameters \((p, s, \gamma, R)\) if 
        \begin{enumerate}[label=(\roman*)]
            \item for every \(\eta \in (0, 1)\) there exists a constant \(C_\eta > 0\) depending only on \(\eta\) such that \(C > C_\eta\) implies \(\mathcal{R}(C\varepsilon^*) \leq \eta\),

            \item for every \(\eta \in (0, 1)\) there exists a constant \(c_\eta > 0\) depending only on \(\eta\) such that \(0 < c < c_\eta\) implies \(\mathcal{R}(c\varepsilon^*) \geq 1-\eta\).
        \end{enumerate}
    \end{definition}
    At first, it may seem that the minimax separation rate should depend on the whole collection of groups \(B_1,...,B_R\) rather than just on \(R\). However, this is not the case as one can find a suitable permutation matrix \(\Pi\) such that the covariance matrix of \(\Pi X\) is block diagonal with blocks ordered corresponding to \(B_1,...,B_R\). Note \(\Pi \theta\) satisfies \(||\Pi\theta|| = ||\theta||\) and \(||\Pi\theta||_0 = ||\theta||_0\) since \(\Pi\) simply permutes coordinates. Consequently, it is equivalent to test \(H_0 : \Pi \theta = 0\) against \(H_1 : \Pi \theta \in \Theta(p, s, \varepsilon)\). Thus, the choice of groups is immaterial. 

    The main result of this section is a characterization of the minimax separation rate. For clarity, we give the rate for each sparsity regime in a separate statement. The following results focus on the correlation regime \(\gamma \in [0, 1)\). The perfect correlation case \((\gamma = 1)\) is addressed in Section \ref{section:R_perfect_correlation}.

    \begin{theorem}\label{thm:R_sparse}
        If \(R \in [p]\) divides \(p\), \(1 \leq s \leq \frac{p}{4R}\), and \(\gamma \in [0, 1)\), then 
        \begin{equation*}
            \varepsilon^*(p, s, \gamma, R)^2 \asymp \psi_1^2 
        \end{equation*}
        where \(\psi_1^2\) is given by (\ref{rate:problemI}).
    \end{theorem}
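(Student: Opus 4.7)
The plan is to parallel the analysis of Problem I from Section \ref{section:ProblemI}, adapted to the block-diagonal covariance matrix \(\Sigma = (1-\gamma)I_p + \gamma \sum_{k=1}^R \mathbf{1}_{B_k}\mathbf{1}_{B_k}^\intercal\). The sparsity constraint \(s \leq p/(4R)\) will ensure the per-group sparsity \(\|\theta_{B_k}\|_0\) is at most \((p/R)/4\), which is precisely what we need so that each group's decorrelation behaves as in the equicorrelated case.

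For the upper bound, I would define the within-group decorrelation: with independent \(\xi_1,\ldots,\xi_R \sim N(0,1)\) drawn independently of \(X\), set
\[
\widetilde{X}_{B_k} := \frac{1}{\sqrt{1-\gamma}}\Bigl(I_{p/R} - \tfrac{R}{p}\mathbf{1}_{B_k}\mathbf{1}_{B_k}^\intercal\Bigr) X_{B_k} + \frac{\xi_k}{\sqrt{p/R}}\mathbf{1}_{B_k}
\]
for each \(k\), yielding \(\widetilde{X} \sim N(\widetilde{\theta}, I_p)\) with \(\widetilde{\theta}_i = (\theta_i - \bar{\theta}_{B(i)})/\sqrt{1-\gamma}\) where \(\bar{\theta}_{B_k} := \frac{R}{p}\sum_{i \in B_k}\theta_i\). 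Applying Corollary \ref{corollary:supp_approximation} group-wise, together with the per-group sparsity bound \(\|\theta_{B_k}\|_0 \leq s \leq (p/R)/4\), shows that the sub-vector \(\widetilde{\theta}_{\supp(\theta)}\) is \(s\)-sparse and has squared norm \(\gtrsim \varepsilon^2/(1-\gamma)\) whenever \(\theta \in \Theta(p,s,\varepsilon)\). I would then run Proposition \ref{prop:supp_tsybakov} with \(t^* = \sqrt{2\log(1+p/s^2)}\) on \(\widetilde{X}\) for the sparse regime \(s < \sqrt{p}\), and the chi-square test \(\varphi^{\chi^2}_{C^2/2}\) on \(\widetilde{X}\) for the dense regime \(s \geq \sqrt{p}\), obtaining \(\varepsilon^*(p,s,\gamma,R)^2 \lesssim \psi_1^2\) in both regimes.

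For the lower bound, the key observation is that the subspace \(V := \{\theta \in \R^p : \mathbf{1}_{B_k}^\intercal \theta = 0 \text{ for all } k\}\) is invariant under \(\Sigma\), and \(\Sigma\) acts on \(V\) as \((1-\gamma)I\). Consequently, any prior \(\pi\) supported on \(V\) satisfies \(\chi^2(P_\pi \| P_{0,\gamma}) + 1 = E_{\theta,\theta' \sim \pi}\exp(\langle \theta,\theta'\rangle/(1-\gamma))\), i.e., the chi-square reduces to that of the independent Collier-type problem in \(N(\theta,(1-\gamma)I_p)\). I would construct \(\pi\) via a pair construction: partition each \(B_k\) into \(p/(2R)\) disjoint pairs of coordinates (yielding \(p/2\) pairs total), draw \(\lfloor s/2\rfloor\) pairs uniformly at random, and on each selected pair assign values \((\mu\epsilon, -\mu\epsilon)\) with \(\epsilon\sim\Uniform\{\pm 1\}\). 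The resulting \(\theta\) lies in \(V\), is \(s\)-sparse, and has \(\|\theta\|^2 = 2\lfloor s/2\rfloor\mu^2\). The standard chi-square bookkeeping gives \(\chi^2 + 1 \leq E[\cosh^{|\Pi\cap\Pi'|}(2\mu^2/(1-\gamma))]\) with \(|\Pi\cap\Pi'| \sim \Hyp(p/2, s/2, s/2)\), and running the usual argument for the Collier lower bound (Proposition \ref{prop:problemI_lowerbound}) with effective ambient dimension \(p/2\), sparsity \(s/2\), and signal strength \(\mu\sqrt{2}/\sqrt{1-\gamma}\) yields \(\chi^2 \lesssim 1\) whenever \(\varepsilon^2 \lesssim (1-\gamma)s\log(1+p/s^2)\), which is \(\asymp \psi_1^2\) when \(s < \sqrt{p}\). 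For \(s \geq \sqrt{p}\), the constraint \(s \leq p/(4R)\) forces \(R \leq \sqrt{p}/4\), so the pair construction at sparsity \(\lceil\sqrt{p}\rceil\) fits inside the groups; the monotonicity \(\Theta(p,s,\varepsilon) \supset \Theta(p,\lceil\sqrt{p}\rceil,\varepsilon)\) then delivers the lower bound \((1-\gamma)\sqrt{p} \asymp \psi_1^2\).

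The main obstacle is the lower bound: checking that the pair-structured prior on \(V\) still produces the Collier rate despite halving both the effective dimension and sparsity, and ensuring the prior's support fits inside \(\Theta(p,s,\varepsilon)\). Both issues hinge on the constraint \(s \leq p/(4R)\), which plays a double role: it enables the group-wise reduction of Corollary \ref{corollary:supp_approximation} in the upper bound, and guarantees enough room within each group to place the pairs in the lower bound. A minor bookkeeping step is verifying \((s/2)\log(1 + (p/2)/(s/2)^2) \asymp s\log(1 + p/s^2)\), which is immediate from the monotonicity of \(\log\).
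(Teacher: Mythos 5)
Your upper bound is essentially the paper's: the same per-group decorrelation $\widetilde{X}_{B_k}$, the same group-wise application of Corollary~\ref{corollary:supp_approximation} using $\|\theta_{B_k}\|_0 \leq s \leq \tfrac{1}{4}\cdot\tfrac{p}{R}$, and the same pair of tests (Proposition~\ref{prop:supp_tsybakov} on $\widetilde{X}$ for $s<\sqrt{p}$, a $\chi^2$ test for $s\geq\sqrt{p}$). This is precisely what Proposition~\ref{prop:ubound_R_sparse} does, with the inclusion $\Theta(p,s,\varepsilon)\subset \Upsilon_{\mathcal I}(p,s,\varepsilon)$ for $s\leq p/(4R)$ playing the role of your ``$\widetilde{\theta}_{\supp(\theta)}$ is $s$-sparse with comparable norm'' observation.

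Your lower bound is a genuinely different route from the paper's. The paper uses the same spike prior as the independent Collier problem, $\theta = \tfrac{c\psi_1}{\sqrt{s}}\mathbf{1}_S$, and computes $\langle\theta,\Sigma^{-1}\widetilde{\theta}\rangle$ via Lemma~\ref{lemma:block_precision}; because $\gamma\geq 0$, the extra terms involving $|B_k\cap S|\,|B_k\cap\widetilde S|$ are $\leq 0$, so one simply drops them and lands on the independent $\chi^2$ calculation. You instead build a Rademacher-signed pair prior inside the invariant subspace $V$ on which $\Sigma$ acts as $(1-\gamma)I$, so the $\chi^2$ reduces to the $N(\cdot,(1-\gamma)I_p)$ problem exactly, with effective dimension $p/2$ and sparsity $\lfloor s/2\rfloor$. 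Both give $\chi^2\lesssim c^2$ for $s\geq 2$, and the constants work out since $\tfrac{s}{2}\log\bigl(1+\tfrac{p/2}{(s/2)^2}\bigr)\asymp s\log(1+p/s^2)$. What your approach buys is conceptual cleanliness (the covariance structure disappears exactly, not just via a one-sided bound); what the paper's approach buys is that the spike prior works for every $s\geq 1$ without modification.

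The one genuine gap is $s=1$: the theorem covers $s=1$, but $V\cap\{\theta:\|\theta\|_0\leq 1, \theta\neq 0\}=\emptyset$ (a nonzero $1$-sparse vector cannot have zero block averages), so your pair construction produces $\theta=0$ and gives no lower bound. Monotonicity of $\mathcal{R}$ in $s$ cannot rescue this since enlarging the alternative set makes the problem harder, not easier. The fix is to fall back to the paper's spike prior (or another $1$-sparse prior not living in $V$) for that case and bound $\Sigma^{-1}$ directly, just as Proposition~\ref{prop:lbound_R_sparse} does.
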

    \begin{theorem}\label{thm:pR_dense}
        Suppose \(R \in [p]\) divides \(p\), \(\frac{p}{4R} < s < \frac{p}{R}\), and \(\gamma \in [0, 1)\). Set 
        \begin{equation}\label{rate:pR_dense_upsilon}
            \upsilon^2 := 
            \begin{cases}
                \frac{(1-\gamma)p}{p-Rs}\left(\sqrt{\frac{p}{R} \log(eR)} + \log(R) \right) \wedge \left(1-\gamma+\gamma \frac{p}{R}\right) \log(eR) &\text{if } \frac{p}{4R} < s \leq \frac{p}{R} - \sqrt{\frac{p}{R}\log(eR)},\\
                \frac{(1-\gamma) p}{p-Rs}\left(\left( \frac{p}{R} - s \right)\log\left(1 + \frac{Rp \log(eR)}{(p-Rs)^2}\right) + \log(R)\right) \wedge \left(1 - \gamma + \gamma \frac{p}{R}\right)\log(eR), &\text{if } \frac{p}{R} - \sqrt{\frac{p}{R}\log(eR)} < s < \frac{p}{R}.
            \end{cases}
        \end{equation}
        Then \(\varepsilon^*(p, s, \gamma, R)^2 \asymp \psi_1^2 + \upsilon^2\) where \(\psi_1^2\) is given by (\ref{rate:problemI}).
    \end{theorem}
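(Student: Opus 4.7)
The strategy is to extend the Problem I / Problem II decomposition of Section \ref{section:problem_decomp} to the block-structured covariance in (\ref{model:multiple_random_effects}). Let \(\mu_k := \frac{R}{p}\sum_{i \in B_k} \theta_i\) denote the mean of \(\theta\) over the \(k\)-th group. Splitting \(\theta = \sum_{k=1}^{R}(\theta_{B_k} - \mu_k \mathbf{1}_{B_k}) + \sum_{k=1}^{R}\mu_k \mathbf{1}_{B_k}\) yields two orthogonal components. I would define a within-group centered problem with alternative \(\{\theta : \|\theta - \sum_k \mu_k \mathbf{1}_{B_k}\| \geq \varepsilon_1,\ \|\theta\|_0 \leq s\}\) and a between-group means problem with alternative \(\{\theta : \|\sum_k \mu_k \mathbf{1}_{B_k}\| \geq \varepsilon_2,\ \|\theta\|_0 \leq s\}\). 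An analogue of Lemma \ref{lemma:problem_decomposition} then gives \((\varepsilon^*)^2 \asymp (\varepsilon_1^*)^2 + (\varepsilon_2^*)^2\), so the task becomes showing \((\varepsilon_1^*)^2 \asymp \psi_1^2\) and \((\varepsilon_2^*)^2 \asymp \upsilon^2\).

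For the within-group centered problem, a block-diagonal orthogonal transformation analogous to the \(\widetilde{X}\) of Section \ref{section:ProblemI} decorrelates within each block, producing \(R\) independent centered Gaussian sequences of dimension \(p/R\) with noise variance \(1-\gamma\). Applying Collier's thresholding statistic (for the sparse regime) or an aggregated \(\chi^2\) statistic (for the dense regime) across groups yields the \(\psi_1^2\) upper bound; the key ingredient is again Corollary \ref{corollary:supp_approximation}, which guarantees a sufficiently large \(s\)-sparse sub-vector after centering inside each block. The matching lower bound follows by concentrating the entire signal inside a single group and invoking Proposition \ref{prop:problemI_lowerbound} on that group alone.

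For the between-group means problem, averaging \(X\) inside each block yields \(R\) independent observations \(\sqrt{p/R}\,\bar{X}_{B_k} \sim N(\sqrt{p/R}\,\mu_k,\ 1-\gamma+\gamma p/R)\), so the problem becomes a sparse signal detection problem on \(R\) independent Gaussians. The induced constraint is subtle: at most \(\min(R,s)\) groups are active, and in each active group the signal occupies at most \(s\) of the \(p/R\) coordinates. A geometric argument analogous to Corollary \ref{corollary:orthog_approximation} shows \(|\mu_k|^2 \cdot (p/R) \gtrsim \|\theta_{B_k}\|^2 \cdot (p/R - s)/(p/R)\), which is what produces the pivotal factor \(p/(p-Rs)\) in \(\upsilon^2\). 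On the upper bound side I would combine three scanning tests over the \(R\) groups: a \(\chi^2\)-type test, per-group linear tests \(\varphi^{\mathbf{1}_{B_k}}\), and per-group Collier-type statistics tuned to the within-group sparsity \(p/R - s\); the additional \(\log(eR)\) factors absorb the union bounds over groups and over support locations within a group. For the lower bound, the key construction is a two-step prior that first picks a uniformly random group \(k^* \in [R]\) and then, inside \(B_{k^*}\), deploys a Problem II-type prior as in Proposition \ref{prop:problemII_lowerbound}, optionally preceded by the mean-shift trick \(\nu^* \propto \mathbf{1}_{B_{k^*}}\) in the regime where the \((1-\gamma+\gamma p/R)\log(eR)\) cap is active.

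The main obstacle will be the lower bound for \(\upsilon^2\). Unlike in the single-random-effect Problem II, the signal need not be aligned with the leading eigenvector of the covariance, because it can be concentrated in one block; bounding the \(\chi^2\)-divergence between the mixture prior and the null requires carefully tracking cross-terms arising from coupling the random group index with the random within-group support. This is precisely where the \(\log(eR)\) penalty emerges, and extracting it correctly in both the \((p/R)^{3/2}/(p/R - s)\) regime and the \(\log(1 + \cdot/(p-Rs)^2)\) regime will require a case analysis that blends the moment computations of Proposition \ref{prop:problemI_lowerbound} and Proposition \ref{prop:problemII_lowerbound} in a single argument.
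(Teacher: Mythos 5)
Your high-level picture — split into a within-group-centered problem yielding \(\psi_1^2\) and a between-group-means problem yielding \(\upsilon^2\), then combine — is roughly the right narrative, and the three scanning test families you list match the paper's tests \(\varphi_r^{\chi^2\text{-scan}}\), \(\varphi_r^{\mathbf{1}\text{-scan}}\), and \(\varphi_{t,r}^{\text{scan}}\). But several of the key intermediate steps are wrong or backwards, and one of them is load-bearing.

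\textbf{The decomposition is not the simple Pythagorean split you write.} The paper's covering (Proposition~\ref{prop:theta_upsilon}) does not simply split \(\|\theta\|^2 = \|\theta - P_V\theta\|^2 + \|P_V\theta\|^2\); instead, \(\Upsilon_{\mathcal{I}}\) is defined directly in terms of the norm of the \emph{support-restricted} centered subvectors \(\theta_{B_k} - \bar\theta_{B_k}\mathbf{1}_{B_k\cap\supp(\theta)}\), and \(\Upsilon_{\mathcal{II}}\) includes only those blocks with \(|B_k\cap\supp(\theta)| > p/(4R)\). This refinement is not cosmetic: your invocation of Corollary~\ref{corollary:supp_approximation} inside block \(B_k\) yields the positivity-guaranteeing factor \((p/R - 2s_k)/(p/R)\), which vanishes (or goes negative) as soon as a single block carries more than half its worth of signal support, and that happens whenever \(p/(2R) < s < p/R\) and \(\theta\) is concentrated in one block. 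The proof of Proposition~\ref{prop:theta_upsilon} handles this regime by a genuine case analysis that reroutes such \(\theta\)'s to the \(\Upsilon_{\mathcal{II}}\) side; your outline quietly assumes the block sparsity is always below half the block length, which is exactly the case you cannot assume in this theorem's range \(p/(4R) < s < p/R\).

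\textbf{The geometric inequality producing \(p/(p-Rs)\) is stated in the wrong direction.} You write \(|\mu_k|^2(p/R) \gtrsim \|\theta_{B_k}\|^2 (p/R - s)/(p/R)\), i.e., the group mean is large. That is not what drives the rate. The correct step (used in Lemma~\ref{lemma:chisquare_scan}) is the one from the single-effect Problem II analysis, see~(\ref{bound:orthog_problemII}): if \(\|\bar\theta_{B_k}\mathbf{1}_{B_k}\|^2 \geq \varepsilon^2\) and \(\theta_{B_k}\) is \(s\)-sparse in a block of size \(p/R\), then Corollary~\ref{corollary:orthog_approximation} gives \(\|\theta_{B_k} - \bar\theta_{B_k}\mathbf{1}_{B_k}\|^2 \geq \varepsilon^2(p/R - s)/(p/R)\), so the within-group \emph{centered} part is large whenever the mean is. It is the decorrelated \(\chi^2\)-scan applied to \(\widetilde X_{B_k}\) — not the block averages — that captures the \(\frac{(1-\gamma)p}{p-Rs}(\cdots)\) branch of \(\upsilon^2\); the block-average channel only gives the \((1-\gamma+\gamma p/R)\log(eR)\) cap. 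Reducing the between-group problem to an independent \(R\)-dimensional sparse detection problem and viewing \(p/(p-Rs)\) as a property of the group means cannot produce the right upper bound.

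\textbf{The lower bound for \(\psi_1^2\) cannot be established by concentrating the signal in one block.} Putting all \(s\) nonzero coordinates in a single \(B_k\) of dimension \(p/R\) and invoking a one-block version of Proposition~\ref{prop:problemI_lowerbound} yields, in the sparse regime, a rate of order \((1-\gamma)s\log\bigl(1 + \tfrac{p/R}{s^2}\bigr)\), which is strictly weaker than \(\psi_1^2 = (1-\gamma)s\log(1+p/s^2)\) unless \(R = 1\). The paper's Proposition~\ref{prop:lbound_R_sparse} spreads the support uniformly over all \(p\) coordinates and then controls the cross-group off-diagonal terms arising from the block precision matrix (Lemma~\ref{lemma:block_precision}); that is what produces the correct \(\log(1+p/s^2)\) factor. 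Your outline for the between-group lower bound (two-step prior with a random group index followed by a Problem II prior, plus the mean-shift trick) does match Lemma~\ref{lemma:lbound_pR_dense}, and your identification of the coupling between group index and support location as the source of the \(\log(eR)\) penalty is correct.
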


    \begin{theorem}\label{thm:R_avg_problem}
        If \(R \in [p]\) divides \(p\), \(\frac{p}{R} \leq s \leq p,\) and \(\gamma \in [0, 1)\), then 
        \begin{equation*}
            \varepsilon^*(p, s, \gamma, R)^2 \asymp  
            \begin{cases}
                \psi_1^2 + \left(1 - \gamma + \gamma \frac{p}{R}\right) \frac{Rs}{p} \log\left(1 + \frac{p^2}{Rs^2}\right) &\text{if } \frac{p}{R} \leq s < \frac{p}{\sqrt{R}}, \\
                (1-\gamma)\sqrt{p} + \left(1-\gamma+\gamma \frac{p}{R}\right)\sqrt{R} &\text{if } s \geq \frac{p}{\sqrt{R}}
            \end{cases}
        \end{equation*}
        where \(\psi_1^2\) is given by (\ref{rate:problemI}).
    \end{theorem}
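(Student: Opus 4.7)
The plan is to mirror the Problem I / Problem II decomposition of Section \ref{section:problem_decomp}, adapted to the block-diagonal covariance of the multi-group model. Let \(V := \spn\{\mathbf{1}_{B_1},\ldots,\mathbf{1}_{B_R}\}\) and let \(P_V := \tfrac{R}{p}\sum_k \mathbf{1}_{B_k}\mathbf{1}_{B_k}^\intercal\) be orthogonal projection onto \(V\). Note that \(V\) is precisely the top eigenspace of the covariance, with eigenvalue \(\sigma^2 := 1-\gamma+\gamma p/R\), while \(V^\perp\) carries eigenvalue \(1-\gamma\). Writing \(\bar{\theta}^{(k)} := \tfrac{R}{p}\sum_{i \in B_k}\theta_i\) so that \(P_V \theta = \sum_k \bar{\theta}^{(k)} \mathbf{1}_{B_k}\), define the analogues of \(\Theta_{\mathcal{I}}\) and \(\Theta_{\mathcal{II}}\) by requiring \(\|(I-P_V)\theta\| \geq \varepsilon_1\) and \(\|P_V \theta\| \geq \varepsilon_2\) respectively (both intersected with \(\|\theta\|_0 \leq s\)). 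An analog of Lemma \ref{lemma:problem_decomposition}, proved by the same orthogonal-splitting argument, gives \((\varepsilon^*)^2 \asymp (\varepsilon_1^*)^2 + (\varepsilon_2^*)^2\), so it suffices to characterize the two separately.

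For Problem I, an appropriate blockwise decorrelation of \(X\) produces \(\widetilde{X} \sim N(\mu, I_p)\) with \(\mu := (I-P_V)\theta/\sqrt{1-\gamma}\). The \(\chi^2\)-type test (\ref{test:problemI_chisquare}) applied to \(\widetilde{X}\) delivers the dense rate \((1-\gamma)\sqrt{p}\), while the Collier-Comminges-Tsybakov statistic (\ref{test:problemI_tsybakov}) applied to \(\widetilde{X}\) with \(t^* = \sqrt{2\log(1+p/s^2)}\) delivers the sparse rate \((1-\gamma)s\log(1+p/s^2)\) when \(s < \sqrt{p}\); matching lower bounds transfer directly from Proposition \ref{prop:problemI_lowerbound}. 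In total, \((\varepsilon_1^*)^2 \asymp \psi_1^2\), essentially reusing the machinery of Theorem \ref{thm:ProblemI} verbatim.

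The heart of the proof is Problem II. Projecting to an \(R\)-dimensional sufficient statistic by \(Y_k := \sqrt{R/p}\,\langle \mathbf{1}_{B_k}, X\rangle\) gives \(Y \sim N(\beta, \sigma^2 I_R)\) where \(\beta_k := \sqrt{p/R}\,\bar{\theta}^{(k)}\) and \(\|\beta\|^2 = \|P_V \theta\|^2 \geq \varepsilon_2^2\). The \(s\)-sparsity of \(\theta\) constrains \(\beta\) through Cauchy-Schwarz: \(|\beta_k| \leq \sqrt{Rs_k/p}\,\|\theta_{B_k}\|\) where \(s_k := |\supp(\theta) \cap B_k|\). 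The crucial observation is that the induced problem is an \(R\)-dimensional sparse detection problem with effective sparsity \(s^\dagger := Rs/p\), attained by the extremal configuration in which a constant signal is placed throughout \(r := \lfloor Rs/p \rfloor\) uniformly randomly chosen blocks (yielding an exactly \(r\)-sparse \(\beta\)). For the upper bound I plan to apply the Collier-Comminges-Tsybakov statistic to \(Y\) with \(\widetilde{t} := \sqrt{2\log(1 + R/(s^\dagger)^2)}\) in the sparse regime \(s^\dagger < \sqrt{R}\) (i.e.\ \(s < p/\sqrt{R}\)), giving the rate \(\sigma^2 (Rs/p)\log(1+p^2/(Rs^2))\), and a \(\chi^2\)-type test on \(Y\) in the dense regime \(s^\dagger \geq \sqrt{R}\), giving the rate \(\sigma^2 \sqrt{R}\). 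For the lower bound, the extremal-blocks prior above reduces a \(\chi^2\)-divergence computation to the one carried out in the proof of Proposition \ref{prop:problemI_lowerbound}, with \(R\) and \(s^\dagger\) playing the roles of \(p\) and \(s\).

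The main technical obstacle is the upper bound for Problem II: verifying that the Collier-Comminges-Tsybakov test achieves the sparse rate \emph{uniformly} over the induced alternative, since \(\beta\) need not itself be \(s^\dagger\)-sparse. The remedy will be to use the Cauchy-Schwarz bound above as a substitute for sparsity, in a role analogous to that played by Corollary \ref{corollary:supp_approximation} in the single random effect setting, and to adapt the argument of Proposition \ref{prop:supp_tsybakov} to exploit the constraint that the \(\beta\)-mass is localized according to the group-wise support sizes. Once Problem II is established, combining with the Problem I bound via the decomposition recovers Theorem \ref{thm:R_avg_problem} in both sparsity regimes.
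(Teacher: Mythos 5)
Your high-level plan matches the paper's proof closely: decompose into a block-orthogonal piece and a block-average piece, reduce the latter to an \(R\)-dimensional sparse detection problem with effective sparsity \(Rs/p\), run the Collier-type test on the block averages in the sparse regime \(s < p/\sqrt{R}\) and a \(\chi^2\)-type test in the dense regime, and build the lower bound from a prior that places a constant signal on \(\lfloor Rs/p\rfloor\) uniformly random blocks. That is exactly what Lemmas \ref{lemma:test_upsilon2_avg_sparse}, \ref{lemma:test_upsilon2_avg_dense}, \ref{lemma:lbound_R_avg_problem} and Propositions \ref{prop:R_avg_ubound}, \ref{prop:R_avg_lbound} do.

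One step in your plan is misdescribed and would lead to a dead end if carried out literally. The Cauchy-Schwarz bound \(|\beta_k|^2 \leq (Rs_k/p)\,\|\theta_{B_k}\|^2\) does \emph{not} imply the \(\beta\)-mass is ``localized'' on roughly \(Rs/p\) coordinates: when the support of \(\theta\) is spread thinly over many blocks (say one coordinate in each of \(s\) blocks), \(\beta\) has \(s\) nonzero entries, which for \(s \gg Rs/p\) is far from \(s^\dagger\)-sparse, and no adaptation of Proposition \ref{prop:supp_tsybakov} alone will make the block-average CCT test sensitive there. What Cauchy-Schwarz actually gives is a \emph{dichotomy}: set \(\mathcal{K}(\theta) := \{k : s_k > p/(4R)\}\). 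Then \(|\mathcal{K}(\theta)| \leq 4Rs/p\), and for every \(k \notin \mathcal{K}(\theta)\) one has \(|\beta_k|^2 \leq \tfrac14\|\theta_{B_k}\|^2\), hence \(\|(I-P_V)\theta_{B_k}\|^2 \geq 3|\beta_k|^2\). Therefore either the \(\beta\)-mass on \(\mathcal{K}(\theta)\) is at least half of \(\|\beta\|^2\) --- in which case \(\beta\) restricted to \(\mathcal{K}(\theta)\) is genuinely \(4Rs/p\)-sparse with large norm and Proposition \ref{prop:supp_tsybakov} applies in dimension \(R\) without modification --- or the \(\beta\)-mass on \(\mathcal{K}(\theta)^c\) dominates, in which case \(\|(I-P_V)\theta\|^2 \gtrsim \|\beta\|^2\) and the Problem~I test catches it. The paper bakes this dichotomy into the very definition of the spaces \(\Upsilon_{\mathcal{I}}\) and \(\Upsilon_{\mathcal{II}}\) via Proposition \ref{prop:theta_upsilon}, which is why its Problem~II upper bound never has to confront a non-sparse \(\beta\); your cruder \(\Theta_{\mathcal{I}}/\Theta_{\mathcal{II}}\) split forces you to rediscover the dichotomy mid-proof. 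Once you replace ``localize via Cauchy-Schwarz'' with this two-pronged argument, your proposal coincides with the paper's proof. One smaller remark: the Problem~I lower bound does not transfer verbatim from Proposition \ref{prop:problemI_lowerbound}, since the \(\chi^2\)-divergence must be computed against the block-diagonal inverse covariance (Lemma \ref{lemma:block_precision} rather than Lemma \ref{lemma:v_inverse}); the paper redoes this in Proposition \ref{prop:lbound_R_sparse}, though the estimate is of the same shape.
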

    
    Theorems \ref{thm:R_sparse} through \ref{thm:R_avg_problem} indicate that the sparse signal detection problem with group structured correlation is fundamentally different than the detection problem under equicorrelation. In particular, the minimax separation rate is not a simple extension of the rate stated in Theorem \ref{thm:minimax_rate}. Before describing the testing procedures employed, a few remarks discussing the rates presented in Theorems \ref{thm:R_sparse} through \ref{thm:R_avg_problem} are in order. 
    \begin{remark}[Single random effect]
        The case \(R = 1\) is the setting of the observation model (\ref{model:single_random_effect}). Substituting \(R = 1\) and collecting the results of Theorems \ref{thm:R_sparse} through \ref{thm:R_avg_problem} yields precisely the separation rate of Theorem \ref{thm:minimax_rate}.
    \end{remark}

    \begin{remark}[Bounded number of random effects]
        When \(R \asymp 1\) and \(p\) is larger than some absolute constant, the separation rate has order 
        \begin{equation*}
            \varepsilon^*(p, s, \gamma, R)^2 \asymp 
            \begin{cases}
                (1-\gamma) s \log\left(1 + \frac{p}{s^2}\right) &\text{if } s < \sqrt{p}, \\
                (1-\gamma)\sqrt{p} + \frac{(1-\gamma)p^{3/2}}{p-Rs} \wedge \left(1 - \gamma + \gamma p\right) &\text{if } \sqrt{p} < s \leq \frac{p}{R} - \sqrt{\frac{p}{R}}, \\
                (1-\gamma)\sqrt{p} + (1-\gamma)p \log\left(1 + \frac{p}{(p-Rs)^2}\right) \wedge \left(1 - \gamma + \gamma p \right) &\text{if } \frac{p}{R} - \sqrt{\frac{p}{R}} < s < \frac{p}{R}, \\
                (1-\gamma)\sqrt{p} + \left(1 - \gamma + \gamma p\right) &\text{if } \frac{p}{R} \leq s \leq p. 
            \end{cases}
        \end{equation*}
        The phase transition at \(s = \sqrt{p}\) is a carryover from the independent setting. Interestingly, the presence of \(R > 1\) random effects results in a rate different from \(\varepsilon^*(p, s, \gamma, 1)\) even when \(R \asymp 1\). 
    \end{remark}

    \begin{remark}[Observation level random effects]
        At the other extreme, the case \(R = p\) is the setting in which there is a unique random effect per observation and so the observations are independent. Indeed, the marginal distribution of the data is \(X \sim N(\theta, I_p)\). The separation rate is given by 
        \begin{equation*}
            \varepsilon^*(p, s, \gamma, p)^2 \asymp 
            \begin{cases}
                s \log\left(1 + \frac{p}{s^2}\right) &\text{if } s < \sqrt{p}, \\
                \sqrt{p} &\text{if } s \geq \sqrt{p}.
            \end{cases}
        \end{equation*}
        We have recovered exactly the result of \cite{collierMinimaxEstimationLinear2017}. Note that the same result is recovered when \(\gamma = 0\) for any \(R \in [p]\). When \(\gamma = 0\), the covariance matrix of the data \(X\) is simply \(I_p\) and the grouping structure is irrelevant. 
    \end{remark}

    \begin{remark}[Rate behavior and sparsity regimes]
        Theorems \ref{thm:R_sparse} through \ref{thm:R_avg_problem} reveal three different sparsity regimes in which the minimax separation rate exhibits distinctive features. In the regime \(1 \leq s \leq \frac{p}{4R}\), the minimax separation rate matches the rate (\ref{rate:problemI}). Provided the sparsity is small enough, the grouping structure does not affect the form of the rate. 
        
        In the sparsity regime \(\frac{p}{4R} < s < \frac{p}{R}\), the situation is more complicated. The juncture at \(s = \frac{p}{R} - \sqrt{\frac{p}{R} \log(eR)}\) is a generalization of the juncture at \(s = p-\sqrt{p}\) from Theorem \ref{thm:minimax_rate}. Intuitively, the separation rate is driven by those signals \(\theta\) which are supported on some group \(B_k\) (see Section \ref{section:pR_dense} for more discussion). The group on which \(\theta\) is supported on is unknown; this ignorance leads to a difference in the rate compared to the equicorrelated setting of Theorem \ref{thm:minimax_rate}. One must pay the price of an additive \(\log(R)\) term and a multiplicative \(\log(eR)\) term. The form of the juncture \(s = \frac{p}{R} - \sqrt{\frac{p}{R} \log(eR)}\) also reflects the ignorance about which group supports the signal. Fixing a group \(B_k\), detecting a signal on \(B_k\) is similar to detecting a \(s\)-sparse signal in ambient dimension \(\frac{p}{R}\) with equicorrelated observations. The intuition from Theorem \ref{thm:minimax_rate} explains the multiplicative factor of \(\frac{(1-\gamma)\frac{p}{R}}{\frac{p}{R} - s}\). 
        
        In the regime \(s \geq \frac{p}{R}\), the \(p\)-dimensional signal detection problem reduces down to an \(R\)-dimensional signal detection problem with independent observations and sparsity \(\frac{Rs}{p}\). The juncture at \(s = \frac{p}{R}\) is discussed in Remark \ref{remark:pR_discontinuity}. The phase transition at \(s = \frac{p}{\sqrt{R}}\) turns out to be driven by the same phenomenon occurring at \(s = \sqrt{p}\) in the independent setting proved in \cite{collierMinimaxEstimationLinear2017}. Thus a phase transition appears at sparsity level \(s\) such that \(\frac{Rs}{p} = \sqrt{R}\), that is to say, at \(s = \frac{p}{\sqrt{R}}\). Section \ref{section:R_avg} discusses the situation in more detail.
    \end{remark}

    \begin{remark}[Discontinuity at \(\frac{p}{R}\)]\label{remark:pR_discontinuity}
        A discontinuity appears in the separation rate at \(s = \frac{p}{R}\). After a quick calculation, Theorems \ref{thm:pR_dense} and \ref{thm:R_avg_problem} show that if \(R > 1\) is large enough such that \(\log(eR) \gtrsim \log\left(1 + \frac{p}{R}\log(eR)\right)\), then \(1-\gamma = o(1)\) implies \(\varepsilon^*\left(p, \frac{p}{R}-1, \gamma\right) = o\left(\varepsilon^*\left(p, \frac{p}{R}, \gamma\right)\right)\). On the other hand in which \(\log(eR) \lesssim \log\left(1 + \frac{p}{R}\log(eR)\right)\), another calculation shows that \(1-\gamma = o\left( \frac{\log(eR)}{\log\left(1 + \frac{p}{R}\log(eR)\right)} \right)\) implies \(\varepsilon^*\left(p, \frac{p}{R} - 1, \gamma\right) = o\left(\varepsilon^*\left(p, \frac{p}{R}, \gamma\right)\right)\). The discontinuity at \(s = \frac{p}{R}\) is a generalization of that discussed in Remark \ref{remark:discontinuity}.
    \end{remark}

    \begin{remark}[Implications for study design]
        Theorems \ref{thm:R_sparse} through \ref{thm:R_avg_problem} offer additional guidance to choosing between a within-subjects design or a between-subjects design. Remark \ref{remark:study_design_comparison} advocated for a within-subject design. Concretely, it was prescribed that \(n\) individuals be recruited and each exposed to the \(p\) treatments. However, it is impractical when \(p\) is large as it may be infeasible to expose every individual to every treatment. A more practical design is to split up the subjects into \(R\) equally-sized groups in which each group is exposed only to \(\frac{p}{R}\) treatments. Adopting the notation of Remark \ref{remark:study_design_comparison}, the observation model of such a design is
        \begin{equation*}
            \sqrt{n}Y\sim N\left(\sqrt{n}\tau, (1-\gamma)I_p + \gamma \sum_{k=1}^{R} \mathbf{1}_{B_k}\mathbf{1}_{B_k}^\intercal \right).
        \end{equation*}
        For simplicity, let us suppose it is known that the sparsity of the potential signal satisfies \(s \leq \frac{p}{2}\). The setting of Remark \ref{remark:study_design_comparison} exhibits separation rate of order \(\varepsilon^*(p, s, \gamma, 1)^2 \asymp \psi_1^2\). How does one choose \(R\)? Larger \(R\) is more practical in that an individual need only be exposed to \(\frac{p}{R}\) treatments, but the order of the separation rate may increase. Theorems \ref{thm:R_sparse} through \ref{thm:R_avg_problem} indicate that if \(R \geq \frac{p}{4s}\), then indeed the separation rate exhibits \(\varepsilon^*(p, s, \gamma, R)^2 \gtrsim \psi_1^2\). However, if \(R \leq \frac{p}{4s}\), then \(\varepsilon^*(p, s, \gamma, R)^2 \asymp \psi_1^2\). Amazingly, we can have the best of both worlds by selecting \(R = \left\lfloor \frac{p}{4s}\right\rfloor\). In other words, we reap the benefit of needing only to expose individuals to at most \(\frac{p}{\frac{p}{4s}} = 4s\) treatments and we sustain no increase in the separation rate! The benefit is substantial when \(s\) is small. For \(s > \frac{p}{2}\), one can directly compare \(\varepsilon^*(p, s, \gamma, 1)\) and \(\varepsilon^*(p, s, \gamma, R)\) and choose \(R\) appropriately. 
    \end{remark}

    \subsection{Preliminaries}\label{section:R_prelim}
    This subsection sets up the notation which will be in force for the proofs presented in Section \ref{section:proofs}. A general strategy for constructing testing procedures is also discussed. Afterwards, each sparsity regime is examined in detail and the testing procedures attaining the upper bounds established in Theorems \ref{thm:R_sparse} through \ref{thm:R_avg_problem} are discussed.
    
    Throughout this section and the proofs presented in Section \ref{section:proofs}, a collection of \(R\) equally-sized disjoint groups \(\{B_k\}_{k=1}^{R}\) will be fixed, where \(R \in [p]\) is an integer dividing \(p\). The matrix \(I_{B_k}\) will refer to the diagonal \(p \times p\) matrix in which the \(i\)th diagonal entry is equal to one if \(i \in B_k\) and equal to zero otherwise. For a vector \(\theta \in \R^p\), we denote \(\theta_{B_k} \in \R^p\) to be the vector with \(i\)th entry equal to \(\theta_i\) if \(i \in B_k\) and equal to zero otherwise. In some cases, we also use \(\theta_{B_k}\) to denote the vector in \(\R^{p/R}\) obtained by taking only those coordinates \(i \in B_k\). Distinguishing notation is not used as context will make it clear. We denote \(\bar{\theta}_{B_k} := \frac{R}{p}\sum_{i \in B_k} \theta_i\). For \(\theta \in \R^p\) and \(\gamma \in [0, 1]\), the notation \(P_{\theta, \gamma, R}\) is used to denote the distribution given by (\ref{model:multiple_random_effects}).
    
    A general strategy for constructing testing procedures is rooted in the following proposition. 
    
    \begin{proposition}\label{prop:theta_upsilon}
        Suppose \(1 \leq s \leq p\) and \(\varepsilon > 0\). Then \(\Theta(p, s, \varepsilon) \subset \Upsilon_{\mathcal{I}}(p, s, \varepsilon) \cup \Upsilon_{\mathcal{II}}(p, s, \varepsilon)\)
        where 
        \begin{equation}\label{space:Upsilon1}
            \Upsilon_{\mathcal{I}}(p, s, \varepsilon) := \left\{ \theta \in \R^p : ||\theta||_0 \leq s \text{ and } \sum_{k=1}^{R} ||\theta_{B_k} - \bar{\theta}_{B_k}\mathbf{1}_{B_k \cap \supp(\theta)}||^2 \geq \frac{\varepsilon^2}{8}\right\}
        \end{equation}
        and 
        \begin{equation}\label{space:Upsilon2}
            \Upsilon_{\mathcal{II}}(p, s, \varepsilon) := \left\{ \theta \in \R^p : ||\theta||_0 \leq s \text{ and } \sum_{\substack{k \in [R] : \\ |B_k \cap \supp(\theta)| > \frac{p}{4R}}} ||\bar{\theta}_{B_k}\mathbf{1}_{B_k}||^2 \geq \frac{\varepsilon^2}{8} \right\}.
        \end{equation}
    \end{proposition}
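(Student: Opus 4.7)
The plan is to verify the inclusion pointwise by controlling $\|\theta\|^2$ from above in terms of the two quantities appearing in the definitions of $\Upsilon_{\mathcal{I}}$ and $\Upsilon_{\mathcal{II}}$, and then using a pigeonhole argument.

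Fix $\theta \in \Theta(p,s,\varepsilon)$ and, for each $k \in [R]$, abbreviate $S_k := B_k \cap \supp(\theta)$. The key local identity I would establish by expanding the square is
\begin{equation*}
    \|\theta_{B_k} - \bar{\theta}_{B_k}\mathbf{1}_{S_k}\|^2 = \|\theta_{B_k}\|^2 - \left(\tfrac{2p}{R} - |S_k|\right)\bar{\theta}_{B_k}^2,
\end{equation*}
which uses only that $\sum_{i\in S_k}\theta_i = \sum_{i\in B_k}\theta_i = \frac{p}{R}\bar{\theta}_{B_k}$ (as $\theta$ vanishes off its support). Rewriting $\bar{\theta}_{B_k}^2 = \tfrac{R}{p}\|\bar{\theta}_{B_k}\mathbf{1}_{B_k}\|^2$ and noting $|S_k|\leq p/R$, this gives the clean bound $\|\theta_{B_k}\|^2 \leq \|\theta_{B_k} - \bar{\theta}_{B_k}\mathbf{1}_{S_k}\|^2 + 2\|\bar{\theta}_{B_k}\mathbf{1}_{B_k}\|^2$ for every $k$.

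Next I would handle the groups with small intersection with $\supp(\theta)$ separately. If $|S_k| \leq \tfrac{p}{4R}$, then Cauchy--Schwarz gives
\begin{equation*}
    \tfrac{p}{R}\|\bar{\theta}_{B_k}\mathbf{1}_{B_k}\|^2 = \Bigl(\tfrac{p}{R}\bar{\theta}_{B_k}\Bigr)^2\cdot\tfrac{R}{p} = \tfrac{R}{p}\Bigl(\sum_{i\in S_k}\theta_i\Bigr)^2 \leq \tfrac{R}{p}\cdot|S_k|\cdot\|\theta_{B_k}\|^2,
\end{equation*}
so $\|\bar{\theta}_{B_k}\mathbf{1}_{B_k}\|^2 \leq \tfrac{1}{4}\|\theta_{B_k}\|^2$. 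Summing the per-block inequality over $k$ and splitting the $\|\bar{\theta}_{B_k}\mathbf{1}_{B_k}\|^2$ sum according to whether $|S_k|>\tfrac{p}{4R}$, this bound absorbs the ``small intersection'' contribution into $\tfrac{1}{2}\|\theta\|^2$, yielding
\begin{equation*}
    \tfrac{1}{2}\|\theta\|^2 \leq \sum_{k=1}^{R}\|\theta_{B_k} - \bar{\theta}_{B_k}\mathbf{1}_{S_k}\|^2 + 2\sum_{k:\,|S_k|>p/(4R)}\|\bar{\theta}_{B_k}\mathbf{1}_{B_k}\|^2.
\end{equation*}

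Since $\|\theta\|^2 \geq \varepsilon^2$, at least one of the two right-hand terms exceeds $\varepsilon^2/4$, hence one of the two (after the factor of $2$) exceeds $\varepsilon^2/8$, placing $\theta$ in $\Upsilon_{\mathcal{I}}(p,s,\varepsilon)$ or $\Upsilon_{\mathcal{II}}(p,s,\varepsilon)$. The only delicate point is that the decomposition $\theta_{B_k} = (\theta_{B_k} - \bar{\theta}_{B_k}\mathbf{1}_{S_k}) + \bar{\theta}_{B_k}\mathbf{1}_{S_k}$ is \emph{not} orthogonal (because $|S_k|$ need not equal $p/R$); the cross term is precisely what forces the threshold $p/(4R)$ on $|S_k|$ and the use of Cauchy--Schwarz to discard blocks whose sparsity inside $B_k$ is too small to contribute meaningfully to $\|\bar{\theta}_{B_k}\mathbf{1}_{B_k}\|^2$.
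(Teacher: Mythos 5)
Your argument is correct and reaches the same constant $\varepsilon^2/8$, but it is a genuinely different and more streamlined route than the paper's. The paper starts from the \emph{orthogonal} (Pythagorean) per-block decomposition $\|\theta_{B_k}\|^2 = \|\theta_{B_k}-\bar{\theta}_{B_k}\mathbf{1}_{B_k}\|^2 + \|\bar{\theta}_{B_k}\mathbf{1}_{B_k}\|^2$, splits $\|\theta\|^2\geq\varepsilon^2$ into two halves by pigeonhole, and then runs a nested case analysis (small/large $|S_k|$ inside each half), repeatedly invoking Lemmas \ref{lemma:v_orthog_approximation} and \ref{lemma:v_supp_approximation} to convert between $\|\theta_{B_k}-\bar{\theta}_{B_k}\mathbf{1}_{B_k}\|$, $\|\theta_{B_k}-\bar{\theta}_{B_k}\mathbf{1}_{S_k}\|$, and $\|\bar{\theta}_{B_k}\mathbf{1}_{B_k}\|$. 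You instead expand $\|\theta_{B_k}-\bar{\theta}_{B_k}\mathbf{1}_{S_k}\|^2$ directly using only $\sum_{i\in S_k}\theta_i=\tfrac{p}{R}\bar{\theta}_{B_k}$, obtain the exact identity $\|\theta_{B_k}\|^2 = \|\theta_{B_k}-\bar{\theta}_{B_k}\mathbf{1}_{S_k}\|^2+\bigl(2-\tfrac{R|S_k|}{p}\bigr)\|\bar{\theta}_{B_k}\mathbf{1}_{B_k}\|^2$, bound the coefficient above by $2$, and use Cauchy--Schwarz to absorb the small-$|S_k|$ contribution into $\tfrac12\|\theta\|^2$. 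A single pigeonhole then finishes the job. What your approach buys is a one-shot inequality that avoids the paper's multi-level case distinction and its reliance on the two auxiliary geometric lemmas; what it costs is that the coefficient $2-\tfrac{R|S_k|}{p}$ is kept implicit rather than the exact Pythagorean split, which the paper reuses elsewhere in closely related forms.

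One small correction: in the Cauchy--Schwarz display the prefactor $\tfrac{p}{R}$ on the left-hand side is spurious. The chain should read
\begin{equation*}
    \|\bar{\theta}_{B_k}\mathbf{1}_{B_k}\|^2 = \tfrac{p}{R}\,\bar{\theta}_{B_k}^2 = \tfrac{R}{p}\Bigl(\sum_{i\in S_k}\theta_i\Bigr)^2 \leq \tfrac{R}{p}\,|S_k|\,\|\theta_{B_k}\|^2,
\end{equation*}
so that $|S_k|\leq\tfrac{p}{4R}$ gives $\|\bar{\theta}_{B_k}\mathbf{1}_{B_k}\|^2\leq\tfrac14\|\theta_{B_k}\|^2$, exactly the conclusion you go on to use. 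The stray factor does not affect the logic anywhere else.
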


    Proposition \ref{prop:theta_upsilon} asserts that it suffices to construct constituent tests which are separately sensitive to \(\Upsilon_{\mathcal{I}}(p, s, \varepsilon)\) are \(\Upsilon_{\mathcal{II}}(p, s, \varepsilon)\), and then combine them to furnish a testing procedure sensitive to \(\Theta(p, s, \varepsilon)\). The space \(\Upsilon_{\mathcal{I}}(p, s, \varepsilon)\) is attractive in that for any vector we are guaranteed that a \(s\)-sparse subvector has not too small norm. The space \(\Upsilon_{\mathcal{II}}(p, s, \varepsilon)\) is attractive in that it guarantees averaging does not diminish the norm too much; in particular, the \(R\)-dimensional vector \((\bar{\theta}_{B_1},...,\bar{\theta}_{B_R})\) will end up having sufficiently large norm for detection. Intuitively, \(\Upsilon_{\mathcal{II}}(p, s, \varepsilon)\) reduces a \(p\)-dimensional problem to an \(R\)-dimensional problem. Of course, the constituent tests we employ will depend heavily on which sparsity regime is in force. 

    \subsection{\texorpdfstring{Regime \(1 \leq s \leq \frac{p}{4R}\)}{Regime 1 <= s <= p/4R}}
    In the regime \(1 \leq s \leq \frac{p}{4R}\), Proposition \ref{prop:theta_upsilon} implies \(\Theta(p, s, \varepsilon) \subset \Upsilon_{\mathcal{I}}(p, s, \varepsilon)\). It is also immediate to see \(\Upsilon_{\mathcal{I}}(p, s, \sqrt{8}\varepsilon) \subset \Theta(p, s, \varepsilon)\). Consequently, the \(\varepsilon^*(p, s, \gamma, R)^2\) matches (up to absolute factors) the minimax separation rate when testing against \(\Upsilon_{\mathcal{I}}(p, s, \varepsilon)\). Armed with the knowledge that attention can be restricted to \(\Upsilon_{\mathcal{I}}(p, s, \varepsilon)\), insight from Section \ref{section:ProblemI} can be leveraged. 
    
    Specifically, the following transformation is considered. For \(k \in [R]\) define \(\widetilde{X}_{B_k} \in \R^{p/R}\) with
    \begin{equation*}
        \widetilde{X}_{B_k} := \frac{1}{\sqrt{1-\gamma}}\left(I_{B_k} - \frac{R}{p} \mathbf{1}_{B_k}\mathbf{1}_{B_k}^\intercal \right) X_{B_k} + \frac{\xi_k}{\sqrt{p/R}} \mathbf{1}_{B_k}
    \end{equation*}
    where \(\xi_1,...,\xi_{\frac{p}{R}} \overset{iid}{\sim} N(0, 1)\) are drawn independently of \(X\). Note that \(\widetilde{X}_{B_k} \sim N\left(\frac{\theta_{B_k} - \bar{\theta}_{B_k}\mathbf{1}_{B_k}}{\sqrt{1-\gamma}}, I_{\frac{p}{R}}\right)\) and \(\{\widetilde{X}_{B_k}\}_{k=1}^{R}\) are independent. For \(t > 0\), define the testing statistic 
    \begin{equation*}
        Y_t := \sum_{k=1}^{R} \sum_{j=1}^{p/R} \left((\widetilde{X}_{B_k})_j^2 - \alpha_{t}\right)\mathbbm{1}_{\{|(\widetilde{X}_{B_k})_j| \geq t\}}
    \end{equation*}
    where \(\alpha_t\) is given by (\ref{eqn:alpha_t}). The corresponding test is 
    \begin{equation}\label{test:orthogonal_tsybakov}
        \varphi_{t, r} := \mathbbm{1}_{\{Y_t > r\}}
    \end{equation}
    where \(r \in \R\) is used to set the testing risk. It turns out \(\varphi_{t, r}\) can only be used when \(s < \sqrt{p}\). If \(s \geq \sqrt{p}\), the testing statistic \(\sum_{k=1}^{R} ||\widetilde{X}_{B_k}||^2\) will be used. The corresponding test is 
    \begin{equation}\label{test:block_orthogonal_chisquare}
        \varphi_{r}^{\chi^2} := \mathbbm{1}_{\left\{\sum_{k=1}^{R} ||\widetilde{X}_{B_k}||^2 > p + r \sqrt{p}\right\}}
    \end{equation}
    where \(r > 0\) is used to set the testing risk. 
    
    \begin{proposition}[Upper bound]\label{prop:ubound_R_sparse}
        Suppose \(1 \leq s \leq \frac{p}{4R}\) and \(\gamma \in [0, 1)\). If \(\eta \in (0, 1)\), then there exists a constant \(C_\eta > 0\) depending only on \(\eta\) such that for all \(C > C_\eta\) the testing procedure 
        \begin{equation}\label{test:R_problem1}
            \varphi_1 := 
            \begin{cases}
                \varphi_{t^*, r^*} &\text{if } s < \sqrt{p}, \\
                \varphi_{C^2/16}^{\chi^2} &\text{if } s \geq \sqrt{p}
            \end{cases}
        \end{equation}
        with \(t^* = \sqrt{2 \log\left(1 + \frac{p}{s^2}\right)}\) and \(r^* = \frac{C^2}{64} s\log\left(1 + \frac{p}{s^2}\right)\) satisfies
        \begin{equation*}
            P_{0, \gamma, R}\left\{ \varphi_1 = 1\right\} + \sup_{\theta \in \Theta(p, s, C\psi_1)} P_{\theta, \gamma, R} \left\{ \varphi_1 = 0 \right\} \leq \eta.
        \end{equation*}
        Here, \(\psi_1^2\) is given by (\ref{rate:problemI}).
    \end{proposition}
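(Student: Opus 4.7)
The plan is to reduce the hypothesis test to a setting where the block-decorrelating transformation converts the problem into standard Gaussian mean detection, and then invoke (versions of) the same tools already analyzed for Problem I in Section \ref{section:ProblemI}.

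\textbf{Step 1 (Reduction to $\Upsilon_\mathcal{I}$).} Since $s \leq \frac{p}{4R}$, for every $\theta$ with $\|\theta\|_0 \leq s$ and every $k \in [R]$ one has $|B_k \cap \supp(\theta)| \leq s \leq \frac{p}{4R}$. The index set in (\ref{space:Upsilon2}) is therefore empty, so $\Upsilon_\mathcal{II}(p, s, C\psi_1) = \emptyset$ and Proposition \ref{prop:theta_upsilon} yields $\Theta(p, s, C\psi_1) \subset \Upsilon_\mathcal{I}(p, s, C\psi_1)$. Every $\theta$ in the alternative thus satisfies $\sum_{k=1}^R \|\theta_{B_k} - \bar{\theta}_{B_k}\mathbf{1}_{B_k \cap \supp(\theta)}\|^2 \geq \frac{C^2 \psi_1^2}{8}$.

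\textbf{Step 2 (Distribution of $\widetilde{X}$ and signal bookkeeping).} A direct calculation, using that the covariance of (\ref{model:multiple_random_effects}) is block diagonal, shows the blocks $\widetilde{X}_{B_1},\ldots,\widetilde{X}_{B_R}$ are mutually independent with $\widetilde{X}_{B_k} \sim N\bigl((1-\gamma)^{-1/2}(\theta_{B_k} - \bar{\theta}_{B_k}\mathbf{1}_{B_k}),\, I_{B_k}\bigr)$; the added $\xi_k/\sqrt{p/R}\,\mathbf{1}_{B_k}$ term is exactly what restores the missing rank-one direction lost by the projection $I_{B_k} - \tfrac{R}{p}\mathbf{1}_{B_k}\mathbf{1}_{B_k}^\intercal$. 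Concatenating, $\widetilde{X} \sim N(\mu, I_p)$ with $\mu := (1-\gamma)^{-1/2}\sum_{k=1}^R (\theta_{B_k} - \bar{\theta}_{B_k}\mathbf{1}_{B_k})$. Zeroing entries outside $\supp(\theta)$ only decreases the squared norm, so Step 1 gives $\|\mu_{\supp(\theta)}\|^2 \geq \frac{C^2 \psi_1^2}{8(1-\gamma)}$ while $\|\mu_{\supp(\theta)}\|_0 \leq s$.

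\textbf{Step 3 (Dense regime).} When $s \geq \sqrt{p}$ we have $\psi_1^2 = (1-\gamma)\sqrt{p}$, so $\|\mu\|^2 \geq \|\mu_{\supp(\theta)}\|^2 \geq C^2 \sqrt{p}/8$. Under $H_0$, $\sum_k \|\widetilde{X}_{B_k}\|^2 \sim \chi^2_p$, and Chebyshev's inequality bounds the Type I error of $\varphi^{\chi^2}_{C^2/16}$ by $O(C^{-4})$. Under $H_1$ the statistic is noncentral $\chi^2_p(\|\mu\|^2)$ with mean $p + \|\mu\|^2$ and variance $2p + 4\|\mu\|^2$; the threshold $p + (C^2/16)\sqrt{p}$ sits at least $(C^2/16)\sqrt{p}$ below the alternative mean, and Chebyshev again yields a Type II error tending to $0$ as $C \to \infty$. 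Choosing $C_\eta$ large enough gives the claimed bound.

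\textbf{Step 4 (Sparse regime).} When $s < \sqrt{p}$ the signal bound becomes $\|\mu_{\supp(\theta)}\|^2 \geq \frac{C^2}{8} s\log(1 + p/s^2)$, and $\mu_{\supp(\theta)}$ is $s$-sparse. Since $\widetilde{X} \sim N(\mu, I_p)$, we are precisely in the sparse-subvector setting to which Proposition \ref{prop:supp_tsybakov} applies: the scan statistic $Y_{t^*}$ with $t^* = \sqrt{2\log(1+p/s^2)}$ and threshold $r^* = \tfrac{C^2}{64} s\log(1+p/s^2)$ distinguishes $\mu = 0$ from $\mu$ having an $s$-sparse subvector of squared norm exceeding a constant multiple of $s\log(1+p/s^2)$ with arbitrarily small risk once $C$ is large. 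The main obstacle is exactly this step: although $\mu$ itself is not sparse (the block-mean subtraction introduces $p - \|\theta\|_0$ small but nonzero coordinates), its restriction to $\supp(\theta)$ is, and it is this observation — combined with the sub-vector extension of the Collier et al. statistic already used in Section \ref{section:ProblemI} — that makes Proposition \ref{prop:supp_tsybakov} applicable and drives the upper bound to match $\psi_1^2$.
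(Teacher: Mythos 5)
Your proposal is correct and tracks the paper's own argument step for step: reduce to $\Upsilon_{\mathcal{I}}$ via Proposition \ref{prop:theta_upsilon} (the paper similarly notes $s \leq \frac{p}{4R}$ forces $\Theta(p,s,\varepsilon)\subset\Upsilon_{\mathcal{I}}(p,s,\varepsilon)$), decorrelate block by block to obtain $\widetilde{X}\sim N(\mu, I_p)$, and then invoke Proposition \ref{prop:supp_tsybakov} on the $s$-sparse subvector $\mu_{\supp(\theta)}$ in the sparse regime and Chebyshev on the $\chi^2$ statistic in the dense regime, which are exactly the two lemmas (\ref{lemma:test_upsilon1_sparse} and \ref{lemma:test_upsilon1_dense}) the paper assembles. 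One minor imprecision: your remark that ``zeroing entries outside $\supp(\theta)$ only decreases the squared norm'' is not actually what furnishes the lower bound on $\|\mu_{\supp(\theta)}\|^2$ --- rather, $\|\mu_{\supp(\theta)}\|^2$ is computed to be \emph{equal} to $(1-\gamma)^{-1}\sum_k\|\theta_{B_k}-\bar{\theta}_{B_k}\mathbf{1}_{B_k\cap\supp(\theta)}\|^2$, which is the defining quantity of $\Upsilon_{\mathcal{I}}$, so the bound follows directly; but your final conclusion there is correct.
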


    The optimality of the test is established by furnishing a matching lower bound. 
    \begin{proposition}[Lower bound]\label{prop:lbound_R_sparse}
        Suppose \(1 \leq s \leq p\) and \(\gamma \in [0, 1)\). If \(\eta \in (0, 1)\), then there exists a constant \(c_\eta > 0\) depending only on \(\eta\) such that for all \(0 < c < c_\eta\) we have \(\mathcal{R}(c\psi_1) \geq 1-\eta\). Here, \(\psi_1^2\) is given by (\ref{rate:problemI}).
    \end{proposition}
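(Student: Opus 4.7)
The plan is to reduce the lower bound to the classical independent-noise lower bound of Collier, Comminges, and Tsybakov \cite{collierMinimaxEstimationLinear2017} via a noise-addition argument. The key structural fact is that
\[
\Sigma_R := (1-\gamma)I_p + \gamma \sum_{k=1}^{R} \mathbf{1}_{B_k}\mathbf{1}_{B_k}^\intercal \succeq (1-\gamma) I_p
\]
for every $\gamma \in [0, 1)$, since $\Sigma_R - (1-\gamma)I_p = \gamma \sum_{k=1}^{R} \mathbf{1}_{B_k}\mathbf{1}_{B_k}^\intercal$ is positive semidefinite. Intuitively, observing $N(\theta, \Sigma_R)$ is strictly noisier than observing $N(\theta, (1-\gamma)I_p)$, so the multiple-random-effects problem is at least as hard as the corresponding independent problem with variance $(1-\gamma)$, whose minimax rate equals $\psi_1^2$ by a rescaling of the Collier et al.\ rate (\ref{rate:independent}).

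To formalize the reduction, given any measurable test $\varphi : \R^p \to \{0,1\}$ for the multiple-random-effects problem, I would sample $W_1,\ldots,W_R \overset{iid}{\sim} N(0,1)$ independently of $Y \sim N(\theta, (1-\gamma)I_p)$ and set $X := Y + \sqrt{\gamma}\sum_{k=1}^{R} W_k \mathbf{1}_{B_k}$. Since $X \sim N(\theta, \Sigma_R)$, the randomized test $\tilde\varphi(Y, W) := \varphi(X)$ for the variance-$(1-\gamma)$ independent problem has Type I and Type II errors matching those of $\varphi$ under $\Sigma_R$. The Collier et al.\ lower bound is proved through the two-point inequality $P_0\{\psi=1\} + E_\pi P_\theta\{\psi=0\} \geq 1 - d_{TV}(P_0, P_\pi)$ combined with a $\chi^2$ bound on the total variation, and therefore applies uniformly to \emph{every} measurable test $\psi$, including randomized ones depending on auxiliary variables such as $W$. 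Hence any such lower bound on the testing risk of $\tilde\varphi$ transfers verbatim to the minimax risk of $\varphi$, giving $\mathcal{R}(\varepsilon) \geq \mathcal{R}^{\mathrm{indep}}(\varepsilon)$ for all $\varepsilon > 0$.

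To finish, I would rescale $Y \mapsto Y/\sqrt{1-\gamma}$, which sends $N(\theta, (1-\gamma)I_p)$ to $N(\theta/\sqrt{1-\gamma}, I_p)$, preserves sparsity, and scales the signal norm by $1/\sqrt{1-\gamma}$. Applying (\ref{rate:independent}) with $\eta$ fixed yields a constant $c_\eta > 0$ such that whenever $c < c_\eta$, signals of norm $c\psi_1/\sqrt{1-\gamma}$ in the unit-variance independent problem are undetectable with testing risk better than $1-\eta$, because $(c\psi_1/\sqrt{1-\gamma})^2 = c^2\bigl[s\log(1+p/s^2) \text{ or } \sqrt{p}\bigr]$ falls below the Collier et al.\ separation threshold. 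Translating back across the rescaling and the noise-addition reduction then yields $\mathcal{R}(c\psi_1) \geq 1-\eta$. The argument is essentially a data-processing argument; the only real subtlety is confirming that the Le Cam--Ingster $\chi^2$ lower bound controls randomized tests as well as deterministic ones, which is immediate from the two-point structure of the proof and from Fubini applied to the auxiliary randomness $W$.
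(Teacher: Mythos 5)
Your proposal is correct, and it takes a genuinely different route from the paper's proof. The paper proves Proposition \ref{prop:lbound_R_sparse} directly: it constructs the uniform-$s$-sparse prior $\pi$, invokes Lemma \ref{lemma:general_lower_bound} (chi-square/TV) and the Ingster--Suslina identity (Lemma \ref{lemma:Ingster_Suslina}), uses the explicit block precision formula (Lemma \ref{lemma:block_precision}), and exploits $\gamma \geq 0$ to drop the cross term so that $\langle \theta, \Sigma_R^{-1}\widetilde\theta\rangle \leq \frac{c^2\psi_1^2}{s(1-\gamma)}|S\cap\widetilde S|$, after which the hypergeometric MGF bound (Lemma \ref{lemma:hypergeometric}) finishes the job. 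Your proof bypasses all of that via a noise-addition (data-processing) reduction: since $\Sigma_R \succeq (1-\gamma)I_p$ for all $\gamma \in [0,1)$, one can simulate the correlated observation from the independent one by adding $\sqrt{\gamma}\sum_k W_k\mathbf{1}_{B_k}$, so the group-correlated problem is informationally no easier, and a rescaling by $\sqrt{1-\gamma}$ reduces it exactly to the Collier--Comminges--Tsybakov unit-variance problem at separation $c\,\varepsilon^*(p,s,0)$. The Fubini observation that the two-point/mixture lower bound controls randomized tests is exactly the point that needs articulating, and you identify it correctly. What each approach buys: yours is shorter, more modular, and more general (it gives the same lower bound for \emph{any} covariance $\Sigma \succeq (1-\gamma)I_p$, not just block-equicorrelated ones, and would work unchanged for the $R=1$ case and the rank-one case of Section \ref{section:rank_one_correlation}); the paper's direct computation is self-contained (it does not invoke the CCT lower bound as a black box) and yields explicit, traceable constants $c_\eta$, which the paper seems to favor throughout. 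One small stylistic note: the statement in the paper that you would cite, (\ref{rate:independent}), is the \emph{rate}, not the lower bound per se; what you actually invoke is clause (ii) of Definition \ref{def:separation_rate} applied to that rate, which is the content of the CCT lower-bound theorem. It would be cleaner to cite the CCT lower-bound theorem directly rather than the rate display.
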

    Note that the Proposition \ref{prop:lbound_R_sparse} holds for all \(1 \leq s \leq p\). Propositions \ref{prop:ubound_R_sparse} and \ref{prop:lbound_R_sparse} together give Theorem \ref{thm:R_sparse}.

    \subsection{\texorpdfstring{Regime \(\frac{p}{4R} < s \leq \frac{p}{R} - \sqrt{\frac{p}{R}\log(eR)}\)}{Regime p/4R < s <= p/R - sqrt(p log(eR)/R) }}\label{section:pR_dense}
    In the regime \(\frac{p}{4R} < s \leq \frac{p}{R} - \sqrt{\frac{p}{R}\log(eR)}\), constituent tests sensitive to \(\Upsilon_{\mathcal{II}}(p, s, \varepsilon)\) are now needed. Since \(s < \frac{p}{R}\), we have \(\frac{s}{p/(4R)} \leq 4\). Consequently, if \(\theta \in \Upsilon_{\mathcal{II}}(p, s, \varepsilon)\) then the number of groups \(B_k\) such that \(|B_k \cap \supp(\theta)| > \frac{p}{4R}\) is at most \(4\). Therefore, the triangle inequality implies that there exists a \(B_{k^*}\) such that \(||\bar{\theta}_{B_{k^*}}\mathbf{1}_{B_{k^*}}||^2 \geq \frac{\varepsilon^2}{32}\). In other words, there is a group in which the norm of the signal is large. A natural idea is to scan over all groups. Moreover, since \(s \leq \frac{p}{R} - \sqrt{\frac{p}{R}\log(eR)}\), the signal inside that group is sparse. 

    Two constituent tests sensitive to \(\Upsilon_{\mathcal{II}}(p, s, \varepsilon)\) in different rate regimes will be used.  For \(k \in [R]\), define 

    \begin{equation}\label{test:pR_dense_chisquare}
        \varphi_r^{\chi^2-\text{scan}} := \max_{k \in [R]} \mathbbm{1}_{\left\{ ||\widetilde{X}_{B_k}||^2 > \frac{p}{R} + 2\sqrt{\frac{p}{R} r} + 2r \right\}}
    \end{equation}
    where \(r \in \R\) is used to set the desired testing risk. Additionally, define 
    \begin{equation}\label{test:linear_scan}
        \varphi_r^{\mathbf{1}-\text{scan}} := \max_{k \in [R]} \mathbbm{1}_{\left\{ \left\langle \sqrt{\frac{R}{p}}\mathbf{1}_{B_k}, X \right\rangle^2 > \left(1-\gamma+\gamma \frac{p}{R}\right)\left(1 + r\right) \right\}}.
    \end{equation}
    The test \(\varphi_1\) given by (\ref{test:R_problem1}) will also be used for sensitivity to \(\Upsilon_{\mathcal{I}}(p, s, \varepsilon)\). 

    \begin{proposition}[Upper bound]\label{prop:ubound_pR_dense}
        Suppose \(\frac{p}{4R} < s \leq \frac{p}{R} - \sqrt{\frac{p}{R}\log(eR)}\) and \(\gamma \in [0, 1)\). If \(\eta \in (0, 1)\), then there exists a constant \(C_\eta > 0\) depending only on \(\eta\) such that for all \(C > C_\eta\) the testing procedure \(\varphi^* := \varphi_1 \vee \varphi_2\) where
        \begin{equation*}
            \varphi_2 := 
            \begin{cases}
                \varphi_{\frac{C^2}{128}\log(eR)}^{\chi^2-\text{scan}} &\text{if } \upsilon^2 = \frac{(1-\gamma)p}{p-Rs} \left(\sqrt{\frac{p}{R}\log(eR)} + \log(R)\right), \\
                \varphi_{\frac{C^2}{64}\log(eR)}^{\mathbf{1}-\text{scan}} &\text{if } \upsilon^2 = \left(1 - \gamma + \gamma \frac{p}{R}\right) \log(eR)
            \end{cases}
        \end{equation*}
        satisfies 
        \begin{equation*}
            P_{0, \gamma, R}\{\varphi^* = 1\} + \sup_{\theta \in \Theta(p, s, C(\psi_1 \vee \upsilon))} P_{\theta, \gamma, R}\{\varphi^* = 0\} \leq \eta.
        \end{equation*}
        Here, \(\psi_1^2\) is given by (\ref{rate:problemI}), \(\varphi_1\) is given by (\ref{test:R_problem1}), and \(\upsilon^2\) is given by (\ref{rate:pR_dense_upsilon}).
    \end{proposition}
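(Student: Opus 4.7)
The plan is to exploit the decomposition in Proposition \ref{prop:theta_upsilon}, which guarantees \(\Theta(p,s,\varepsilon)\subset \Upsilon_{\mathcal{I}}(p,s,\varepsilon)\cup\Upsilon_{\mathcal{II}}(p,s,\varepsilon)\). Since \(\varphi^*=\varphi_1\vee\varphi_2\), the Type I error is at most \(P_{0,\gamma,R}\{\varphi_1=1\}+P_{0,\gamma,R}\{\varphi_2=1\}\), and the Type II error over \(\Theta(p,s,C(\psi_1\vee\upsilon))\) is bounded by the larger of the worst-case Type II errors of \(\varphi_1\) over \(\Upsilon_{\mathcal{I}}\) and of \(\varphi_2\) over \(\Upsilon_{\mathcal{II}}\). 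Proposition \ref{prop:ubound_R_sparse} together with the trivial inclusion \(\Upsilon_{\mathcal{I}}(p,s,\sqrt{8}\varepsilon)\subset\Theta(p,s,\varepsilon)\) takes care of the \(\Upsilon_{\mathcal{I}}\) part (after adjusting the constant), so the real work is to show \(\varphi_2\) controls both the null and the alternative on \(\Upsilon_{\mathcal{II}}(p,s,C(\psi_1\vee\upsilon))\).

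To extract a ``dominant group,'' first observe that \(\theta\in\Upsilon_{\mathcal{II}}\) with \(||\theta||_0\leq s\leq p/R\) implies \(\sum_k|B_k\cap\supp(\theta)|\leq s\leq p/R\), so at most four groups can satisfy \(|B_k\cap\supp(\theta)|>p/(4R)\). The triangle inequality then yields some \(k^*\) with \(||\bar{\theta}_{B_{k^*}}\mathbf{1}_{B_{k^*}}||^2\gtrsim C^2(\psi_1\vee\upsilon)^2\). For the chi-square branch, a Cauchy--Schwarz estimate within \(B_{k^*}\) gives \(\left(\tfrac{p}{R}\bar{\theta}_{B_{k^*}}\right)^2\leq s\,||\theta_{B_{k^*}}||^2\), hence
\begin{equation*}
||\theta_{B_{k^*}}-\bar{\theta}_{B_{k^*}}\mathbf{1}_{B_{k^*}}||^2\geq\tfrac{p-Rs}{Rs}||\bar{\theta}_{B_{k^*}}\mathbf{1}_{B_{k^*}}||^2.
\end{equation*}
Dividing by \(1-\gamma\) translates this to a lower bound on the noncentrality \(\lambda^2\) of \(\widetilde{X}_{B_{k^*}}\sim N_{p/R}\bigl(\tfrac{\theta_{B_{k^*}}-\bar{\theta}_{B_{k^*}}\mathbf{1}_{B_{k^*}}}{\sqrt{1-\gamma}},I_{p/R}\bigr)\), and substituting \(\upsilon^2\) in the chi-square regime gives \(\lambda^2\gtrsim C^2\bigl(\sqrt{(p/R)\log(eR)}+\log(R)\bigr)\).

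Type I error for \(\varphi_2^{\chi^2\text{-scan}}\) is controlled by the Laurent--Massart inequality \(P(\chi^2_{p/R}>p/R+2\sqrt{(p/R)r}+2r)\leq e^{-r}\) applied independently across the \(R\) groups (under the null, the \(\widetilde{X}_{B_k}\) are independent standard Gaussian vectors), followed by a union bound, giving \(R e^{-r}\lesssim(eR)^{1-C^2/128}\) for \(r=\tfrac{C^2}{128}\log(eR)\). Type II then follows because the noncentral chi-square on group \(k^*\) has mean \(p/R+\lambda^2\) with \(\lambda^2\gtrsim \sqrt{(p/R)\log(eR)}+\log(eR)\), which dominates the fluctuation and the threshold increment for \(C\) large. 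For \(\varphi_2^{\mathbf{1}\text{-scan}}\), block-disjointness makes \(\langle\sqrt{R/p}\mathbf{1}_{B_k},X\rangle\sim N(0,1-\gamma+\gamma p/R)\) independent across \(k\) under the null (the cross-covariances vanish since \(|B_k\cap B_j|=0\) for \(k\neq j\)), so a Gaussian tail plus union bound yields \(P_{0,\gamma,R}\{\varphi_2^{\mathbf{1}\text{-scan}}=1\}\lesssim R e^{-r/2}\). Under the alternative, the mean at \(k^*\) is \(\sqrt{p/R}\,\bar{\theta}_{B_{k^*}}\), whose square equals \(||\bar{\theta}_{B_{k^*}}\mathbf{1}_{B_{k^*}}||^2\gtrsim C^2(1-\gamma+\gamma p/R)\log(eR)\), easily exceeding the threshold \((1-\gamma+\gamma p/R)(1+r)\) with \(r=\tfrac{C^2}{64}\log(eR)\).

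The main obstacle is the chi-square branch, because the Laurent--Massart threshold \(2\sqrt{(p/R)r}+2r\) introduces two fluctuation scales whose balance mirrors the two summands \(\sqrt{(p/R)\log(eR)}\) and \(\log(R)\) appearing in the \(\upsilon^2\) numerator; matching them requires careful bookkeeping in the noncentral chi-square deviation bound, particularly when \(s\) approaches \(p/R-\sqrt{(p/R)\log(eR)}\) from below and the geometric factor \(\tfrac{p-Rs}{Rs}\) becomes small. Once the constants in both branches are chosen in a coordinated way relative to \(\eta\), assembling the Type I and Type II bounds for \(\varphi^*=\varphi_1\vee\varphi_2\) completes the proof.
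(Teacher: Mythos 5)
Your overall strategy matches the paper's: decompose \(\Theta\) via Proposition \ref{prop:theta_upsilon} into \(\Upsilon_{\mathcal{I}}\cup\Upsilon_{\mathcal{II}}\), handle \(\Upsilon_{\mathcal{I}}\) with \(\varphi_1\), extract a dominant group \(k^*\) (at most four groups intersect \(\supp(\theta)\) in more than \(\frac{p}{4R}\) coordinates), and analyze the two scan tests over \(\Upsilon_{\mathcal{II}}\) via Laurent--Massart plus a union bound for type I and a noncentrality lower bound for type II. Your geometric inequality \(||\theta_{B_{k^*}}-\bar\theta_{B_{k^*}}\mathbf 1_{B_{k^*}}||^2\ge\frac{p-Rs}{Rs}||\bar\theta_{B_{k^*}}\mathbf 1_{B_{k^*}}||^2\) via Cauchy--Schwarz is a mild variant of the paper's Corollary \ref{corollary:orthog_approximation} (which gives the factor \(\frac{p-Rs}{p}\)); since \(s>\frac{p}{4R}\), the two factors agree up to an absolute constant and either suffices.

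There is, however, one genuine gap. To control \(\varphi_1\) on \(\Upsilon_{\mathcal{I}}\) you invoke Proposition \ref{prop:ubound_R_sparse}, but that proposition assumes \(1\le s\le\frac{p}{4R}\), whereas here \(s>\frac{p}{4R}\); its hypothesis is violated, and the inclusion \(\Upsilon_{\mathcal{I}}(p,s,\sqrt 8\,\varepsilon)\subset\Theta(p,s,\varepsilon)\) does not rescue this, because Proposition \ref{prop:ubound_R_sparse} only controls \(\varphi_1\) over \(\Theta\) in the small-\(s\) regime (its proof uses \(s\le\frac{p}{4R}\) to get \(\Theta\subset\Upsilon_{\mathcal{I}}\), the reverse inclusion). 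What is actually needed, and what the paper uses, are Lemmas \ref{lemma:test_upsilon1_sparse} and \ref{lemma:test_upsilon1_dense}: these bound the risk of \(\varphi_1\) directly over \(\Upsilon_{\mathcal{I}}(p,s,C\psi_1)\), with no restriction of \(s\) relative to \(\frac{p}{R}\). Replace the citation of Proposition \ref{prop:ubound_R_sparse} with those two lemmas and the argument closes.
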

    
    We obtain a lower bound which matches the upper bound. 
    \begin{proposition}[Lower bound]\label{prop:lbound_pR_dense}
        Suppose \(\frac{p}{4R} < s \leq \frac{p}{R} - \sqrt{\frac{p}{R}\log(eR)}\) and \(\gamma \in [0, 1)\). If \(\eta \in (0, 1)\), then there exists a constant \(c_\eta > 0\) such that for all \(0 < c < c_\eta\) we have \(\mathcal{R}(c(\psi_1 \vee \upsilon)) \geq 1-\eta\) where \(\psi_1^2\) is given by (\ref{rate:problemI}) and \(\upsilon^2\) is given by (\ref{rate:pR_dense_upsilon}).
    \end{proposition}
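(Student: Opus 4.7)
The $\psi_1$-part of the lower bound is delivered by Proposition \ref{prop:lbound_R_sparse}: in the regime $\psi_1 \gtrsim \upsilon$ we have $\psi_1 \vee \upsilon \asymp \psi_1$, so the bound follows by monotonicity of $\mathcal{R}$ in the separation level. I focus on the complementary regime $\upsilon \gtrsim \psi_1$, where the plan is to construct a prior $\pi$ supported on $\Theta(p,s,c\upsilon)$ with $\chi^2(P_{\pi,\gamma,R}\,\|\,P_{0,\gamma,R})$ uniformly bounded for $c$ small, then invoke the standard inequality $\mathcal{R} \geq 1 - \tfrac{1}{2}\sqrt{\chi^2}$ to conclude.

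The key structural observation is that $\Sigma$ is block diagonal with blocks indexed by $B_1,\ldots,B_R$, hence so is $\Sigma^{-1}$, and vectors supported in distinct blocks are $\Sigma^{-1}$-orthogonal. Take $\pi := R^{-1} \sum_{k=1}^R \pi_k$, where each $\pi_k$ is the translate to $B_k$ of a within-block prior $\pi_1$ on $B_1$. Ingster's identity then yields
\begin{equation*}
    \chi^2(P_\pi\,\|\,P_0) \;=\; \tfrac{1}{R}\,\chi^2\bigl(\pi_1\,\big\|\,N(0, \Sigma_{B_1,B_1})\bigr),
\end{equation*}
so any within-block prior whose chi-squared is of order $R$ already suffices for an overall chi-squared of order one. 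This ``scanning boost'' is precisely what injects the $\log(eR)$ and $\log R$ factors into $\upsilon$.

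The within-block problem, with covariance $\Sigma_{B_1,B_1} = (1-\gamma)I_{p/R} + \gamma \mathbf{1}\mathbf{1}^\intercal$ and sparsity $\|\theta_{B_1}\|_0 \leq s < p/R$, is exactly the equicorrelated setup of Section \ref{section:ProblemII}, so I adapt the two constructions used in the proof of Proposition \ref{prop:problemII_lowerbound}. Writing $\upsilon^2 = \upsilon_A^2 \wedge \upsilon_B^2$ for the two summands, I handle them separately. For $\upsilon_B^2 = (1-\gamma + \gamma p/R)\log(eR)$, take $\pi_1$ to be the law of $\mu\mathbf{1}_S$ with $S \subset B_1$ uniform of size $s$; the mean shift by $\nu^* = (\mu sR/p)\mathbf{1}_{B_1}$ aligned with the top eigenvector of $\Sigma_{B_1,B_1}$ controls the total variation via $\|\nu^*\|_{\Sigma_{B_1,B_1}^{-1}}^2$ and permits $\mu^2 s \asymp (1-\gamma + \gamma p/R)\log(eR)$. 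For $\upsilon_A^2 = \frac{(1-\gamma)p}{p-Rs}\bigl(\sqrt{(p/R)\log(eR)} + \log R\bigr)$, the same mean shift reduces the problem to detecting a $(p/R - s)$-sparse alternative in the hyperplane orthogonal to $\mathbf{1}_{B_1}$; the $\log R$ summand emerges from a Collier-type analysis of the hypergeometric overlap $|S \cap S'|$ (as in \cite{collierMinimaxEstimationLinear2017}), while the $\sqrt{(p/R)\log(eR)}$ summand emerges from the bulk chi-squared contribution.

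The main technical hurdle is the chi-squared computation for the $\upsilon_A$ construction, which must produce both additive regimes simultaneously from a single moment generating function bound on the hypergeometric overlap — calling for sharp handling of both its quadratic (bulk) and exponential (sparse) tails, in combination with the mean-shift correction along $\mathbf{1}_{B_1}$. A secondary bookkeeping issue is that the regime $p/(4R) < s \leq p/R - \sqrt{(p/R)\log(eR)}$ does not perfectly align with the Problem II sparsity constraint $s \geq (p/R)/2$, so the constructions of Proposition \ref{prop:problemII_lowerbound} must be verified to extend down to $s > p/(4R)$, where the behavior is governed by the same within-block rate $\psi_2^2(p/R, s, \gamma)$.
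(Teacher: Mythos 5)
Your high-level architecture agrees with the paper: handle $\psi_1$ via Proposition \ref{prop:lbound_R_sparse}, and for the $\upsilon$ part build a scanning prior $\pi = R^{-1}\sum_{k=1}^R \pi_k$, exploiting block-diagonality of $\Sigma^{-1}$ to get $\chi^2(P_{\pi,\gamma,R}\|P_{0,\gamma,R}) = R^{-1}\,\chi^2(\text{within-block mixture}\,\|\,N(0,\Sigma_{B_1,B_1}))$. That identity is exactly what the paper uses (it appears as the conditioning on $\{K=\widetilde{K}\}$ in the Ingster--Suslina formula), and the $1/R$ damping is what buys the $\log(eR)$ and $\log R$ factors. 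This part of your reasoning is sound.

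The genuine gap is in how you propose to bound the within-block term. The scanning identity lives in $\chi^2$, not in total variation: a uniform mixture of $R$ components damps chi-squared by $1/R$, but it does \emph{not} damp TV in any comparable way. So to cash in on the $1/R$ you must bound the within-block \emph{chi-squared} by something of order $R$. But the argument you sketch for the $\upsilon_B$ regime --- shift by $\nu^* = (\mu sR/p)\mathbf{1}_{B_1}$, bound $d_{TV}(N(\nu^*,\Sigma_{B_1,B_1}), N(0,\Sigma_{B_1,B_1}))$ via $\exp(\|\nu^*\|^2_{\Sigma^{-1}})$, then apply the triangle inequality to pass to the shifted problem as in Lemma \ref{lemma:problemII_lowerbound} --- is intrinsically a TV argument. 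At the separation you need, $\|\nu^*\|^2_{\Sigma_{B_1,B_1}^{-1}} \asymp c^2\log(eR)$, which diverges with $R$, so the within-block TV term is on the order of $R^{c^2/2}$: not small, and there is no scanning factor to rescue TV. You cannot chain $P_0 \to N(\nu^*,\Sigma) \to \text{shifted mixture}$ in chi-squared, since $\chi^2$ is not a metric and has no triangle inequality. The TV-triangle mean-shift technique and the $1/R$ chi-squared scanning do not compose.

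What the paper does instead is perform the mean shift \emph{inside} the Ingster--Suslina quadratic form, as an algebraic rearrangement rather than a distributional shift. On the event $\{K=\widetilde{K}\}$ one writes
\[
\langle\theta, \Omega_K\widetilde{\theta}\rangle = \langle\theta-\eta, \Omega_K(\widetilde{\theta}-\widetilde{\eta})\rangle + \langle\eta,\Omega_K\widetilde{\theta}\rangle + \langle\widetilde{\eta},\Omega_K\theta\rangle - \langle\eta,\Omega_K\widetilde{\eta}\rangle
\]
with $\eta := \frac{c\rho\sqrt{p/R}}{s}\,\mathbf{1}_{B_K}$ (and $\widetilde{\eta}$ analogously), then bounds the cross terms deterministically by $O(c^2\log(eR))$ because $\mathbf{1}_{B_K}$ is the top eigendirection of $\Sigma_{B_K,B_K}$ and therefore lies in the \emph{small}-eigenvalue direction of $\Omega_K$; the residual piece $\langle\theta-\eta,\Omega_K(\widetilde{\theta}-\widetilde{\eta})\rangle$ is the $(p/R-s)$-sparse hypergeometric overlap that yields the $\upsilon_A$ contribution. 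Two further points your sketch elides: the computation uses both wedge operands of $\upsilon^2$ simultaneously (one bound for the cross terms, the other for the main term), not casewise on which is the minimum; and a separate sub-case arises when $\sqrt{(p/R)\log(eR)} < \log R$, where the paper renormalizes the prior amplitude to $c\rho/\sqrt{s}$. Your flagged secondary issue about $s > p/(4R)$ versus $s \geq p/(2R)$ is real but benign; the paper works directly with $\frac{p}{R}-s \leq 3s$ throughout.
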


    \subsection{\texorpdfstring{Regime \(\frac{p}{R} - \sqrt{\frac{p}{R}\log(eR)} < s < \frac{p}{R}\)}{Regime p/R - sqrt(p log(eR)/R) < s < p/R}}
    The regime \(\frac{p}{R} - \sqrt{\frac{p}{R}\log(eR)} < s < \frac{p}{R}\) is similar to the previous sparsity regime. The strategy of scanning over all groups is employed here as well. For \(k \in [R]\) and \(t > 0\) define the testing statistic 
    \begin{equation*}
        Y_t^{(k)} := \sum_{i \in B_k} \left((\widetilde{X}_{B_k})_i^2 - \alpha_t\right) \mathbbm{1}_{\left\{|(\widetilde{X}_{B_k})_i| \geq t\right\}}
    \end{equation*}
    where \(\alpha_t\) is given by (\ref{eqn:alpha_t}). With this testing statistic, define the test 
    \begin{equation}\label{test:pR_very_dense_tsybakov}
        \varphi_{t, r}^{\text{scan}} := \max_{k \in [R]} \mathbbm{1}_{\left\{Y_t^{(k)} > r\right\}}
    \end{equation}
    where \(r \in \R\) is used to set the desired testing risk. The test \(\varphi_{r}^{\mathbf{1}-\text{scan}}\) will be employed as well for sensitivity to \(\Upsilon_{\mathcal{II}}(p, s, \varepsilon)\). The test \(\varphi_1\) given by (\ref{test:R_problem1}) will be used for sensitivity to \(\Upsilon_{\mathcal{I}}(p, s, \varepsilon)\). 

    \begin{proposition}[Upper bound]\label{prop:ubound_pR_very_dense}
        Suppose \(s > \frac{p}{4R}\). Further suppose \(\frac{p}{R} - \sqrt{\frac{p}{R}\log(eR)} < s < \frac{p}{R}\) and \(\gamma \in [0, 1)\). If \(\eta \in (0, 1)\), then there exists a constant \(C_\eta > 0\) depending only on \(\eta\) such that for all \(C > C_\eta\) the testing procedure \(\varphi^* := \varphi_1 \vee \varphi_2\) where 
        \begin{equation*}
            \varphi_2 := 
            \begin{cases}
                \varphi_{\widetilde{t}, \widetilde{r}}^{\text{scan}} &\text{if } \upsilon^2 = \frac{(1-\gamma)p}{p-Rs}\left( \left(\frac{p}{R} - s\right)\log\left(1 + \frac{Rp \log(eR)}{(p-Rs)^2}\right)+ \log(R) \right), \\
                \varphi_{\frac{C^2}{64}\log(eR)}^{\mathbf{1}-\text{scan}} &\text{if } \upsilon^2 = \left(1 -\gamma + \gamma\frac{p}{R}\right)\log(eR)
            \end{cases}
        \end{equation*}
        with \(\widetilde{t} := \sqrt{2 \log\left(1 + \frac{Rp \log(eR)}{(p-Rs)^2}\right)}\) and \(\widetilde{r} := \frac{C^2}{64} \left(\left( \frac{p}{R} - s\right) \log\left(1 + \frac{Rp\log(eR)}{(p-Rs)^2}\right) + \log(R)\right)\) satisfies 
        \begin{equation*}
            P_{0, \gamma, R}\{\varphi^* = 1\} + \sup_{\theta \in \Theta(p, s, C(\psi_1 \vee \upsilon))} P_{\theta, \gamma, R}\{\varphi^* = 0\} \leq \eta.
        \end{equation*}
        Here, \(\psi_1^2\) is given by (\ref{rate:problemI}), \(\varphi_1\) is given by (\ref{test:R_problem1}), and \(\upsilon^2\) is given by (\ref{rate:pR_dense_upsilon}).
    \end{proposition}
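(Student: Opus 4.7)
The plan is to invoke Proposition \ref{prop:theta_upsilon} to decompose the alternative as \(\Theta(p, s, \varepsilon) \subset \Upsilon_{\mathcal{I}}(p, s, \varepsilon) \cup \Upsilon_{\mathcal{II}}(p, s, \varepsilon)\) and then to control the risks of the constituent tests separately. Since \(\varphi^* = \varphi_1 \vee \varphi_2\), a union bound gives
\[
P_{0, \gamma, R}\{\varphi^* = 1\} + \sup_{\theta \in \Theta(p, s, C(\psi_1 \vee \upsilon))} P_{\theta, \gamma, R}\{\varphi^* = 0\} \leq 2 P_{0, \gamma, R}\{\varphi_1 = 1\} + 2 P_{0, \gamma, R}\{\varphi_2 = 1\} + \sup_{\theta \in \Upsilon_{\mathcal{I}}(p, s, C\psi_1)} P_{\theta, \gamma, R}\{\varphi_1 = 0\} + \sup_{\theta \in \Upsilon_{\mathcal{II}}(p, s, C\upsilon)} P_{\theta, \gamma, R}\{\varphi_2 = 0\}.
\]
The sensitivity of \(\varphi_1\) on \(\Upsilon_{\mathcal{I}}(p, s, C\psi_1)\) is exactly the content of the proof of Proposition \ref{prop:ubound_R_sparse}: the decorrelated data \(\widetilde{X}_{B_k}\) has mean \((\theta_{B_k} - \bar{\theta}_{B_k}\mathbf{1}_{B_k})/\sqrt{1-\gamma}\), and on \(B_k \cap \supp(\theta)\) this coincides with \((\theta_{B_k} - \bar{\theta}_{B_k}\mathbf{1}_{B_k \cap \supp(\theta)})/\sqrt{1-\gamma}\), a subvector of dimension at most \(s\) whose squared norm is at least \(C^2\psi_1^2/(8(1-\gamma))\) by the definition of \(\Upsilon_{\mathcal{I}}\). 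The sparse-subvector extension of Collier's statistic (Proposition \ref{prop:supp_tsybakov}) handles \(s < \sqrt{p}\); the \(\chi^2\) tail bound handles \(s \geq \sqrt{p}\).

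Turning to \(\varphi_2\) on \(\Upsilon_{\mathcal{II}}(p, s, C\upsilon)\), fix \(\theta \in \Upsilon_{\mathcal{II}}\). Since \(s < p/R\) in the present regime, the number of groups with \(|B_k \cap \supp(\theta)| > p/(4R)\) is at most \(4Rs/p < 4\), so by pigeonhole there is a group \(B_{k^*}\) with \(|B_{k^*} \cap \supp(\theta)| > p/(4R)\) and \(\|\bar{\theta}_{B_{k^*}}\mathbf{1}_{B_{k^*}}\|^2 \geq C^2 \upsilon^2 / 32\), equivalently \(\bar{\theta}_{B_{k^*}}^2 \geq C^2 R \upsilon^2/(32 p)\). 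In the case \(\upsilon^2 = (1-\gamma+\gamma p/R)\log(eR)\), the test used is \(\varphi_{C^2 \log(eR)/64}^{\mathbf{1}\text{-scan}}\); under \(H_0\) the statistics \(\langle \sqrt{R/p}\,\mathbf{1}_{B_k}, X\rangle^2/(1-\gamma+\gamma p/R)\) are i.i.d.\ \(\chi^2_1\) across groups by block independence, so Gaussian/chi-square tails plus a union bound over \(R\) groups control Type I error at threshold of order \(\log(eR)\), while under \(H_1\) the noncentrality at \(k^*\) is \((p/R)\bar{\theta}_{B_{k^*}}^2 \geq C^2 \upsilon^2/32 \gtrsim C^2(1-\gamma+\gamma p/R)\log(eR)\), giving rejection for \(C\) large.

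In the other case \(\upsilon^2 = \frac{(1-\gamma)p}{p-Rs}\bigl((p/R - s)\log(1 + Rp\log(eR)/(p-Rs)^2) + \log R\bigr)\), the test used is \(\varphi_{\widetilde{t}, \widetilde{r}}^{\text{scan}}\). Under \(H_0\) the blocks \(\widetilde{X}_{B_k}\) are i.i.d.\ \(N(0, I_{p/R})\), so each \(Y_{\widetilde{t}}^{(k)}\) has the distribution of a standard Collier statistic in dimension \(p/R\), and the exponential tail of that statistic combined with a union bound over \(R\) groups yields Type I control provided \(\widetilde{r}\) absorbs a \(\log R\) inflation, which is exactly its prescribed form. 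Under \(H_1\), the mean of \(\widetilde{X}_{B_{k^*}}\) equals \(-\bar{\theta}_{B_{k^*}}/\sqrt{1-\gamma}\) at each of the \(\geq p/R - s\) coordinates of \(B_{k^*} \setminus \supp(\theta)\), producing a \((p/R-s)\)-sparse subvector of the mean of squared norm at least \((p/R - s)\bar{\theta}_{B_{k^*}}^2/(1-\gamma)\). A direct calculation gives
\[
\frac{p/R - s}{(1-\gamma) p/R} \cdot C^2 \upsilon^2 \;=\; C^2 \left[(p/R - s)\log\!\left(1 + \frac{Rp \log(eR)}{(p-Rs)^2}\right) + \log R\right],
\]
which matches the squared signal strength required by the Collier statistic at sparsity \(p/R-s\) in dimension \(p/R\) with threshold \(\widetilde{t}\), plus the scan budget \(\log R\). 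Proposition \ref{prop:supp_tsybakov} then delivers rejection at group \(k^*\) with high probability.

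The main obstacle is the Type II analysis in the scan-Collier case: the mean of \(\widetilde{X}_{B_{k^*}}\) is not literally sparse, since on \(B_{k^*} \cap \supp(\theta)\) it takes arbitrary values \((\theta_i - \bar{\theta}_{B_{k^*}})/\sqrt{1-\gamma}\) while being a nonzero constant on the anti-support. The sparse-subvector extension of Collier's statistic (Proposition \ref{prop:supp_tsybakov}) is the critical ingredient that lets the test lock onto the \((p/R-s)\)-sparse constant piece of large norm without being spoiled by the coordinates in the support, and the precise tuning of \(\widetilde{t}\) to the effective sparsity \(p/R - s\) (with the \(\log(eR)\) inflation inside the logarithm) and of \(\widetilde{r}\) to the \(R\)-fold scan must be matched carefully. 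By contrast, Type I control and the \(\mathbf{1}\)-scan case reduce, after exploiting block independence, to elementary Gaussian and chi-square tail bounds.
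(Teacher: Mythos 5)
Your proof takes essentially the same route as the paper: decompose $\Theta(p,s,\varepsilon)$ into $\Upsilon_{\mathcal{I}}\cup\Upsilon_{\mathcal{II}}$ via Proposition~\ref{prop:theta_upsilon}, control $\varphi_1$ on $\Upsilon_{\mathcal{I}}$ exactly as in Proposition~\ref{prop:ubound_R_sparse}, and control $\varphi_2$ on $\Upsilon_{\mathcal{II}}$ via a union bound over the $R$-fold scan together with the block-conditional constituent test analysis — which for the scan-Collier test amounts to extracting the $(p/R-s)$-sparse constant subvector on $B_{k^*}\setminus\supp(\theta)$ and feeding it into the sparse-subvector extension (Proposition~\ref{prop:supp_tsybakov}) with the $\log(eR)$-inflated threshold; this mirrors the paper's Lemmas~\ref{lemma:tsybakov_scan} and~\ref{lemma:linear_scan} and the combination argument in Proposition~\ref{prop:ubound_pR_dense}. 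Your superfluous factors of 2 on the Type~I errors in the union bound are harmless, and your identification of the non-sparsity of the decorrelated mean and the reliance on Proposition~\ref{prop:supp_tsybakov} is exactly the crux the paper leans on.
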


    We obtain a matching lower bound.

    \begin{proposition}[Lower bound]\label{prop:lbound_pR_very_dense}
        Suppose \(s > \frac{p}{4R}\). Further suppose \(\frac{p}{R} - \sqrt{\frac{p}{R}\log(eR)} < s < \frac{p}{R}\) and \(\gamma \in [0, 1)\). If \(\eta \in (0, 1)\), then there exists a constant \(c_\eta > 0\) depending only on \(\eta\) such that for all \(0 < c < c_\eta\) we have \(\mathcal{R}(c(\psi_1 \vee \upsilon)) \geq 1-\eta\) where \(\psi_1^2\) is given by (\ref{rate:problemI}) and \(\upsilon^2\) is given by (\ref{rate:pR_dense_upsilon}).
    \end{proposition}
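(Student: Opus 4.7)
Since $\mathcal{R}(c(\psi_1 \vee \upsilon)) \geq \max\{\mathcal{R}(c\psi_1),\mathcal{R}(c\upsilon)\}$, it suffices to establish $\mathcal{R}(c\psi_1) \geq 1-\eta$ and $\mathcal{R}(c\upsilon) \geq 1-\eta$ separately for sufficiently small $c$. The first bound is already delivered by Proposition \ref{prop:lbound_R_sparse}, which holds for every $1 \leq s \leq p$. Only the $\upsilon$ bound requires new work, and I expect a single prior construction to deliver both terms in the minimum defining $\upsilon^2$.

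For the $\upsilon$ bound I would adapt the prior from the proof of Proposition \ref{prop:problemII_lowerbound} by grouping: draw a block $k \sim \mathrm{Unif}[R]$, then draw an $s$-subset $S$ of $B_k$ uniformly, and set $\theta = \mu\mathbf{1}_S$ with $\mu > 0$ calibrated so $\mu^2 s \asymp c^2 \upsilon^2$. By construction $\theta \in \Theta(p, s, c\upsilon)$ almost surely, and the standard Le Cam inequality $\mathcal{R}(c\upsilon) \geq 1 - \tfrac{1}{2}\sqrt{\chi^2(P_\pi \| P_{0,\gamma,R})}$ reduces everything to a chi-square bound.

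The chi-square exploits that, since $B_1,\dots,B_R$ partition $[p]$, the covariance $\Sigma$ and hence $\Sigma^{-1}$ are block-diagonal. Thus for independent $\theta_1, \theta_2 \sim \pi$ with blocks $k_1, k_2$, the form $\theta_1^\intercal \Sigma^{-1} \theta_2$ vanishes whenever $k_1 \neq k_2$. Using the Sherman--Morrison formula $\Sigma_{B_k}^{-1} = \tfrac{1}{1-\gamma}I_{B_k} - \tfrac{\gamma}{(1-\gamma)(1-\gamma+\gamma p/R)}\mathbf{1}_{B_k}\mathbf{1}_{B_k}^\intercal$ gives
\begin{equation*}
    \chi^2(P_\pi \| P_{0,\gamma,R}) + 1 = \frac{R-1}{R} + \frac{e^{-\alpha}}{R} \cdot E_{S_1, S_2}\!\left[\exp\!\left(\frac{\mu^2 |S_1 \cap S_2|}{1-\gamma}\right)\right],
\end{equation*}
where $\alpha = \tfrac{\mu^2 \gamma s^2}{(1-\gamma)(1-\gamma+\gamma p/R)}$ and $S_1, S_2$ are independent uniform $s$-subsets of a fixed block of size $p/R$, so $|S_1 \cap S_2|$ is hypergeometric. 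Apart from the factor $1/R$ this is exactly the chi-square from the proof of Proposition \ref{prop:problemII_lowerbound} at ambient dimension $p/R$. Dominating the hypergeometric stochastically by $\mathrm{Bin}(s, sR/p)$ bounds the MGF by $(1 + (sR/p)(e^{\mu^2/(1-\gamma)} - 1))^s$; analyzing this in two sub-regimes (small vs.\ large $\mu^2/(1-\gamma)$) produces respectively the thresholds $(1-\gamma+\gamma p/R)\log(eR)$ and $\tfrac{(1-\gamma)p}{R}\log(1 + \tfrac{p\log(eR)}{R(p/R-s)^2}) + \tfrac{(1-\gamma)p\log R}{R(p/R-s)}$, whose minimum is $\upsilon^2$ up to absolute constants.

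The main obstacle is the mean-shift step. In the equicorrelated Problem II proof one subtracts a deterministic $\nu^* = \mu \mathbf{1}_p$ to reduce to a $(p-s)$-sparse detection problem, but here the analogous shift $\mu\mathbf{1}_{B_k}$ depends on the random block $k$ and cannot be absorbed into the data. The workaround is to perform all calculations directly at the chi-square level; the block-diagonal structure of $\Sigma^{-1}$ eliminates cross-block contributions for free, while the implicit ``mean-shift reduction'' to $(p/R-s)$-sparse detection is carried out automatically by the rank-one correction of $\Sigma_{B_k}^{-1}$, which is the origin of the exponential down-shift $e^{-\alpha}$. The genuinely delicate step is balancing this down-shift and the hypergeometric MGF against the $1/R$ block-randomness factor accurately enough to recover both the additive $\log R$ and the inflated $\log(eR)$ inside the sparse-regime logarithm.
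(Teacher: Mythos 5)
Your decomposition into the $\psi_1$ and $\upsilon$ parts is sound, and the $\psi_1$ part is indeed disposed of by Proposition \ref{prop:lbound_R_sparse}. The prior (random block, then random $s$-subset within it), the block-diagonal observation that cross-block contributions vanish, and the computation of the exponential down-shift $e^{-\alpha}$ coming from the rank-one correction are all correct and match the paper's construction in Lemma~\ref{lemma:lbound_pR_dense}.

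However, there is a genuine gap in the $\upsilon$ part, and it sits exactly at the step you describe as ``carried out automatically.'' The factor $e^{-\alpha}$ subtracts only the quadratic-form contribution $\frac{\mu^2\gamma s^2}{(1-\gamma)(1-\gamma+\gamma p/R)}$ coming from $\mathbf{1}_{B_k}\mathbf{1}_{B_k}^\intercal$. It does \emph{not} perform the reduction from the $s$-sparse hypergeometric $|S_1\cap S_2|$ to the $(p/R-s)$-sparse hypergeometric $|T_1\cap\widetilde T_1|$ (where $T_i := B_k\setminus S_i$); that reduction requires the separate algebraic identity $|S_1\cap S_2| = 2s - p/R + |T_1\cap\widetilde T_1|$, which pulls out an additional deterministic factor $e^{L(2s-p/R)}$ with $L=\mu^2/(1-\gamma)$. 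Having skipped that step, you then apply the hypergeometric/binomial MGF bound of Lemma~\ref{lemma:hypergeometric} directly to $|S_1\cap S_2|$, which treats $|S_1\cap S_2|$ like a $\mathrm{Bin}(s, sR/p)$. With $q:=sR/p$ close to $1$ (which is forced here, since $p/R-s<\sqrt{(p/R)\log(eR)}\ll p/R$), this bound charges you binomial-scale fluctuations of order $\sqrt{s(1-q)}\asymp\sqrt{p/R-s}$, whereas the true hypergeometric fluctuations of $|S_1\cap S_2|$ are only of order $(p/R-s)/\sqrt{p/R}$ (the finite-population correction). The excess is fatal. Concretely, take $R=1$, $s=p-1$, and $1-\gamma\lesssim 1/\log(1+p)$, so $\upsilon^2\asymp(1-\gamma)p\log(1+p)$ and $L\asymp c^2\log(1+p)$. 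Then
\begin{equation*}
    e^{-\alpha}\,(1-q+qe^L)^s \;\geq\; \exp\!\left(\frac{(p-1)L}{1-\gamma+\gamma p} - (1-e^{-L}) - O(1/p)\right) \;\gtrsim\; (1+p)^{c^2},
\end{equation*}
which diverges in $p$ for every fixed $c>0$, so your claimed $\chi^2$ bound never gets below a constant. The lower bound does hold, but your route to it does not close.

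The fix, which is what Cases~3 and~4 of Lemma~\ref{lemma:lbound_pR_dense} actually do, is to perform the mean shift inside the $\chi^2$ computation: write $\theta - \eta = -\frac{c\rho\sqrt{p/R}}{s}\mathbf{1}_{T}$ with $\eta = \frac{c\rho\sqrt{p/R}}{s}\mathbf{1}_{B_K}$ and $T = B_K\setminus S$, expand $\langle\theta,\Omega_K\widetilde\theta\rangle = \langle\theta-\eta,\Omega_K(\widetilde\theta-\widetilde\eta)\rangle + (\text{cross terms})$, bound the cross terms by $O(c^2\log(eR))$ using $\rho^2\leq(1-\gamma+\gamma p/R)\log(eR)$, and only then apply the hypergeometric MGF bound -- now to $|T_1\cap\widetilde T_1|$ with sparsity $p/R-s$, where the binomial comparison is tight. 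The ``random block'' issue you flag as the obstacle is not actually one: the shift $\eta$ is allowed to depend on $K$ because it is applied symbolically inside the chi-square integrand, not to the data. You had the right instinct (work at the $\chi^2$ level), but the complement-set identity is an indispensable ingredient, not an automatic byproduct of the rank-one correction.
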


    Propositions \ref{prop:ubound_pR_dense}, \ref{prop:lbound_pR_dense}, \ref{prop:ubound_pR_very_dense}, and \ref{prop:lbound_pR_very_dense} are combined to give Theorem \ref{thm:pR_dense}. 

    \subsection{\texorpdfstring{Regime \(\frac{p}{R} \leq s \leq p\)}{Regime p/R <= s <= p\)}}\label{section:R_avg}
    In the regime \(\frac{p}{R} \leq s \leq p\), the space \(\Upsilon_{\mathcal{II}}(p, s, \varepsilon)\) plays a more prominent role. Specifically, the space can be thought of as consisting of those \(\theta\) such that \((\bar{\theta}_{B_1},...,\bar{\theta}_{B_R})\) exhibits a \(\frac{s}{p/R}\)-sparse subvector with not too small squared norm. A natural approach is thus to examine the \(R\)-dimensional sparse signal detection problem with the observation \((\bar{X}_{B_1},...,\bar{X}_{B_R})\).

    For sensitivity to the space \(\Upsilon_{\mathcal{I}}(p, s, \varepsilon)\), the test \(\varphi_1\) given by (\ref{test:R_problem1}) is used. For sensitivity to \(\Upsilon_{\mathcal{II}}(p, s, \varepsilon)\), consider the following construction. Note that \(\{\bar{X}_{B_k}\}_{k=1}^{R}\) are independent and that 
    \begin{equation*}
        \sqrt{\frac{p}{R}}\bar{X}_{B_k} \sim N\left(\sqrt{\frac{p}{R}}\bar{\theta}_{B_k} , 1 - \gamma + \gamma \frac{p}{R} \right).
    \end{equation*}
    For \(t > 0\) define the testing statistic 
    \begin{equation*}
        \bar{Y}_t := \sum_{k=1}^{R} \left(\frac{\frac{p}{R} \bar{X}_{B_k}^2}{1-\gamma+\gamma \frac{p}{R}} - \alpha_{t}\right)\mathbbm{1}_{\left\{ \left|\frac{\sqrt{\frac{p}{R}} \bar{X}_{B_k}}{\sqrt{1-\gamma+\gamma \frac{p}{R}}} \right| \geq t\right\}}
    \end{equation*}
    where \(\alpha_t\) is given by (\ref{eqn:alpha_t}), and the corresponding test
    \begin{equation}\label{test:average_tsybakov}
        \bar{\varphi}_{t, r} := \mathbbm{1}_{\{\bar{Y}_t > r\}}.
    \end{equation}
    The test \(\bar{\varphi}_{t, r}\) is useful when \(s < \frac{p}{\sqrt{R}}\). When \(s \geq \frac{p}{\sqrt{R}}\), the testing statistic \(\sum_{k=1}^{R} ||\bar{X}_{B_k}\mathbf{1}_{B_k}||^2\) is used. Specifically, the test 
    \begin{equation}\label{test:average_chisquare}
        \bar{\varphi}_{r}^{\chi^2} := \mathbbm{1}_{\left\{\sum_{k=1}^{R} ||\bar{X}_{B_k} \mathbf{1}_{B_k}||^2 > \left(1 - \gamma + \gamma \frac{p}{R}\right)(R + r\sqrt{R})\right\}}
    \end{equation}
    will be used, where \(r \in \R\) is used to set the testing risk. The final testing procedure combines \(\varphi_1\), \(\bar{\varphi}_{t, r}\), and \(\bar{\varphi}_{r}^{\chi^2}\). The following proposition establishes its performance. 
    
    \begin{proposition}[Upper bound]\label{prop:R_avg_ubound}
        Suppose \(\frac{p}{R} \leq s \leq p\) and \(\gamma \in [0, 1)\). Set
        \begin{equation*}
            \bar{\rho}^2 := 
            \begin{cases}
                \left(1-\gamma+\gamma \frac{p}{R} \right) \frac{4Rs}{p} \log\left(1 + \frac{p^2}{16Rs^2}\right) &\text{if } s < \frac{p}{\sqrt{R}}, \\
                \left(1 - \gamma + \gamma \frac{p}{R}\right) \sqrt{R} &\text{if } s \geq \frac{p}{\sqrt{R}}. 
            \end{cases}
        \end{equation*}
        If \(\eta \in (0, 1)\), then there exists a constant \(C_\eta\) depending only on \(\eta\) such that for all \(C > C_\eta\), the testing procedure \(\varphi^* := \varphi_1 \vee \varphi_2\) satisfies 
        \begin{equation*}
            P_{0, \gamma, R}\left\{ \varphi^* = 1 \right\} + \sup_{\theta \in \Theta(p, s, C(\psi_1 \vee \bar{\rho}))} P_{\theta, \gamma, R}\{\varphi^* = 0\} \leq \eta. 
        \end{equation*}
        Here, \(\psi_1^2\) is given by (\ref{rate:problemI}), \(\varphi_1\) is given by (\ref{test:R_problem1}), and 
        \begin{equation}\label{test:R_avg}
            \varphi_2 := 
            \begin{cases}
                \bar{\varphi}_{\bar{t}, \bar{r}} &\text{if } s < \frac{p}{\sqrt{R}}, \\
                \bar{\varphi}_{C^2/16}^{\chi^2} &\text{if } s \geq \frac{p}{\sqrt{R}}
            \end{cases}
        \end{equation}
        with \(\bar{t} = \sqrt{2 \log\left(1 + \frac{p^2}{Rs^2}\right)}\) and \(\bar{r} = \frac{C^2}{64} \frac{4Rs}{p}\log\left(1 + \frac{p^2}{16Rs^2}\right)\).
    \end{proposition}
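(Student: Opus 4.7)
The plan is to invoke Proposition \ref{prop:theta_upsilon} to reduce the task to bounding two separate errors: the error of \(\varphi_1\) on \(\Upsilon_{\mathcal{I}}(p, s, C\psi_1)\) and the error of \(\varphi_2\) on \(\Upsilon_{\mathcal{II}}(p, s, C\bar{\rho})\). The Type I errors of \(\varphi_1\) and \(\varphi_2\) will be controlled independently of \(\theta\) by choosing \(C\) large in their respective tuning parameters, while the Type II error of \(\varphi^* = \varphi_1 \vee \varphi_2\) over \(\Theta(p, s, C(\psi_1 \vee \bar{\rho}))\) is majorized, via Proposition \ref{prop:theta_upsilon}, by the worst-case Type II error of each constituent on its respective subset. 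The analysis of \(\varphi_1\) on \(\Upsilon_{\mathcal{I}}\) is a direct replay of the proof of Proposition \ref{prop:ubound_R_sparse}: the block-wise transformations \(\widetilde{X}_{B_k} \sim N((\theta_{B_k} - \bar{\theta}_{B_k}\mathbf{1}_{B_k})/\sqrt{1-\gamma}, I_{p/R})\) are independent, and the condition defining \(\Upsilon_{\mathcal{I}}\) isolates a sparse component of the composite mean with squared norm \(\gtrsim \psi_1^2/(1-\gamma)\), which is what drives either the thresholded statistic (when \(s < \sqrt{p}\)) or the global \(\chi^2\) statistic (when \(s \geq \sqrt{p}\)). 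Crucially, nothing in this argument uses the upper bound \(s \leq p/(4R)\).

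The substantive new content concerns the analysis of \(\varphi_2\) on \(\Upsilon_{\mathcal{II}}(p, s, C\bar{\rho})\). First I would reduce the \(p\)-dimensional problem to a Collier--Comminges--Tsybakov-style sparse detection problem in ambient dimension \(R\). Setting \(\bar{Z}_k := \sqrt{p/R}\bar{X}_{B_k}/\sqrt{1-\gamma+\gamma p/R}\) and \(\mu_k := \sqrt{p/R}\bar{\theta}_{B_k}/\sqrt{1-\gamma+\gamma p/R}\), the block independence of \(X\) yields \(\bar{Z} \sim N(\mu, I_R)\). The ``active'' group set \(\mathcal{K} := \{k \in [R] : |B_k \cap \supp(\theta)| > p/(4R)\}\) satisfies \(|\mathcal{K}| \leq 4Rs/p\) since \(||\theta||_0 \leq s\), and the defining inequality of \(\Upsilon_{\mathcal{II}}\) rearranges to
\begin{equation*}
\sum_{k \in \mathcal{K}} \mu_k^2 = \frac{1}{1-\gamma+\gamma p/R}\sum_{k \in \mathcal{K}} ||\bar{\theta}_{B_k}\mathbf{1}_{B_k}||^2 \geq \frac{C^2 \bar{\rho}^2}{8(1-\gamma+\gamma p/R)}.
\end{equation*}
Substituting the explicit value of \(\bar{\rho}^2\), the restriction \(\mu_{\mathcal{K}}\) is an \(\lceil 4Rs/p\rceil\)-sparse subvector whose squared norm exceeds, up to an absolute factor proportional to \(C^2\), the CCT separation rate (\ref{rate:independent}) in ambient dimension \(R\) at sparsity \(\lceil 4Rs/p \rceil\): namely \((Rs/p)\log(1 + p^2/(Rs^2))\) when \(s < p/\sqrt{R}\), and \(\sqrt{R}\) when \(s \geq p/\sqrt{R}\).

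With the reduction in place, the conclusion follows by replaying the CCT-type analysis on \(\bar{Z}\). One verifies that \(\bar{Y}_t = \sum_{k=1}^{R} (\bar{Z}_k^2 - \alpha_t)\mathbbm{1}_{\{|\bar{Z}_k| \geq t\}}\) and that \(\sum_{k=1}^{R} ||\bar{X}_{B_k}\mathbf{1}_{B_k}||^2 = (1-\gamma+\gamma p/R)||\bar{Z}||^2\), so \(\bar{\varphi}_{\bar{t}, \bar{r}}\) and \(\bar{\varphi}_r^{\chi^2}\) are literally the CCT thresholded statistic and the global \(\chi^2\) test applied to the \(R\)-dimensional Gaussian vector \(\bar{Z}\). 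In the sparse regime \(s < p/\sqrt{R}\), the choice \(\bar{t} = \sqrt{2\log(1 + p^2/(Rs^2))}\) matches the CCT optimal threshold at sparsity \(\lceil 4Rs/p \rceil\), and the null/alternative moment computations of \cite{collierMinimaxEstimationLinear2017} (carried out exactly as in the proof of Proposition \ref{prop:problemI_upperbound}) together with Chebyshev's inequality give the desired Type I and Type II bounds with the stated \(\bar{r}\). In the dense regime \(s \geq p/\sqrt{R}\), standard \(\chi^2\) deviation inequalities applied under the alternative \(||\mu||^2 \gtrsim \sqrt{R}\) give the bound for \(\bar{\varphi}_{C^2/16}^{\chi^2}\). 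I expect the main obstacle to be the bookkeeping that translates the ``active-group'' inequality defining \(\Upsilon_{\mathcal{II}}\) into the sparsity and signal-norm parameters fed into the CCT analysis on \(\bar{Z}\), and a minor second issue will be reconciling the log factor in \(\bar{t}\) and \(\bar{r}\) (involving \(p^2/(Rs^2)\) or \(p^2/(16Rs^2)\)) with the CCT-optimal choice at sparsity \(\lceil 4Rs/p \rceil\); both logs are of the same order up to absolute constants, which is all that is needed. Once the reduction is set up cleanly, both subcases are routine applications of prior results.
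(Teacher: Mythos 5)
Your proposal is correct and follows essentially the same route as the paper: Proposition~\ref{prop:theta_upsilon} splits \(\Theta\) into \(\Upsilon_{\mathcal{I}}\cup\Upsilon_{\mathcal{II}}\); \(\varphi_1\) is handled by Lemmas~\ref{lemma:test_upsilon1_sparse} and~\ref{lemma:test_upsilon1_dense} (whose proofs, as you observe, do not use \(s\le p/(4R)\)); and \(\varphi_2\) is handled (Lemmas~\ref{lemma:test_upsilon2_avg_sparse} and~\ref{lemma:test_upsilon2_avg_dense}) by treating \(\sqrt{p/R}\,\bar X_{B_k}/\sqrt{1-\gamma+\gamma p/R}\) as an \(R\)-dimensional Gaussian with effective sparsity \(\le 4Rs/p\) and invoking Proposition~\ref{prop:supp_tsybakov} (the paper's extension of the Collier et al.\ statistic). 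The "log-factor reconciliation" you flag between \(\bar t\propto\sqrt{\log(1+p^2/(Rs^2))}\) and the threshold \(\sqrt{\log(1+p^2/(16Rs^2))}\) that Proposition~\ref{prop:supp_tsybakov} prescribes for sparsity \(4Rs/p\) is a genuine (constant-factor) mismatch in the paper's statement of \(\bar t\) rather than a gap in your argument, and resolves exactly as you say.
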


    The optimality of our testing procedure is implied by a matching lower bound.

    \begin{proposition}[Lower bound]\label{prop:R_avg_lbound}
        Suppose \(\frac{p}{R} \leq s \leq p\) and \(\gamma \in [0, 1)\). Set 
        \begin{equation*}
            \underline{\rho}^2 := 
            \begin{cases}
                \left(1 - \gamma + \gamma \frac{p}{R}\right) \frac{Rs}{p} \log\left(1 + \frac{p^2}{Rs^2}\right) &\text{if } s < \frac{p}{\sqrt{R}}, \\
                \left(1 - \gamma + \gamma \frac{p}{R}\right) \sqrt{R} &\text{if } s \geq \frac{p}{\sqrt{R}}.
            \end{cases}
        \end{equation*}
        If \(\eta \in (0, 1)\), then there exists a constant \(c_\eta > 0\) depending only on \(\eta\) such that for all \(0 < c < c_\eta\) we have \(\mathcal{R}(c(\psi_1 \vee \underline{\rho})) \geq 1-\eta\) where \(\psi_1^2\) is given by (\ref{rate:problemI}). 
    \end{proposition}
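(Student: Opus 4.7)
The plan is to prove separately the two bounds $\mathcal{R}(c\psi_1) \geq 1-\eta$ and $\mathcal{R}(c\underline{\rho}) \geq 1-\eta$, and then combine them by taking the smaller of the two constants to obtain $\mathcal{R}(c(\psi_1 \vee \underline{\rho})) \geq 1-\eta$. The first bound is an immediate consequence of Proposition \ref{prop:lbound_R_sparse}, which already handles all sparsity levels $1 \leq s \leq p$. The genuinely new content is the $\underline{\rho}$ bound, which I would obtain by reducing the $p$-dimensional block-correlated problem to a sparse detection problem in an $R$-dimensional independent Gaussian sequence model and then invoking the Collier--Comminges--Tsybakov lower bound (the rate displayed in (\ref{rate:independent})).

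For the reduction, set $s' := \lfloor Rs/p \rfloor$, which satisfies $s' \geq 1$ since $s \geq p/R$, and set $\mu^2 := \varepsilon^2 R/(s' p)$. I would consider the prior $\pi$ that draws a uniform size-$s'$ subset $\mathcal{S} \subset [R]$ and outputs $\theta := \mu \sum_{k \in \mathcal{S}} \mathbf{1}_{B_k}$. By construction $\|\theta\|_0 = s'(p/R) \leq s$ and $\|\theta\|^2 = \varepsilon^2$, so $\pi$ is supported on $\Theta(p, s, \varepsilon)$. Because every $\theta$ in the support of $\pi$ lies in $\mathrm{span}\{\mathbf{1}_{B_k} : k \in [R]\}$, the likelihood ratio $dP_{\theta, \gamma, R}/dP_{0, \gamma, R}$ depends on $X$ only through the block averages $\bar{X}_{B_1}, \ldots, \bar{X}_{B_R}$. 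These averages are independent with $\sqrt{p/R}\,\bar{X}_{B_k} \sim N\bigl(\mu\sqrt{p/R}\,\mathbbm{1}_{\{k \in \mathcal{S}\}},\, 1-\gamma+\gamma p/R\bigr)$. Rescaling by $\sqrt{1-\gamma+\gamma p/R}$ yields the standard Gaussian sequence model in $\R^R$ with sparse mean $\nu$ satisfying $\nu_k = \mu_* \mathbbm{1}_{\{k \in \mathcal{S}\}}$, where $\mu_*^2 = \mu^2 (p/R)/(1-\gamma+\gamma p/R) = \varepsilon^2/\bigl(s'(1-\gamma+\gamma p/R)\bigr)$.

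In this reduced model, the Ingster--Suslina formula gives $1 + \chi^2\bigl(E_{\mathcal{S}} N(\nu_{\mathcal{S}}, I_R) \,\|\, N(0, I_R)\bigr) = E\exp(\mu_*^2 |\mathcal{S} \cap \mathcal{S}'|)$ with $\mathcal{S}, \mathcal{S}'$ i.i.d. uniform size-$s'$ subsets of $[R]$; this is precisely the quantity controlled in the Collier--Comminges--Tsybakov lower bound. For $s' < \sqrt{R}$ it stays below $1 + \eta^2$ provided $\mu_*^2 \leq c_1 \log(1 + R/(s')^2)$, and for $s' \geq \sqrt{R}$ it stays bounded provided $s' \mu_*^2 \leq c_2 \sqrt{R}$. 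Plugging in $\mu_*^2 = \varepsilon^2/\bigl(s'(1-\gamma+\gamma p/R)\bigr)$ and using $s' \asymp Rs/p$, these thresholds translate to $\varepsilon^2 \lesssim (1-\gamma+\gamma p/R)(Rs/p)\log(1 + p^2/(Rs^2))$ when $s < p/\sqrt{R}$ and $\varepsilon^2 \lesssim (1-\gamma+\gamma p/R)\sqrt{R}$ when $s \geq p/\sqrt{R}$, matching $\underline{\rho}^2$. Standard total variation bounds via $d_{TV} \leq \frac{1}{2}\sqrt{\chi^2}$ then give $\mathcal{R}(c\underline{\rho}) \geq 1-\eta$ for sufficiently small $c$.

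The main obstacle is the reduction step: rigorously justifying that, under the prior $\pi$ concentrated on block-constant signals, the block averages $(\bar{X}_{B_k})_k$ act as a sufficient statistic so that the original minimax risk is lower bounded by the Bayes risk of the reduced $R$-dimensional problem. Once that reduction is in place, the remaining chi-squared computation is essentially a black-box invocation of the argument underlying (\ref{rate:independent}). A minor technical annoyance is that $Rs/p$ need not be an integer, but this is absorbed into absolute constants since $s \geq p/R$ guarantees $\lfloor Rs/p \rfloor \asymp Rs/p$.
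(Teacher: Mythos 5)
Your overall plan matches the paper's: split $\psi_1 \vee \underline{\rho}$ into its two pieces, invoke Proposition~\ref{prop:lbound_R_sparse} for $\psi_1$ (which the paper states for all $1 \leq s \leq p$), and construct a block-constant prior for $\underline{\rho}$. The paper's route for the $\underline{\rho}$ piece is Lemma~\ref{lemma:lbound_R_avg_problem}, which uses exactly your prior $\theta = \mu\sum_{k\in\mathcal{S}}\mathbf{1}_{B_k}$ with $m = \lfloor Rs/p\rfloor$ blocks, but computes the $\chi^2$-divergence directly in the $p$-dimensional model via the Ingster--Suslina formula and the explicit block-precision inverse (Lemma~\ref{lemma:block_precision}). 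You instead reduce to an $R$-dimensional independent model by observing the block averages are sufficient; both routes arrive at the identical hypergeometric mgf $E\exp(\mu_*^2|\mathcal{S}\cap\mathcal{S}'|)$, so this difference is cosmetic. What you flag as the ``main obstacle'' --- justifying the sufficiency reduction --- is actually quick: since every $\theta$ in the prior's support lies in $\spn\{\mathbf{1}_{B_1},\dots,\mathbf{1}_{B_R}\}$, a short computation with $\Sigma^{-1}$ (as in Lemma~\ref{lemma:block_precision}) shows $\theta^\intercal\Sigma^{-1}X = \frac{p/R}{1-\gamma+\gamma p/R}\sum_k \bar\theta_{B_k}\bar X_{B_k}$, so the likelihood ratio under the prior depends on $X$ only through $(\bar X_{B_1},\dots,\bar X_{B_R})$ and hence $d_{TV}(P_{0,\gamma,R},P_{\pi,\gamma,R})$ equals the total variation in the pushforward model. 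The paper makes this same reduction explicit for $\gamma = 1$ in Lemma~\ref{lemma:TV_reduction}.

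One genuine (though easily repaired) gap: for the dense branch $s' \geq \sqrt{R}$ you claim the bound ``stays bounded provided $s'\mu_*^2 \leq c_2\sqrt{R}$'' while keeping the prior uniformly over size-$s'$ subsets of $[R]$. That is false as stated. With a size-$s'$ prior and $\mu_*^2 \leq c\sqrt{R}/s'$, the hypergeometric bound gives $\log(\chi^2+1) \lesssim s'\cdot\frac{s'}{R}(e^{\mu_*^2}-1) \lesssim c\,s'/\sqrt{R}$, which is unbounded when $s' \gg \sqrt{R}$. The fix is the standard CCT one: in the dense regime, cap the prior sparsity at $\lfloor\sqrt{R}\rfloor$ rather than $s'$ (this is precisely what the paper does in Case 2 of Lemma~\ref{lemma:lbound_R_avg_problem}, replacing $s$ with $p/\sqrt{R}$). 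With that substitution, the computation closes and your argument is correct.
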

    Propositions \ref{prop:R_avg_ubound} and \ref{prop:R_avg_lbound} together imply Theorem \ref{thm:R_avg_problem}.

    \subsection{Perfect correlation}\label{section:R_perfect_correlation}
    In this section, the case \(\gamma = 1\) is addressed.
    \begin{theorem}\label{thm:R_perfect_correlation}
        If \(s \in [p]\) and \(R \in [p]\) divides \(p\), then 
        \begin{equation*}
            \varepsilon^*(p, s, 1, R)^2 \asymp 
            \begin{cases}
                0 &\text{if } s < \frac{p}{R}, \\
                s \log\left(1 + \frac{p^2}{Rs^2}\right) &\text{if } \frac{p}{R} \leq s < \frac{p}{\sqrt{R}}, \\
                \frac{p}{\sqrt{R}} &\text{if } \frac{p}{\sqrt{R}} \leq s \leq p. 
            \end{cases}
        \end{equation*}
    \end{theorem}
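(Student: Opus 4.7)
The plan hinges on the observation that at \(\gamma = 1\) the noise \(W_{B(i)}\) is constant within each block \(B_k\), so within-block variation is noise-free. Concretely, writing \(X_i = \theta_i + W_{B(i)}\), I define for each \(k \in [R]\) the residual \(T_{B_k} := X_{B_k} - \bar{X}_{B_k}\mathbf{1}_{B_k}\), which is deterministically equal to \(\theta_{B_k} - \bar{\theta}_{B_k}\mathbf{1}_{B_k}\), and the block average \(\bar{X}_{B_k} \sim N(\bar{\theta}_{B_k}, 1)\). The \(R\) block averages are jointly independent and independent of the residuals, so \(\left((T_{B_k})_k, (\bar{X}_{B_k})_k\right)\) is a sufficient statistic, and the proof decouples into a ``within-block'' piece and an ``across-block'' piece.

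For \(1 \leq s < \tfrac{p}{R}\), I argue the rate is zero. A block-constant vector \(c_k\mathbf{1}_{B_k}\) with \(c_k \neq 0\) contributes \(\tfrac{p}{R}\) nonzero coordinates, so the sparsity constraint \(\|\theta\|_0 \leq s < \tfrac{p}{R}\) forces every block \(B_k\) to be either identically zero or non-constant; in either case \(T_{B_k} = 0\) if and only if \(\theta_{B_k} = 0\). Hence \(\theta = 0\) is equivalent to \(\sum_k \|T_{B_k}\|^2 = 0\), and the test rejecting precisely when \(\sum_k \|T_{B_k}\|^2 > 0\) achieves zero testing risk for every \(\varepsilon > 0\); therefore \(\varepsilon^*(p, s, 1, R) = 0\).

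For \(\tfrac{p}{R} \leq s \leq p\), I would use a composite test \(\varphi_A \vee \varphi_B\), where \(\varphi_A := \mathbbm{1}_{\left\{\sum_k \|T_{B_k}\|^2 > 0\right\}}\) has zero type I error (all \(T_{B_k}\) vanish under \(H_0\)) and \(\varphi_B\) is the Collier--Comminges--Tsybakov rate-optimal test \cite{collierMinimaxEstimationLinear2017} applied to \(\bar{X} := (\bar{X}_{B_1}, \ldots, \bar{X}_{B_R}) \sim N(c, I_R)\) with \(c_k := \bar{\theta}_{B_k}\). Under the alternative either some block is non-constant (so \(\varphi_A\) fires almost surely), or \(\theta = \sum_k c_k \mathbf{1}_{B_k}\) with \(\|c\|_0 \leq m := \lfloor Rs/p \rfloor\) and \(\|c\|^2 = (R/p)\|\theta\|^2 \geq (R/p)\varepsilon^2\). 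The rate (\ref{rate:independent}) then applies in dimension \(R\) with sparsity \(m\), giving consistent detection when \(\|c\|^2 \gtrsim m\log(1 + R/m^2)\) for \(m < \sqrt{R}\) and \(\|c\|^2 \gtrsim \sqrt{R}\) otherwise. Translating via \(\varepsilon^2 = (p/R)\|c\|^2\) and using \(m \asymp Rs/p\) (valid since \(Rs/p \geq 1\) in this regime) recovers the upper bounds \(s\log(1 + p^2/(Rs^2))\) for \(s < p/\sqrt{R}\) and \(p/\sqrt{R}\) for \(s \geq p/\sqrt{R}\).

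For the matching lower bound when \(s \geq \tfrac{p}{R}\), I would restrict attention to the block-constant subfamily: place a prior on \(\theta = \sum_k c_k \mathbf{1}_{B_k}\) by drawing \(c \in \R^R\) from a prior supported on vectors with \(\|c\|_0 \leq m\) and \(\|c\|^2 = (R/p)\varepsilon^2\), so that \(\|\theta\|_0 \leq s\) and \(\|\theta\| = \varepsilon\). Under any such \(\theta\) the residuals \(T_{B_k}\) vanish deterministically, so all statistical information lies in \(\bar{X} \sim N(c, I_R)\), reducing the problem exactly to \(R\)-dimensional \(m\)-sparse Gaussian mean testing. The minimax lower bound of \cite{collierMinimaxEstimationLinear2017} then transfers directly and, after translating through \(\varepsilon^2 = (p/R)\|c\|^2\), yields the claimed lower bound. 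The only bookkeeping is the harmless rounding \(m \asymp Rs/p\); the main point of the proof, and what makes it cleaner than the \(\gamma < 1\) analysis, is that the block-constant reduction is exact in distribution, so no serious obstacle arises.
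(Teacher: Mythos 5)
Your proposal is correct and follows essentially the same route as the paper: the zero-rate claim for \(s < p/R\) via the noise-free residual test \(\varphi^\dag = \mathbbm{1}\{\sum_k \|X_{B_k}-\bar{X}_{B_k}\mathbf{1}_{B_k}\|^2>0\}\), the upper bound via \(\varphi^\dag\) combined with the Collier--Comminges--Tsybakov and \(\chi^2\) tests on the block means (the paper's \(\varphi_2\) in Propositions~\ref{prop:R_perfect_correlation_ubound} and~\ref{prop:R_avg_ubound}), and the lower bound via a block-constant prior reduced exactly to an \(R\)-dimensional \(\lfloor Rs/p\rfloor\)-sparse Gaussian mean problem (the paper formalizes this with Lemma~\ref{lemma:TV_reduction} and then runs the Ingster--Suslina and hypergeometric calculations). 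The one cosmetic difference is that you partition the alternative directly into block-constant versus non-block-constant signals, whereas the paper routes through the general-purpose decomposition \(\Theta \subset \Theta^\dag \cup \Upsilon_{\mathcal{II}}\) of Proposition~\ref{prop:theta_upsilon} to keep the \(\gamma=1\) proof structurally parallel to the \(\gamma<1\) case; both partitions lead to the same constituent tests and the same rate bookkeeping.
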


    When \(s < \frac{p}{R}\), the situation is similar to the one discussed in Section \ref{section:perfect_correlation}. Specifically, for \(k \in [R]\) one can write \(X_{B_k} = \theta_{B_k} + W_k \mathbf{1}_{B_k}\) where \(W_1,...,W_R \overset{iid}{\sim} N(0, 1)\). Consequently, \(X_{B_k} - \bar{X}_{B_k}\mathbf{1}_{B_k} = \theta_{B_k} - \bar{\theta}_{B_k}\) for all \(k\) almost surely. Now for any \(\varepsilon > 0\), there must be a \(k^*\) such that \(\theta_{B_{k^*}} \not \in \spn\{\mathbf{1}_{B_{k^*}}\}\) for \(\theta \in \Theta(p, s, \varepsilon)\). Consequently, the test which rejects the null hypothesis when \(\sum_{k=1}^{R} ||X_{B_k} - \bar{X}_{B_k}\mathbf{1}_{B_k}||^2\) is nonzero attains zero testing risk. Since \(\varepsilon > 0\) was arbitrary, the minimax separation rate is degenerate when \(s < \frac{p}{R}\). 

    When \(s \geq \frac{p}{R}\), the minimax separation rate becomes nontrivial due to those signals \(\theta \in \Theta(p, s, \varepsilon)\) which are constant on the groups, i.e. \(\theta_{B_k} \in \spn\left\{ \mathbf{1}_{B_k}\right\}\) for all \(k\). Consequently, the effective dimension of the problem reduces to \(R\) and the effective sparsity is \(\left\lfloor \frac{s}{p/R}\right\rfloor\). Essentially, the problem reduces to a sparse signal detection problem given independent observations \(\sqrt{\frac{p}{R}} \bar{X}_{B_k} \sim N\left( \sqrt{\frac{p}{R}}\bar{\theta}_{B_k}, \frac{p}{R}\right)\) for \(k \in [R]\). The result of Collier et al. \cite{collierMinimaxEstimationLinear2017} thus predicts that the squared separation rate should be of order \(\frac{p}{R} \cdot \frac{s}{p/R} \log\left(1 + \frac{p^2}{Rs^2}\right)\) for \(\frac{s}{p/R} < \sqrt{R}\) and of order \(\frac{p}{R} \cdot \sqrt{R}\) for \(\frac{s}{p/R} \geq \sqrt{R}\). Indeed, Theorem \ref{thm:R_perfect_correlation} confirms this prediction.

    \section{Discussion}

    \subsection{Unknown sparsity}\label{section:sparsity_adaptive}
    The rate-optimal testing procedure (\ref{test:full_test}) requires knowledge of the sparsity level. In practice, this information is typically unknown to the statistician and so it is of practical interest to construct testing procedures which adapt to the sparsity level. Fortunately, it turns out that the simple idea of scanning over all sparsity levels and using (\ref{test:full_test}) at each level furnishes an adaptive testing procedure which achieves the minimax separation rate. 
    
    To describe the construction, we recall the constituent tests making up (\ref{test:full_test}) and make the dependence on the sparsity explicit. First, we will make use of the test (\ref{test:problemI_chisquare}). Notice it does not require knowledge of the sparsity. Second, we will make use of the test (\ref{test:problemI_tsybakov}) with the choice \(t = t(s)\) and \(r = r(s)\) depending on the sparsity, that is, we use \(\varphi_{t(s), r(s)} = \mathbbm{1}_{\{Y_{t(s)} > r(s)\}}\). Lastly, we make use of the test (\ref{test:problemII_linear}) which does not depend on the sparsity. The idea of scanning over all sparsity levels amounts to taking a maximum over these constituent tests, meaning we define
    \begin{equation}\label{test:adaptive}
        \varphi_{\text{adaptive}} := \left(\max_{1 \leq s < \sqrt{p}} \varphi_{t(s), r(s)} \right) \vee \varphi_{r_{\chi^2}}^{\chi^2} \vee \left(\max_{p-\sqrt{p} < s < p} \varphi_{\widetilde{t}(s), \widetilde{r}(s)}\right) \vee \varphi^{\mathbf{1}_p}_{r_{\mathbf{1}_p}}. 
    \end{equation}
    where \(t(s)\) and \(\widetilde{t}(s)\) are thresholds used in the test (\ref{test:problemI_tsybakov}) with the dependence on the sparsity \(s\) made explicit. The cutoffs \(r(s)\) and \(\widetilde{r}(s)\) also depend on the sparsity. The cutoffs \(r_{\chi^2}\) and \(r_{\mathbf{1}_p}\) for the tests (\ref{test:problemI_chisquare}) and (\ref{test:problemII_linear}) do not depend on \(s\).
    \begin{theorem}\label{thm:sparsity_adaptive}
        Suppose \(1 \leq s^* \leq p\) and \(\gamma \in [0, 1)\). If \(\eta \in (0, 1)\), then there exists a constant \(C_\eta > 0\) depending only on \(\eta\) such that for all \(C > C_\eta\) the test \(\varphi_{\text{adaptive}}\) given in (\ref{test:adaptive}) with the choices \(t(s) = \sqrt{2 \log\left(1 + \frac{p}{s^2}\right)}, \widetilde{t}(s) = \sqrt{2 \log\left(1 + \frac{p}{(p-s)^2}\right)}, r(s) = \frac{C^2}{32} s \log\left(1 + \frac{p}{s^2}\right), \widetilde{r}(s) = \frac{C^2}{8} (p-s)\log\left(1 + \frac{p}{(p-s)^2}\right), r_{\chi^2} = \frac{C^2}{2}, r_{\mathbf{1}_p} = \frac{C^2}{2}\) satisfies
        \begin{equation*}
            P_{0, \gamma}\{\varphi_{\text{adaptive}} = 1\} + \sup_{\theta \in \Theta(p, s^*, C\psi)} P_{\theta, \gamma}\{\varphi_{\text{adaptive}} = 0\} \leq \eta
        \end{equation*} 
        where \(\psi^2\) is given by the right hand side of Theorem \ref{thm:minimax_rate}.
    \end{theorem}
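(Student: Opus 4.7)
The plan is to control the Type I and Type II errors of $\varphi_{\text{adaptive}}$ separately, leveraging the bounds already established in Propositions \ref{prop:problemI_upperbound} and \ref{prop:problemII_upperbound}. For the Type I error, I would apply a union bound: since $\varphi_{\text{adaptive}} = 1$ under the null means at least one constituent test rejects, the overall null probability is bounded by the sum of the null rejection probabilities of the constituent tests. Each of the two scans ($1 \leq s < \sqrt{p}$ and $p-\sqrt{p} < s < p$) contains at most $\sqrt{p}$ tests. The Type I error of a single scan test $\varphi_{t(s), r(s)}$ decays like $\exp(-c\, C^2 s \log(1 + p/s^2))$ by the calculation in the proof of Proposition \ref{prop:problemI_upperbound}; since $s \log(1 + p/s^2) \gtrsim \log p$ for every $1 \leq s < \sqrt{p}$, this decay absorbs the $\sqrt{p}$ union-bound factor once $C$ is large enough. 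An analogous bound handles the anti-sparse scan $\varphi_{\widetilde{t}(s), \widetilde{r}(s)}$ via the corresponding calculation in Proposition \ref{prop:problemII_upperbound}, while the two non-scanning tests $\varphi^{\chi^2}_{r_{\chi^2}}$ and $\varphi^{\mathbf{1}_p}_{r_{\mathbf{1}_p}}$ are bounded directly. Summed together, these contributions are at most $\eta/2$ for $C$ sufficiently large.

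For the Type II error, the key observation is that $\varphi_{\text{adaptive}} = 0$ forces \emph{every} constituent test to return $0$, so the overall Type II error is at most the Type II error of any single constituent test we choose. I would perform a case analysis on $s^*$: when $s^* < \sqrt{p}$, pick $\varphi_{t(s^*), r(s^*)}$, which is precisely the sparse-regime test of Proposition \ref{prop:problemI_upperbound}, and use Lemma \ref{lemma:problemI_equiv} (together with the containment $\Theta(p, s^*, C\psi) \subset \Theta_{\mathcal{I}}(p, s^*, c C \psi_1)$ that it supplies) to transfer the bound to the original problem; when $\sqrt{p} \leq s^* \leq p - \sqrt{p}$, pick $\varphi^{\chi^2}_{r_{\chi^2}} \vee \varphi^{\mathbf{1}_p}_{r_{\mathbf{1}_p}}$, which combines the dense-regime tests of Propositions \ref{prop:problemI_upperbound} and \ref{prop:problemII_upperbound} and yields the required bound via Lemma \ref{lemma:problem_decomposition}; when $p - \sqrt{p} < s^* < p$, pick $\varphi_{\widetilde{t}(s^*), \widetilde{r}(s^*)} \vee \varphi^{\chi^2}_{r_{\chi^2}} \vee \varphi^{\mathbf{1}_p}_{r_{\mathbf{1}_p}}$; and when $s^* = p$, pick $\varphi^{\chi^2}_{r_{\chi^2}} \vee \varphi^{\mathbf{1}_p}_{r_{\mathbf{1}_p}}$. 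In each case, the rate $\psi^2$ from Theorem \ref{thm:minimax_rate} matches (up to absolute constants) the rate required by the chosen test, so the corresponding proposition gives a Type II error of at most $\eta/2$ once $C$ is large enough.

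The main obstacle I anticipate is calibrating the threshold $r(s)$ and $\widetilde{r}(s)$ so that the exponential Type I decay at the \emph{boundary} sparsity levels $s = 1$ and $s = p - 1$ still dominates the $\sqrt{p}$ union bound; this is the source of the larger threshold $C_\eta$ permitted here than in the non-adaptive statement. At these boundary values, the rate $s \log(1 + p/s^2)$ is of order $\log p$, and the corresponding Type I probability is bounded by $p^{-c C^2}$, which beats $\sqrt{p}$ comfortably for $C$ large. Away from the boundary the decay is even faster, so no additional care is needed in the interior of the scan. The remainder of the argument is bookkeeping: matching sparsity regimes to constituent tests, checking that $\psi$ dominates the rate required by the chosen test in each regime via Lemmas \ref{lemma:problem_decomposition} and \ref{lemma:problemI_equiv}, and invoking the already-proved Propositions \ref{prop:problemI_upperbound} and \ref{prop:problemII_upperbound} to conclude.
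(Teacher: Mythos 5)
Your Type~II analysis is sound and follows the paper: since $\varphi_{\text{adaptive}} = 0$ forces all constituent tests to return $0$, and the constituent tests making up the sparsity-$s^*$ rate-optimal procedure of Proposition~\ref{prop:full_test} are all included in (\ref{test:adaptive}) with the same thresholds, the Type~II error is controlled by that proposition. The gap is in your Type~I argument, specifically the claim that $P_{0,\gamma}\{\varphi_{t(s),r(s)}=1\}$ decays like $\exp(-cC^2 s\log(1+p/s^2))$ for all $1\le s<\sqrt p$. That rate is \emph{not} what Lemma~\ref{lemma:tsybakov_typeI_error} delivers. That lemma has the form $P\{Y_t > 9(\sqrt{pe^{-t^2/2}x}+x)\}\le e^{-x}$, so the exponent $x$ one can extract is constrained by requiring $r(s)\ge 9\sqrt{pe^{-t(s)^2/2}x}+9x$. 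With $t(s)=\sqrt{2\log(1+p/s^2)}$ one has $pe^{-t(s)^2/2}=ps^2/(p+s^2)\asymp s^2$ once $s\lesssim\sqrt p$, so the constraint becomes $r(s)\gtrsim s\sqrt{x}+x$. Taking $x\asymp C^2 s\log(1+p/s^2)$ forces $s\lesssim C^2\log(1+p/s^2)$, which fails badly for $s$ near $\sqrt p$ where $\log(1+p/s^2)$ is bounded. The largest admissible $x$ is in fact $\asymp \min\bigl(C^2 s\log(1+p/s^2),\, C^4\log^2(1+p/s^2)\bigr)$, and for $s\asymp\sqrt p$ the second arm wins and is $O(1)$ in $p$. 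So for the $\Theta(\sqrt p)$ indices with $s\asymp\sqrt p$, each single-test Type~I error is only $\exp(-\Theta_C(1))$, a constant independent of $p$. Your union bound $\sum_s p^{-cC^2}$ is therefore unjustified; the naive bound gives $\sqrt p \cdot\exp(-\Theta_C(1))$, which is not small.

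This is exactly why the paper's proof introduces the piecewise quantity $u(s)=\frac{C^2}{64\cdot 9}\bigl[\log^2(1+\tfrac{p}{s^2})\wedge s\log(1+\tfrac{p}{s^2})\bigr]$ rather than $s\log(1+p/s^2)$ alone, and then splits $\sum_s e^{-u(s)}$ into the two corresponding sums, controlling the $s\log$ arm by a geometric-type series dominated by $\sum_s s^{-2}$ and the $\log^2$ arm by comparison with a Gaussian integral. The whole difficulty you flagged as the ``main obstacle'' — calibrating $r(s)$ at the boundary $s=1$ — is in fact not the issue (at $s=1$ the exponent is $\asymp C^2\log p$, which is plenty); the real boundary that needs care is $s\approx\sqrt p$, where your claimed decay rate no longer holds, and your proposal has no mechanism to absorb the $\sqrt p$ union-bound factor there.
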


    \subsection{Rank-one correlation patterns}\label{section:rank_one_correlation}
    In this section, we extend the model (\ref{model:single_random_effect}) by examining ``rank-one'' correlation patterns. Specifically, consider the random effects model
    \begin{equation}\label{model:rank_one_correlation_additive}
        X_i = \theta_i + \sqrt{\gamma} Wv_i + \sqrt{1-\gamma} Z_i
    \end{equation}
    for \(1 \leq i \leq p\) where \(W,Z_1,...,Z_p\) are independent and identically distributed standard Gaussian random variables. Here, \(\gamma \in [0, 1]\) denotes the correlation level and \(v := (v_1,...,v_p) \in \R^p\). The marginal distribution of the observation \(X := (X_1,...,X_p)\) is given by
    \begin{equation}\label{model:rank_one_correlation}
        X \sim N\left( \theta, (1-\gamma) I_p + \gamma vv^\intercal \right)
    \end{equation}
    where \(\theta \in \R^p\), \(I_p \in \R^{p \times p}\) denotes the identity matrix. We assume the scaling \(||v|| = \sqrt{p}\) in order to keep the scale of \(\gamma\) the same as in (\ref{model:single_random_effect}). We say that (\ref{model:rank_one_correlation}) exhibits a ``rank-one'' correlation pattern since the covariance matrix \((1-\gamma)I_p + \gamma vv^\intercal\) is a convex combination between \(I_p\) and a rank-one positive-semidefinite matrix. The model (\ref{model:rank_one_correlation_additive}) is known as a ``random slope'' model in the linear mixed models literature \cite{agrestiFoundationsLinearGeneralized2015,gelmanDataAnalysisUsing2006a}. Specifically, each \(v_i\) can be interpreted as a one-dimensional covariate associated to the individual \(i\). The random effect \(\sqrt{\gamma}W\) then has the interpretation as a random slope since \(W\) is random. While the results of this section may have implications for practical modelling and study design concerns, we do not dwell on viewing (\ref{model:rank_one_correlation_additive}) as a random slope model. Rather, our discussion exclusively refers to \(v\) as a correlation pattern.
    
    We focus on the setting where \(v\) is known and present a partial result. Given an observation \(X\) from the model (\ref{model:rank_one_correlation}) in which \(v\) is known, we are interested in the hypothesis testing problem (\ref{problem:multiple_random_effects_1})-(\ref{problem:multiple_random_effects_2}). The minimax testing risk is defined to be 
    \begin{equation}\label{risk:known_v}
        \mathcal{R}(\varepsilon, v) = \inf_{\varphi}\left\{ P_{0, \gamma, v}\left\{\varphi = 1\right\} + \sup_{\theta \in \Theta(p, s, \varepsilon)} P_{\theta, \gamma, v}\left\{\varphi = 0 \right\} \right\}
    \end{equation}
    where the infimum runs over all measurable functions \(\varphi : \R^p \to \{0, 1\}\). The minimax separation rate \(\varepsilon^* = \varepsilon^*(p, s, \gamma, v)\) can be defined analogously to Definition \ref{def:separation_rate}. Note, however, that the rate depends on \(v\). Indeed, understanding how \(v\) affects the rate is the principal motivation in this section.

    A slight break in the narrative flow is necessary to first deal with the case \(\gamma = 1\). The situation is very similar to that described in Section \ref{section:perfect_correlation} with some slight modifications to deal with the direction of the correlation pattern. 
    \begin{proposition}\label{prop:known_v_perfect_correlation}
        If \(1 \leq s \leq p\), then 
        \begin{equation*}
            \varepsilon^*(p, s, 1, v)^2 \asymp 
            \begin{cases}
                0 &\text{if } s < ||v||_0, \\
                p &\text{if } s \geq ||v||_0.
            \end{cases}
        \end{equation*}
    \end{proposition}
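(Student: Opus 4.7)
The plan is to mirror the argument for Proposition \ref{prop:perfect_correlation}, adapted to the rank-one correlation pattern. With \(\gamma = 1\), the additive model (\ref{model:rank_one_correlation_additive}) gives \(X = \theta + W v\), so \(X\) lies almost surely on the random affine line \(\theta + \spn\{v\}\). Let \(P_v := \|v\|^{-2} v v^\intercal\) denote the orthogonal projection onto \(\spn\{v\}\). Projecting to orthogonal subspaces yields the two pieces
\begin{align*}
    (I_p - P_v) X &= (I_p - P_v) \theta, \\
    \langle v, X\rangle/\|v\| &= \langle v, \theta\rangle/\|v\| + W \|v\|,
\end{align*}
so the component orthogonal to \(v\) carries no noise, whereas the component along \(v\) is contaminated by the entire random effect.

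For the case \(s < \|v\|_0\), I would first show that \((I_p - P_v)\theta \neq 0\) for every \(\theta \in \Theta(p, s, \varepsilon)\). Indeed, \((I_p-P_v)\theta = 0\) forces \(\theta = c v\); but \(\|\theta\| \geq \varepsilon > 0\) gives \(c \neq 0\), whence \(\|\theta\|_0 = \|v\|_0 > s\), contradicting \(\|\theta\|_0 \leq s\). Consequently the test that rejects \(H_0\) when \((I_p - P_v) X \neq 0\) has zero testing risk, so \(\varepsilon^*(p,s,1,v) = 0\).

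For the case \(s \geq \|v\|_0\), I would handle the matching bounds separately. \textbf{Upper bound:} combine the orthogonal test above with a one-dimensional \(\chi^2\)-type test \(\mathbbm{1}\{\langle v, X\rangle^2/\|v\|^2 > (1+C) p\}\); using the Pythagorean decomposition \(\|\theta\|^2 = \|(I_p-P_v)\theta\|^2 + (\langle v,\theta\rangle/\|v\|)^2\), any \(\theta\) with \(\|\theta\|^2 \geq C^2 p\) satisfies either \(\|(I_p-P_v)\theta\|^2 \geq \varepsilon^2/2\) (handled perfectly by the first test) or \((\langle v,\theta\rangle/\|v\|)^2 \geq \varepsilon^2/2 \gtrsim p\), in which case the signal-to-noise ratio of the second test (whose null fluctuation has scale \(\|v\| = \sqrt{p}\)) is large enough for rejection with arbitrarily small error by choosing \(C\) large. \textbf{Lower bound:} reduce to the two-point problem \(\theta_0 = 0\) versus \(\theta_1 = c v\); the condition \(s \geq \|v\|_0\) makes \(\theta_1\) admissible whenever \(|c| \geq \varepsilon/\sqrt{p}\). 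Both \(P_{0,1,v}\) and \(P_{cv,1,v}\) are singular Gaussians supported on \(\spn\{v\}\), and by parametrizing \(X = \xi v\) the problem collapses to distinguishing \(\xi \sim N(0,1)\) from \(\xi \sim N(c,1)\); a standard total-variation calculation then gives testing risk at least \(1 - \eta\) whenever \(|c|\) is below an \(\eta\)-dependent constant, i.e.\ whenever \(\varepsilon^2 \leq c_\eta p\).

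There is not much of a true obstacle here: the whole proof is a transparent adaptation of Section \ref{section:perfect_correlation}, and the only place where care is needed is the step establishing \(\theta \notin \spn\{v\}\) in the \(s < \|v\|_0\) regime, which isolates why the threshold is \(\|v\|_0\) rather than \(p\). The lower bound exploits that the noise \(W v\) entirely masks any signal of the form \(cv\), so the problem along \(\spn\{v\}\) is just one-dimensional Gaussian location testing with variance \(\|v\|^2 = p\).
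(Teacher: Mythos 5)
Your proposal is correct and follows essentially the same route as the paper: project along $v$ to isolate the noise-free orthogonal component in the case $s < \|v\|_0$, and for $s \geq \|v\|_0$ use a $\chi^2$-type test together with a two-point reduction at $\theta = cv$ for the lower bound. The paper implements the upper bound for $s \geq \|v\|_0$ via the single $\chi^2$-test on $\|X\|^2$, whose analysis already exploits the Pythagorean decomposition $\|X\|^2 = \|(I_p - P_v)X\|^2 + \|P_vX\|^2$, so your explicit split into a noise-free orthogonal test plus a one-dimensional $\chi^2$-test along $v$ is a cosmetic rather than substantive variation; likewise the paper verifies the two-point lower bound through a $\chi^2$-divergence computation rather than total variation, but both give the same $e^{c^2}-1$ control.
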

    
    For the discussion in the remainder of this section, the reader should keep the setting \(\gamma \in [0, 1)\) in mind. In preparation for the statement of our main result define 
    \begin{equation}\label{eqn:omega_v}
        \omega(v) := \max\left\{ 0 \leq s \leq p : \max_{\substack{S \subset [p], \\ |S| \leq s}} ||v_S||^2 \leq \frac{p}{4}\right\}.
    \end{equation}
    Note \(\omega(v) < ||v||_0\) since \(||v||^2 = p\). The main result of this section is the following. 
    \begin{theorem}\label{thm:known_v_rate}
        Suppose \(1 \leq s \leq \omega(v)\) and \(\gamma \in [0, 1]\). If \(\omega(v) \geq \sqrt{p}\), then 
        \begin{equation*}
            \varepsilon^*(p, s, \gamma, v)^2 \asymp 
            \begin{cases}
                (1-\gamma)s \log\left(1 + \frac{p}{s^2}\right) &\text{if } s \leq \sqrt{p}, \\
                (1-\gamma) \sqrt{p} &\text{if } \sqrt{p} < s \leq \omega(v).
            \end{cases}
        \end{equation*}
        If \(\omega(v) < \sqrt{p}\), then 
        \begin{equation*}
            \varepsilon^*(p, s, \gamma, v)^2 \asymp (1-\gamma) s \log\left(1 + \frac{p}{s^2}\right).
        \end{equation*}
    \end{theorem}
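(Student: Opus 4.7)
The plan mirrors the Problem I strategy from Section \ref{section:ProblemI}, with the equicorrelation vector \(\mathbf{1}_p\) replaced by \(v\). Define the decorrelating transformation \(\widetilde{X} := (1-\gamma)^{-1/2}P_v X + (\xi/\sqrt{p})v\) where \(P_v := I_p - vv^\intercal/p\) is the projection onto \(v^\perp\) and \(\xi \sim N(0,1)\) is drawn independently of \(X\); a direct calculation shows \(\widetilde{X} \sim N(P_v \theta/\sqrt{1-\gamma}, I_p)\). The key geometric lemma is that for any \(\theta\) supported on a set \(S\) with \(|S| \leq \omega(v)\), the defining inequality \(\|v_S\|^2 \leq p/4\) together with \(P_v\theta = \theta - (\langle v_S, \theta_S\rangle/p)v\) and Cauchy--Schwarz yields \(\|(P_v\theta)_S\|^2 \geq \|\theta\|^2/2\) and \(\|P_v\theta\|^2 \geq 3\|\theta\|^2/4\). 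Consequently, applying Proposition \ref{prop:supp_tsybakov} to \(\widetilde{X}\) with threshold \(t^* = \sqrt{2\log(1+p/s^2)}\) delivers the upper bound \((1-\gamma)s\log(1+p/s^2)\) in the regime \(s < \sqrt{p}\), and in the regime \(\sqrt{p} \leq s \leq \omega(v)\) (only possible when \(\omega(v) \geq \sqrt{p}\)) the \(\chi^2\)-test \(\varphi^{\chi^2}_{C^2/2}\) from (\ref{test:problemI_chisquare}) applied to \(\widetilde{X}\) delivers the upper bound \((1-\gamma)\sqrt{p}\). Both analyses follow Proposition \ref{prop:problemI_upperbound} essentially verbatim once the geometric lemma is in hand.

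For the matching lower bound, Sherman--Morrison gives \(\Sigma^{-1} = \frac{1}{1-\gamma}I_p - \frac{\gamma}{(1-\gamma)(1-\gamma+\gamma p)}vv^\intercal\), so for any signals \(\theta, \theta'\),
\[
\theta^\intercal \Sigma^{-1} \theta' \;=\; \frac{\langle \theta, \theta'\rangle}{1-\gamma} - \frac{\gamma\,\langle v,\theta\rangle\langle v,\theta'\rangle}{(1-\gamma)(1-\gamma+\gamma p)}.
\]
Consider the standard Rademacher prior \(\pi\) placing \(\theta = \mu \epsilon_S\) with \(S\) a uniformly drawn size-\(s\) subset of \([p]\) and \(\epsilon \in \{\pm 1\}^S\) i.i.d., calibrated so \(\|\theta\| \asymp c\psi_1\). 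Since \(s \leq \omega(v)\), every realised \(S\) automatically obeys \(\|v_S\|^2 \leq p/4\); combined with the crude bound \(\gamma p/(1-\gamma+\gamma p) \leq 1\), this shows \(\theta^\intercal \Sigma^{-1}\theta'\) differs from the pure-independent exponent \(\langle\theta,\theta'\rangle/(1-\gamma)\) by an additive term of sub-Gaussian type in the Rademacher signs whose moment generating function is controlled by \(\|v_S\|^2 \|\theta\|\|\theta'\|/[(1-\gamma)(1-\gamma+\gamma p)] \leq \|\theta\|\|\theta'\|/[4(1-\gamma)]\). Accordingly the chi-squared divergence reduces, up to absolute constants, to the one underlying Proposition \ref{prop:problemI_lowerbound} at effective noise level \(1-\gamma\), and yields the lower bound \((1-\gamma)s\log(1+p/s^2)\) for \(1 \leq s \leq \sqrt{p} \wedge \omega(v)\). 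When \(\omega(v) \geq \sqrt{p}\) and \(\sqrt{p} < s \leq \omega(v)\), the lower bound \((1-\gamma)\sqrt{p}\) follows from monotonicity \(\Theta(p,s,\varepsilon) \supseteq \Theta(p,\lceil\sqrt{p}\rceil,\varepsilon)\) applied to the sparse regime lower bound at \(s = \lceil\sqrt{p}\rceil \leq \omega(v)\).

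The main obstacle is the moment generating function control of \(\gamma\langle v,\theta\rangle\langle v,\theta'\rangle/[(1-\gamma)(1-\gamma+\gamma p)]\) inside the \(\chi^2\) computation. Unlike the equicorrelation case, the vector \(v\) need not respect sparsity, and a priori \(\langle v,\theta\rangle\) could be as large as \(\|v\|\|\theta\| = \sqrt{p}\|\theta\|\), which would swamp the \(\gamma/(1-\gamma+\gamma p) \leq 1/p\) prefactor only weakly. The role of \(\omega(v)\) is precisely to guarantee \(\|v_S\|^2 \leq p/4\) for every realisable support and thereby cap the sub-Gaussian variance of \(\langle v,\theta\rangle\) at \(\mu^2 p/4\); this is exactly what is needed to absorb the \(v\)-correction into a constant-factor perturbation of the independent-case calculation. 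The geometric lemma in the upper bound, while elementary, is the mirror image of this obstruction and makes the same \(p/4\) threshold critical for both directions.
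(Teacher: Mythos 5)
Your upper bound is essentially the paper's: same decorrelation $\widetilde{X}$, same geometric consequences of $\|v_S\|^2 \le p/4$ (Lemmas \ref{lemma:v_orthog_approximation} and \ref{lemma:v_supp_approximation}), same two tests. That part is fine.

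Your lower bound has a genuine gap, and the gap is exactly where you identify "the main obstacle." The paper sidesteps the cross term entirely by choosing a prior whose nonzero coordinates carry the sign of the corresponding coordinate of $v$, i.e.\ $\mu_i = \sgn(v_i)\,c\psi/\sqrt{s}$ for $i\in S$. Then $\langle v,\theta\rangle = \frac{c\psi}{\sqrt{s}}\|v_S\|_1 \ge 0$ deterministically, so the $vv^\intercal$ correction to $\Sigma^{-1}$ contributes a \emph{nonpositive} term to $\langle\theta,\Sigma^{-1}\theta'\rangle$ and can be dropped wholesale. The remaining expression is the same hypergeometric calculation as Collier et al., and no moment generating function control of the $v$-term is needed; in fact $\omega(v)$ plays no role in the lower bound whatsoever (it is used only in the upper bound).

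Your Rademacher version does not close. Write the exponent as $A-B$ with $A = \frac{\mu^2}{1-\gamma}\sum_{i\in S\cap S'}\epsilon_i\epsilon'_i$ and $B = \frac{\gamma\mu^2}{(1-\gamma)(1-\gamma+\gamma p)}\bigl(\sum_{i\in S}v_i\epsilon_i\bigr)\bigl(\sum_{j\in S'}v_j\epsilon'_j\bigr)$. First, $A$ and $B$ are built from the same Rademacher variables, so $E[e^{A-B}]$ does not factor and a separate bound on $E[e^{-B}]$ is not sufficient. Second, even ignoring the coupling, the quantity you name as the sub-Gaussian variance proxy is off: conditioning on $\epsilon'$, the linear form $\sum_{i\in S}v_i\epsilon_i$ has variance proxy $\|v_S\|^2$, so the variance proxy of $-B$ is $\frac{\gamma^2\mu^4\,\|v_S\|^2\,\langle v_{S'},\epsilon'_{S'}\rangle^2}{(1-\gamma)^2(1-\gamma+\gamma p)^2}$; taking the remaining expectation over $\epsilon'$ forces you to control $E_{\epsilon'}[\exp(\beta\,\langle v_{S'},\epsilon'_{S'}\rangle^2)]$ with $\beta\|v_{S'}\|^2 \lesssim \mu^4/(1-\gamma)^2 = c^4\log^2(1+p/s^2)$, which is unbounded in $p$ for fixed $s$ and small constant $c$. (Crudely bounding $|B|$ instead gives $|B| \le \frac{c^2 s\log(1+p/s^2)}{4}$, also unbounded.) So the claim that the cross term is "absorbed into a constant-factor perturbation of the independent-case calculation" is not substantiated, and the naive variance bound is not good enough.

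The takeaway is that $\omega(v)$ is a sufficient condition for the \emph{upper bound} argument, but the \emph{lower bound} is robust to any $v$ because the prior can align its signs with $v$; your sketch inverts this, loading all the $\omega(v)$ reasoning onto the lower bound, where the paper needs none of it.
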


    Interestingly, for any \(v, v' \in \R^p\) with \(||v|| = ||v'|| = \sqrt{p}\), we have the rate equivalence \(\varepsilon^*(p, s, \gamma, v)^2 \asymp \varepsilon^*(p, s, \gamma, v')^2\) for all \(1 \leq s \leq \omega(v) \wedge \omega(v')\). As an example, Theorem \ref{thm:known_v_rate} implies that if \(v, v' \in \{-1, 1\}^p\), then \(\varepsilon^*(p , s, \gamma, v)^2 \asymp \varepsilon^*(p, s, \gamma, v')^2\). Surprisingly, this holds even when, say, \(p\) is even and \(v, v' \in \{-1, 1\}^p\) are chosen to be orthogonal. To consider an example exhibiting heterogeneity in the magnitude order of the coordinates, consider the choice \(v = \mathbf{1}_p\) and \(v'\) being the vector with the first \(\sqrt{p}\) coordinates equal to \(p^{1/4}\) and the remaining coordinates equal to zero. It follows from Theorem \ref{thm:known_v_rate} that \(\varepsilon^*(p, s, \gamma, v)^2 \asymp \varepsilon^*(p, s, \gamma, v')^2 \asymp (1-\gamma) s \log\left(1 + \frac{p}{s^2}\right)\) for \(s \leq \frac{\sqrt{p}}{4}\). It is quite intriguing to see that the minimax separation rates are equivalent for \(s \leq \frac{\sqrt{p}}{4}\) even though \(\langle \frac{v}{||v||}, \frac{v'}{||v'||} \rangle = p^{-1/4}\), meaning that \(v\) and \(v'\) become orthogonal in the limit as \(p \to \infty\). 

    Theorem \ref{thm:known_v_rate} only characterizes the minimax separation rate for a limited range of sparsity levels. As hinted by the preceding discussion, the rate appears to exhibit a subtle dependence on the direction \(v\). Unfortunately, a complete and rigorous characterization of the rate is outside our grasp and so the problem remains open. From the preceding discussion, one might be tempted to conjecture that the minimax separation rate for any direction \(v''\) and any \(\gamma \in [0, 1)\) should be of the order \((1-\gamma) s \log\left(1 + \frac{p}{s^2}\right)\) for \(s \lesssim \sqrt{p}\). 
    
    Yet the choice \(v'' = \sqrt{p}e_1\) where \(e_1\) is the first standard basis vector in \(\R^p\) falsifies the conjecture in the regime \(\gamma = \omega(p^{-1/2})\). In particular, a reduction to a simple versus simple testing problem establishes \(\varepsilon^*(p, s, \gamma, v'')^2 \gtrsim 1-\gamma+\gamma p\) for \(s \geq 1\). Observe that \(1-\gamma+\gamma p = \omega((1-\gamma)\sqrt{p})\) when \(\gamma = \omega(p^{-1/2})\). Since \((1-\gamma)\sqrt{p} \gtrsim (1-\gamma)s \log\left(1 + \frac{p}{s^2}\right)\) for \(s \lesssim \sqrt{p}\), the aforementioned conjecture is false when \(\gamma = \omega(p^{-1/2})\). Note that in the regime \(\gamma \lesssim p^{-1/2}\) and \(1-\gamma \asymp 1\) we have \(1-\gamma+\gamma p \lesssim \sqrt{p} \asymp (1-\gamma)\sqrt{p}\). So the lower bound \(\varepsilon^*(p, s, \gamma, v'')^2 \gtrsim 1-\gamma+\gamma p\) does not falsify the conjecture in this correlation regime when \(s \asymp \sqrt{p}\). 

    \subsection{Future directions}
    The existing literature studying fundamental detection limits in dependent Gaussian models is quite limited. We hope the present work in the stylized setting of Gaussian sequence mixed models is a small piece at the beginning of a budding research program engaging the statistical community. There are a number of directions for future work building upon our results. An extension of Theorem \ref{thm:known_v_rate} to a full characterization of the minimax separation rate for any \(||v|| = \sqrt{p}\) would be of interest. A very ambitious goal is a full characterization of the minimax separation rate for a general covariance matrix \(\Sigma\), immensely extending Theorem 7 of \cite{liuMinimaxRatesSparse2021}. Arguably, this goal is too ambitious as such a result must encapsulate the two substantially different separation rates from Theorem \ref{thm:minimax_rate} and Theorems \ref{thm:R_sparse} through \ref{thm:R_avg_problem}. Instead, a more tractable program involves studying the sparse signal detection problem with specific and explicit covariance structures of interest, as done in the present paper. For example, one might consider auto-regressive structures, various kernel matrices, convex combination between an \(r\)-rank matrix and the identity matrix, etc. 
    
    Another direction for further work is to consider different separation metrics for the alternative hypothesis rather than the Euclidean norm. Our arguments relied heavily on the Pythagorean identity for squared Euclidean norm, and it would be interesting to see how the rates differ under other separation metrics. Additionally, other structured signals rather than just sparse ones can be considered. Structured signal detection has been studied in Ising models \cite{debDetectingStructuredSignals2020} as well as in Gaussian models with independent observations \cite{arias-castroDetectionAnomalousCluster2011a,arias-castroSearchingTrailEvidence2008a}, and extensions to Gaussian models exhibiting dependence are of interest.

    \section{Proofs}\label{section:proofs}
    In this section, all proofs for the results presented in the main body of the paper are provided. 

    \subsection{Some simple geometric consequences of sparsity}\label{section:geometric_observations}
    In this section, we state and prove some simple geometric results about sparse vectors. While elementary, they are invaluable for deriving upper bounds for the minimax separation rates \(\varepsilon_1^*\) and \(\varepsilon_2^*\) and are used repeatedly.

    \begin{lemma}\label{lemma:v_orthog_approximation}
        Suppose \(v \in \R^p\) with \(||v|| = \sqrt{p}\). If \(\theta \in \R^p\) and \(||\theta||_0 \leq s\), then 
        \begin{equation*}
            \left|\left|\theta - \frac{1}{p}\langle v, \theta \rangle v\right|\right|^2 \geq ||\theta||^2 \cdot \frac{p - \max_{\substack{S \subset [p], \\ |S| \leq s}} ||v_S||^2}{p}.
        \end{equation*}
    \end{lemma}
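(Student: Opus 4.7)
The plan is to decompose $\theta$ orthogonally with respect to the direction $v$ and then bound the component along $v$ using Cauchy--Schwarz on the support of $\theta$.

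First I would observe that since $\|v\|^2 = p$, the matrix $P_v := \frac{1}{p}vv^\intercal$ is the orthogonal projection onto $\mathrm{span}\{v\}$. Hence $\theta - \frac{1}{p}\langle v, \theta\rangle v = (I - P_v)\theta$ is orthogonal to $P_v \theta$, and the Pythagorean identity yields
\begin{equation*}
\left\|\theta - \tfrac{1}{p}\langle v,\theta\rangle v\right\|^2 = \|\theta\|^2 - \frac{\langle v, \theta\rangle^2}{p}.
\end{equation*}

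Next, I would exploit the sparsity of $\theta$. Letting $S := \mathrm{supp}(\theta)$, we have $|S| \leq s$ and $\langle v, \theta\rangle = \langle v_S, \theta\rangle$ since $\theta$ vanishes outside $S$. Cauchy--Schwarz then gives
\begin{equation*}
\langle v,\theta\rangle^2 = \langle v_S, \theta\rangle^2 \leq \|v_S\|^2 \cdot \|\theta\|^2 \leq \|\theta\|^2 \cdot \max_{\substack{S' \subset [p] \\ |S'| \leq s}} \|v_{S'}\|^2.
\end{equation*}
Substituting this upper bound into the Pythagorean decomposition and factoring out $\|\theta\|^2/p$ delivers the stated inequality.

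There is no real obstacle here; the only substantive ingredient is recognising that the rank-one structure of $P_v$ together with the support constraint on $\theta$ reduces the estimate to Cauchy--Schwarz on a subvector of dimension at most $s$. The bound is tight (up to the maximization over $S$) whenever $\theta$ is aligned with $v_S$ on an $s$-subset where $\|v_S\|^2$ is largest.
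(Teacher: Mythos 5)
Your proof is correct and follows essentially the same route as the paper: Pythagorean decomposition of $\theta$ along $\mathrm{span}\{v\}$ using that $\frac{1}{p}vv^\intercal$ is an orthogonal projection (since $\|v\|^2 = p$), followed by Cauchy--Schwarz on the restriction $\langle v_S, \theta\rangle$ to the support set. No gaps.
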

    \begin{proof}
        The Pythagorean identity gives \(||\theta - p^{-1}\langle v, \theta \rangle v||^2 = ||\theta||^2 - p^{-1} \langle v, \theta\rangle^2\). Since \(||\theta||_0 \leq s\), it follows that 
        \begin{align*}
            ||\theta - p^{-1}\langle v, \theta \rangle v||^2 &\geq ||\theta||^2 - p^{-1} \cdot \max_{\substack{||\mu||_0 \leq s, \\ ||\mu|| = ||\theta||}} \langle v, \mu\rangle^2.
        \end{align*}
        Since \(||\mu||_0 \leq s\), it follows that \(\langle v, \mu\rangle = \langle v_S, \mu\rangle\) where \(S = \supp(\mu)\). Therefore, we have by Cauchy-Schwarz 
        \begin{align*}
            \max_{\substack{||\mu||_0 \leq s, \\ ||\mu|| = ||\theta||}} \langle v, \mu\rangle^2 = \max_{\substack{S \subset [p], \\ |S| \leq s}} \max_{\substack{\supp(\mu) = S, \\ ||\mu|| = ||\theta||}} \langle v_S, \mu\rangle^2 \leq ||\theta||^2 \cdot \max_{\substack{S \subset [p], \\ |S| \leq s}} ||v_S||^2. 
        \end{align*}
        Hence, 
        \begin{equation*}
            ||\theta - p^{-1}\langle v, \theta \rangle v||^2 \geq ||\theta||^2 - p^{-1} ||\theta||^2 \cdot \max_{\substack{S \subset [p], \\ |S| \leq s}} ||v_S||^2 = ||\theta||^2 \cdot \frac{p - \max_{\substack{S \subset [p], \\ |S| \leq s}} ||v_S||^2}{p}
        \end{equation*}
        as desired.
    \end{proof}
    \begin{lemma}\label{lemma:v_supp_approximation}
        Suppose \(v \in \R^p\) with \(||v|| = \sqrt{p}\). If \(\theta \in \R^p\) and \(||\theta||_0 \leq s\), then 
        \begin{equation*}
            \left|\left| \theta - \frac{1}{p}\langle v, \theta \rangle v_{\supp(\theta)} \right|\right|^2 \geq ||\theta||^2 \cdot \frac{p - 2\max_{\substack{S \subset [p], \\ |S| \leq s}} ||v_S||^2}{p}.
        \end{equation*}
    \end{lemma}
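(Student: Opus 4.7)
The plan is to reduce the inequality to a one-line expansion of the squared norm followed by Cauchy--Schwarz combined with the sparsity constraint, mirroring the proof of Lemma \ref{lemma:v_orthog_approximation} but being careful that the vector being subtracted is the \emph{truncated} rank-one piece $p^{-1}\langle v, \theta\rangle v_{\supp(\theta)}$ rather than $p^{-1}\langle v, \theta\rangle v$. The key observation that makes this work is that if we set $T = \supp(\theta)$, then both $\theta = \theta_T$ and $\langle v, \theta\rangle = \langle v_T, \theta\rangle$, so the entire computation takes place on the coordinates indexed by $T$.

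Writing $c := p^{-1}\langle v,\theta\rangle = p^{-1}\langle v_T, \theta\rangle$, I would expand
\begin{equation*}
    \|\theta - c v_T\|^2 = \|\theta\|^2 - 2c\langle v_T, \theta\rangle + c^2 \|v_T\|^2 = \|\theta\|^2 - c^2\bigl(2p - \|v_T\|^2\bigr).
\end{equation*}
Let $M := \max_{|S|\leq s} \|v_S\|^2$. Cauchy--Schwarz gives $\langle v_T, \theta\rangle^2 \leq \|v_T\|^2 \|\theta\|^2 \leq M\|\theta\|^2$, so $c^2 \leq M\|\theta\|^2/p^2$.

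In the regime $\|v_T\|^2 \leq 2p$ (which certainly holds when $M \leq p/2$, the only case where the target lower bound is nontrivial), the factor $2p - \|v_T\|^2$ is nonnegative and bounded above by $2p$, so
\begin{equation*}
    \|\theta - c v_T\|^2 \geq \|\theta\|^2 - \frac{M\|\theta\|^2}{p^2}\bigl(2p - \|v_T\|^2\bigr) \geq \|\theta\|^2 - \frac{2M\|\theta\|^2}{p} = \|\theta\|^2 \cdot \frac{p-2M}{p},
\end{equation*}
which is the claimed bound. In the complementary regime $\|v_T\|^2 > 2p$, the term $-c^2(2p-\|v_T\|^2)$ is nonnegative, so $\|\theta - cv_T\|^2 \geq \|\theta\|^2$, and this is trivially at least $\|\theta\|^2 \cdot (p-2M)/p$ since the factor on the right is at most $1$. (If $M > p/2$ the right-hand side is negative and the inequality holds vacuously.)

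There isn't really a serious obstacle; the only place requiring care is the sign of the factor $2p - \|v_T\|^2$, because losing track of it would turn the Cauchy--Schwarz step into an upper bound going in the wrong direction. The factor $2$ in $p - 2M$ comes precisely from bounding $2p - \|v_T\|^2 \leq 2p$ rather than the tighter $2p - \|v_T\|^2 \leq 2p - 0$ one might use elsewhere, and this is why the constant $2$ in front of $M$ in the present lemma is worse than the constant $1$ in Lemma \ref{lemma:v_orthog_approximation}.
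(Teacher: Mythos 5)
Your proof is correct, but it takes a somewhat different route than the paper. You directly expand $\|\theta - c v_T\|^2 = \|\theta\|^2 - c^2(2p - \|v_T\|^2)$ and then bound $c^2 \le M\|\theta\|^2/p^2$ via Cauchy--Schwarz and $2p-\|v_T\|^2 \le 2p$, so the argument is self-contained and never invokes Lemma~\ref{lemma:v_orthog_approximation}. The paper instead decomposes $\theta - p^{-1}\langle v,\theta\rangle v$ into its $T$-part and $T^c$-part (a Pythagorean split), bounds $\|p^{-1}\langle v,\theta\rangle v_{T^c}\|^2 \le \|p^{-1}\langle v,\theta\rangle v\|^2$, rewrites to get $2\|\theta - p^{-1}\langle v,\theta\rangle v\|^2 - \|\theta\|^2$, and then invokes Lemma~\ref{lemma:v_orthog_approximation} as a black box. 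The approaches are computationally equivalent, but yours is more elementary and direct; the paper's buys structural reuse of the earlier lemma at the cost of an extra geometric decomposition step. One small note: your case split on $\|v_T\|^2 > 2p$ is vacuous, since $\|v_T\|^2 \le \|v\|^2 = p < 2p$ always, so the "complementary regime" never occurs and need not be discussed. That said, including it does no harm.
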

    \begin{proof}
        Consider that 
        \begin{align*}
            \left|\left| \theta - \frac{1}{p}\langle v, \theta \rangle v_{\supp(\theta)} \right|\right|^2 &= \left|\left| \theta - \frac{1}{p}\langle v, \theta \rangle v\right|\right|^2 - \left|\left|\frac{1}{p}\langle v, \theta \rangle v_{\supp(\theta)^c}\right|\right|^2 \\
            &\geq \left|\left|\theta - \frac{1}{p}\langle v, \theta \rangle v\right|\right|^2 - \left|\left|\frac{1}{p} \langle v, \theta \rangle v \right|\right|^2 \\
            &= 2\left|\left|\theta - \frac{1}{p}\langle v, \theta \rangle v\right|\right|^2 - ||\theta||^2.
        \end{align*}
        Applying Lemma \ref{lemma:v_orthog_approximation}, we have 
        \begin{align*}
            2\left|\left| \theta - \frac{1}{p}\langle v, \theta \rangle v\right|\right|^2 - ||\theta||^2 &\geq 2 ||\theta||^2 \cdot \frac{p - \max_{\substack{S \subset [p], \\ |S| \leq s}} ||v_S||^2}{p} - ||\theta||^2 \\
            &= ||\theta||^2 \cdot \frac{p - 2\max_{\substack{S \subset [p], \\ |S| \leq s}} ||v_S||^2}{p}
        \end{align*}
        as desired.
    \end{proof}

    The special case of \(v = \mathbf{1}_p\) is obviously of great interest to us, so we explicitly state the corresponding results as corollaries. 
    \begin{corollary}\label{corollary:orthog_approximation}
        If \(1 \leq s \leq p\), \(\theta \in \R^p\), and \(||\theta||_0 \leq s\), then \(||\theta - \bar{\theta}\mathbf{1}_p||^2 \geq ||\theta||^2 \cdot \frac{p-s}{p}\).
    \end{corollary}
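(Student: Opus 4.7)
The plan is to derive this as an immediate specialization of Lemma \ref{lemma:v_orthog_approximation}, which has already been established in the preceding part of the section. First I would verify that $v = \mathbf{1}_p$ meets the hypothesis $\|v\| = \sqrt{p}$ of that lemma, which is obvious since $\|\mathbf{1}_p\|^2 = p$. Then I would identify each term in the lemma's conclusion with its counterpart in the corollary: the inner product satisfies $\langle \mathbf{1}_p, \theta\rangle = \sum_{i=1}^p \theta_i = p\bar{\theta}$, so the projection becomes $\frac{1}{p}\langle \mathbf{1}_p, \theta\rangle \mathbf{1}_p = \bar{\theta}\mathbf{1}_p$, giving exactly the vector appearing on the left of the corollary.

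It then remains to evaluate the geometric constant $\max_{S \subset [p],\, |S|\leq s} \|(\mathbf{1}_p)_S\|^2$. Since $(\mathbf{1}_p)_S$ is the indicator vector of $S$, its squared norm equals $|S|$, and the maximum over $|S| \leq s$ is attained at $|S| = s$, yielding the value $s$. Substituting into the bound from Lemma \ref{lemma:v_orthog_approximation} gives $\|\theta - \bar{\theta}\mathbf{1}_p\|^2 \geq \|\theta\|^2 \cdot \tfrac{p-s}{p}$, which is the desired inequality. There is no real obstacle here; the entire content of the corollary is to package Lemma \ref{lemma:v_orthog_approximation} in the form that will be invoked throughout the rest of the paper, and the proof is a one-line substitution.
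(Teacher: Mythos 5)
Your proposal is correct and is precisely the argument the paper intends: Corollary~\ref{corollary:orthog_approximation} is stated immediately after Lemma~\ref{lemma:v_orthog_approximation} as its specialization to $v = \mathbf{1}_p$, and your verification that $\|\mathbf{1}_p\| = \sqrt{p}$, that $\tfrac{1}{p}\langle \mathbf{1}_p,\theta\rangle\mathbf{1}_p = \bar{\theta}\mathbf{1}_p$, and that $\max_{|S|\le s}\|(\mathbf{1}_p)_S\|^2 = s$ is exactly the one-line substitution the authors had in mind.
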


    \begin{corollary}\label{corollary:supp_approximation}
        If \(1 \leq s \leq p\), \(\theta \in \R^p\), and \(||\theta||_0 \leq s\), then \(||\theta - \bar{\theta}\mathbf{1}_{\supp(\theta)}||^2 \geq ||\theta||^2 \cdot \frac{p-2s}{p}\).
    \end{corollary}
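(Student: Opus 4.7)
The plan is to deduce the corollary as an immediate special case of Lemma \ref{lemma:v_supp_approximation} by substituting \(v = \mathbf{1}_p\). First I would verify that \(v = \mathbf{1}_p\) meets the hypothesis \(||v|| = \sqrt{p}\), which is trivially true since \(||\mathbf{1}_p||^2 = p\). Next, I would identify the two quantities appearing in the lemma's bound under this substitution: the inner product \(\frac{1}{p}\langle \mathbf{1}_p, \theta \rangle\) is exactly \(\bar{\theta}\) by definition, so \(\frac{1}{p}\langle v, \theta\rangle v_{\supp(\theta)} = \bar{\theta}\mathbf{1}_{\supp(\theta)}\), matching the left-hand side of the corollary.

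The remaining ingredient is to evaluate \(\max_{S \subset [p], |S| \leq s} ||v_S||^2\) with \(v = \mathbf{1}_p\). Since each coordinate of \(\mathbf{1}_p\) equals one, we have \(||(\mathbf{1}_p)_S||^2 = |S|\), and thus the maximum over subsets of size at most \(s\) equals \(s\) (achieved by any \(s\)-element subset, assuming \(s \leq p\)). Plugging \(s\) in for this maximum in the lemma's bound gives \(||\theta||^2 \cdot \frac{p - 2s}{p}\), which is exactly the claimed inequality. No obstacle arises here, since Lemma \ref{lemma:v_supp_approximation} has already been established; the corollary is essentially a notational specialization.
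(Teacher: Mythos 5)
Your proposal is correct and matches the paper's approach exactly: the corollary is stated as the specialization $v = \mathbf{1}_p$ of Lemma~\ref{lemma:v_supp_approximation}, and your verification that $\frac{1}{p}\langle \mathbf{1}_p, \theta\rangle = \bar{\theta}$ and $\max_{|S|\le s}\|(\mathbf{1}_p)_S\|^2 = s$ is precisely what is needed.
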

    
    \subsection{Key results for upper bounds}

    The following results will be repeatedly used in our arguments. Recall that the statistic of \cite{collierMinimaxEstimationLinear2017} was designed for the sparse signal detection problem in which the mean vector of the Gaussian observation with spherical covariance is sparse. Both \cite{collierMinimaxEstimationLinear2017} and \cite{liuMinimaxRatesSparse2021} essentially prove the statistic's utility. Proposition \ref{prop:supp_tsybakov} crucially extends the statistic's applicability to the setting in which the mean vector exhibits a sparse subvector with large norm. Notably, the mean vector itself need not be sparse. To prove Proposition \ref{prop:supp_tsybakov}, we follow the strategy of \cite{liuMinimaxRatesSparse2021} but with important modifications to expand the statistic's applicability. Recall the notation \(\alpha_t\) given by (\ref{eqn:alpha_t}). 

    \begin{proposition}\label{prop:supp_tsybakov}
        Suppose \(1 \leq s < 4\sqrt{p}\) and \(Y \sim N(\mu, \sigma^2I_p)\). For \(\varepsilon > 0\), define the parameter space
        \begin{equation}\label{space:supp_space}
            \mathscr{M}(p, s, \varepsilon) := \left\{ \mu \in \R^p : ||\mu_S||^2 \geq \varepsilon^2 \text{ for some } S \subset [p] \text{ with } |S|\leq s \right\}.    
        \end{equation}
        If \(\eta \in (0, 1)\), then there exists a constant \(C_\eta > 0\) depending only on \(\eta\) such that for all \(C > C_\eta\)
        \begin{equation*}
            P_{0, \sigma^2}\left\{ \sum_{i=1}^{p} \left(\frac{Y_i^2}{\sigma^2} - \alpha_{t^*}\right)\mathbbm{1}_{\{|Y_i/\sigma| \geq t^*\}} > r^* \right\} + \sup_{\mu \in \mathscr{M}\left(p, s, C\sigma \sqrt{s\log\left(1 + \frac{p}{s^2}\right)}\right)} P_{\mu, \sigma^2}\left\{ \sum_{i=1}^{p} \left(\frac{Y_i^2}{\sigma^2} - \alpha_{t^*}\right)\mathbbm{1}_{\{|Y_i/\sigma| \geq t^*\}} \leq r^* \right\} \leq \eta
        \end{equation*}
        where \(t^* = \sqrt{2 \log\left(1 + \frac{p}{s^2}\right)}\) and \(r^* = \frac{C^2}{8} s \log\left(1 + \frac{p}{s^2}\right)\). In fact, we may take 
        \begin{equation}\label{eqn:general_tsybakov_Ceta}
            C_\eta = \sqrt{\frac{9 \cdot 8 \cdot 2}{\log\left(\frac{17}{16}\right)}} \vee \sqrt{\log\left(\frac{2}{\eta}\right) \cdot \frac{9 \cdot 8 \cdot 2}{\log\left(\frac{17}{16}\right)}} \vee \sqrt{256} \vee \left(\frac{1024 C_1}{\eta/2}\right)^{1/4} \vee \sqrt{\frac{128C_1}{\eta/2}}
        \end{equation}
        where \(C_1\) is the constant from Lemma \ref{lemma:tsybakov_variance}.
    \end{proposition}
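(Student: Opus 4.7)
My approach is a Chebyshev-type argument on $T := \sum_{i=1}^p Z_i$ with $Z_i := (Y_i^2/\sigma^2 - \alpha_{t^*}) \mathbbm{1}_{\{|Y_i/\sigma| \ge t^*\}}$, controlling the mean and variance of $T$ separately under $H_0$ and under the alternative. Under $H_0$ the $Z_i$ are i.i.d.\ and mean zero by the very definition of $\alpha_{t^*}$, so a per-coordinate second-moment bound (Lemma~\ref{lemma:tsybakov_variance} applied with $\mu_i = 0$) combined with Chebyshev yields $P_0\{T > r^*\} \le \eta/2$ for $C \ge C_\eta$; the restriction $s < 4\sqrt{p}$ enters through $t^* \ge \sqrt{2\log(17/16)}$, which is the origin of the $\log(17/16)$ factor appearing in~\eqref{eqn:general_tsybakov_Ceta}.

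For the alternative, fix $\mu \in \mathscr{M}\bigl(p,s,C\sigma\sqrt{s\log(1+p/s^2)}\bigr)$, choose $S \subset [p]$ with $|S|\le s$ realizing $||\mu_S||^2 \ge C^2 \sigma^2 s \log(1+p/s^2)$, and split $T = T_S + T_{S^c}$ where $T_A := \sum_{i \in A} Z_i$. For $T_S$ I adapt the calculation of Collier--Comminges--Tsybakov~\cite{collierMinimaxEstimationLinear2017} used in Liu et al.~\cite{liuMinimaxRatesSparse2021}: partitioning $S$ according to whether $|\mu_i/\sigma|$ exceeds a threshold comparable to $t^*$, the ``large'' coordinates contribute approximately $\mu_i^2/\sigma^2$ to $E_\mu[T_S]$ (since the indicator is almost surely one), while each ``small'' coordinate still contributes a positive multiple of $\mu_i^2/\sigma^2$ after a direct Gaussian-tail computation, and summing produces $E_\mu[T_S] \gtrsim ||\mu_S||^2/\sigma^2 \ge C^2 s \log(1+p/s^2)$ for $C$ above the constant in~\eqref{eqn:general_tsybakov_Ceta}. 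Another application of Lemma~\ref{lemma:tsybakov_variance} then controls $\Var_\mu(T_S)$ at a level dominated by $(E_\mu[T_S])^2$ once $C$ is large enough.

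The genuinely new ingredient, required because $\mu_{S^c}$ may be nonzero, is the pointwise claim that $h(\nu) := E\bigl[(g^2 - \alpha_{t^*}) \mathbbm{1}_{\{|g|\ge t^*\}}\bigr] \ge 0$ for every $\nu \in \R$ when $g \sim N(\nu,1)$. To see this, using $\phi(g-\nu) = \phi(g) e^{\nu g - \nu^2/2}$ and the symmetry $g \leftrightarrow -g$ gives
\begin{equation*}
h(\nu) \;=\; 2 e^{-\nu^2/2} \int_{t^*}^\infty (g^2 - \alpha_{t^*})\,\phi(g)\, \cosh(\nu g)\, dg.
\end{equation*}
Because $\int_{t^*}^\infty (g^2 - \alpha_{t^*}) \phi(g)\,dg = 0$ by the definition of $\alpha_{t^*}$, I may subtract $\cosh(\nu\sqrt{\alpha_{t^*}})$ times that vanishing integral to recast the integrand as $(g^2 - \alpha_{t^*})(\cosh(\nu g) - \cosh(\nu\sqrt{\alpha_{t^*}}))\phi(g)$; both factors change sign precisely at $g = \sqrt{\alpha_{t^*}}$ (the second because $g \mapsto \cosh(\nu g)$ is monotone on $[0,\infty)$ for every $\nu$), so the product is pointwise nonnegative on $[t^*,\infty)$ and $h(\nu) \ge 0$ follows. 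Consequently $E_\mu[T_{S^c}] = \sum_{i\in S^c} h(\mu_i/\sigma) \ge 0$, so the nuisance mean on $S^c$ never drags $E_\mu[T]$ below $E_\mu[T_S]$; a coordinatewise tail computation bounds each $\Var_\mu(Z_i)$ for $i \in S^c$ in a way that is either comparable to the null-regime value or compatibly scaled with $h(\mu_i/\sigma)$, so $\Var_\mu(T_{S^c})$ is absorbed into either the null-type bound or the enlarged margin $E_\mu[T] - r^*$, and Chebyshev's inequality produces the Type II bound. The main obstacle is the pointwise sign $h(\nu) \ge 0$: this clean structural fact is precisely what allows us to relax sparsity of the full mean vector to the mere existence of a single sparse subvector with large norm, since without it an adversarial $\mu_{S^c}$ could pull $E_\mu[T_{S^c}]$ below zero and destroy the signal coming from $T_S$.
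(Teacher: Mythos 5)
Your argument deviates from the paper's in a significant way, and the deviation opens a genuine gap. The paper does not split $T = T_S + T_{S^c}$ and then attempt a direct mean-variance analysis of $T_{S^c}$. Instead it builds a surrogate observation $Y'$ that equals $Y_i/\sigma$ on $S$ and an \emph{independent} fresh standard Gaussian $Z_i$ on $S^c$, then invokes a per-coordinate stochastic-ordering claim --- that $(g^2 - \alpha_t)\mathbbm{1}_{\{|g|\geq t\}}$ for $g\sim N(m,1)$ stochastically dominates its null version --- to bound the type II error of the test on $Y$ by that on $Y'$. Since $Y'$ has an $s$-sparse mean with large norm, the entire analysis then reduces to the known sparse-mean case of Collier et al.\ and Liu et al., and one never has to think about the distribution of the nuisance terms. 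Your $h(\nu)\geq 0$ derivation via the $\cosh$ identity is correct and quite elegant --- and it is actually a weaker (and more transparently verifiable) statement than the paper's stochastic-dominance claim --- but a sign on $E_\mu[T_{S^c}]$ alone does not control the fluctuations of $T_{S^c}$.

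The missing step is the variance bound on $T_{S^c}$, and it does not follow from the tools you cite. Lemma~\ref{lemma:tsybakov_variance} gives $\Var(Z_i) \leq C_1 (t^*)^4$ for every $i\in S^c$ with $0 < |\mu_i/\sigma| < 2t^*$, and the definition \eqref{space:supp_space} leaves $\mu_{S^c}$ completely unconstrained, so the adversary may place up to $p-s$ such coordinates. Summing the lemma's bound yields $\Var(T_{S^c}) \lesssim p(t^*)^4 = 4p\log^2(1+p/s^2)$, whereas your Chebyshev margin is $(E_\mu[T_S]-r^*)^2 \asymp C^4 s^2\log^2(1+p/s^2)$; the ratio is of order $p/(C^4 s^2)$, which diverges whenever $s \ll \sqrt p$ (take $s=1$ and $p\to\infty$). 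You gesture at an interpolation --- ``$\Var(Z_i)$ is either comparable to the null value or compatibly scaled with $h(\mu_i/\sigma)$'' --- but nothing in Lemmas~\ref{lemma:tsybakov_expectation} or~\ref{lemma:tsybakov_variance} delivers it: the expectation lemma gives no lower bound on $h$ in the regime $0<|\mu_i/\sigma| \lesssim t^*$, and the variance lemma only offers the crude $C_1(t^*)^4$ there. Making your route rigorous requires proving a uniform bound of the form $\Var(Z_i) \lesssim (\text{null variance}) + h(\mu_i/\sigma)^2$ for all $\mu_i$, which is a genuinely new calculation; the paper's coupling device is precisely the mechanism that lets one avoid ever proving it. As written, this is the step that an adversarial choice of $\mu_{S^c}$ attacks, and your proposal asserts rather than proves it.
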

    \begin{proof}
        Fix \(\eta \in (0, 1)\). Let \(C > C_\eta\). For ease of notation, let \(\psi^2 = \sigma^2 s\log\left(1 + \frac{p}{s^2}\right)\). We first bound the type I error. First, consider that \(1 \leq s < 4\sqrt{p}\) implies
        \begin{align*}
            C^2\frac{\log\left(\frac{17}{16}\right)}{2} (1 + s) \leq C^2 s \log\left(\frac{17}{16}\right) \leq C^2 s \log\left(1 + \frac{p}{s^2}\right).
        \end{align*}
        Since \(C^2\frac{\log\left(\frac{17}{16}\right)}{2 \cdot 9 \cdot 8} > C_\eta^2\frac{\log\left(\frac{17}{16}\right)}{2 \cdot 9 \cdot 8} \geq 1\), it follows that \(s\sqrt{C^2\frac{\log\left(\frac{17}{16}\right)}{2 \cdot 9 \cdot 8}} + C^2\frac{\log\left(\frac{17}{16}\right)}{2 \cdot 9 \cdot 8} \leq \frac{C^2}{9 \cdot 8} s \log\left(1 + \frac{p}{s^2}\right)\). Therefore, 
        \begin{equation*}
            9\sqrt{pe^{-(t^*)^2/2} C^2 \frac{\log\left(\frac{17}{16}\right)}{2 \cdot 9 \cdot 8}} + C^2 \frac{\log\left(\frac{17}{16}\right)}{2 \cdot 8} \leq r^*.
        \end{equation*}
        Thus the type I error is bounded as 
        \begin{align*}
            &P_{0, \sigma^2}\left\{ \sum_{i=1}^{p} \left(\frac{Y_i^2}{\sigma^2} - \alpha_{t^*}\right)\mathbbm{1}_{\{|Y_i/\sigma| \geq t^*\}} > r^* \right\} \\
            &\leq P_{0, \sigma^2}\left\{ \sum_{i=1}^{p} \left(\frac{Y_i^2}{\sigma^2} - \alpha_{t^*}\right)\mathbbm{1}_{\{|Y_i/\sigma| \geq t^*\}} > 9\sqrt{pe^{-(t^*)^2/2} C^2 \frac{\log\left(\frac{17}{16}\right)}{2 \cdot 9 \cdot 8}} + C^2 \frac{\log\left(\frac{17}{16}\right)}{2 \cdot 8} \right\} \\
            &\leq \exp\left(-C^2\frac{\log\left(\frac{17}{16}\right)}{2 \cdot 9 \cdot 8} \right) \\
            &\leq \exp\left(-C_\eta^2\frac{\log\left(\frac{17}{16}\right)}{2 \cdot 9 \cdot 8} \right) \\
            &\leq \frac{\eta}{2}
        \end{align*}
        where we have used Lemma \ref{lemma:tsybakov_typeI_error} and \(C_\eta^2 \geq \log\left(\frac{2}{\eta}\right) \cdot \frac{2 \cdot 8 \cdot 9}{\log\left(\frac{17}{16}\right)}\).
        
        We now shift our attention to bounding the type II error. For \(\mu \in \mathscr{M}(p, s, C\psi)\) let \(S(\mu) \subset [p]\) denote any subset such that \(|S(\mu)| \leq s\) and \(||\mu_{S(\mu)}||^2 \geq C^2 \psi^2\). Note such a subset must exist by definition of \(\mathscr{M}(p, s, C\psi)\). Let \(Z_1,...,Z_p \overset{iid}{\sim}N(0, 1)\) be independent of \(Y\). Define the random vector \(Y' \in \R^p\) with 
        \begin{equation*}
            Y'_i = 
            \begin{cases}
                \frac{Y_i}{\sigma} &\text{if } i \in S(\mu), \\
                Z_i &\text{if } i \in S(\mu)^c.
            \end{cases}
        \end{equation*}
        It is clear that if \(g \sim N(m, 1)\) and \(t > 0\), then \((g^2 - \alpha_t)\mathbbm{1}_{\{|g| \geq t\}}\) is stochastically increasing in \(|m|\). Consequently, \(\left( \frac{Y_i^2}{\sigma^2} - \alpha_{t^*}\right)\mathbbm{1}_{\{|Y_i/\sigma| \geq t^*\}}\) is stochastically larger than \((Z_i^2 - \alpha_{t^*})\mathbbm{1}_{\{|Z_i| \geq t^*\}}\) for all \(1 \leq i \leq p\). Since the collection \(\{Y_i\}_{i=1}^{p}\) are mutually independent, it follows that \(\sum_{i=1}^{p} \left(\frac{Y_i^2}{\sigma^2} - \alpha_{t^*}\right)\mathbbm{1}_{\{|Y_i / \sigma| \geq t^*\}}\) is stochastically larger than 
        \begin{equation*}
            \sum_{i \in S(\mu)} \left( \frac{Y_i^2}{\sigma^2} - \alpha_{t^*} \right)\mathbbm{1}_{\{|Y_i/\sigma| \geq t^*\}} + \sum_{i \in S(\mu)^c} (Z_i^2 - \alpha_{t^*})\mathbbm{1}_{\{|Z_i| \geq t^*\}}.
        \end{equation*}
        The above display is exactly \(\sum_{i=1}^{p} ( Y_i'^2 - \alpha_{t^*})\mathbbm{1}_{\{|Y_i'| \geq t^*\}}\). By the stochastic ordering, we have 
        \begin{equation}
            \sup_{\mu \in \mathscr{M}(p, s, C\psi)} P_{\mu, \sigma^2}\left\{ \sum_{i=1}^{p} \left(\frac{Y_i^2}{\sigma^2} - \alpha_{t^*} \right)\mathbbm{1}_{\{|Y_i/\sigma| \geq t^*\}} \leq r^* \right\}
            \leq \sup_{\mu \in \mathscr{M}(p, s, C\psi)} P_{\mu, \sigma^2}\left\{ \sum_{i=1}^{p} \left(Y_i'^2 - \alpha_{t^*} \right)\mathbbm{1}_{\{|Y_i'| \geq t^*\}} \leq r^* \right\}. \label{eqn:supp_tsybakov_bound}
        \end{equation}
        Consider that \(Y' \sim N(\mu', I_p)\) with 
        \begin{equation*}
            \mu'_i =
            \begin{cases}
                \frac{\mu_i}{\sigma} &\text{if } i \in S(\mu), \\
                0 &\text{if } i \in S(\mu)^c.
            \end{cases}
        \end{equation*}
        Consequently, \(||\mu'||_0 = |S(\mu)| \leq s\) and \(||\mu'||^2 = \frac{||\mu_{S(\mu)}||^2}{\sigma^2} \geq \frac{C^2\psi^2}{\sigma^2} \geq C^2 s \log\left(1 + \frac{p}{s^2}\right)\). Therefore
        \begin{align*}
            &\sup_{\mu \in \mathscr{M}(p, s, C\psi)} P_{\mu, \sigma^2}\left\{ \sum_{i=1}^{p} \left(Y_i'^2 - \alpha_{t^*} \right)\mathbbm{1}_{\{|Y_i'| \geq t^*\}} \leq r^*\right\} \\
            &\leq \sup_{\substack{||\mu'||_0 \leq s, \\ ||\mu'||^2 \geq C^2 s \log\left(1 + \frac{p}{s^2}\right)}} P_{Y' \sim N(\mu', I_p)}\left\{  \sum_{i=1}^{p} \left(Y_i'^2 - \alpha_{t^*} \right)\mathbbm{1}_{\{|Y_i'| \geq t^*\}}  \leq r^* \right\}.
        \end{align*}
        We first calculate bounds on the variance and expectation of the statistic on the right hand side before working to obtain the desired bound. For ease, we suppress the subscript \(Y' \sim N(\mu', I_p)\) on the associated probability and expectation operators. For \(Y' \sim N(\mu', I_p)\) with \(||\mu'||_0 \leq s\) and \(||\mu'|| \geq C^2 s \log\left(1 + \frac{p}{s^2}\right)\), we have by Lemma \ref{lemma:tsybakov_expectation}
        \begin{align*}
            &E\left(\sum_{i=1}^{p} \left(Y_i'^2 - \alpha_{t^*}\right)\mathbbm{1}_{\{|Y_i'| \geq t^*\}} \right) \\
            &= \sum_{i \in \supp(\mu')} E\left(\left(Y_i'^2 - \alpha_{t^*}\right)\mathbbm{1}_{\{|Y_i'| \geq t^*\}}\right) + \sum_{i \in \supp(\mu')^c} E\left(\left(Y_i'^2 - \alpha_{t^*}\right)\mathbbm{1}_{\{|Y_i'| \geq t^*\}}\right) \\
            &= \sum_{i \in \supp(\mu')} E\left(\left(Y_i'^2 - \alpha_{t^*}\right)\mathbbm{1}_{\{|Y_i'| \geq t^*\}}\right) \\
            &\geq \sum_{i \in \supp(\mu'): |\mu_i'| \geq 8t^*} E\left(\left(Y_i'^2 - \alpha_{t^*}\right)\mathbbm{1}_{\{|Y_i'| \geq t^*\}}\right) \\
            &\geq \sum_{i \in \supp(\mu'): |\mu_i'| \geq 8t^*} \frac{\mu_i'^2}{2} \\
            &= \frac{||\mu'||^2}{2} - \sum_{i \in \supp(\mu) : |\mu_i'| < 8t^*} \frac{\mu_i'^2}{2} \\
            &\geq \frac{||\mu'||^2}{2} - 32s(t^*)^2 \\
            &= \frac{||\mu'||^2}{2} - 64 s \log\left(1 + \frac{p}{s^2}\right) \\
            &= \frac{||\mu'||^2}{2} \left(1 - \frac{128}{C^2}\right).
        \end{align*}
        Now using that \(C^2 > C_\eta^2 \geq 256\) which implies \(\frac{1}{2}\left(1 - \frac{128}{C^2}\right) \geq \frac{1}{4}\), we have 
        \begin{equation}\label{eqn:expectation_lowerbound}
            E\left(\sum_{i=1}^{p} \left(Y_i'^2 - \alpha_{t^*}\right)\mathbbm{1}_{\{|Y_i'| \geq t^*\}} \right) \geq \frac{||\mu'||^2}{4}.
        \end{equation} 

        Turning our attention to the variance, consider that by independence and Lemma \ref{lemma:tsybakov_variance}
        \begin{align*}
            &\Var\left(\sum_{i=1}^{p} \left(Y_i'- \alpha_{t^*}\right)\mathbbm{1}_{\{|Y_i'| \geq t^*\}} \right) \\
            &= \sum_{i=1}^{p} \Var\left( \left(Y_i'^2 - \alpha_{t^*}\right)\mathbbm{1}_{\{|Y_i'| \geq t^*\}} \right) \\
            &= \sum_{i \in \supp(\mu')^c} \Var\left( \left(Y_i'^2 - \alpha_{t^*}\right)\mathbbm{1}_{\{|Y_i'| \geq t^*\}} \right) + \sum_{i \in \supp(\mu')} \Var\left( \left(Y_i'^2 - \alpha_{t^*}\right)\mathbbm{1}_{\{|Y_i'| \geq t^*\}} \right) \\
            &\leq C_1(p-||\mu'||_0)(t^*)^3e^{-(t^*)^2/2} + \sum_{i \in \supp(\mu')} \Var\left( \left(Y_i'^2 - \alpha_{t^*}\right)\mathbbm{1}_{\{|Y_i'| \geq t^*\}} \right) \\
            &\leq C_1p(t^*)^4e^{-(t^*)^2/2} + \sum_{i \in \supp(\mu') : |\mu_i'| \geq 2t^*} C_1 \mu_i'^2 + \sum_{i \in \supp(\mu') : |\mu_i'| < 2t^*} C_1(t^*)^4 \\
            &\leq 4C_1p \log^2\left(1 + \frac{p}{s^2}\right) \left(1 + \frac{p}{s^2}\right)^{-1} + C_1||\mu'||^2 + 4C_1 s \log^2\left(1 + \frac{p}{s^2}\right) \\
            &\leq 4C_1 s^2 \log^2\left(1 + \frac{p}{s^2}\right) + C_1||\mu'||^2 + 4C_1 s \log^2\left(1 + \frac{p}{s^2}\right) \\
            &\leq 8C_1s^2 \log^2\left(1 + \frac{p}{s^2}\right) + C_1||\mu'||^2
        \end{align*}
        where \(C_1\) is the constant from Lemma \ref{lemma:tsybakov_variance}. We can now proceed to prove the desired result in the statement of the lemma. Since \(r^* = \frac{C^2}{8}s \log\left(1 + \frac{p}{s^2}\right) \leq \frac{||\mu'||^2}{8}\), an application of Chebyshev's inequality with (\ref{eqn:expectation_lowerbound}) and the above variance upper bound yields
        \begin{align*}
            &\sup_{\substack{||\mu'||_0 \leq s, \\ ||\mu'||^2 \geq C^2 s\log\left(1 + \frac{p}{s^2}\right)}} P\left\{ \sum_{i=1}^{p} \left(Y_i'^2 - \alpha_{t^*}\right)\mathbbm{1}_{\{|Y_i'| \geq t^*\}} \leq r^* \right\} \\
            &\leq \sup_{\substack{||\mu'||_0 \leq s, \\ ||\mu'||^2 \geq C^2 s\log\left(1 + \frac{p}{s^2}\right)}} \frac{\Var\left(\sum_{i=1}^{p} \left(Y_i'^2 - \alpha_{t^*}\right)\mathbbm{1}_{\{|Y_i'| \geq t^*\}} \right) }{\left(E\left(\sum_{i=1}^{p} \left(Y_i'^2 - \alpha_{t^*}\right)\mathbbm{1}_{\{|Y_i'| \geq t^*\}}  \right)  - r^*\right)^2} \\
            &\leq \sup_{\substack{||\mu'||_0 \leq s, \\ ||\mu'||^2 \geq C^2 s\log\left(1 + \frac{p}{s^2}\right)}} \frac{8C_1s^2\log^2\left(1 + \frac{p}{s^2}\right) + C_1||\mu'||^2}{\frac{||\mu'||^4}{64}} \\
            &\leq \frac{512C_1 s^2\log^2\left(1 + \frac{p}{s^2}\right)}{C^4s^2\log^2\left(1 + \frac{p}{s^2}\right)} + \sup_{\substack{||\mu'||_0 \leq s, \\ ||\mu'||^2 \geq C^2 s\log\left(1 + \frac{p}{s^2}\right)}} \frac{64C_1}{||\mu'||^2} \\
            &\leq \frac{512C_1}{C^4} + \frac{64C_1}{C^2 s\log\left(1 + \frac{p}{s^2}\right)} \\
            &\leq \frac{512C_1}{C_\eta^4} + \frac{64C_1}{C_\eta^2} \\
            &\leq \frac{\eta}{2}.
        \end{align*}
        Plugging this bound into (\ref{eqn:supp_tsybakov_bound}) shows that the type II error is bounded by \(\frac{\eta}{2}\). Thus the sum of type I and type II errors is bounded by \(\eta\). Since \(C > C_\eta\) was arbitrary and \(\eta \in (0, 1)\) was arbitrary, the proof is complete.
    \end{proof}

    \subsection{Proofs of Proposition \ref{prop:perfect_correlation}, Lemma \ref{lemma:problem_decomposition}, and Lemma \ref{lemma:problemI_equiv}}
    We begin by proving Proposition \ref{prop:perfect_correlation}. 
    \begin{proof}[Proof of Proposition \ref{prop:perfect_correlation}]
        The argument proceeds by separately considering the two cases \(s < p\) and \(s = p\). 

        \textbf{Case 1:} Suppose \(1 \leq s < p\). It trivially holds \(\varepsilon^*(p, s, 1)^2 \geq 0\) and so the lower bound is proved. Define the test \(\varphi^* = \mathbf{1}_{\{X-\bar{X}\mathbf{1}_p \neq 0\}}\). Observe that under the data-generating process \(P_{\theta, 1}\), we have \(X - \bar{X}\mathbf{1}_p = \theta - \bar{\theta}\mathbf{1}_p\) almost surely. Since \(s < p\) and \(||\theta||_0 \leq s\) implies that \(\theta \not \in \spn\{\mathbf{1}_p\}\setminus \{0\}\), it immediately follows that if \(||\theta||_0 \leq s\), then \(\theta \neq 0\) if and only if \(\theta - \bar{\theta}\mathbf{1}_p \neq 0\). We now explicitly bound the type I and type II errors. Examining the type I error first, consider \(P_{0, 1}\{\varphi^* = 1\} = P_{0, 1}\{X-\bar{X}\mathbf{1}_p \neq 0\} = 0\) since \(X-\bar{X}\mathbf{1}_p = 0\) almost surely under \(P_{0, 1}\). Examining the type II error, consider that for any \(\varepsilon > 0\), we have
        \begin{align*}
            \sup_{\theta \in \Theta(p, s, \varepsilon)} P_{\theta, 1} \left\{ \varphi^* = 0 \right\} = \sup_{\theta \in \Theta(p, s, \varepsilon)} P_{\theta, 1} \left\{ X-\bar{X}\mathbf{1}_p = 0\right\} = 0
        \end{align*}
        since \(X - \bar{X}\mathbf{1}_p = \theta-\bar{\theta}\mathbf{1}_p\) almost surely under \(P_{\theta, 1}\) and \(\theta - \bar{\theta}\mathbf{1}_p \neq 0\) for all \(\theta \in \Theta(p, s, \varepsilon)\). Therefore, the sum of the type I and II errors of \(\varphi^*\) is equal to \(0\). Hence, we have \(\mathcal{R}(\varepsilon) = 0\). Since \(\varepsilon > 0\) was arbitrary, we have proved the upper bound. \newline

        \textbf{Case 2:} Suppose \(s = p\). We first prove the upper bound. Let \(\eta \in (0, 1)\) and set \(C_{\eta}\) to be any value satisfying \(\frac{16}{C_{\eta}^4} + \frac{16}{C_{\eta}^2} \leq \eta\). Note that such a \(C_{\eta}\) clearly exists by taking \(C_\eta\) sufficiently large depending only on \(\eta\). Let \(C > C_\eta\). Define the test \(\varphi^* = \mathbbm{1}_{\left\{||X||^2 > p + \frac{C^2}{2} p \right\}}\). Under the data generating process \(P_{\theta, 1}\), we have \(||X||^2 = ||X - \bar{X}\mathbf{1}_p||^2 + ||\bar{X}\mathbf{1}_p||^2 \sim ||\theta - \bar{\theta}\mathbf{1}_p||^2 + p\chi^2_1(\bar{\theta}^2)\). Consequently, \(E_{\theta,1}(||X||^2) = ||\theta||^2 + p\) and \(\Var_{\theta,1}(||X||^2) = 2p^2 + 4p||\bar{\theta}\mathbf{1}_p||^2\). Examining the type I error, consider 
        \begin{align*}
            P_{0,1}\left\{\varphi^* = 1 \right\} = P_{0, 1}\left\{||X||^2 > p + \frac{C^2}{2} p  \right\} \leq \frac{\Var_{0, 1}(||X||^2)}{\frac{C^4}{4} p^2} = \frac{8p^2}{C^4p^2} = \frac{8}{C^4} \leq \frac{8}{C_{\eta}^4}.
        \end{align*}
        Now examining the type II error, consider by Chebyshev's inequality
        \begin{align*}
            \sup_{\theta \in \Theta(p, p, C\sqrt{p})} P_{\theta, 1}\left\{ \varphi^* = 0 \right\} &= \sup_{\theta \in \Theta(p, p, C\sqrt{p})} P_{\theta, 1}\left\{ ||X||^2 \leq p + \frac{C^2}{2} p \right\} \\
            &\leq \sup_{\theta \in \Theta(p, p, C\sqrt{p})} P_{\theta, 1}\left\{||\theta||^2 - \frac{C^2}{2} p \leq p + ||\theta||^2 - ||X||^2 \right\} \\
            &\leq \sup_{\theta \in \Theta(p, p, C\sqrt{p})} \frac{\Var_{\theta,1}(||X||^2)}{(||\theta||^2 - \frac{C^2}{2} p)^2} \\
            &= \sup_{\theta \in \Theta(p, p, C\sqrt{p})} \frac{2p^2 + 4p||\bar{\theta}\mathbf{1}_p||^2}{(||\theta||^2 - \frac{C^2}{2} p)^2} \\
            &\leq \frac{2p^2}{\frac{C^4}{4}p^2} + \sup_{\theta \in \Theta(p, p, C\sqrt{p})} \frac{4p||\bar{\theta}\mathbf{1}_p||^2}{\frac{1}{4} ||\theta||^4} \\
            &\leq \frac{8}{C^4} + \sup_{\theta \in \Theta(p, p, C\sqrt{p})} \frac{16p}{||\theta||^2} \\
            &\leq \frac{8}{C_{\eta}^4} + \frac{16}{C_{\eta}^2}. 
        \end{align*}
        Therefore, it follows that \(P_{0,1}\left\{\varphi^* = 1 \right\} + \sup_{\theta \in \Theta(p, p, C\sqrt{p})} P_{\theta, 1}\left\{ \varphi^* = 0 \right\} \leq \eta\). Since \(C > C_\eta\) was arbitrary and \(\eta \in (0, 1)\) was arbitrary, it follows that \(\varepsilon^*(p, p, 1)^2 \lesssim p\).

        We now prove the lower bound. Let \(\eta \in (0, 1)\) and set \(c_{\eta} := \sqrt{\log\left(1 + 4\eta^2\right)}\). Let \(0 < c < c_\eta\). To prove the lower bound, we make use of Lemma \ref{lemma:general_lower_bound}. Let \(\pi\) be the prior on \(\Theta(p, p, c\sqrt{p})\) which is a point mass at \(c\mathbf{1}_p\). Note that \(|| c\mathbf{1}_p|| = c \sqrt{p}\) and so \(\pi\) is indeed supported on \(\Theta(p, p, c\sqrt{p})\). A direct calculation shows 
        \begin{align*}
            \chi^2(P_{c\mathbf{1}_p, 1} || P_{0, 1}) = \chi^2(N(cp, p^2) || N(0, p^2)) = \exp\left(c^2p^2 \cdot \frac{1}{p^2}\right) - 1 = e^{c^2} - 1 \leq e^{c_{\eta}^2} - 1.
        \end{align*}
        Therefore, \(1 - \frac{1}{2}\sqrt{\chi^2(P_{c\mathbf{1}_p, 1} || P_{0, 1})} \geq 1 - \frac{1}{2}\sqrt{e^{c_\eta^2} - 1} = 1-\eta\). Lemma \ref{lemma:general_lower_bound} thus implies \(\mathcal{R}(c\sqrt{p}) \geq 1-\eta\). Since \(0 < c < c_\eta\) was arbitrary and \(\eta \in (0, 1)\) was arbitrary, it follows that \(\varepsilon^*(p, p, 1)^2 \gtrsim p\).
    \end{proof}

    We now prove Lemmas \ref{lemma:problem_decomposition} and \ref{lemma:problemI_equiv} which validate the strategy of separately considering Problem I and Problem II.
    \begin{proof}[Proof of Lemma \ref{lemma:problem_decomposition}]
        For any \(\varepsilon, \varepsilon_1,\varepsilon_2 > 0\) such that \(\varepsilon^2 = \varepsilon_1^2 + \varepsilon_2^2\), we have the inclusion \(\Theta(p, s, \varepsilon) \subset \Theta_{\mathcal{I}}(p, s, \varepsilon_1) \cup \Theta_{\mathcal{II}}(p, s, \varepsilon_2)\). Indeed, if \(||\theta||^2 \geq \varepsilon^2\), then either \(||\theta - \bar{\theta}\mathbf{1}_p||^2 \geq \varepsilon_1^2\) or \(||\bar{\theta}\mathbf{1}_p||^2 \geq \varepsilon_2^2\) since \(||\theta||^2 = ||\theta-\bar{\theta}\mathbf{1}_p||^2 + ||\bar{\theta}\mathbf{1}_p||^2\). Since \(\varepsilon_1^*\) and \(\varepsilon_2^*\) are the minimax separation rates for Problem I and Problem II respectively consider the following. There must exist tests \(\varphi_1^*\) and \(\varphi_2^*\) such that for every \(\eta \in (0, 1)\) there exists a constants \(C_{\eta, 1}, C_{\eta, 2} > 0\) such that for all \(C_1 > C_{\eta, 1}\) and \(C_2 > C_{\eta, 2}\)
        \begin{align*}
            &P_{0, \gamma}\{\varphi_1 = 1\} + \sup_{\theta \in \Theta_{\mathcal{I}}(p, s, C_1\varepsilon_1^*)}P_{\theta, \gamma}\{\varphi_1 = 0\} \leq \frac{\eta}{2}, \\
            &P_{0, \gamma}\{\varphi_2 = 1\} + \sup_{\theta \in \Theta_{\mathcal{II}}(p, s, C_2\varepsilon_2^*)}P_{\theta, \gamma}\{\varphi_2 = 0\} \leq \frac{\eta}{2}. 
        \end{align*}
        Let \(\varphi^* := \varphi_1^* \vee \varphi_2^*\) and for \(\eta \in (0, 1)\) let \(C_\eta := C_{\eta, 1} \vee C_{\eta, 2}\). Then for all \(C > C_\eta\) we have 
        \begin{align*}
            &P_{0, \gamma}\{\varphi^* = 1\} + \sup_{\theta \in \Theta\left(p, s, C\sqrt{(\varepsilon_1^*)^2 + (\varepsilon_2^*)^2}\right)}P_{\theta, \gamma}\{\varphi^* = 0\} \\
            &\leq P_{0, \gamma}\{\varphi^*_1 = 1\} + P_{0, \gamma}\{\varphi_2^* = 1\} + \sup_{\theta \in \Theta_{\mathcal{I}}(p, s, C\varepsilon_1^*)\cup \Theta_{\mathcal{II}}(p, s, C\varepsilon_2^*)}P_{\theta, \gamma}\{\varphi^* = 0\} \\
            &\leq \left\{P_{0, \gamma}\{\varphi^*_1 = 1\} + \sup_{\theta \in \Theta_{\mathcal{I}}(p, s, C\varepsilon_1^*)}P_{\theta, \gamma}\{\varphi^*_1 = 0\}\right\} + \left\{P_{0, \gamma}\{\varphi^*_2 = 1\} + \sup_{\theta \in \Theta_{\mathcal{II}}(p, s, C\varepsilon_2^*)}P_{\theta, \gamma}\{\varphi^*_2 = 0\}\right\} \\
            &\leq \eta.
        \end{align*}
        Since \(C > C_\eta\) was arbitrary and \(\eta \in (0, 1)\) was arbitrary, we have shown \(\varepsilon^* \lesssim \varepsilon_1^* + \varepsilon_2^*\). It remains to show \(\varepsilon^* \gtrsim \varepsilon_1^* + \varepsilon_2^*\). To show this, consider that we have the inclusions \(\Theta_{\mathcal{I}}(p, s, \varepsilon) \subset \Theta(p, s, \varepsilon)\) and \(\Theta_{\mathcal{II}}(p, s, \varepsilon) \subset \Theta(p, s, \varepsilon)\). Consequently, we have \(\mathcal{R}_{\mathcal{I}}(\varepsilon) \leq \mathcal{R}(\varepsilon)\) and \(\mathcal{R}_{\mathcal{II}}(\varepsilon) \leq \mathcal{R}(\varepsilon)\). Therefore, \(\varepsilon_1^* \lesssim \varepsilon^*\) and \(\varepsilon_2^* \lesssim \varepsilon^*\), which immediately yields \(\varepsilon_1^* + \varepsilon_2^* \lesssim \varepsilon^*\). Thus, the desired result is proved.
    \end{proof}

    \begin{proof}[Proof of Lemma \ref{lemma:problemI_equiv}]
        By hypothesis we have \(1 \leq s \leq \frac{p}{2}\). Note that \(s \leq \frac{p}{2}\) implies \(\frac{p-s}{p} \geq \frac{1}{2}\). Hence, Corollary \ref{corollary:orthog_approximation} implies that if \(||\theta||_0 \leq s\), then \(||\theta - \bar{\theta}\mathbf{1}_p||^2 \geq \frac{1}{2}||\theta||^2\). Therefore, for \(\varepsilon > 0\) the chain of inclusions \(\Theta_{\mathcal{I}}(p, s, \varepsilon) \subset \Theta(p, s, \varepsilon) \subset \Theta_{\mathcal{I}}(p, s, \varepsilon/\sqrt{2})\) holds. The first inclusion follows from the fact that \(||\theta-\bar{\theta}\mathbf{1}_p||^2 \geq \varepsilon^2\) implies \(||\theta||^2 \geq \varepsilon^2\). The inclusions immediately gives us the chain of inequalities \(\mathcal{R}_{\mathcal{I}}(\varepsilon) \leq \mathcal{R}(\varepsilon) \leq \mathcal{R}_{\mathcal{I}}(\varepsilon/\sqrt{2})\). It immediately follows that \(\varepsilon^*(p, s, \gamma) \asymp \varepsilon_1^*(p, s, \gamma)\) as desired.
    \end{proof}

    \subsection{Proofs of results in Section \ref{section:ProblemI}}
    \subsubsection{Upper bound}
        In this section, we prove Proposition \ref{prop:problemI_upperbound}. For clarity and ease of reading, we separately evaluate the performance of the constituent tests and combine the results later to prove Proposition \ref{prop:problemI_upperbound}.

        \begin{lemma}\label{lemma:problem1_chisquare}
            Suppose \(1 \leq s \leq p\) and \(\gamma \in [0, 1)\). If \(\eta \in (0, 1)\), then there exists a constant \(C_\eta > 0\) depending only on \(\eta\) such that for all \(C > C_\eta\), the test \(\varphi_{C^2/2}^{\chi^2}\) given by (\ref{test:problemI_chisquare}) satisfies 
            \begin{equation*}
                P_{0, \gamma}\left\{\varphi_{C^2/2}^{\chi^2} = 1\right\} + \sup_{\theta \in \Theta_{\mathcal{I}}\left(p, s, C\sqrt{(1-\gamma)\sqrt{p}}\right)} P_{\theta, \gamma}\left\{ \varphi_{C^2/2}^{\chi^2} = 0 \right\} \leq \eta.
            \end{equation*} 
        \end{lemma}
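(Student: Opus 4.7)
The plan is to exploit the fact that the transformation $\widetilde{X}$ decorrelates the data and, under the null, has distribution $N(0, I_p)$, so $\|\widetilde{X}\|^2 \sim \chi^2_p$ with mean $p$ and variance $2p$. The test compares $\|\widetilde{X}\|^2$ to $p + \frac{C^2}{2}\sqrt{p}$, and both the Type I and Type II errors can be controlled via Chebyshev's inequality; the only care needed is making sure that for $\theta \in \Theta_{\mathcal{I}}(p, s, C\sqrt{(1-\gamma)\sqrt{p}})$, the mean of $\widetilde{X}$ is large enough relative to the threshold.

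First I would bound the Type I error. Since $\|\widetilde{X}\|^2 \sim \chi^2_p$ under $P_{0,\gamma}$, Chebyshev's inequality gives
\begin{equation*}
    P_{0, \gamma}\left\{\|\widetilde{X}\|^2 > p + \tfrac{C^2}{2}\sqrt{p}\right\} \leq \frac{2p}{\left(\frac{C^2}{2}\sqrt{p}\right)^2} = \frac{8}{C^4}.
\end{equation*}

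Next I would bound the Type II error. For $\theta \in \Theta_{\mathcal{I}}(p, s, C\sqrt{(1-\gamma)\sqrt{p}})$ set $\mu := (\theta - \bar{\theta}\mathbf{1}_p)/\sqrt{1-\gamma}$, so that $\widetilde{X} \sim N(\mu, I_p)$ and $\|\mu\|^2 = \|\theta - \bar{\theta}\mathbf{1}_p\|^2/(1-\gamma) \geq C^2 \sqrt{p}$. Then $E_{\theta,\gamma}\|\widetilde{X}\|^2 = p + \|\mu\|^2$ and $\Var_{\theta,\gamma}\|\widetilde{X}\|^2 = 2p + 4\|\mu\|^2$. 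Using $\|\mu\|^2 - \frac{C^2}{2}\sqrt{p} \geq \frac{1}{2}\|\mu\|^2 \geq \frac{C^2}{2}\sqrt{p}$, Chebyshev's inequality yields
\begin{equation*}
    P_{\theta, \gamma}\left\{ \|\widetilde{X}\|^2 \leq p + \tfrac{C^2}{2}\sqrt{p} \right\}
    \leq \frac{2p + 4\|\mu\|^2}{(\|\mu\|^2 - \frac{C^2}{2}\sqrt{p})^2}
    \leq \frac{2p}{\frac{C^4}{4} p} + \frac{4\|\mu\|^2}{\frac{1}{4}\|\mu\|^4}
    \leq \frac{8}{C^4} + \frac{16}{C^2 \sqrt{p}}.
\end{equation*}

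Combining the two bounds, the total risk is at most $\frac{16}{C^4} + \frac{16}{C^2}$, which can be made smaller than $\eta$ by choosing $C_\eta$ sufficiently large depending only on $\eta$ (for instance, any $C_\eta$ with $\frac{16}{C_\eta^4} + \frac{16}{C_\eta^2} \leq \eta$). There is no genuine obstacle here; the only subtlety is the splitting trick in the denominator of the Chebyshev bound for the Type II error, which lets one absorb the variance contributions from both $p$ and $\|\mu\|^2$ separately and decouples the argument from the specific value of $\|\mu\|^2$ beyond the guarantee $\|\mu\|^2 \geq C^2\sqrt{p}$.
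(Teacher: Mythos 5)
Your proof is correct and follows essentially the same route as the paper: under both hypotheses $\|\widetilde{X}\|^2$ is noncentral chi-square, the Type I and Type II errors are bounded by Chebyshev's inequality, and the denominator in the Type II bound is split exactly as in the paper (using $\|\mu\|^2 - \tfrac{C^2}{2}\sqrt{p} \geq \tfrac12\|\mu\|^2 \geq \tfrac{C^2}{2}\sqrt p$) to absorb the $2p$ and $4\|\mu\|^2$ variance contributions separately. The final constant choice $\tfrac{16}{C_\eta^4} + \tfrac{16}{C_\eta^2} \leq \eta$ matches the paper's.
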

        \begin{proof}
            Fix \(\eta \in (0, 1)\) and let \(C_\eta > 0\) be any value satisfying the inequality \(\frac{16}{C_\eta^4} + \frac{16}{C_\eta^2} \leq \eta\). Note that such a \(C_{\eta}\) clearly exists by taking \(C_\eta\) sufficiently large. Let \(C > C_\eta\). Under the data-generating process \(P_{\theta, \gamma}\), recall that \(\widetilde{X} \sim N\left(\frac{\theta - \bar{\theta}\mathbf{1}_p}{\sqrt{1-\gamma}}, I_p\right)\) where \(\widetilde{X}\) is the transformed data used in the definition of the test \(\varphi_{C^2/2}^{\chi^2}\). Therefore, \(||\widetilde{X}||^2 \sim \chi^2_p\left(\frac{||\theta-\bar{\theta}\mathbf{1}_p||^2}{1-\gamma}\right)\) and so \(E_{\theta, \gamma}(||\widetilde{X}||^2) = p + \frac{||\theta - \bar{\theta}\mathbf{1}_p||^2}{1-\gamma}\) and \(\Var_{\theta, \gamma}(||\widetilde{X}||^2) = 2p + 4\frac{||\theta - \bar{\theta}\mathbf{1}_p||^2}{1-\gamma}\). We now explicitly bound the type I and type II errors of the test. 
            
            Examining first the type I error, consider by Chebyshev's inequality 
            \begin{align*}
                P_{0, \gamma}\left\{\varphi_{C^2/2}^{\chi^2} = 1\right\} = P_{0, \gamma}\left\{ ||\widetilde{X}||^2 > p + \frac{C^2}{2} \sqrt{p} \right\} \leq \frac{\Var_{0, \gamma}(||\widetilde{X}||^2)}{\frac{C^4}{4} p} = \frac{8}{C^4} \leq \frac{8}{C_\eta^4}.
            \end{align*}

            Turning our attention to the type II error, consider again by Chebyshev's inequality
            \begin{align*}
                \sup_{\theta \in \Theta_{\mathcal{I}}\left(p, s, C\sqrt{(1-\gamma)\sqrt{p}}\right)} P_{\theta, \gamma}\left\{ \varphi_{C^2/2}^{\chi^2} = 0 \right\} &= \sup_{\theta \in \Theta_{\mathcal{I}}\left(p, s, C\sqrt{(1-\gamma)\sqrt{p}}\right)} P_{\theta, \gamma}\left\{ ||\widetilde{X}||^2 \leq p + \frac{C^2}{2}\sqrt{p}\right\} \\
                &= \sup_{\theta \in \Theta_{\mathcal{I}}\left(p, s, C\sqrt{(1-\gamma)\sqrt{p}}\right)} P_{\theta, \gamma}\left\{ \frac{||\theta - \bar{\theta}\mathbf{1}_p||^2}{1-\gamma} - \frac{C^2}{2} \sqrt{p} \leq p + \frac{||\theta - \bar{\theta}\mathbf{1}_p||^2}{1-\gamma} - ||\widetilde{X}||^2\right\} \\
                &\leq \sup_{\theta \in \Theta_{\mathcal{I}}\left(p, s, C\sqrt{(1-\gamma)\sqrt{p}}\right)} \frac{\Var_{\theta, \gamma}(||\widetilde{X}||^2)}{\left(\frac{||\theta - \bar{\theta}\mathbf{1}_p||^2}{1-\gamma} - \frac{C^2}{2} \sqrt{p}\right)^2} \\
                &= \sup_{\theta \in \Theta_{\mathcal{I}}\left(p, s, C\sqrt{(1-\gamma)\sqrt{p}}\right)} \frac{2p + 4\frac{||\theta-\bar{\theta}\mathbf{1}_p||^2}{1-\gamma}}{\left(\frac{||\theta - \bar{\theta}\mathbf{1}_p||^2}{1-\gamma} - \frac{C^2}{2} \sqrt{p}\right)^2} \\
                &\leq \frac{2p}{\frac{C^4}{4}p} + \sup_{\theta \in \Theta_{\mathcal{I}}\left(p, s, C\sqrt{(1-\gamma)\sqrt{p}}\right)}  \frac{4\frac{||\theta-\bar{\theta}\mathbf{1}_p||^2}{1-\gamma}}{\frac{1}{4}\frac{||\theta -\bar{\theta}\mathbf{1}_p||^4}{(1-\gamma)^2}} \\
                &= \frac{8}{C^4} + \sup_{\theta \in \Theta_{\mathcal{I}}\left(p, s, C\sqrt{(1-\gamma)\sqrt{p}}\right)} \frac{16(1-\gamma)}{||\theta - \bar{\theta}\mathbf{1}_p||^2} \\
                &\leq \frac{8}{C^4} + \frac{16}{C^2 \sqrt{p}} \\
                &\leq \frac{8}{C_\eta^4} + \frac{16}{C_\eta^2}. 
            \end{align*}
            Therefore, the sum of type I and type II errors is bounded by 
            \begin{align*}
                P_{0, \gamma}\left\{\varphi_{C^2/2}^{\chi^2} = 1\right\} + \sup_{\theta \in \Theta_{\mathcal{I}}\left(p, s, C\sqrt{(1-\gamma)\sqrt{p}}\right)} P_{\theta, \gamma}\left\{ \varphi_{C^2/2}^{\chi^2} = 0 \right\} \leq \frac{16}{C_\eta^4} + \frac{16}{C_\eta^2} \leq \eta.
            \end{align*}
            Since \(C > C_\eta\) was arbitrary and \(\eta \in (0, 1)\) was arbitrary, we have proved the desired result.
        \end{proof}

        \begin{lemma}\label{lemma:problem1_tsybakov}
            Assume \(p \geq 4\). Suppose \(1 \leq s < \sqrt{p}\) and \(\gamma \in [0, 1)\). If \(\eta \in (0, 1)\), then there exists a constant \(C_\eta > 0\) depending only on \(\eta\) such that for all \(C > C_\eta\), the test \(\varphi_{t^*, r^*}\) given by (\ref{test:problemI_tsybakov}) with \(t^* = \sqrt{2\log\left(1 + \frac{p}{s^2}\right)}\) and \(r^* = \frac{C^2}{32} s \log\left(1 + \frac{p}{s^2}\right)\) satisfies 
            \begin{equation*}
                P_{0, \gamma}\left\{\varphi_{t^*, r^*} = 1 \right\} + \sup_{\theta \in \Theta_{\mathcal{I}}\left(p, s, C\sqrt{(1-\gamma) s \log\left(1 + \frac{p}{s^2}\right)}\right)} P_{\theta, \gamma}\left\{ \varphi_{t^*, r^*} = 0 \right\} \leq \eta.
            \end{equation*}
        \end{lemma}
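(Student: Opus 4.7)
The plan is to apply Proposition \ref{prop:supp_tsybakov} to the decorrelated observation $\widetilde{X} \sim N\bigl((1-\gamma)^{-1/2}(\theta - \bar\theta\mathbf{1}_p),\, I_p\bigr)$, taking $\sigma = 1$ and $\mu := (1-\gamma)^{-1/2}(\theta - \bar\theta\mathbf{1}_p)$. The test $\varphi_{t^*,r^*}$ is precisely the test analyzed in that proposition (up to a numerical factor in $r^*$ that can be absorbed into $C_\eta$ via a routine re-examination of the proposition's proof), so the entire task reduces to verifying that $\mu$ lies in the alternative class $\mathscr{M}(p, s, \cdot)$ of the proposition.

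The delicate point is that $\mu$ is generically dense: subtracting $\bar\theta\mathbf{1}_p$ spreads nonzero mass over every coordinate, so one cannot simply invoke $s$-sparsity of $\mu$. What saves us is that Proposition \ref{prop:supp_tsybakov} only demands the existence of a single subvector indexed by some $S$ with $|S| \leq s$ having sufficiently large Euclidean norm. The natural candidate is $S := \supp(\theta)$, for which the subvector is $\mu_S = (1-\gamma)^{-1/2}(\theta - \bar\theta\mathbf{1}_{\supp(\theta)})$. Combining Corollary \ref{corollary:supp_approximation} (which gives $\|\theta - \bar\theta\mathbf{1}_{\supp(\theta)}\|^2 \geq \|\theta\|^2 (p-2s)/p$) with the Pythagorean bound $\|\theta\|^2 \geq \|\theta - \bar\theta\mathbf{1}_p\|^2 \geq C^2(1-\gamma) s\log(1+p/s^2)$ yields
\[
\|\mu_S\|^2 \;\geq\; \frac{p-2s}{p}\,C^2 s \log\!\Bigl(1+\frac{p}{s^2}\Bigr).
\]
Since $s < \sqrt{p}$ and $p \geq 4$, a direct case check shows $(p-2s)/p$ is bounded below by an absolute positive constant $c_0$ (the minimum $1/5$ is attained at $p = 5$, $s = 2$; for $p \geq 16$ one even has $c_0 \geq 1/2$), so $\mu \in \mathscr{M}\bigl(p, s, C\sqrt{c_0 s\log(1+p/s^2)}\bigr)$.

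With this containment established, Proposition \ref{prop:supp_tsybakov} closes the argument, provided $C_\eta$ in the current lemma is chosen large enough that $C\sqrt{c_0}$ exceeds the proposition's threshold constant and the slight discrepancy between the chosen threshold $r^* = C^2/32 \cdot s\log(1+p/s^2)$ and the proposition's nominal threshold $c_0 C^2/8 \cdot s\log(1+p/s^2)$ is absorbed into the Type I and Type II bounds; this amounts to re-running the Chebyshev computation in the proposition's proof with a slightly reduced effective signal-to-variance ratio, which is entirely routine. The main conceptual obstacle is the first step: an arbitrary top-$s$ truncation of $\mu$ could suffer destructive cancellation against $-\bar\theta\mathbf{1}_p$ and lose a constant fraction of its norm, so it is essential to pick the support of $\theta$ itself. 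Corollary \ref{corollary:supp_approximation} is precisely the geometric fact that validates this choice.
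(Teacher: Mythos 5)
Your proof is correct and follows the same route as the paper: decorrelate to \(\widetilde X\), use Corollary \ref{corollary:supp_approximation} with the specific choice \(S = \supp(\theta)\) to place the mean of \(\widetilde X\) in \(\mathscr{M}(p,s,\cdot)\), and then invoke Proposition \ref{prop:supp_tsybakov}. In fact you have correctly flagged a small slip in the paper's proof, which asserts \((p-2s)/p \geq 1/4\) for all \(p \geq 4\), \(1 \leq s < \sqrt{p}\): the minimum is \(1/5\) (attained at \(p=5\), \(s=2\)), so the paper's exact numerical match between \(c_0/8\) and the prescribed \(r^* = C^2/32\) does not hold in that edge case, and your proposal to absorb the resulting discrepancy by re-running the Chebyshev step in Proposition \ref{prop:supp_tsybakov} (at the cost of a constant factor in the final bound, hence a larger \(C_\eta\)) is the correct remedy.
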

        \begin{proof}
            Fix \(\eta \in (0, 1)\) and set \(C_\eta := 2C_\eta^*\) where \(C_\eta^*\) is given by Proposition \ref{prop:supp_tsybakov}. Let \(C > C_\eta\). For \(\theta \in \Theta_{\mathcal{I}}\left(p, s, C\sqrt{(1-\gamma)s\log\left(1 + \frac{p}{s^2}\right)}\right)\), an application of Corollary \ref{corollary:supp_approximation} along with \(p \geq 4\) and \(1 \leq s < \sqrt{p}\) being an integer yields 
            \begin{equation*}
                \left|\left|\frac{\theta - \bar{\theta}\mathbf{1}_{\supp(\theta)}}{\sqrt{1-\gamma}}\right|\right|^2 \geq \frac{||\theta - \bar{\theta}\mathbf{1}_{p}||^2}{1-\gamma} \cdot \frac{p-2s}{p} \geq \frac{||\theta - \bar{\theta}\mathbf{1}_{p}||^2}{1-\gamma} \cdot \frac{1}{4} \geq \frac{C^2}{4} s \log\left(1 + \frac{p}{s^2}\right).
            \end{equation*}
            Hence, we have shown that \(\theta \in \Theta_{\mathcal{I}}\left(p, s, C\sqrt{(1-\gamma)s\log\left(1 + \frac{p}{s^2}\right)}\right)\) implies \(\frac{\theta - \bar{\theta}\mathbf{1}_p}{\sqrt{1-\gamma}} \in \mathscr{M}\left(p, s, \frac{C}{2}\sqrt{s\log\left(1 + \frac{p}{s^2}\right)}\right)\) where the latter parameter space is given by (\ref{space:supp_space}). Recall under the data-generating process \(P_{\theta, \gamma}\) we have \(\widetilde{X} \sim N\left(\frac{\theta - \bar{\theta}\mathbf{1}_p}{\sqrt{1-\gamma}}, I_p \right)\) where \(\widetilde{X}\) is given in the definition \(\varphi_{t^*, r^*}\) (see (\ref{test:problemI_tsybakov})). Now since \(\frac{C}{2} > \frac{C_\eta}{2} = C_\eta^*\) and \(r^* = \frac{C^2/4}{8} s\log\left(1 + \frac{p}{s^2}\right)\), Proposition \ref{prop:supp_tsybakov} immediately implies 
            \begin{equation*}
                P_{0, \gamma}\left\{\varphi_{t^*, r^*} = 1 \right\} + \sup_{\theta \in \Theta_{\mathcal{I}}\left(p, s, C\sqrt{(1-\gamma) s \log\left(1 + \frac{p}{s^2}\right)}\right)} P_{\theta, \gamma}\left\{ \varphi_{t^*, r^*} = 0 \right\} \leq \eta.
            \end{equation*}
            Since \(C > C_\eta\) was arbitrary and \(\eta \in (0, 1)\) was arbitrary, the desired result has been proved.
        \end{proof}

        \begin{proof}[Proof of Proposition \ref{prop:problemI_upperbound}]
            Fix \(\eta \in (0, 1)\) and let \(C_{\eta} = C_{\eta, 1} \vee C_{\eta, 2}  \) where \(C_{\eta, 1}\) and \(C_{\eta, 2}\) are the constants at level \(\eta\) from Lemmas \ref{lemma:problem1_tsybakov} and \ref{lemma:problem1_chisquare} respectively. Let \(C > C_{\eta}\). Recall that \(p \geq 4\) is assumed. 

            \textbf{Case 1:} \(1 \leq s < \sqrt{p}\). Then \(\varphi_I^* = \varphi_{t^*, r^*}\) and \(\psi_1^2 = (1-\gamma)s \log\left(1 + \frac{p}{s^2}\right)\). Since \(C > C_{\eta, 1}\), it immediately follows from Lemma \ref{lemma:problem1_tsybakov} that \(P_{0, \gamma}\left\{ \varphi_I^* = 1 \right\} + \sup_{\theta \in \Theta_{\mathcal{I}}(p, s, C\psi_1)} P_{\theta, \gamma}\left\{ \varphi_{I}^* = 0 \right\} \leq \eta\).

            \textbf{Case 2:} Suppose \(s \geq \sqrt{p}\). Then \(\varphi_{I}^* = \varphi_{C^2/2}^{\chi^2}\) and \(\psi_1^2 = (1-\gamma)\sqrt{p}\). Since \(C > C_{\eta, 2}\), it immediately follows from Lemma \ref{lemma:problem1_chisquare} that \(P_{0, \gamma}\left\{\varphi_I^* = 1 \right\} + \sup_{\theta \in \Theta_{\mathcal{I}}(p, s, C\psi_1)} P_{\theta, \gamma}\left\{ \varphi_{I}^* = 0 \right\} \leq \eta\). Since \(C > C_\eta\) was arbitrary and \(\eta \in (0, 1)\) was arbitrary, we have proved the desired result.
        \end{proof}

    \subsubsection{Lower bound}
    In this section, we prove Proposition \ref{prop:problemI_lowerbound}. The strategy for proving the lower bound is the typical one found in the literature. In particular, as laid out by Lemma \ref{lemma:general_lower_bound}, we will construct a prior distribution on the alternative hypothesis and bound the \(\chi^2\)-divergence between the null distribution and the mixture distribution induced by the prior. Fortunately, in our Gaussian observation setting, the so called Ingster-Suslina method \cite{ingsterNonparametricGoodnessoffitTesting2003} (see Lemma \ref{lemma:Ingster_Suslina} below) can be used to calculate the \(\chi^2\)-divergence.  
    
    In preparation for the proof of Proposition \ref{prop:problemI_lowerbound}, the following lemma is needed.
    \begin{lemma}\label{lemma:v_inverse}
        If \(\gamma \in [0, 1)\) and \(v \in \R^p\) satisfies \(||v|| = \sqrt{p}\), then 
        \begin{equation*}
            \left((1-\gamma)I_p + \gamma vv^\intercal\right)^{-1} = \frac{1}{1-\gamma}\left( I_p - \frac{1}{p}vv^\intercal \right) + \frac{1}{1-\gamma+\gamma p} \cdot \frac{1}{p}vv^\intercal. 
        \end{equation*}
    \end{lemma}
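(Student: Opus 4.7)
The plan is to recognize the covariance matrix as a weighted sum over two orthogonal complementary projections and then invert each piece on its invariant subspace. Let $P := p^{-1} vv^\intercal$. Since $\|v\|^2 = p$, a direct computation yields $P^2 = p^{-2} v(v^\intercal v) v^\intercal = P$, so $P$ is the orthogonal projection onto $\spn\{v\}$ and $I_p - P$ is the orthogonal projection onto $\spn\{v\}^\perp$. In particular $P(I_p - P) = (I_p - P)P = 0$.

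Next I rewrite the covariance in terms of these projections:
\[
(1-\gamma)I_p + \gamma vv^\intercal = (1-\gamma)(I_p - P) + (1-\gamma)P + \gamma p P = (1-\gamma)(I_p - P) + (1-\gamma+\gamma p)P.
\]
This is a spectral decomposition of $\Sigma := (1-\gamma)I_p + \gamma vv^\intercal$ with eigenvalues $1-\gamma$ on $\spn\{v\}^\perp$ (multiplicity $p-1$) and $1-\gamma+\gamma p$ on $\spn\{v\}$ (multiplicity $1$). Both eigenvalues are strictly positive: $1-\gamma > 0$ because $\gamma \in [0,1)$, and $1-\gamma+\gamma p \geq 1-\gamma > 0$ since $p \geq 1$. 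Hence $\Sigma$ is invertible.

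Finally, because $I_p - P$ and $P$ are orthogonal complementary projections that commute with $\Sigma$, the candidate matrix
\[
M := \frac{1}{1-\gamma}(I_p - P) + \frac{1}{1-\gamma+\gamma p} P
\]
satisfies, using $(I_p-P)^2 = I_p - P$, $P^2 = P$, and $(I_p-P)P = 0$,
\[
\Sigma M = (I_p - P) + P = I_p.
\]
Therefore $\Sigma^{-1} = M$, which is exactly the right-hand side of the claimed identity. There is no real obstacle here; the only substantive point is the identity $P^2 = P$, which pins down the two eigenvalues and lets the inverse be written coordinate-free without invoking Sherman--Morrison.
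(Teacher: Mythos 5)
Your argument is correct, and it takes a different route from the paper, which simply cites the Sherman--Morrison rank-one update formula. You instead observe that $P := p^{-1}vv^\intercal$ is idempotent (because $\|v\|^2 = p$), rewrite $\Sigma = (1-\gamma)I_p + \gamma vv^\intercal$ as $(1-\gamma)(I_p-P) + (1-\gamma+\gamma p)P$, and invert eigenvalue-by-eigenvalue using the fact that $I_p - P$ and $P$ are complementary orthogonal projections annihilating each other. The paper's Sherman--Morrison route is a one-line citation and delivers the inverse in the form $\frac{1}{1-\gamma}I_p - \frac{\gamma}{(1-\gamma)(1-\gamma+\gamma p)}vv^\intercal$, which then has to be algebraically rearranged to match the stated expression; your spectral decomposition produces the target form directly and, as a side benefit, makes the positivity of both eigenvalues (hence the invertibility of $\Sigma$) immediate, which is exactly the structure exploited throughout the paper in the data transformation $\widetilde{X}$. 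Both arguments are equally rigorous; yours is self-contained and requires no external rank-one inverse identity, while the paper's is shorter given that Sherman--Morrison is taken as standard.
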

    \begin{proof}
        The result follows immediately from an application of the Sherman-Morrison formula.
    \end{proof}

    The proof of Proposition \ref{prop:problemI_lowerbound} uses the same prior distribution used in the lower bound argument of Collier et al. \cite{collierMinimaxEstimationLinear2017}. In fact, the calculation in our proof of Proposition \ref{prop:problemI_lowerbound} is similar to the calculation in Collier et al. \cite{collierMinimaxEstimationLinear2017} despite the correlation in our setting; it turns out no significant extra work needs to be done to obtain the lower bound for Problem I. 

    \begin{proof}[Proof of Proposition \ref{prop:problemI_lowerbound}]
        Fix \(\eta \in (0, 1)\) and set 
        \begin{equation*}
            c_\eta := \sqrt{\frac{2-\sqrt{2}}{2}} \wedge \sqrt{\frac{2-\sqrt{2}}{2} \log\left(1 + 4\eta^2\right)} \wedge \sqrt{\frac{2-\sqrt{2}}{2} \log\left(1 + \log\left(1 + 4\eta^2\right)\right)}
        \end{equation*}
        Let \(0 < c < c_\eta\). We deal with the two sparsity regimes separately. 

        \textbf{Case 1:} Suppose \(1 \leq s < \sqrt{p}\). Note \(\psi_1^2 = (1-\gamma)s \log\left(1 + \frac{p}{s^2}\right)\). Let \(\pi\) be the prior on \(\Theta_{\mathcal{I}}(p, s, c\psi_1)\) in which a draw \(\mu \sim \pi\) is obtained by drawing a subset \(S \subset [p]\) of size \(s\) uniformly at random from the set of all subsets of \([p]\) of size \(s\) and setting 
        \begin{equation*}
            \mu_i := 
            \begin{cases}
                \sqrt{\frac{2}{2-\sqrt{2}}} \cdot \frac{c\psi_1}{\sqrt{s}} &\text{if } i \in S, \\
                0 &\text{if } i \in S^c.
            \end{cases}
        \end{equation*}
        Note that we can write \(\mu = \sqrt{\frac{2}{2-\sqrt{2}}} \frac{c\psi_1}{\sqrt{s}} \mathbf{1}_S\). Since \(|S| = s\), we clearly have \(||\mu||_0 \leq s\). By Corollary \ref{corollary:orthog_approximation}, we have \(||\mu - \bar{\mu}\mathbf{1}_p||^2 \geq ||\mu||^2 \cdot \frac{p-s}{p} \geq  \frac{2}{2-\sqrt{2}} c^2 \psi_1^2 \cdot \frac{p-s}{p} \geq c^2 \psi_1^2 \). The last inequality follows from \(s < \sqrt{p}\) and \(p \geq 2\). Hence, \(\pi\) is indeed supported on \(\Theta_{\mathcal{I}}(p, s, c\psi_1)\). 

        Let \(P_{\pi, \gamma} = \int P_{\theta, \gamma} \pi(d\theta)\) denote the Gaussian mixture induced by \(\pi\). By Lemma \ref{lemma:Ingster_Suslina} and Lemma \ref{lemma:v_inverse}, 
        \begin{align*}
            \chi^2(P_{\pi, \gamma} || P_{0, \gamma}) = E\left[ \exp \left( \left\langle \theta, \left[ \frac{1}{1-\gamma} I_p - \frac{\gamma}{(1-\gamma)^2 + (1-\gamma)\gamma p} \mathbf{1}_p \mathbf{1}_p^\intercal \right] \widetilde{\theta}  \right\rangle \right)\right] - 1
        \end{align*}
        where \(\theta, \widetilde{\theta} \overset{iid}{\sim} \pi\). Write \(\theta = \sqrt{\frac{2}{2-\sqrt{2}}} \frac{c\psi_1}{\sqrt{s}} \mathbf{1}_S\) and \(\widetilde{\theta} = \sqrt{\frac{2}{2-\sqrt{2}}} \frac{c \psi_1}{\sqrt{s}} \mathbf{1}_{\widetilde{S}}\) for \(S, \widetilde{S}\) iid uniformly drawn subsets of \([p]\) of size \(s\). Then, we have 
        \begin{align*}
            \left\langle \theta, \left[ \frac{1}{1-\gamma} I_p - \frac{\gamma}{(1-\gamma)^2 + (1-\gamma)\gamma p} \mathbf{1}_p \mathbf{1}_p^\intercal \right] \widetilde{\theta}  \right\rangle &= \frac{2}{2-\sqrt{2}} \frac{c^2 \psi_1^2}{s} \left[\frac{1}{1-\gamma} |S \cap \widetilde{S}| - \frac{\gamma s^2}{(1-\gamma)^2 + \gamma(1-\gamma)p}\right] \\
            &\leq \frac{2}{2-\sqrt{2}} \frac{c^2\psi_1^2}{s(1-\gamma)} |S \cap \widetilde{S}|
        \end{align*}
        where the final inequality is due to \(\gamma \geq 0\). Hence, 
        \begin{align}
            \chi^2(P_{\pi, \gamma} || P_{0, \gamma}) &\leq E\left[\exp\left( \frac{2}{2-\sqrt{2}} \frac{c^2\psi_1^2}{s(1-\gamma)} |S \cap \widetilde{S}| \right)\right] - 1 \\
            &\leq \left(1 - \frac{s}{p} + \frac{s}{p} \exp\left(\frac{2}{2-\sqrt{2}} \frac{c^2\psi_1^2}{s(1-\gamma)} \right)\right)^s - 1\label{eqn:chi_square_divergence}
        \end{align}
        where the final inequality follows from the fact that \(|S \cap \widetilde{S}|\) is distributed according to the hypergeometric distribution with probability mass function given in Lemma \ref{lemma:hypergeometric}. Recalling that \(\psi_1^2 = (1-\gamma) s \log\left(1 + \frac{p}{s^2}\right)\), we have 
        \begin{align*}
            \left(1 - \frac{s}{p} + \frac{s}{p} \exp\left(\frac{2}{2-\sqrt{2}} \frac{c^2\psi_1^2}{s(1-\gamma)} \right)\right)^s &= \left(1 - \frac{s}{p} + \frac{s}{p} \exp\left(\frac{2}{2-\sqrt{2}} c^2 \log\left(1 + \frac{p}{s^2}\right) \right)\right)^s \\
            &= \left(1 - \frac{s}{p} + \frac{s}{p}\left(1 + \frac{p}{s^2}\right)^{\frac{2}{2-\sqrt{2}} c^2}\right)^s \\
            &\leq \left(1 + \frac{1}{s} \frac{2}{2-\sqrt{2}} c^2 \right)^s \\
            &\leq \exp\left(\frac{2}{2-\sqrt{2}} c^2\right).
        \end{align*}
        where the penultimate inequality follows from the fact that \(\frac{2}{2-\sqrt{2}} c^2 < \frac{2}{2-\sqrt{2}} c_\eta^2 \leq 1\) along with the inequality \((1+x)^\delta - 1 \leq \delta x\) for all \(0 < \delta < 1\) and \(x > 0\). Therefore, it follows that \(\chi^2(P_{\pi, \gamma} || P_{0, \gamma}) \leq \exp\left(\frac{2}{2-\sqrt{2}}c^2\right) - 1 \leq \exp\left( \frac{2}{2-\sqrt{2}} c_\eta^2 \right) - 1 \leq 4\eta^2\). An application of Lemma \ref{lemma:general_lower_bound} yields \(\mathcal{R}_{\mathcal{I}}(c\psi_1) \geq 1 - \frac{1}{2}\sqrt{\chi^2(P_{\pi, \gamma} || P_{0, \gamma})} \geq 1-\eta\). Since \(0 < c < c_\eta\) was arbitrary and \(\eta \in (0, 1)\) was arbitrary, we have proved the desired result for the case \(1 \leq s < \sqrt{p}\).

        \textbf{Case 2:} Suppose \(s \geq \sqrt{p}\). Note \(\psi_1^2 = (1-\gamma)\sqrt{p}\). Without loss of generality, assume \(\sqrt{p}\) is an integer. Repeating exactly the argument presented in Case 1 except now replacing every instance of \(s\) with \(\sqrt{p}\) yields 
        \begin{align*}
            \chi^2(P_{\pi, \gamma} || P_{0, \gamma}) &\leq \left(1 - \frac{1}{\sqrt{p}} + \frac{1}{\sqrt{p}} \exp\left(\frac{2}{2-\sqrt{2}} \frac{c^2\psi_1^2}{\sqrt{p}(1-\gamma)}\right)\right)^{\sqrt{p}} - 1 \\
            &\leq \left(1 + \frac{1}{\sqrt{p}} \left[\exp\left(\frac{2}{2-\sqrt{2}} c^2 \right) - 1\right] \right)^{\sqrt{p}} - 1\\
            &\leq \exp\left(\exp\left(\frac{2}{2-\sqrt{2}} c^2\right) - 1\right) - 1 \\
            &\leq \exp\left(\exp\left(\frac{2}{2-\sqrt{2}} c_\eta^2\right) - 1\right) - 1 \\
            &\leq 4\eta^2. 
        \end{align*}
        An application of Lemma \ref{lemma:general_lower_bound} yields \(\mathcal{R}_{\mathcal{I}}(c\psi_1) \geq 1 - \frac{1}{2}\sqrt{\chi^2(P_{\pi, \gamma} || P_{0, \gamma})} \geq 1-\eta\). Since \(0 < c < c_\eta\) was arbitrary and \(\eta \in (0, 1)\) was arbitrary, we have proved the desired result for the case \(s \geq \sqrt{p}\). 
    \end{proof}

    \begin{proof}[Proof of Theorem \ref{thm:ProblemI}]
        If \(p \geq 4\), then Propositions \ref{prop:problemI_upperbound} and \ref{prop:problemII_lowerbound} can be combined to immediately yield the desired result. Suppose \(p = 2\) or \(p = 3\). Proposition \ref{prop:problemI_lowerbound} already gives the lower bound. To prove the upper bound, one can repeat the analysis of Lemma \ref{lemma:problem1_chisquare} to show that the \(\chi^2\)-test is sensitive for all \(1 \leq s \leq p\). The proof is complete. 
    \end{proof}

    \subsection{Proof of results in Section \ref{section:ProblemII}}

    \subsubsection{Upper bound}
    To prove the upper bound, the constituent tests are examined separately and later combined to prove Proposition \ref{prop:problemII_upperbound}.

    \begin{lemma}\label{lemma:problemII_linear}
        Suppose \(1 \leq s \leq p\) and \(\gamma \in [0, 1]\). If \(\eta \in (0, 1)\), then there exists a constant \(C_\eta\) depending only on \(\eta\) such that for all \(C > C_\eta\) the testing procedure 
        \begin{equation*}
            \varphi_{C^2/2}^{\mathbf{1}_p} := \mathbbm{1}_{\{\langle p^{-1/2} \mathbf{1}_p, X\rangle^2 > (1-\gamma+\gamma p) \left(1 + C^2/2\right)\}}
        \end{equation*}
        satisfies 
        \begin{equation*}
            P_{0, \gamma}\left\{\varphi_{C^2/2}^{\mathbf{1}_p} = 1 \right\} + \sup_{\theta \in \Theta_{\mathcal{II}}(p, s, C\sqrt{1-\gamma+\gamma p})} P_{\theta, \gamma}\left\{\varphi_{C^2/2}^{\mathbf{1}_p} = 0\right\} \leq \eta. 
        \end{equation*}
    \end{lemma}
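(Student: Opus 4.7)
The plan is to reduce the test to a one-dimensional noncentral chi-squared statistic and then bound both error probabilities via Chebyshev's inequality, in direct parallel to the proof of Lemma \ref{lemma:problem1_chisquare}.

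First I would compute the exact distribution of the statistic under $P_{\theta,\gamma}$. Since $p^{-1/2}\mathbf{1}_p$ is a unit eigenvector of $(1-\gamma)I_p + \gamma\mathbf{1}_p\mathbf{1}_p^\intercal$ with eigenvalue $1-\gamma+\gamma p$, the linear functional $\langle p^{-1/2}\mathbf{1}_p, X\rangle$ is univariate Gaussian with mean $\sqrt{p}\,\bar{\theta}$ and variance $1-\gamma+\gamma p$. Consequently
\begin{equation*}
\frac{\langle p^{-1/2}\mathbf{1}_p, X\rangle^2}{1-\gamma+\gamma p} \sim \chi^2_1(\lambda_\theta), \qquad \lambda_\theta := \frac{p\bar{\theta}^2}{1-\gamma+\gamma p} = \frac{\|\bar{\theta}\mathbf{1}_p\|^2}{1-\gamma+\gamma p},
\end{equation*}
which has mean $1+\lambda_\theta$ and variance $2+4\lambda_\theta$. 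The key observation is that $\theta \in \Theta_{\mathcal{II}}(p,s,C\sqrt{1-\gamma+\gamma p})$ translates precisely into the lower bound $\lambda_\theta \geq C^2$.

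Next I would bound the type I error. Under $P_{0,\gamma}$ we have $\lambda_0 = 0$, so Chebyshev gives
\begin{equation*}
P_{0,\gamma}\bigl\{\varphi_{C^2/2}^{\mathbf{1}_p} = 1\bigr\} = P\bigl\{\chi^2_1 > 1 + C^2/2\bigr\} \leq \frac{2}{(C^2/2)^2} = \frac{8}{C^4}.
\end{equation*}
For the type II error, for any $\theta$ in the alternative parameter space the shift $\lambda_\theta \geq C^2$ yields $\lambda_\theta - C^2/2 \geq \lambda_\theta/2$, so another Chebyshev application produces
\begin{equation*}
P_{\theta,\gamma}\bigl\{\varphi_{C^2/2}^{\mathbf{1}_p} = 0\bigr\} \leq \frac{2+4\lambda_\theta}{(\lambda_\theta - C^2/2)^2} \leq \frac{8}{\lambda_\theta^2} + \frac{16}{\lambda_\theta} \leq \frac{8}{C^4} + \frac{16}{C^2}.
\end{equation*}

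Summing the two bounds yields a total error at most $\tfrac{16}{C^4} + \tfrac{16}{C^2}$, so choosing $C_\eta$ large enough that $\tfrac{16}{C_\eta^4}+\tfrac{16}{C_\eta^2} \leq \eta$ (which depends only on $\eta$) completes the argument; since $C > C_\eta$ and $\eta \in (0,1)$ are arbitrary, the statement follows. There is no real obstacle here—the entire argument is the same Chebyshev analysis used for $\varphi^{\chi^2}$ in Lemma \ref{lemma:problem1_chisquare}, with the only novelty being the identification of $p^{-1/2}\mathbf{1}_p$ as the top eigenvector so that the statistic exactly concentrates the variance from the correlated component into a single one-dimensional noncentral chi-squared.
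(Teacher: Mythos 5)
Your proof is correct and follows essentially the same approach as the paper: identify $\langle p^{-1/2}\mathbf{1}_p, X\rangle^2$ as a scaled noncentral $\chi^2_1$ statistic, translate the separation condition into a lower bound on the noncentrality parameter, and bound both errors by Chebyshev's inequality exactly as in Lemma \ref{lemma:problem1_chisquare}. The only cosmetic difference is that you normalize by $1-\gamma+\gamma p$ to work with a standard $\chi^2_1(\lambda_\theta)$, whereas the paper carries the factor through; the resulting bounds and the choice of $C_\eta$ are the same.
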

    \begin{proof}
        Fix \(\eta \in (0, 1)\) and let \(C_{\eta} := \left(\frac{16}{\eta}\right)^{1/4} \vee \left(\frac{32}{\eta}\right)^{1/4} \vee \sqrt{\frac{64}{\eta}}\). Clearly such a value of \(C_\eta\) exists since \(C_\eta\) can be taken sufficiently large depending only \(\eta\). Let \(C > C_\eta\). Consider under the data-generating process \(P_{\theta, \gamma}\) we have \(\langle p^{-1/2}\mathbf{1}_p, X\rangle \sim N\left( \sqrt{p}\bar{\theta}, 1-\gamma+\gamma p\right)\). Therefore, \(\langle p^{-1/2}\mathbf{1}_p, X\rangle^2 \sim (1-\gamma+\gamma p) \chi^2_1\left(\frac{||\bar{\theta}\mathbf{1}_p||^2}{1-\gamma+\gamma p} \right)\). Moreover, \(E_{\theta, \gamma}\left( \langle p^{-1/2} \mathbf{1}_p, X\rangle^2 \right) = (1-\gamma+\gamma p) + ||\bar{\theta}\mathbf{1}_p||^2\) and \(\Var_{\theta, \gamma}\left( \langle p^{-1/2}\mathbf{1}_p, X\rangle^2 \right) = 2(1-\gamma+\gamma p)^2 + 4(1-\gamma+\gamma p)||\bar{\theta}\mathbf{1}_p||^2\). Examining the type I error, consider 
        \begin{align*}
            P_{0, \gamma}\left\{ \langle p^{-1/2}\mathbf{1}_p, X\rangle^2 > (1-\gamma+\gamma p)\left(1 + \frac{C^2}{2}\right) \right\} \leq \frac{\Var_{0, \gamma}\left( \langle p^{-1/2}\mathbf{1}_p, X\rangle \right)}{\frac{C^4}{4}(1-\gamma+\gamma p)^2} \leq \frac{8}{C_\eta^4} \leq \frac{\eta}{2}
        \end{align*}
        since \(C_\eta \geq \left(\frac{16}{\eta}\right)^{1/4}\). Examining the type II error, consider 
        \begin{align*}
            &\sup_{\theta \in \Theta_{\mathcal{II}}(p, s, C\sqrt{1-\gamma+\gamma p})} P_{\theta, \gamma}\left\{\varphi_{C^2/2}^{\mathbf{1}_p} = 0 \right\} \\
            &= \sup_{\theta \in \Theta_{\mathcal{II}}(p, s, C\sqrt{1-\gamma+\gamma p})} P_{\theta, \gamma}\left\{\langle p^{-1/2}\mathbf{1}_p, X\rangle^2 \leq (1-\gamma+\gamma p)\left(1 + \frac{C^2}{2}\right) \right\} \\
            &\leq \sup_{\theta \in \Theta_{\mathcal{II}}(p, s, C\sqrt{1-\gamma+\gamma p})} P_{\theta, \gamma}\left\{||\bar{\theta}\mathbf{1}_p||^2 - (1-\gamma+\gamma p)\frac{C^2}{2} \leq (1-\gamma+\gamma p) + ||\bar{\theta}\mathbf{1}_p||^2 - \langle p^{-1/2}\mathbf{1}_p, X\rangle^2 \right\} \\
            &\leq \sup_{\theta \in \Theta_{\mathcal{II}}(p, s, C\sqrt{1-\gamma+\gamma p})}\frac{\Var_{\theta, \gamma}\left(\langle p^{-1/2}\mathbf{1}_p, X\rangle^2\right)}{\left(||\bar{\theta}\mathbf{1}_p||^2 - \frac{C^2}{2}(1-\gamma+\gamma p)\right)^2} \\
            &\leq \frac{2(1-\gamma+\gamma p)^2}{\frac{C^4}{4}(1-\gamma+\gamma p)^2} + \sup_{\theta \in \Theta_{\mathcal{II}}(p, s, C\sqrt{1-\gamma+\gamma p})} \frac{4(1-\gamma+\gamma p) ||\bar{\theta}\mathbf{1}_p||^2}{\frac{1}{4} ||\bar{\theta}\mathbf{1}_p||^4} \\
            &\leq \frac{8}{C^4} + \frac{16}{C^2} \\
            &\leq \frac{\eta}{2}.
        \end{align*}
        Thus, \(P_{\theta, \gamma}\left\{ \varphi_{C^2/2}^{\mathbf{1}_p} = 1 \right\} + \sup_{\theta \in \Theta_{\mathcal{II}}(p, s, C\sqrt{1-\gamma+\gamma p})} P_{\theta, \gamma}\left\{\varphi_{C^2/2}^{\mathbf{1}_p} = 0 \right\} \leq \frac{\eta}{2} + \frac{\eta}{2} \leq \eta\). Since \(C > C_\eta\) was arbitrary and \(\eta \in (0, 1)\) was arbitrary, the desired result is thus proved. 
    \end{proof}

    \begin{lemma}\label{lemma:problemII_chisquare}
        Suppose \(1 \leq s < p\) and \(\gamma \in [0, 1)\). If \(\eta \in (0, 1)\), then there exists a constant \(C_\eta\) depending only on \(\eta\) such that for all \(C > C_\eta\) the testing procedure \(\varphi_{C^2/2}^{\chi^2}\) given by (\ref{test:problemI_chisquare}) satisfies 
        \begin{equation*}
            P_{0, \gamma}\left\{ \varphi_{C^2/2}^{\chi^2} = 1  \right\} + \sup_{\theta \in \Theta_{\mathcal{II}}\left(p, s, C\sqrt{\frac{(1-\gamma)p^{3/2}}{p-s}}\right)} P_{\theta, \gamma}\left\{ \varphi_{C^2/2}^{\chi^2} = 0 \right\} \leq \eta.
        \end{equation*}
    \end{lemma}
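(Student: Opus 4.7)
The plan is to reduce Lemma \ref{lemma:problemII_chisquare} to essentially the same Chebyshev argument used for Lemma \ref{lemma:problem1_chisquare}, with one additional geometric input that translates separation in Problem II form ($\|\bar{\theta}\mathbf{1}_p\|$ large) into separation in Problem I form ($\|\theta - \bar{\theta}\mathbf{1}_p\|$ large). Recall that under $P_{\theta,\gamma}$ the transformed data satisfies $\widetilde{X} \sim N\!\left(\frac{\theta - \bar{\theta}\mathbf{1}_p}{\sqrt{1-\gamma}}, I_p\right)$, so $\|\widetilde{X}\|^2$ has mean $p + \|(\theta-\bar{\theta}\mathbf{1}_p)/\sqrt{1-\gamma}\|^2$ and variance $2p + 4\|(\theta-\bar{\theta}\mathbf{1}_p)/\sqrt{1-\gamma}\|^2$. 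The type I bound is then identical to that in the proof of Lemma \ref{lemma:problem1_chisquare}: Chebyshev's inequality under $P_{0,\gamma}$ gives $P_{0,\gamma}\{\varphi_{C^2/2}^{\chi^2} = 1\} \leq 8/C^4$.

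The key new ingredient appears in the type II analysis. For $\theta \in \Theta_{\mathcal{II}}\!\left(p, s, C\sqrt{(1-\gamma)p^{3/2}/(p-s)}\right)$, the defining conditions $\|\theta\|_0 \leq s$ and $\|\bar{\theta}\mathbf{1}_p\| \geq C\sqrt{(1-\gamma)p^{3/2}/(p-s)}$ combine with Corollary \ref{corollary:orthog_approximation} (precisely the inequality (\ref{bound:orthog_problemII}) already highlighted in the paper) to yield
\begin{equation*}
    \left\| \frac{\theta - \bar{\theta}\mathbf{1}_p}{\sqrt{1-\gamma}} \right\|^2 \geq \frac{\|\bar{\theta}\mathbf{1}_p\|^2}{1-\gamma} \cdot \frac{p-s}{p} \geq C^2 \sqrt{p}.
\end{equation*}
This is the only place where the sparsity of $\theta$ and the particular choice of separation $\varepsilon_2 = C\sqrt{(1-\gamma)p^{3/2}/(p-s)}$ enter; the factor $(p-s)/p$ is exactly what is needed to cancel the $p^{3/2}/(p-s)$ in $\varepsilon_2^2$ and recover the signal-to-noise ratio $C^2\sqrt{p}$ required by the $\chi^2$ test.

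With this lower bound in hand, the proof concludes exactly as in Lemma \ref{lemma:problem1_chisquare}. A Chebyshev bound on the type II probability gives
\begin{equation*}
    P_{\theta,\gamma}\{\varphi_{C^2/2}^{\chi^2} = 0\} \leq \frac{2p + 4\|(\theta-\bar{\theta}\mathbf{1}_p)/\sqrt{1-\gamma}\|^2}{\left(\|(\theta-\bar{\theta}\mathbf{1}_p)/\sqrt{1-\gamma}\|^2 - (C^2/2)\sqrt{p}\right)^2} \leq \frac{8}{C^4} + \frac{16}{C^2\sqrt{p}},
\end{equation*}
where in the second step we use that $\|(\theta-\bar{\theta}\mathbf{1}_p)/\sqrt{1-\gamma}\|^2 - (C^2/2)\sqrt{p} \geq \tfrac{1}{2}\|(\theta-\bar{\theta}\mathbf{1}_p)/\sqrt{1-\gamma}\|^2$ by the geometric lower bound. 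Choosing $C_\eta$ large enough that $16/C_\eta^4 + 16/C_\eta^2 \leq \eta$ (as in Lemma \ref{lemma:problem1_chisquare}) yields the claimed risk bound.

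There is no real obstacle: the main conceptual step is recognizing that the geometric corollary is exactly what is needed to port the Lemma \ref{lemma:problem1_chisquare} argument to the Problem II parameter space, and the rate $(1-\gamma)p^{3/2}/(p-s)$ in the statement is defined precisely so that this translation produces the threshold $C^2\sqrt{p}$ that the $\chi^2$ test requires. The computation is otherwise a word-for-word repetition of the earlier lemma.
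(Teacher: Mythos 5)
Your proposal is correct and follows essentially the same route as the paper: both invoke Corollary \ref{corollary:orthog_approximation} to translate the Problem II separation $\|\bar\theta\mathbf{1}_p\|^2 \geq C^2(1-\gamma)p^{3/2}/(p-s)$ into the Problem I separation $\|(\theta-\bar\theta\mathbf{1}_p)/\sqrt{1-\gamma}\|^2 \geq C^2\sqrt{p}$, and then run the identical $\chi^2$ Chebyshev argument of Lemma \ref{lemma:problem1_chisquare}. The only cosmetic difference is that the paper packages this translation as the set inclusion $\Theta_{\mathcal{II}}(p,s,C\sqrt{\kappa}) \subset \Theta_{\mathcal{I}}(p,s,C\sqrt{(1-\gamma)\sqrt{p}})$ and reuses the earlier type II bound verbatim, whereas you redo the Chebyshev computation inline.
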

    \begin{proof}
        Fix \(\eta \in (0, 1)\) and let \(C_{\eta} := \left(\frac{16}{\eta}\right)^{1/4} \vee \left(\frac{32}{\eta}\right)^{1/4} \vee \sqrt{\frac{64}{\eta}}\). Let \(C > C_\eta\). For ease of notation, denote \(\kappa := \frac{(1-\gamma)p^{3/2}}{p-s}\). Note that the same test was analyzed in Lemma \ref{lemma:problem1_chisquare}. Recall from the proof of Lemma \ref{lemma:problem1_chisquare} that \(||\widetilde{X}||^2 \sim \chi^2_p\left(\frac{||\theta - \bar{\theta}\mathbf{1}_p||^2}{1-\gamma}\right)\) under the data-generating process \(P_{\theta, \gamma}\). Therefore, \(E_{\theta, \gamma}\left(||\widetilde{X}||^2\right) = p + \frac{||\theta - \bar{\theta}\mathbf{1}_p||^2}{1-\gamma}\) and \(\Var_{\theta, \gamma}\left(||\widetilde{X}||^2 \right) = 2p + 4\frac{||\theta - \bar{\theta}\mathbf{1}_p||^2}{1-\gamma}\). The type I error analysis of the proof of Lemma \ref{lemma:problem1_chisquare} can be exactly repeated here to yield \(P_{0, \gamma}\left\{ \varphi_{C^2/2}^{\chi^2} = 1 \right\} \leq \frac{\eta}{2}\). Before turning our attention to the type II error, we make a preliminary observation. For every \(\theta \in \Theta_{\mathcal{II}}(p, s, C\sqrt{\kappa})\), we have by Corollary \ref{corollary:orthog_approximation}
        \begin{align*}
            ||\theta - \bar{\theta}\mathbf{1}_p||^2 \geq ||\theta||^2\cdot \frac{p-s}{p} \geq ||\bar{\theta}\mathbf{1}_p||^2 \cdot \frac{p-s}{p} \geq C^2\kappa \cdot \frac{p-s}{p} = C^2 (1-\gamma) \sqrt{p}.
        \end{align*}
        In other words, we have the inclusion \(\Theta_{\mathcal{II}}(p, s, C\sqrt{\kappa}) \subset \Theta_{\mathcal{I}}(p, s, C\sqrt{(1-\gamma)\sqrt{p}})\). Consequently,
        \begin{align*}
            \sup_{\theta \in \Theta_{\mathcal{II}}(p, s, C\sqrt{\kappa})} P_{\theta, \gamma}\left\{\varphi_{C^2/2}^{\chi^2} = 0 \right\} \leq \sup_{\theta \in \Theta_{\mathcal{I}}(p, s, C\sqrt{(1-\gamma)\sqrt{p}})} P_{\theta, \gamma}\left\{ \varphi_{C^2/2}^{\chi^2} = 0 \right\} \leq \frac{8}{C_\eta^4} + \frac{16}{C_\eta^2} \leq \frac{\eta}{2}
        \end{align*}
        where the second inequality is obtained by repeating exactly the type II error analysis in the proof of Lemma \ref{lemma:problem1_chisquare}. Therefore, \(P_{0, \gamma}\left\{ \varphi_{C^2/2}^{\chi^2} = 1 \right\} + \sup_{\theta \in \Theta_{\mathcal{II}}(p, s, C\sqrt{\kappa})} P_{\theta, \gamma}\left\{\varphi_{C^2/2}^{\chi^2} = 0 \right\} \leq \frac{\eta}{2} + \frac{\eta}{2} \leq \eta\). Since \(C > C_\eta\) was arbitrary and \(\eta \in (0, 1)\) was arbitrary, the desired result has thus been proved.
    \end{proof}

    \begin{lemma}\label{lemma:problemII_tsybakov}
        Suppose \(p - \sqrt{p} < s < p\) and \(\gamma \in [0, 1)\). If \(\eta \in (0, 1)\), then there exists a constant \(C_\eta\) depending only on \(\eta\) such that for all \(C > C_\eta\) the testing procedure \(\varphi_{\widetilde{t}, \widetilde{r}}\) with \(\widetilde{t} = \sqrt{2 \log\left(1 + \frac{p}{(p-s)^2}\right)}\) and \(\widetilde{r} = \frac{C^2}{8} (p-s) \log\left(1 + \frac{p}{(p-s)^2}\right)\) given by (\ref{test:problemI_tsybakov}) satisfies 
        \begin{equation*}
            P_{0,\gamma}\left\{ \varphi_{\widetilde{t}, \widetilde{r}} = 1 \right\} + \sup_{\theta \in \Theta_{\mathcal{II}}\left(p, s, C \sqrt{(1-\gamma) p \log\left(1 + \frac{p}{(p-s)^2}\right)} \right)}P_{\theta, \gamma}\left\{ \varphi_{\widetilde{t}, \widetilde{r}} = 0 \right\} \leq \eta. 
        \end{equation*}
    \end{lemma}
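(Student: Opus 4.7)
The plan is to reduce the claim to a direct invocation of Proposition \ref{prop:supp_tsybakov} applied to the decorrelated data $\widetilde{X}$. Recall that under $P_{\theta, \gamma}$, we have $\widetilde{X} \sim N(\mu, I_p)$ with $\mu = (\theta - \bar{\theta}\mathbf{1}_p)/\sqrt{1-\gamma}$. Although $\mu$ itself is generally not sparse, the statistic $Y_{\widetilde{t}}$ is tailored to detect an $(p-s)$-sparse sub-vector of $\mu$ with sufficient $\ell_2$-norm, which is exactly what the space $\mathscr{M}(p, p-s, \cdot)$ in Proposition \ref{prop:supp_tsybakov} captures.

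First, I would verify the sparsity constraint of Proposition \ref{prop:supp_tsybakov}. Since $p-\sqrt{p} < s < p$, we have $1 \leq p-s < \sqrt{p} < 4\sqrt{p}$, so the parameter $p-s$ plays the role of ``$s$'' in that proposition. Moreover, the choices $\widetilde{t} = \sqrt{2\log(1 + p/(p-s)^2)}$ and $\widetilde{r} = \tfrac{C^2}{8}(p-s)\log(1+p/(p-s)^2)$ are precisely the choices $t^*$ and $r^*$ prescribed by Proposition \ref{prop:supp_tsybakov} with sparsity level $p-s$ and noise level $\sigma = 1$.

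Next, I would show that for every $\theta \in \Theta_{\mathcal{II}}(p, s, C\sqrt{(1-\gamma) p \log(1 + p/(p-s)^2)})$, the mean vector $\mu$ lies in the space $\mathscr{M}(p, p-s, C\sqrt{(p-s)\log(1 + p/(p-s)^2)})$. Pick any $T \subset \supp(\theta)^c$ with $|T| = p-s$; existence holds because $|\supp(\theta)^c| \geq p-s$. Since $\theta_i = 0$ for $i \in T$, we get $(\theta - \bar{\theta}\mathbf{1}_p)_T = -\bar{\theta}\mathbf{1}_T$ and therefore
\begin{equation*}
    \|\mu_T\|^2 = \frac{(p-s)\bar{\theta}^2}{1-\gamma} = \frac{\|\bar{\theta}\mathbf{1}_p\|^2}{1-\gamma}\cdot\frac{p-s}{p} \geq C^2(p-s)\log\left(1 + \frac{p}{(p-s)^2}\right),
\end{equation*}
using the defining separation bound $\|\bar{\theta}\mathbf{1}_p\|^2 \geq C^2(1-\gamma)p\log(1 + p/(p-s)^2)$ for $\theta \in \Theta_{\mathcal{II}}$. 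This places $\mu$ in the desired $\mathscr{M}$ space.

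Finally, with these verifications in hand, Proposition \ref{prop:supp_tsybakov} applied to $Y = \widetilde{X}$ with $\sigma = 1$ and sparsity $p-s$ immediately yields the risk bound: for each $\eta \in (0,1)$, taking $C_\eta$ to be the constant produced by Proposition \ref{prop:supp_tsybakov} at level $\eta$, we get the combined type I plus type II error bounded by $\eta$ for all $C > C_\eta$. Arbitrariness of $C > C_\eta$ and $\eta \in (0,1)$ completes the argument. The only real work is the algebraic verification of the sub-vector norm bound and the cross-walk between the constants, both of which are routine; there is no substantive obstacle, since Proposition \ref{prop:supp_tsybakov} was designed precisely to handle this ``sparse sub-vector of a generally non-sparse mean'' situation.
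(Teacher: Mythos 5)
Your proof is correct and matches the paper's own argument essentially line for line: both pick a set $T \subset \supp(\theta)^c$ of size $p-s$, show $\|\mu_T\|^2 = \bar{\theta}^2(p-s)/(1-\gamma) \geq C^2(p-s)\log(1+p/(p-s)^2)$ so that $\mu \in \mathscr{M}(p,p-s,\cdot)$, and then invoke Proposition \ref{prop:supp_tsybakov} applied to $\widetilde{X}$ with sparsity level $p-s$. No meaningful difference in approach or in the details.
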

    \begin{proof}
        Fix \(\eta \in (0, 1)\) and set \(C_\eta\) as in Proposition \ref{prop:supp_tsybakov}. Let \(C > C_\eta\). For ease of notation, set \(\kappa:= (1-\gamma)p \log\left(1 + \frac{p}{(p-s)^2}\right)\). Fix \(\theta \in \Theta_{\mathcal{II}}\left(p, s, C \sqrt{\kappa}\right)\). Since \(||\theta||_0 \leq s\), there exists a set \(T \subset \supp(\theta)^c\) such that \(|T| = p-s\). It immediately follows 
        \begin{equation*}
            \left|\left|\left(\frac{\theta-\bar{\theta}\mathbf{1}_p}{\sqrt{1-\gamma}} \right)_T\right|\right|^2 = \frac{\left|\left|-\bar{\theta}\mathbf{1}_{T} \right|\right|^2}{1-\gamma} = \frac{\bar{\theta}^2 (p-s)}{1-\gamma} \geq \frac{C^2\kappa (p-s)}{p(1-\gamma)} = C^2(p-s)\log\left(1 + \frac{p}{(p-s)^2}\right).
        \end{equation*}
        Thus, we have shown that \(\theta \in  \Theta_{\mathcal{II}}\left(p, s, C \sqrt{\kappa} \right)\) implies \(\frac{\theta-\bar{\theta}\mathbf{1}_p}{\sqrt{1-\gamma}} \in \mathscr{M}\left(p, p-s, C\sqrt{(p-s)\log\left(1 + \frac{p}{(p-s)^2}\right)}\right)\) where the latter parameter space is given by (\ref{space:supp_space}). Consider that \(1 \leq p-s < \sqrt{p}\) and \(\widetilde{X} \sim N\left(\frac{\theta - \bar{\theta}\mathbf{1}_p}{\sqrt{1-\gamma}}, I_p\right)\) under the data-generating process \(P_{\theta, \gamma}\) (where \(\widetilde{X}\) is given in the definition of \(\varphi_{\widetilde{t}, \widetilde{r}}\) as in (\ref{test:problemI_tsybakov})). Since \(C > C_\eta\) and \(\widetilde{r} = \frac{C^2}{8} (p-s) \log\left(1 + \frac{p}{(p-s)^2}\right)\), an application of Proposition \ref{prop:supp_tsybakov} yields 
        \begin{equation*}
            P_{0,\gamma}\left\{ \varphi_{\widetilde{t}, \widetilde{r}} = 1 \right\} + \sup_{\theta \in \Theta_{\mathcal{II}}\left(p, s, C \sqrt{\kappa} \right)}P_{\theta, \gamma}\left\{ \varphi_{\widetilde{t}, \widetilde{r}} = 0 \right\} \leq \eta. 
        \end{equation*}
        Since \(C > C_\eta\) was arbitrary and \(\eta \in (0, 1)\) was arbitrary, the desired result has been proved.
    \end{proof}

    Having examined the constituent tests, we are ready to prove Proposition \ref{prop:problemII_upperbound}.

    \begin{proof}[Proof of Proposition \ref{prop:problemII_upperbound}]
        Fix \(\eta \in (0, 1)\) and let \(C_{\eta} := C_{\eta/2, 1} \vee C_{\eta/2, 2} \vee C_{\eta/2, 3}\) where \(C_{\eta/2, 1}, C_{\eta/2, 2},\) and \(C_{\eta/2, 3}\) are the constants at level \(\frac{\eta}{2}\) from Lemmas \ref{lemma:problemII_linear}, \ref{lemma:problemII_chisquare}, and \ref{lemma:problemII_tsybakov} respectively. Let \(C > C_\eta\). We consider the separate cases. 

        \textbf{Case 1:} Suppose \(\frac{p}{2} \leq s \leq p-\sqrt{p}\) and \(\psi_2^2 = (1-\gamma+\gamma p)\). Then \(\varphi_{II}^* = \varphi_{C^2/2}^{\chi^2} \vee \varphi_{C^2/2}^{\mathbf{1}_p}\). Therefore, since \(C > C_{\eta/2, 1}\), we have by Lemma \ref{lemma:problemII_linear}
        \begin{align*}
            &P_{0, \gamma}\{\varphi_{II}^* = 1\} + \sup_{\theta \in \Theta_{\mathcal{II}}(p, s, C\psi_2)} P_{\theta, \gamma}\{ \varphi_{II}^* = 0 \} \\
            &\leq P_{0, \gamma}\{\varphi_{C^2/2}^{\chi^2} = 1\} + P_{0, \gamma}\{\varphi_{C^2/2}^{\mathbf{1}_p} = 1\} + \sup_{\theta \in \Theta_{\mathcal{II}}(p, s, C\psi_2)} P_{\theta, \gamma} \{ \varphi_{C^2/2}^{\mathbf{1}_p} = 0\} \\
            &\leq \frac{\eta}{2} + \frac{\eta}{2} \\
            &\leq \eta
        \end{align*}
        where we have also used Lemma \ref{lemma:problemII_chisquare} because \(C > C_{\eta/2,2}\).
        
        \textbf{Case 2:} Suppose \(\frac{p}{2} \leq s \leq p-\sqrt{p}\) and \(\psi^2_2 = \frac{(1-\gamma)p^{3/2}}{p-s}\). Then \(\varphi_{II}^* = \varphi_{C^2/2}^{\chi^2} \vee \varphi_{C^2/2}^{\mathbf{1}_p}\). Therefore, since \(C > C_{\eta/2, 2}\), we have by Lemma \ref{lemma:problemII_chisquare}
        \begin{align*}
            & P_{0, \gamma}\{\varphi_{II}^* = 1\} + \sup_{\theta \in \Theta_{\mathcal{II}}(p, s, C\psi_2)} P_{\theta, \gamma}\{ \varphi_{II}^* = 0 \} \\
            &\leq P_{0, \gamma}\{\varphi_{C^2/2}^{\mathbf{1}_p} = 1\} + P_{0, \gamma}\{\varphi_{C^2/2}^{\chi^2} = 1\} + \sup_{\theta \in \Theta_{\mathcal{II}}(p, s, C\psi_2)} P_{\theta, \gamma}\{\varphi_{C^2/2}^{\chi^2} = 0\} \\
            &\leq \frac{\eta}{2} + \frac{\eta}{2} \\
            &\leq \eta
        \end{align*}
        where we have also used Lemma \ref{lemma:problemII_linear} because \(C > C_{\eta/2, 1}\).

        \textbf{Case 3:} Suppose \(p-\sqrt{p} < s < p\) and \(\psi^2_2 = 1-\gamma+\gamma p\). Then \(\varphi_{II}^* = \varphi_{\widetilde{r}, \widetilde{t}} \vee \varphi_{C^2/2}^{\mathbf{1}_p}\). Therefore, since \(C > C_{\eta/2, 1}\), we have by Lemma \ref{lemma:problemII_linear}
        \begin{align*}
            &P_{0, \gamma}\{\varphi_{II}^* = 1\} + \sup_{\theta \in \Theta_{\mathcal{II}}(p, s, C\psi_2)} P_{\theta, \gamma}\{ \varphi_{II}^* = 0 \} \\
            &\leq P_{0, \gamma}\{\varphi_{\widetilde{t}, \widetilde{r}} = 1\} + P_{0, \gamma}\{\varphi_{C^2/2}^{\mathbf{1}_p} = 1\} + \sup_{\theta \in \Theta_{\mathcal{II}}(p, s, C\psi_2)} P_{\theta, \gamma} \{ \varphi_{C^2/2}^{\mathbf{1}_p} = 0\} \\
            &\leq \frac{\eta}{2} + \frac{\eta}{2} \\
            &\leq \eta
        \end{align*}
        where we have also used Lemma \ref{lemma:problemII_tsybakov} because \(C > C_{\eta/2, 3}\). 

        \textbf{Case 4:} Suppose \(p - \sqrt{p} < s < p\) and \(\psi^2_2 = (1-\gamma)p\log\left(1 + \frac{p}{(p-s)^2}\right)\). Then \(\varphi_{II}^* = \varphi_{\widetilde{r}, \widetilde{t}} \vee \varphi_{C^2/2}^{\mathbf{1}_p}\). Therefore, since \(C > C_{\eta/2, 3}\), we have by Lemma \ref{lemma:problemII_linear}
        \begin{align*}
            & P_{0, \gamma}\{\varphi_{II}^* = 1\} + \sup_{\theta \in \Theta_{\mathcal{II}}(p, s, C\psi_2)} P_{\theta, \gamma}\{ \varphi_{II}^* = 0 \} \\
            &\leq P_{0, \gamma}\{\varphi_{C^2/2}^{\mathbf{1}_p} = 1\} + P_{0, \gamma}\{\varphi_{\widetilde{t}, \widetilde{r}} = 1\} + \sup_{\theta \in \Theta_{\mathcal{II}}(p, s, C\psi_2)} P_{\theta, \gamma}\{\varphi_{\widetilde{t}, \widetilde{r}} = 0\} \\
            &\leq \frac{\eta}{2} + \frac{\eta}{2} \\
            &\leq \eta
        \end{align*}
        where we have also used Lemma \ref{lemma:problemII_linear} because \(C > C_{\eta/2, 1}\). 

        \textbf{Case 5:} Suppose \(s = p\). Then \(\psi_2^2 = 1-\gamma+\gamma p\) and \(\varphi_{II}^* = \varphi_{C^2/2}^{\mathbf{1}_p}\). Since \(C > C_{\eta/2, 1}\), Lemma \ref{lemma:problemII_linear} immediately implies 
        \begin{equation*}
            P_{0, \gamma}\{ \varphi_{C^2/2}^{\mathbf{1}_p} = 1\} + \sup_{\theta \in \Theta_{\mathcal{II}}(p, s, C\psi_2)} P_{\theta, \gamma} \{\varphi_{C^2/2}^{\mathbf{1}_p} = 0\} \leq \frac{\eta}{2} \leq \eta. 
        \end{equation*}
        
        All of the cases have now been dealt with. Since \(C > C_{\eta}\) was arbitrary and \(\eta \in (0, 1)\) was arbitrary, the desired result has been proved.
    \end{proof}

    \subsubsection{Lower bound}
    In this section, we prove Proposition \ref{prop:problemII_lowerbound}. A number of preliminary lemmas are needed.

    \begin{lemma}\label{lemma:dtv_null_bound}
        Suppose \(\gamma \in [0, 1)\). Let \(P_{\theta, \gamma}\) denote the distribution \(N(\theta, (1-\gamma)I_p + \gamma \mathbf{1}_p\mathbf{1}_p^\intercal)\) with \(\theta \in \R^p\). If \(\mu = m \mathbf{1}_p\) for some \(m \in \R\), then 
        \begin{equation*}
            d_{TV}(P_{\mu, \gamma}, P_{0, \gamma}) \leq \frac{1}{2}\sqrt{\exp\left(\frac{pm^2}{1-\gamma+\gamma p}\right) - 1}.
        \end{equation*}
    \end{lemma}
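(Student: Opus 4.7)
The plan is to reduce the total variation bound to a $\chi^2$-divergence computation via the standard inequality $d_{TV}(P, Q) \leq \frac{1}{2}\sqrt{\chi^2(P \| Q)}$ (which follows from Cauchy-Schwarz). Since both measures are Gaussians on $\R^p$ sharing the common covariance $\Sigma := (1-\gamma)I_p + \gamma \mathbf{1}_p \mathbf{1}_p^\intercal$, the standard closed form gives
\[
\chi^2(P_{\mu, \gamma} \,\|\, P_{0, \gamma}) = \exp\bigl(\mu^\intercal \Sigma^{-1} \mu\bigr) - 1,
\]
so it suffices to evaluate $\mu^\intercal \Sigma^{-1} \mu$ explicitly when $\mu = m \mathbf{1}_p$.

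The key observation is that $\mathbf{1}_p$ is an eigenvector of $\Sigma$ associated to the eigenvalue $1-\gamma+\gamma p$, since $\Sigma \mathbf{1}_p = (1-\gamma)\mathbf{1}_p + \gamma p \mathbf{1}_p$. Consequently $\mathbf{1}_p$ is also an eigenvector of $\Sigma^{-1}$ with eigenvalue $(1-\gamma+\gamma p)^{-1}$, giving $\mathbf{1}_p^\intercal \Sigma^{-1} \mathbf{1}_p = \frac{p}{1-\gamma+\gamma p}$. (Alternatively, one can invoke the Sherman-Morrison identity already recorded in Lemma \ref{lemma:v_inverse} with $v = \mathbf{1}_p$: the first summand kills $\mathbf{1}_p$ because $I_p - \frac{1}{p}\mathbf{1}_p \mathbf{1}_p^\intercal$ is the orthogonal projection onto $\spn\{\mathbf{1}_p\}^\perp$, and the second summand contributes $\frac{p}{1-\gamma+\gamma p}$.) Substituting $\mu = m \mathbf{1}_p$ therefore yields $\mu^\intercal \Sigma^{-1} \mu = \frac{p m^2}{1-\gamma+\gamma p}$, and plugging this into the $\chi^2$ formula and then into the $d_{TV}$--$\chi^2$ inequality gives the claimed bound.

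There is no real obstacle here: the computation is elementary once one notices that $\mathbf{1}_p$ diagonalizes $\Sigma$ along the relevant one-dimensional subspace, so the entire argument is essentially a two-line calculation. The only mild care needed is ensuring $\gamma < 1$ so that $\Sigma$ is invertible, which is given in the hypothesis.
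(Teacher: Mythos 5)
Your proof is correct and takes essentially the same route as the paper: bound $d_{TV}$ by $\frac{1}{2}\sqrt{\chi^2}$, invoke the Gaussian $\chi^2$ closed form, and compute $\mu^\intercal \Sigma^{-1}\mu$ via the fact that $\mathbf{1}_p$ lies in the top eigenspace of $\Sigma$ (the paper does this via its Sherman--Morrison lemma, which you also note as an alternative). The eigenvector observation is a slightly cleaner way to read off the quadratic form, but the computation is identical.
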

    \begin{proof}
        By Lemma \ref{lemma:chisquare_tv_bound}, Lemma \ref{lemma:Ingster_Suslina}, and Lemma \ref{lemma:v_inverse} we have 
        \begin{align*}
            d_{TV}(P_{\mu, \gamma}, P_{0, \gamma}) &\leq \frac{1}{2} \sqrt{\chi^2(P_{\mu, \gamma} || P_{0, \gamma})} \\
            &= \frac{1}{2}\sqrt{\exp\left(\left\langle \mu, \left[\frac{1}{1-\gamma}\left(I_p - \frac{1}{p}\mathbf{1}_p\mathbf{1}_p^\intercal\right) + \frac{1}{1-\gamma+\gamma p} \cdot \frac{1}{p}\mathbf{1}_p\mathbf{1}_p^\intercal \right] \mu \right\rangle \right) - 1} \\
            &= \frac{1}{2}\sqrt{\exp\left(\frac{m^2}{1-\gamma+\gamma p}\left\langle \mathbf{1}_p, \left[\frac{1}{p}\mathbf{1}_p\mathbf{1}_p^\intercal\right] \mathbf{1}_p\right\rangle \right) - 1} \\
            &=\frac{1}{2}\sqrt{\exp\left(  \frac{p m^2}{1-\gamma+\gamma p} \right) - 1}.
        \end{align*}
    \end{proof}

    \begin{lemma}\label{lemma:problemII_lowerbound}
        Suppose \(\frac{p}{2} \leq s < p\). Set 
        \begin{equation*}
            \kappa := 
            \begin{cases}
                \frac{(1-\gamma)p^{3/2}}{p-s} \wedge (1-\gamma + \gamma p) &\text{if } \frac{p}{2} \leq s \leq p-\sqrt{p}, \\
                (1-\gamma)p \log\left(1 + \frac{p}{(p-s)^2}\right) \wedge (1-\gamma+\gamma p) &\text{if } p-\sqrt{p} < s < p. 
            \end{cases}
        \end{equation*}
        If \(\eta \in (0, 1)\), then there exists a constant \(c_\eta > 0\) depending only on \(\eta\) such that for all \(0 < c < c_\eta\) we have \(\mathcal{R}_{\mathcal{II}}(c\sqrt{\kappa}) \geq 1-\eta\). 
    \end{lemma}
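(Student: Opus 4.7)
The plan is to apply Lemma \ref{lemma:general_lower_bound} with the prior \(\pi\) on \(\Theta_{\mathcal{II}}(p, s, c\sqrt{\kappa})\) that sets \(\theta = \frac{c\sqrt{\kappa}\sqrt{p}}{s}\mathbf{1}_S\) for \(S\) uniformly drawn among size-\(s\) subsets of \([p]\); a direct computation gives \(\bar{\theta} = \frac{c\sqrt{\kappa}}{\sqrt{p}}\) and \(||\bar{\theta}\mathbf{1}_p||^2 = c^2\kappa\), so \(\pi\) is supported on the alternative. Since the signal \(\theta\) is \(s\)-dense and a direct Ingster-Suslina bound would fail, we employ the mean-shift of Section \ref{section:ProblemII}: set \(\nu^* := \frac{c\sqrt{\kappa}\sqrt{p}}{s}\mathbf{1}_p\) and observe that \(\theta - \nu^* = -\frac{c\sqrt{\kappa}\sqrt{p}}{s}\mathbf{1}_T\) with \(T := S^c\) uniform of size \(q := p-s\). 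Translation invariance of total variation and the triangle inequality yield
\begin{equation*}
d_{TV}(P_{0,\gamma}, P_{\pi,\gamma}) \leq d_{TV}(P_{-\nu^*,\gamma}, P_{0,\gamma}) + d_{TV}(P_{0,\gamma}, P_{\pi',\gamma}),
\end{equation*}
where \(\pi'\) is the pushforward of \(\pi\) under \(\theta \mapsto \theta - \nu^*\). Lemma \ref{lemma:dtv_null_bound} bounds the first summand by \(\frac{1}{2}\sqrt{\exp(4c^2) - 1}\), using that \(\kappa \leq 1 - \gamma + \gamma p\) by the defining minimum of \(\psi_2^2\).

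\textbf{\(\chi^2\)-calculation.} For the second summand, combining an Ingster-Suslina identity with the inverse in Lemma \ref{lemma:v_inverse} gives
\begin{equation*}
\chi^2(P_{\pi',\gamma} || P_{0,\gamma}) = E\exp(aX - b) - 1,
\end{equation*}
where \(X := |T_1 \cap T_2|\) is hypergeometric for \(T_1, T_2\) iid uniform size-\(q\) subsets, \(a := \frac{c^2\kappa p}{(1-\gamma)s^2}\), and \(b := \frac{c^2\kappa \gamma p q^2}{(1-\gamma)(1-\gamma+\gamma p)s^2}\). A hypergeometric MGF bound along the lines of the derivation of (\ref{eqn:chi_square_divergence}) gives \(E\exp(aX) \leq \exp((q^2/p)(\exp(a) - 1))\). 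The key algebraic cancellation is
\begin{equation*}
\frac{q^2}{p} a - b = \frac{c^2\kappa q^2}{s^2(1-\gamma+\gamma p)} \leq \frac{c^2 q^2}{s^2} \leq c^2,
\end{equation*}
using \(\kappa \leq 1-\gamma+\gamma p\) and \(q \leq s\) (both valid since \(s \geq p/2\)). In Case A (\(s \leq p - \sqrt{p}\), so \(q \geq \sqrt{p}\)), \(a \leq 4c^2\) is small, hence \(\exp(a) \leq 1 + a + a^2\) applies and the quadratic remainder obeys \((q^2/p)a^2 \leq 16c^4\) after invoking \(\kappa q \leq (1-\gamma)p^{3/2}\) (which holds on both branches of the minimum defining \(\psi_2^2\)). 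In Case B (\(q < \sqrt{p}\)), \(a\) may grow with \(p\), but \(\kappa \leq (1-\gamma) p \log(1 + p/q^2)\) gives \(\exp(a) \leq (1+p/q^2)^{4c^2}\); the inequality \((1+x)^\delta \leq 1 + \delta x\) for \(\delta \in (0, 1)\) then yields \((q^2/p)(\exp(a) - 1) \leq 4c^2\). In every sub-case, \(\chi^2 \leq \exp(O(c^2)) - 1 = O(c^2)\).

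\textbf{Conclusion and main obstacle.} Combining both TV bounds, \(d_{TV}(P_{0,\gamma}, P_{\pi,\gamma}) = O(c)\), so choosing \(c_\eta\) small depending only on \(\eta\) gives \(\mathcal{R}_{\mathcal{II}}(c\sqrt{\kappa}) \geq 1 - \eta\). The main obstacle is the Case A algebra: a naive first-order bound on the hypergeometric MGF yields a \(\chi^2\) that blows up as \(q \to p/2\). The remedy is to retain the negative term \(b\) stemming from the non-diagonal block of \(\Sigma^{-1}\), which precisely cancels the leading order of the uniform-subset covariance, while the quadratic remainder is controlled uniformly across both sub-cases of \(\psi_2^2\) by exploiting that \(\psi_2^2\) is the minimum of two quantities so that upper bounds in either branch are available simultaneously.
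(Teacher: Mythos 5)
Your proposal is correct and follows essentially the same route as the paper's proof: the same product prior on uniform size-\(s\) supports, the same mean shift by \(\nu^*\) followed by translation invariance of total variation and a triangle-inequality split, the same use of Lemma \ref{lemma:dtv_null_bound} for the point-mass leg, and the same Ingster--Suslina plus hypergeometric-MGF calculation for the mixture leg with the identical two-case analysis at the branch \(q\asymp\sqrt{p}\). Your isolation of the exact cancellation \(\tfrac{q^2}{p}a - b = \tfrac{c^2\kappa q^2}{s^2(1-\gamma+\gamma p)}\) is a clean way to state what the paper carries out inline, and your observation that both upper bounds on \(\kappa\) are available simultaneously (since \(\kappa\) is a minimum) is precisely the mechanism the paper relies on.
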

    \begin{proof}
        Fix \(\eta \in (0, 1)\) and set \(c_\eta := \frac{1}{4} \wedge \frac{\log(1+\eta^2)}{8e+1} \wedge \sqrt{\frac{1}{4}\log(1+\eta^2)}\). Let \(0 < c < c_\eta\). Let \(\pi\) be the prior on \(\Theta_{\mathcal{II}}(p, s, c\sqrt{\kappa})\) in which a drawn \(\mu \sim \pi\) is obtained by drawing a subset \(S \subset [p]\) of size \(s\) uniformly at random and setting 
        \begin{equation*}
            \mu_i :=
            \begin{cases}
                \frac{c\sqrt{\kappa p}}{s} &\text{if } i \in S, \\
                0 &\text{if } i \in S^c.
            \end{cases}
        \end{equation*}
        Note that if \(\mu \sim \pi\), then \(||\mu||_0 = s\) and \(p \bar{\mu}^2 = p \left(\frac{c\sqrt{\kappa p}}{s} \cdot \frac{s}{p} \right)^2 = c^2\kappa\). Thus, \(\pi\) is indeed supported on \(\Theta(p, s, c\sqrt{\kappa})\). Let \(P_{\pi, \gamma}\) denote the Gaussian mixture \(\int_{\theta} P_{\theta, \gamma} \, \pi(d\theta)\). Consider that 
        \begin{equation}\label{eqn:problem2_tv_lbound}
            \mathcal{R}_{\mathcal{II}}(c\sqrt{\kappa}) \geq \inf_{\varphi} \left\{ P_{0, \gamma} \{\varphi = 1\} + P_{\pi, \gamma} \{\varphi = 0\} \right\} = 1 - d_{TV}(P_{0, \gamma}, P_{\pi, \gamma})
        \end{equation}
        where the final equality is given by the Neyman-Pearson lemma. Now consider the invertible transformation 
        \begin{equation}\label{eqn:problemII_lbound_Y_defn}
            Y := X - \frac{c\sqrt{\kappa p}}{s}\mathbf{1}_p. 
        \end{equation}
        Under the data-generating process \(X \sim P_{0, \gamma}\), we clearly have \(Y \sim P_{\nu^*, \gamma}\) where \(\nu^* := -\frac{c\sqrt{\kappa p}}{s}\mathbf{1}_p\). Under the process \(X \sim P_{\pi, \gamma}\), we have \(Y \sim P_{\widetilde{\pi}, \gamma}\) where \(P_{\widetilde{\pi}, \gamma} = \int P_{\theta, \gamma} \widetilde{\pi}(d\theta)\) is the mixture induced by the prior \(\widetilde{\pi}\). Here, \(\widetilde{\pi}\) is the prior in which a draw \(\mu \sim \widetilde{\pi}\) is obtained by uniformly drawing a size \(p-s\) subset \(T \subset [p]\) and setting 
        \begin{equation*}
            \mu_i := 
            \begin{cases}
                -\frac{c\sqrt{\kappa p}}{s} &\text{if } i \in T, \\
                0 &\text{if } i \in T^c.
            \end{cases}
        \end{equation*}
        Since the transformation furnishing \(Y\) is an invertible transformation, we have that \(d_{TV}(P_{0, \gamma}, P_{\pi, \gamma}) = d_{TV}(P_{\nu^*, \gamma}, P_{\widetilde{\pi}, \gamma})\). Since \(d_{TV}\) is a metric, an application of triangle inequality and Lemma \ref{lemma:dtv_null_bound} to (\ref{eqn:problemII_lbound_Y_defn}) yields
        \begin{align}
            \mathcal{R}_{\mathcal{II}}(c\sqrt{\kappa}) &\geq 1 - d_{TV}(P_{\nu^*, \gamma}, P_{0, \gamma}) - d_{TV}(P_{0, \gamma}, P_{\widetilde{\pi}, \gamma}) \\
            &\geq 1 - \frac{1}{2}\sqrt{\exp\left(\frac{p^2\kappa}{s^2(1-\gamma+\gamma p)}\right) - 1} - d_{TV}(P_{0, \gamma}, P_{\widetilde{\pi}, \gamma}).\label{eqn:dtv_triangle_inequality} 
        \end{align}
        Hence, it suffices to produce a lower bound of the right hand side of (\ref{eqn:dtv_triangle_inequality}) in order to furnish a lower bound for \(\mathcal{R}_{\mathcal{II}}(c\sqrt{\kappa})\). We examine each term separately. First, the inequalities \(\kappa \leq 1-\gamma+\gamma p\) and \(\frac{p}{2} \leq s\) together yield
        \begin{equation}\label{eqn:dtv_nu_star_bound}
           \frac{1}{2}\sqrt{\exp\left(\frac{p^2}{s^2} \frac{c^2\kappa}{1-\gamma+\gamma p}\right) - 1} \leq \frac{1}{2}\sqrt{\exp\left(4c^2\right) - 1} \leq \frac{\eta}{2}
        \end{equation}
        where we have used \(c < c_\eta \leq \sqrt{\frac{1}{4}\log(1+\eta^2)}\) to give the second inequality.
        
        We now work to furnish a lower bound for the term \(1 - d_{TV}(P_{0, \gamma}, P_{\widetilde{\pi}, \gamma})\) in the right hand side of (\ref{eqn:dtv_triangle_inequality}). Note that \(1 - d_{TV}(P_{0, \gamma}, P_{\widetilde{\pi}, \gamma}) \geq 1-\frac{1}{2}\sqrt{\chi^2(P_{\widetilde{\pi}, \gamma} || P_{0, \gamma})}\) and so it suffices to bound the \(\chi^2\)-divergence from above. By Lemma \ref{lemma:Ingster_Suslina}, 
        \begin{equation*}
            \chi^2(P_{\widetilde{\pi}, \gamma} ||P_{0, \gamma}) = E\left[ \exp\left( \left\langle \theta, \left[\frac{1}{1-\gamma}\left(I_p - \frac{1}{p}\mathbf{1}_p\mathbf{1}_p^\intercal \right) + \frac{1}{1-\gamma+\gamma p}\cdot \frac{1}{p}\mathbf{1}_p\mathbf{1}_p^\intercal \right] \widetilde{\theta} \right\rangle \right) \right] - 1
        \end{equation*}
        where \(\theta, \widetilde{\theta} \overset{iid}{\sim} \widetilde{\pi}\). Note we can write \(\theta = -\frac{c\sqrt{\kappa p}}{s}\mathbf{1}_T\) and \(\widetilde{\theta} = - \frac{c\sqrt{\kappa p}}{s}\mathbf{1}_{\widetilde{T}}\) where \(T, \widetilde{T}\) are iid uniformly drawn subsets of \([p]\) of size \(p-s\). We now consider the two cases. \newline

        \textbf{Case 1:} Suppose \(\frac{p}{2} \leq s \leq p-\sqrt{p}\). Then \(\kappa = \frac{(1-\gamma)p^{3/2}}{p-s} \wedge (1-\gamma+\gamma p)\). We have 
        \begin{equation*}
            \left\langle \theta, \left[\frac{1}{1-\gamma}\left(I_p - \frac{1}{p}\mathbf{1}_p\mathbf{1}_p^\intercal \right) + \frac{1}{1-\gamma+\gamma p}\cdot \frac{1}{p}\mathbf{1}_p\mathbf{1}_p^\intercal \right] \widetilde{\theta} \right\rangle = \frac{c^2\kappa p}{s^2(1-\gamma)} \left( |T \cap \widetilde{T}| - \frac{(p-s)^2}{p}\right) + \frac{c^2 \kappa}{1-\gamma+\gamma p} \cdot \frac{(p-s)^2}{s^2}.
        \end{equation*}
        Note that \(|T \cap \widetilde{T}|\) is distributed according to the hypergeometric distribution 
        \begin{equation*}
            P\left\{|T \cap \widetilde{T}| = k\right\} = \frac{\binom{p-s}{k} \binom{s}{p-s-k}}{\binom{p}{p-s}}
        \end{equation*}
        for \(0 \leq k \leq p-s\). Note also that since \(\kappa \leq 1-\gamma+\gamma p\) and since \(p-s \leq \frac{p}{2} \leq s\), we have \(\frac{c^2\kappa}{1-\gamma+\gamma p} \cdot \frac{(p-s)^2}{s^2} \leq c^2\). Therefore, 
        \begin{equation*}
            \frac{c^2\kappa p}{s^2(1-\gamma)} \left( |T \cap \widetilde{T}| - \frac{(p-s)^2}{p}\right) + \frac{c^2 \kappa}{1-\gamma+\gamma p} \cdot \frac{(p-s)^2}{s^2} \leq \frac{c^2\kappa p}{(1-\gamma) s^2} |T \cap \widetilde{T}| - \frac{c^2 \kappa (p-s)^2}{(1-\gamma)s^2} + c^2.
        \end{equation*}
        By Lemma \ref{lemma:hypergeometric}, 
        \begin{align}
            \chi^2(P_{\widetilde{\pi}, \gamma}||P_{0, \gamma}) &\leq E\left[ \exp\left(\frac{c^2\kappa p}{(1-\gamma) s^2} |T \cap \widetilde{T}| - \frac{c^2 \kappa (p-s)^2}{(1-\gamma)s^2} + c^2 \right) \right] - 1 \\
            &= E\left[ \exp\left( \frac{c^2\kappa p}{(1-\gamma)s^2} |T \cap \widetilde{T}| \right)\right] \exp\left( - \frac{c^2\kappa (p-s)^2}{(1-\gamma)s^2} + c^2\right) - 1 \\
            &\leq \left(1 - \frac{p-s}{p} + \frac{p-s}{p} \exp\left(\frac{c^2\kappa p}{s^2(1-\gamma)}\right)\right)^{p-s} \exp\left( - \frac{c^2\kappa (p-s)^2}{(1-\gamma)s^2} + c^2\right) - 1. \label{eqn:problemII_chisquare_div_bound}
        \end{align}
        Consider 
        \begin{equation}\label{eqn:mgf_bound}
            \left(1 - \frac{p-s}{p} + \frac{p-s}{p}\exp\left(\frac{c^2\kappa p}{s^2(1-\gamma)}\right)\right)^{p-s} \leq \exp\left(\frac{(p-s)^2}{p} \left[ \exp\left(\frac{c^2\kappa p}{s^2(1-\gamma)}\right) - 1 \right]\right).
        \end{equation}
       Since \(\sqrt{p} \leq p-s \leq \frac{p}{2}\) and \(\kappa \leq \frac{(1-\gamma)p^{3/2}}{p-s}\), it follows that \(\frac{c^2\kappa p}{s^2 (1-\gamma)} \leq \frac{c^2p^{5/2}}{(p-s)s^2} \leq \frac{4c^2\sqrt{p}}{p-s} \leq 4c^2 \leq 1\). Here, we have used that \(c^2 < c_\eta^2 \leq \frac{1}{4}\). Therefore, we have 
       \begin{equation*}
        \exp\left(\frac{c^2\kappa p}{s^2(1-\gamma)}\right) - 1 \leq \frac{c^2 \kappa p}{s^2(1-\gamma)} + \frac{ec^4\kappa^2 p^2}{2s^4(1-\gamma)^2}
       \end{equation*}
       due to the inequality \(e^t - 1 \leq t + \frac{et^2}{2}\) for all \(t \in (0, 1)\). Hence, 
       \begin{align*}
        \exp\left(\frac{(p-s)^2}{p} \left[ \exp\left(\frac{c^2\kappa p}{s^2(1-\gamma)}\right) - 1 \right]\right) &\leq \exp\left( \frac{c^2 \kappa (p-s)^2}{s^2(1-\gamma)} + \frac{ec^4\kappa^2 p (p-s)^2}{2s^4(1-\gamma)^2} \right).
       \end{align*}
       Plugging the above bound into (\ref{eqn:mgf_bound}) followed by (\ref{eqn:problemII_chisquare_div_bound}) yields 
       \begin{align*}
        \chi^2(P_{\widetilde{\pi}, \gamma} || P_{0, \gamma}) &\leq \exp\left( \frac{c^2 \kappa (p-s)^2}{s^2(1-\gamma)} + \frac{ec^4\kappa^2 p (p-s)^2}{2s^4(1-\gamma)^2} \right) \exp\left(-\frac{c^2 \kappa(p-s)^2}{(1-\gamma)s^2} + c^2 \right) - 1 \\
        &= \exp\left(\frac{ec^4\kappa^2 p(p-s)^2}{2s^4(1-\gamma)^2} + c^2\right) - 1 \\
        &\leq \exp\left( \frac{ec^4p^4}{2s^4} + c^2 \right) - 1 \\
        &\leq \exp\left(8ec^4 + c^2\right) - 1
       \end{align*}
       where the penultimate inequality follows from \(\kappa^2 \leq \frac{(1-\gamma)^2 p^3}{(p-s)^2}\) and the final inequality follows from \(s^4 \geq \frac{p^4}{16}\). Since \(c < c_\eta \leq \frac{1}{4} \wedge \frac{\log\left(1 + \eta^2\right)}{8e+1}\), it follows that \(e^{8ec^4 + c^2} - 1 \leq e^{c^2(8e+1)} - 1 \leq \eta^2\). In other words, we have shown \(\chi^2(P_{\widetilde{\pi}, \gamma}||P_{0, \gamma}) \leq \eta^2\), and so 
       \begin{equation}\label{eqn:dtv_bound_pi_tilde}
        1 - d_{TV}(P_{0, \gamma}, P_{\widetilde{\pi}, \gamma}) \geq 1 - \frac{1}{2}\sqrt{\chi^2(P_{\widetilde{\pi}, \gamma} || P_{0, \gamma})} \geq 1 - \frac{\eta}{2}.
       \end{equation}
    
       Plugging in the bounds (\ref{eqn:dtv_nu_star_bound}) and (\ref{eqn:dtv_bound_pi_tilde}) into (\ref{eqn:dtv_triangle_inequality}) yields \(\mathcal{R}_{\mathcal{II}}(c\sqrt{\kappa}) \geq 1 - \eta\). Since \(0 < c < c_\eta\) was arbitrary and \(\eta \in (0, 1)\) was arbitrary, the desired result has thus been proved.

        \textbf{Case 2:} Suppose \(p-\sqrt{p} < s < p\). Then \(\kappa = (1-\gamma)p \log\left(1 + \frac{p}{(p-s)^2}\right) \wedge (1-\gamma+\gamma p)\). We have 
        \begin{align*}
            \left\langle \theta, \left[\frac{1}{1-\gamma} I_p - \frac{\gamma}{(1-\gamma)^2 + \gamma(1-\gamma)p} \mathbf{1}_p\mathbf{1}_p^\intercal\right] \widetilde{\theta}\right\rangle &= \frac{c^2 \kappa p}{s^2} \left(\frac{1}{1-\gamma} |T \cap \widetilde{T}| - \frac{\gamma (p-s)^2}{(1-\gamma)^2 + \gamma(1-\gamma) p} \right) \\
            &\leq \frac{c^2 \kappa p}{s^2 (1-\gamma)} |T \cap \widetilde{T}| \\
            &\leq 4c^2 \log\left(1 + \frac{p}{(p-s)^2}\right) |T \cap \widetilde{T}| 
        \end{align*}
        where we have used the previously established fact \(s \geq \frac{p}{2}\) as well as the inequality \(\kappa \leq (1-\gamma)p\log\left(1 + \frac{p}{(p-s)^2}\right)\). Applying Lemma \ref{lemma:hypergeometric} yields 
        \begin{align*}
            \chi^2(P_{\widetilde{\pi}, \gamma} || P_{0, \gamma}) &\leq E\left[ \exp\left(4c^2 \log\left(1 + \frac{p}{(p-s)^2}\right) |T \cap \widetilde{T}| \right) \right] - 1 \\
            &\leq \left(1 - \frac{p-s}{p} + \frac{p-s}{p} \exp\left(4c^2 \log\left(1 + \frac{p}{(p-s)^2}\right) \right)\right)^{p-s} - 1 \\
            &\leq \left(1 - \frac{p-s}{p} + \frac{p-s}{p}\left(1 + \frac{p}{(p-s)^2}\right)^{4c^2}\right)^{p-s} - 1 \\
            &\leq \left(1 + \frac{4c^2}{p-s}\right)^{p-s} - 1\\
            &\leq e^{4c^2} - 1
        \end{align*}
        where the penultimate inequality follows from the fact that \(4c^2 < 4c_\eta^2 \leq 1\) along with the inequality \((1+t)^{\delta} - 1 \leq \delta t\) for all \(0 < \delta < 1\) and \(t > 0\). Therefore, it follows that \(\chi^2(P_{\widetilde{\pi}, \gamma} || P_{0, \gamma}) \leq e^{4c^2} - 1 \leq e^{4c_\eta^2} - 1 \leq \eta^2\). Therefore, we have \(1 - d_{TV}(P_{\widetilde{\pi}, \gamma}, P_{0, \gamma}) \geq 1 - \frac{1}{2}\sqrt{\chi^2(P_{\widetilde{\pi}, \gamma} || P_{0, \gamma})} \geq 1 - \frac{\eta}{2}\). Substituting this bound and (\ref{eqn:dtv_nu_star_bound}) into (\ref{eqn:dtv_triangle_inequality}) yields \(\mathcal{R}_{\mathcal{II}}(c\sqrt{\kappa}) \geq 1 - \eta\). Since \(0 < c < c_\eta\) was arbitrary and \(\eta \in (0, 1)\) was arbitrary, the desired result has been proved.
    \end{proof}

    \begin{lemma}\label{lemma:problemII_p_lbound}
        Suppose \(s = p\) and \(\gamma \in [0, 1)\). If \(\eta \in (0, 1)\), then there exists a constant \(c_\eta > 0\) depending only on \(\eta\) such that for all \(0 < c < c_\eta\) we have \(\mathcal{R}_{\mathcal{II}}\left(c \sqrt{1-\gamma+\gamma p} \right) \geq 1-\eta\).
    \end{lemma}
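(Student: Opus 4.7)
The plan is to carry out a straightforward simple-versus-simple reduction, exploiting the fact that when $s = p$ the sparsity constraint $\|\theta\|_0 \leq p$ is vacuous, so the alternative $\Theta_{\mathcal{II}}(p, p, \varepsilon_2)$ reduces to $\{\theta \in \R^p : \|\bar\theta \mathbf{1}_p\| \geq \varepsilon_2\}$. In particular, the adversary can align the signal perfectly with the leading eigenvector $p^{-1/2}\mathbf{1}_p$ of the covariance, which is the hardest direction to detect.

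Concretely, I would take the prior $\pi$ to be the point mass at $\nu^* := c\sqrt{(1-\gamma+\gamma p)/p}\,\mathbf{1}_p$. Then $\bar{\nu}^* \mathbf{1}_p = \nu^*$ and $\|\nu^*\| = c\sqrt{1-\gamma+\gamma p}$, so $\nu^* \in \Theta_{\mathcal{II}}(p, p, c\sqrt{1-\gamma+\gamma p})$. By the Neyman--Pearson lemma,
\begin{equation*}
\mathcal{R}_{\mathcal{II}}(c\sqrt{1-\gamma+\gamma p}) \geq \inf_\varphi \left\{ P_{0,\gamma}\{\varphi = 1\} + P_{\nu^*, \gamma}\{\varphi = 0\} \right\} = 1 - d_{TV}(P_{0,\gamma}, P_{\nu^*, \gamma}).
\end{equation*}

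The remaining step is to bound $d_{TV}(P_{0,\gamma}, P_{\nu^*, \gamma})$. Since $\nu^* = m\mathbf{1}_p$ with $m = c\sqrt{(1-\gamma+\gamma p)/p}$, Lemma \ref{lemma:dtv_null_bound} immediately gives
\begin{equation*}
d_{TV}(P_{\nu^*, \gamma}, P_{0,\gamma}) \leq \frac{1}{2}\sqrt{\exp\!\left(\frac{p m^2}{1-\gamma+\gamma p}\right) - 1} = \frac{1}{2}\sqrt{e^{c^2} - 1}.
\end{equation*}
Choosing $c_\eta := \sqrt{\log(1 + 4\eta^2)}$ ensures that the right-hand side is at most $\eta$ whenever $0 < c < c_\eta$, which completes the argument.

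There is no real obstacle here; the lemma is essentially a one-line consequence of Lemma \ref{lemma:dtv_null_bound} once one recognizes that $\nu^*$ lies in $\Theta_{\mathcal{II}}(p, p, c\sqrt{1-\gamma+\gamma p})$ because the sparsity requirement is trivially satisfied when $s = p$. The only care needed is scaling $m$ correctly so that $\|m\mathbf{1}_p\|$ hits the separation threshold while the chi-square exponent simplifies to $c^2$ (independent of $p$, $\gamma$), which is exactly what makes the prior worst-case: the quadratic form $\mathbf{1}_p^\intercal \Sigma^{-1} \mathbf{1}_p = p/(1-\gamma+\gamma p)$ cancels precisely against the squared magnitude of $m$.
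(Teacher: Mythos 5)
Your proof is correct and takes essentially the same route as the paper: a point-mass prior at a multiple of $\mathbf{1}_p$, the Neyman--Pearson reduction to total variation, and Lemma~\ref{lemma:dtv_null_bound} to bound the resulting $\chi^2$-divergence, with the exponent simplifying exactly to $c^2$. The only cosmetic difference is your choice $c_\eta = \sqrt{\log(1+4\eta^2)}$ (the sharp threshold for $\tfrac12\sqrt{e^{c^2}-1}\leq \eta$) versus the paper's slightly more conservative $c_\eta = \sqrt{\log(1+\eta^2)}$; both are valid.
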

    \begin{proof}
        Fix \(\eta \in (0, 1)\) and set \(c_{\eta} := \sqrt{\log(1 + \eta^2)}\). Let \(0 < c < c_\eta\). For ease of notation, set \(\kappa := 1-\gamma+\gamma p\). Let \(\pi\) be the prior on \(\Theta_{\mathcal{II}}(p, p, c\sqrt{\kappa})\) which is a point mass at \(\frac{c\sqrt{\kappa}}{\sqrt{p}} \mathbf{1}_p\). Consider that 
        \begin{align*}
            \mathcal{R}_{\mathcal{II}}(c\sqrt{\kappa}) \geq \inf_{\varphi} \left\{ P_{0, \gamma}\{\varphi = 1\} + P_{\frac{c\sqrt{\kappa}}{\sqrt{p}}\mathbf{1}_p, \gamma}\{\varphi = 0\} \right\} = 1 - d_{TV}\left(P_{0, \gamma}, P_{\frac{c\sqrt{\kappa}}{\sqrt{p}}\mathbf{1}_p, \gamma}\right)
        \end{align*}
        where the final equality is due to the Neyman-Pearson lemma. Lemma \ref{lemma:dtv_null_bound} implies 
        \begin{align*}
            1 - d_{TV}\left(P_{0, \gamma}, P_{\frac{c\sqrt{\kappa}}{\sqrt{p}}\mathbf{1}_p, \gamma}\right) &\geq 1 - \frac{1}{2}\sqrt{\exp\left( \frac{\frac{pc^2\kappa}{p}}{1-\gamma+\gamma p} \right)-1} \geq 1 - \frac{1}{2}\sqrt{\exp\left(c^2\right)- 1} \geq 1 - \eta.
        \end{align*}
        Since \(0 < c < c_\eta\) was arbitrary and \(\eta \in (0, 1)\) was arbitrary, we have proved the desired result.
    \end{proof}
    
    With all three sparsity regimes examined, we are now able to combine the results to prove Proposition \ref{prop:problemII_lowerbound}.

    \begin{proof}[Proof of Proposition \ref{prop:problemII_lowerbound}]
        Fix \(\eta \in (0, 1)\) and let \(c_{\eta} := c_{\eta,1} \wedge c_{\eta, 2} \) where \(c_{\eta, 1}, c_{\eta, 2}\) are the constants given in Lemmas \ref{lemma:problemII_lowerbound} and \ref{lemma:problemII_p_lbound} respectively. Let \(0 < c < c_\eta\). We now consider each case in turn. 

        \textbf{Case 1:} Suppose \(\frac{p}{2} \leq s \leq p-\sqrt{p}\). Then \(\psi_2^2 = \frac{(1-\gamma)p^{3/2}}{p-s} \wedge (1-\gamma+\gamma p)\). Since \(c < c_\eta \leq c_{\eta, 1}\), it follows by Lemma \ref{lemma:problemII_lowerbound} that \(\mathcal{R}_{\mathcal{II}}(c\psi_2) \geq 1-\eta\). 

        \textbf{Case 2:} Suppose \(p-\sqrt{p} < s < p\). Then \(\psi_2^2 = (1-\gamma)p\log\left(1 + \frac{p}{(p-s)^2}\right) \wedge (1-\gamma+\gamma p)\). Since \(c < c_{\eta} \leq c_{\eta, 1}\), it follows by Lemma \ref{lemma:problemII_lowerbound} that \(\mathcal{R}_{\mathcal{II}}(c\psi_2) \geq 1-\eta\). 

        \textbf{Case 3:} Suppose \(s = p\). Then \(\psi^2_2 = 1 - \gamma + \gamma p\). Since \(c < c_{\eta} \leq c_{\eta, 2}\), it follows by Lemma \ref{lemma:problemII_p_lbound} that \(\mathcal{R}_{\mathcal{II}}(c\psi_2) \geq 1-\eta\).

        All of the cases have been dealt with. Since \(0 < c < c_\eta\) was arbitrary and \(\eta \in (0, 1)\) was arbitrary, we have proved the desired result.
    \end{proof}

    \subsection{Proofs of results in Section \ref{section:synthesis}}
    \begin{proof}[Proof of Proposition \ref{prop:full_test}]
        Fix \(\eta \in (0, 1)\) and let \(C_{\eta} := \sqrt{2}C_{\eta/2, 1} \vee \sqrt{2}C_{\eta/2, 2}\) where \(C_{\eta/2, 1}, C_{\eta/2,2}\) are the constants corresponding to testing risk level \(\frac{\eta}{2}\) from the proofs of Propositions \ref{prop:problemI_upperbound} and \ref{prop:problemII_upperbound} respectively. Let \(C > C_\eta\). Note by the Pythagorean identity \(||\theta||^2 = ||\theta - \bar{\theta}\mathbf{1}_p||^2 + ||\bar{\theta}\mathbf{1}_p||^2\) we have the inclusion \(\Theta(p, s, C\psi) \subset \Theta_{\mathcal{I}}\left(p, s, \frac{C}{\sqrt{2}}\psi\right) \cup \Theta_{\mathcal{II}}\left(p, s, \frac{C}{\sqrt{2}}\psi\right)\). Note also that \(\psi^2 \geq \psi_1^2 \vee \psi_2^2\) where \(\psi_1^2\) and \(\psi_2^2\) are given by (\ref{rate:problemI}) and (\ref{rate:problemII}) respectively. Consequently, we have 
        \begin{align*}
            &P_{0, \gamma}\{\varphi^* = 1\} + \sup_{\theta \in \Theta(p, s, C\psi)} P_{\theta, \gamma}\{\varphi^* = 0\} \\
            &\leq P_{0, \gamma}\{\varphi_{I}^* = 1\} + P_{0, \gamma}\{\varphi_{II}^* = 1\} + \sup_{\theta \in \Theta_{\mathcal{I}}(p, s, \frac{C}{\sqrt{2}}\psi)} P_{\theta, \gamma} \{\varphi^* = 0\} + \sup_{\theta \in \Theta_{\mathcal{II}}(p, s, \frac{C}{\sqrt{2}}\psi)} P_{\theta, \gamma}\{\varphi^* = 0\} \\
            &\leq \left(P_{0, \gamma}\{\varphi_{I}^* = 1\} +  \sup_{\theta \in \Theta_{\mathcal{I}}(p, s, \frac{C}{\sqrt{2}}\psi)} P_{\theta, \gamma} \{\varphi_{I}^* = 0\} \right) + \left(P_{0, \gamma}\{\varphi_{II}^* = 1\} +  \sup_{\theta \in \Theta_{\mathcal{II}}(p, s, \frac{C}{\sqrt{2}}\psi)}P_{\theta, \gamma}\{\varphi_{II}^* = 0\}\right) \\
            &\leq \frac{\eta}{2} + \frac{\eta}{2} \\
            &= \eta
        \end{align*}
        where the penultimate inequality follows from \(\frac{C}{\sqrt{2}} > C_{\eta/2, 1} \vee C_{\eta/2, 2}\). Since \(C > C_\eta\) was arbitrary and \(\eta \in (0, 1)\) was arbitrary, the desired result has been proved.
    \end{proof}
    
    \subsection{Proofs of lower bounds in Section \ref{section:multiple_random_effects}}
    In this section, we present the proofs for the lower bounds presented in Section \ref{section:multiple_random_effects}. The strategy for proving the lower bound is the same as the one we employed in our previous arguments. Specifically, Lemma \ref{lemma:general_lower_bound} along with the Ingster-Suslina method (Lemma \ref{lemma:Ingster_Suslina}) will be used. Recall the notation which was set up in Section \ref{section:R_prelim}. Introducing an additional piece of notation, for a probability distribution \(\pi\) on \(\R^p\) we denote \(P_{\pi, \gamma, R} := \int P_{\theta, \gamma, R} \, \pi(d\theta)\) as the Gaussian mixture induced by \(\pi\). The following lemma, which we state without proof, is needed for our main arguments.
    \begin{lemma}\label{lemma:block_precision}
        If \(\gamma \in [0, 1)\), then 
        \begin{equation*}
            \left( (1-\gamma)I_p + \gamma \sum_{k=1}^{R} \mathbf{1}_{B_k}\mathbf{1}_{B_k}^\intercal \right)^{-1} = \sum_{k=1}^{R} \left(\frac{1}{1-\gamma}I_{B_k} - \frac{\gamma}{(1-\gamma)\left(1-\gamma+\gamma \frac{p}{R}\right)} \mathbf{1}_{B_k}\mathbf{1}_{B_k}^\intercal \right)
        \end{equation*}
        where \(I_{B_k}\) is the diagonal matrix with \(i\)th diagonal entry equal to one if \(i \in B_k\) and equal to zero otherwise.
    \end{lemma}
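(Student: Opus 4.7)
The plan is to exploit the block structure of $\Sigma := (1-\gamma)I_p + \gamma \sum_{k=1}^{R}\mathbf{1}_{B_k}\mathbf{1}_{B_k}^\intercal$ induced by the fact that $B_1,\ldots,B_R$ partition $[p]$. First I would write $I_p = \sum_{k=1}^{R} I_{B_k}$ so that
$$\Sigma = \sum_{k=1}^{R}\left[(1-\gamma)I_{B_k} + \gamma\,\mathbf{1}_{B_k}\mathbf{1}_{B_k}^\intercal\right],$$
and observe that for $j \ne k$ we have $I_{B_j}I_{B_k} = 0$, $I_{B_j}\mathbf{1}_{B_k}\mathbf{1}_{B_k}^\intercal = 0$, and $\mathbf{1}_{B_j}\mathbf{1}_{B_j}^\intercal\mathbf{1}_{B_k}\mathbf{1}_{B_k}^\intercal = 0$. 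Thus $\Sigma$ acts independently on the coordinate subspaces indexed by each $B_k$, and it suffices to invert one block of size $\frac{p}{R} \times \frac{p}{R}$ of the form $(1-\gamma)I + \gamma \mathbf{1}\mathbf{1}^\intercal$.

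Next I would invoke the Sherman--Morrison formula on each block (the same calculation as in Lemma \ref{lemma:v_inverse} but in ambient dimension $p/R$ with $\|\mathbf{1}\|^2 = p/R$), obtaining
$$\bigl((1-\gamma)I + \gamma\mathbf{1}\mathbf{1}^\intercal\bigr)^{-1} = \frac{1}{1-\gamma}I - \frac{\gamma}{(1-\gamma)\left(1-\gamma+\gamma\tfrac{p}{R}\right)}\mathbf{1}\mathbf{1}^\intercal.$$
Re-embedding back into $\R^{p}$ by replacing $I$ and $\mathbf{1}$ with $I_{B_k}$ and $\mathbf{1}_{B_k}$, then summing over $k$, produces the candidate $A := \sum_{k=1}^{R}\bigl(\frac{1}{1-\gamma}I_{B_k} - \frac{\gamma}{(1-\gamma)(1-\gamma+\gamma p/R)}\mathbf{1}_{B_k}\mathbf{1}_{B_k}^\intercal\bigr)$.

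The final step is a direct verification that $\Sigma A = I_p$. Using $\mathbf{1}_{B_k}^\intercal \mathbf{1}_{B_k} = \frac{p}{R}$ together with the orthogonality of distinct blocks noted above, the cross terms between different $B_k$'s vanish and the within-block terms collapse, after an elementary algebraic simplification of $\frac{\gamma}{1-\gamma} - \frac{\gamma \cdot p/R}{(1-\gamma)(1-\gamma+\gamma p/R)} - \frac{\gamma^2 \cdot p/R}{(1-\gamma)(1-\gamma+\gamma p/R)}$, to recover $I_{B_k}$ on each block. The only place any care is needed is in keeping track of which dimension the Sherman--Morrison identity is being applied in (since the cited Lemma \ref{lemma:v_inverse} is stated in $\R^p$), but since the direct verification sidesteps this and uses only $\mathbf{1}_{B_k}^\intercal \mathbf{1}_{B_k} = p/R$, I do not anticipate any genuine obstacle.
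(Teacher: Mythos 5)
The paper states Lemma~\ref{lemma:block_precision} without proof (``The following lemma, which we state without proof, is needed for our main arguments''), so there is no printed argument to compare against; your proposal supplies exactly the proof one would expect, and it is the natural one.

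Your plan is sound: the matrix is block diagonal because $I_{B_j}I_{B_k}=0$, $I_{B_j}\mathbf{1}_{B_k}\mathbf{1}_{B_k}^\intercal=0$, and $\mathbf{1}_{B_j}\mathbf{1}_{B_j}^\intercal\mathbf{1}_{B_k}\mathbf{1}_{B_k}^\intercal = 0$ for $j\neq k$, each block has the form $(1-\gamma)I + \gamma\mathbf{1}\mathbf{1}^\intercal$ in dimension $p/R$, and applying Sherman--Morrison there (the same calculation as Lemma~\ref{lemma:v_inverse}, but with $\|\mathbf{1}\|^2 = p/R$ instead of $p$) gives the right per-block inverse. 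The only flaw is a typo in the scalar identity that you claim ``collapses'' the coefficient of $\mathbf{1}_{B_k}\mathbf{1}_{B_k}^\intercal$ in the verification of $\Sigma A = I_p$. The three contributions to that coefficient are $(1-\gamma)I_{B_k}$ acting on the rank-one part of $A$, the rank-one part of $\Sigma$ acting on $\frac{1}{1-\gamma}I_{B_k}$, and the rank-one part acting on the rank-one part; these give
\begin{equation*}
-\frac{\gamma}{1-\gamma+\gamma\tfrac{p}{R}} \;+\; \frac{\gamma}{1-\gamma} \;-\; \frac{\gamma^2\tfrac{p}{R}}{(1-\gamma)\bigl(1-\gamma+\gamma\tfrac{p}{R}\bigr)} \;=\; \frac{-\gamma(1-\gamma) + \gamma\bigl(1-\gamma+\gamma\tfrac{p}{R}\bigr) - \gamma^2\tfrac{p}{R}}{(1-\gamma)\bigl(1-\gamma+\gamma\tfrac{p}{R}\bigr)} \;=\; 0,
\end{equation*}
rather than the expression you wrote, whose middle term $\frac{\gamma\cdot p/R}{(1-\gamma)(1-\gamma+\gamma p/R)}$ should be $\frac{\gamma}{1-\gamma+\gamma p/R} = \frac{\gamma(1-\gamma)}{(1-\gamma)(1-\gamma+\gamma p/R)}$; as written your sum equals $\frac{\gamma(1-\gamma-p/R)}{(1-\gamma)(1-\gamma+\gamma p/R)} \neq 0$. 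With that one factor corrected the argument closes cleanly.
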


    \subsubsection{\texorpdfstring{Regime \(1 \leq s \leq \frac{p}{4R}\)}{Regime 1 <= s <= p/4R}}
    We are now ready to prove our first lower bound. Notice that it immediately gives the desired lower bound presented in Theorem \ref{thm:R_sparse} where the sparsity regime \(s < \frac{p}{4R}\) is in force. 

    \begin{proof}[Proof of Proposition \ref{prop:lbound_R_sparse}]
        Fix \(\eta \in (0, 1)\). Set \(c_\eta := 1 \wedge \sqrt{\log\left(1 + 4\eta^2\right)} \wedge \sqrt{\log(1 + \log(1 + 4\eta^2))}\). Let \(0 < c < c_\eta\). We deal with the two sparsity regimes separately. 

        \textbf{Case 1:} Suppose \(1 \leq s < \sqrt{p}\). Then \(\psi_1^2 = (1-\gamma)s \log\left(1 + \frac{p}{s^2}\right)\). Let \(\pi\) be the prior on \(\Theta(p, s, c\psi_1)\) in which a draw \(\mu \sim \pi\) is obtained by drawing a subset \(S \subset [p]\) of size \(s\) uniformly at random from the set of all subset of \([p]\) of size \(s\) and setting 
        \begin{equation*}
            \mu_i := 
            \begin{cases}
                \frac{c\psi_1}{\sqrt{s}} &\text{if } i \in S, \\
                0 &\text{if } i \in S^c. 
            \end{cases}
        \end{equation*}
        Note that \(||\mu||_0 = s\) and \(||\mu||^2 = c^2\psi_1^2\) almost surely, and so \(\pi\) is indeed supported on \(\Theta(p, s, c\psi_1)\). Let \(P_{\pi, \gamma, R} = \int P_{\theta, \gamma, R} \, \pi(d\theta)\) denote the Gaussian mixture induced by \(\pi\). By Lemma \ref{lemma:Ingster_Suslina} we have 
        \begin{equation*}
            \chi^2(P_{\pi, \gamma, R} || P_{0, \gamma, R}) = E\left[ \exp\left( \left\langle \theta, \left[(1-\gamma) I_p + \gamma \sum_{k=1}^{R} \mathbf{1}_{B_k}\mathbf{1}_{B_k}^\intercal \right]^{-1} \widetilde{\theta} \right\rangle \right) \right] - 1
        \end{equation*}
        where \(\theta, \widetilde{\theta} \overset{iid}{\sim} \pi\). Write \(\theta = \frac{c\psi_1}{\sqrt{s}}\mathbf{1}_S\) and \(\widetilde{\theta} = \frac{c\psi_1}{\sqrt{s}}\mathbf{1}_{\widetilde{S}}\) where \(S, \widetilde{S}\) are iid uniformly drawn subsets of \([p]\) of size \(s\). Then, we have by Lemma \ref{lemma:block_precision} 
        \begin{align*}
            \left\langle \theta, \left[(1-\gamma) I_p + \gamma \sum_{k=1}^{R} \mathbf{1}_{B_k}\mathbf{1}_{B_k}^\intercal \right]^{-1} \widetilde{\theta} \right\rangle &= \sum_{k=1}^{R} \left\langle \theta, \left(\frac{1}{1-\gamma} I_{B_k} - \frac{\gamma}{(1-\gamma)(1-\gamma + \gamma \frac{p}{R})} \mathbf{1}_{B_k}\mathbf{1}_{B_k}^\intercal \right) \widetilde{\theta} \right\rangle  \\
            &= \sum_{k=1}^{R} \left(\frac{c^2\psi_1^2}{s(1-\gamma)} |S \cap \widetilde{S} \cap B_k| - \frac{c^2 \psi_1^2 \gamma}{s(1-\gamma)\left(1 -\gamma + \gamma \frac{p}{R}\right)} |B_k \cap S| \cdot |B_k \cap \widetilde{S}|\right) \\
            &\leq \sum_{k=1}^{R} \frac{c^2\psi_1^2}{s(1-\gamma)} |S \cap \widetilde{S} \cap B_k| \\
            &\leq \frac{c^2\psi_1^2}{s(1-\gamma)} |S \cap \widetilde{S}|.
        \end{align*}
        Consider that \(|S \cap \widetilde{S}|\) is distributed according to the hypergeometric distribution with probability mass function given in Lemma \ref{lemma:hypergeometric}. A calculation similar to the Case 1 analysis (starting just before (\ref{eqn:chi_square_divergence})) in the proof of Proposition \ref{prop:problemI_lowerbound} shows that \(\chi^2(P_{\pi, \gamma, R} || P_{0, \gamma, R}) \leq e^{c^2} - 1 \leq e^{c_\eta^2} - 1 \leq 4\eta^2\). An application of Lemma \ref{lemma:general_lower_bound} gives \(\mathcal{R}(c\psi_1) \geq 1 - \frac{1}{2}\sqrt{\chi^2(P_{\pi, \gamma, R} || P_{0, \gamma, R})} \geq 1-\eta\). Since \(0 < c < c_\eta\) was arbitrary and \(\eta \in (0, 1)\) was arbitrary, the desired result has been proved in the case \(1 \leq s < \sqrt{p}\). 

        \textbf{Case 2:} Suppose \(s \geq \sqrt{p}\). Then \(\psi_1^2 = (1-\gamma)\sqrt{p}\). Without loss of generality, assume \(\sqrt{p}\) is an integer. Repeating exactly the argument presented in Case 1 but now with every instance of \(s\) replaced with \(\sqrt{p}\) yields \(\chi^2(P_{\pi, \gamma, R} || P_{0, \gamma, R}) \leq 4\eta^2\). This bound is obtained by following a calculation similar to the one in the Case 2 analysis of Proposition \ref{prop:problemI_lowerbound}. Lemma \ref{lemma:general_lower_bound} then implies \(\mathcal{R}(c\psi_1) \geq 1 - \frac{1}{2}\sqrt{\chi^2(P_{\pi, \gamma, R} || P_{0, \gamma, R})} \geq 1-\eta\). Since \(0 < c < c_\eta\) was arbitrary and \(\eta \in (0, 1)\) was arbitrary, we have proved the desired result for the case \(s \geq \sqrt{p}\). 
    \end{proof}

    \subsubsection{\texorpdfstring{Regime \(\frac{p}{4R} < s < \frac{p}{R}\)}{Regime p/4R < s < p/R}}
    A preliminary lemma is needed before we prove the lower bound of interest. 
    \begin{lemma}\label{lemma:NL_bound}
        If \(\eta \in (0, 1)\), then there exists a constant \(L_\eta > 0\) depending only on \(\eta\) such that for all \(0 < L < L_\eta\) we have 
        \begin{equation*}
            -\frac{1}{N} + \exp\left((L - 1)\log(N) + L\right) \leq 4\eta^2
        \end{equation*}
        for all positive integers \(N\). 
    \end{lemma}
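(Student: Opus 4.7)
The plan is to reduce the claim to a one-variable maximization, then exhibit a bound on the maximum that vanishes as $L \to 0^+$. First, I would rewrite
\begin{equation*}
-\frac{1}{N} + \exp\left((L-1)\log N + L\right) = \frac{e^L N^L - 1}{N} = \frac{(eN)^L - 1}{N},
\end{equation*}
so the statement reduces to producing an $L_\eta$ depending only on $\eta$ with $\sup_{N \geq 1} \frac{(eN)^L - 1}{N} \leq 4\eta^2$ whenever $0 < L < L_\eta$. Note that I am free to maximize over real $N \geq 1$, which trivially upper-bounds the maximum over positive integers.

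Next, I would apply the elementary inequality $e^x - 1 \leq x e^x$ (valid for $x \geq 0$) with the choice $x = L \log(eN) = L(1 + \log N) \geq 0$, yielding
\begin{equation*}
(eN)^L - 1 \leq L(1+\log N)(eN)^L = L e^L (1+\log N) N^L.
\end{equation*}
Dividing by $N$, the task reduces to bounding $f(N) := (1 + \log N) N^{L-1}$ over $N \geq 1$ for fixed $L \in (0,1)$.

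The function $f$ is a routine calculus exercise: differentiating gives the critical point $N_0 = \exp\!\left(\frac{L}{1-L}\right) \geq 1$, at which $1 + \log N_0 = \frac{1}{1-L}$ and $N_0^{L-1} = e^{-L}$, so the maximum value is $f(N_0) = \frac{e^{-L}}{1-L}$. Since $f(N) \to 0$ as $N \to \infty$ and $f(1) = 1 \leq f(N_0)$ for $L \in (0,1)$, this is the global maximum on $[1, \infty)$. Combining, for every $L \in (0,1)$ and every integer $N \geq 1$,
\begin{equation*}
\frac{(eN)^L - 1}{N} \leq L e^L \cdot \frac{e^{-L}}{1-L} = \frac{L}{1-L}.
\end{equation*}

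Finally, setting $L_\eta := \frac{4\eta^2}{1 + 4\eta^2} \in (0,1)$, any $0 < L < L_\eta$ satisfies $\frac{L}{1-L} < 4\eta^2$, which gives the claim. I expect no serious obstacles: the only mildly technical step is verifying that $N_0 \geq 1$ (which holds because $L \geq 0$ forces $L/(1-L) \geq 0$) and checking it is indeed a maximum rather than a minimum, both of which are immediate.
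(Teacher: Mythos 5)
Your argument is correct and takes a genuinely different route from the paper's. The paper handles $N=1$ separately (directly requiring $e^L - 1 \le 4\eta^2$) and then, for $N\ge 2$, shows the original map $x \mapsto -x^{-1} + \exp((L-1)\log x + L)$ is decreasing on $[2,\infty)$ so that the supremum is attained at $N=2$; this forces a threshold $L_\eta$ built as a minimum of three separate conditions. You instead rewrite the quantity as $\frac{(eN)^L - 1}{N}$, apply the single inequality $e^x - 1 \le x e^x$ at $x = L\log(eN)$, and then maximize the \emph{resulting upper bound} $L e^L (1+\log N) N^{L-1}$ over real $N\ge 1$, which is a clean one-variable calculus problem with global maximum $\tfrac{e^{-L}}{1-L}$ at $N_0 = e^{L/(1-L)}$. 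This yields the monotone, uniform estimate $\frac{(eN)^L-1}{N} \le \frac{L}{1-L}$ for all $N\ge 1$ and $L\in(0,1)$, with no case split and an explicit optimal-looking threshold $L_\eta = \frac{4\eta^2}{1+4\eta^2}$. Your approach buys a cleaner constant and a transparent monotonicity in $L$; the paper's approach works directly with the original function but at the cost of a slightly opaque derivative-sign check ($(1-L)(2e)^L > 1$) and a compound $L_\eta$. Both are valid; I verified the calculus (the critical point, its location $\ge 1$, the sign change of $f'$, and the identity $\frac{L_\eta}{1-L_\eta} = 4\eta^2$) and found no gaps.
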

    \begin{proof}
        Fix \(\eta \in (0, 1)\) and let \(L_\eta := \frac{\log(8\eta^2 + 1)}{2e} \wedge \log(1+4\eta^2) \wedge \frac{1}{2} \). Let \(0 < L < L_\eta\). In the case \(N = 1\), observe that \(-1 + e^L \leq -1 + e^{\log(1+4\eta^2)} = 4\eta^2\) as desired. Suppose \(N \geq 2\). Define the function \(f : [2, \infty) \to \R\) with \(f(x) = -\frac{1}{x} + \exp((L-1)\log(x) + L)\). Clearly \(f\) is differentiable on \((2, \infty)\) with \(f'(x) = x^{-2}\left(1 + (L-1)(ex)^L\right)\). Consider that \(f'(x) < 0\) if and only if \((L-1)(ex)^L < -1\). Since \(L < \frac{1}{2}\) and \(x \geq 2\) we have \((L-1)(ex)^L \leq (L-1)(2e)^L < -1\). Thus, \(f'(x) < 0\). Consequently, \(-\frac{1}{N} + \exp\left((L - 1)\log(N) + L\right) \leq -\frac{1}{2} + \exp\left((L - 1)\log(2) + L\right) = \frac{1}{2}\left((2e)^L - 1\right) \leq 4\eta^2\) since \(L < \frac{\log\left(8\eta^2+1\right)}{2e}\). The proof is complete.
    \end{proof}

    \begin{lemma}\label{lemma:lbound_pR_dense}
        Suppose \(\frac{p}{4R} < s < \frac{p}{R}\) and \(\gamma \in [0, 1)\). Set 
        \begin{equation*}
            \rho^2 := 
            \begin{cases}
                \frac{(1-\gamma)p}{p-Rs} \left(\sqrt{\frac{p}{R}\log(eR)} + \log(R) \right) \wedge \left(1-\gamma+\gamma \frac{p}{R}\right)\log(eR) &\text{if } \frac{p}{4R} < s \leq \frac{p}{R} - \sqrt{\frac{p}{R} \log(eR)}, \\
                \frac{(1-\gamma)p}{p-Rs} \left(\left(\frac{p}{R}-s\right)\log\left(1 + \frac{Rp\log(eR)}{\left(p - Rs\right)^2}\right)+ \log(R) \right) \wedge \left(1-\gamma+\gamma \frac{p}{R}\right)\log(eR) &\text{if } \frac{p}{R} - \sqrt{\frac{p}{R} \log(eR)} < s < \frac{p}{R}.
            \end{cases}
        \end{equation*}
        If \(\eta \in (0, 1)\), then there exists a constant \(c_\eta > 0\) depending only on \(\eta\) such that for all \(0 < c < c_\eta\) we have \(\mathcal{R}(c\rho) \geq 1-\eta\). 
    \end{lemma}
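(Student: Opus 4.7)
The plan is to invoke Lemma \ref{lemma:general_lower_bound} with a carefully chosen prior $\pi$ supported on $\Theta(p,s,c\rho)$ and control the chi-square divergence $\chi^2(P_{\pi,\gamma,R} \| P_{0,\gamma,R})$ by the Ingster--Suslina identity (Lemma \ref{lemma:Ingster_Suslina}). The crucial structural ingredient is the block-diagonal form of $\Sigma^{-1}$ furnished by Lemma \ref{lemma:block_precision}: signals supported on different groups $B_k, B_{\widetilde{k}}$ give $\langle \theta, \Sigma^{-1}\widetilde{\theta}\rangle = 0$, so a prior that concentrates each draw on a single, uniformly-random group yields
\begin{equation*}
\chi^2(P_{\pi,\gamma,R}\|P_{0,\gamma,R}) + 1 = \tfrac{1}{R}\, E\bigl[\exp(\langle \theta, \Sigma^{-1}\widetilde{\theta}\rangle) \bigm| k = \widetilde{k}\bigr] + \tfrac{R-1}{R}.
\end{equation*}
This $1/R$ prefactor is what produces the $\log(eR)$ inflation in $\rho$: via Lemma \ref{lemma:NL_bound} we are permitted a within-group exponential moment of order $R$ rather than of order $1$.

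First I would construct the prior by sampling $k \in [R]$ uniformly, then a size-$s$ subset $S \subset B_k$ uniformly, then setting $\mu = a\mathbf{1}_S$ with $a^2 = c^2 \rho^2/s$; membership in $\Theta(p,s,c\rho)$ is immediate. Expanding the within-group exponential moment through Lemma \ref{lemma:block_precision} reduces the computation to controlling
\begin{equation*}
E\left[\exp\!\left(\tfrac{a^2}{1-\gamma}|S \cap \widetilde{S}| - \tfrac{a^2 \gamma s^2}{(1-\gamma)(1-\gamma+\gamma p/R)}\right) \Bigm| k = \widetilde{k}\right],
\end{equation*}
in which $|S \cap \widetilde{S}|$ is hypergeometric on $B_k$ (Lemma \ref{lemma:hypergeometric}). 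The two sparsity sub-regimes then mirror the Problem~II lower bound calculation: for $\frac{p}{4R} < s \leq \frac{p}{R} - \sqrt{(p/R)\log(eR)}$ I would reuse the Case~1 moment generating function estimate of Lemma \ref{lemma:problemII_lowerbound}, adapted to a block of dimension $p/R$, producing the $\sqrt{(p/R)\log(eR)}$ contribution to $\rho^2$; for $\frac{p}{R} - \sqrt{(p/R)\log(eR)} < s < \frac{p}{R}$, the Case~2 estimate adapted to effective dimension $p/R$ produces the $(\frac{p}{R}-s)\log(1 + Rp\log(eR)/(p-Rs)^2)$ contribution. The additive $\log(R)$ in each expression is exactly what the factor $1/R$ in front buys us through Lemma \ref{lemma:NL_bound}.

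When the minimum defining $\rho^2$ is realized by the eigenvalue cap $(1-\gamma+\gamma p/R)\log(eR)$, the negative drift $-\frac{a^2 \gamma s^2}{(1-\gamma)(1-\gamma+\gamma p/R)}$ is no longer sufficient by itself, and I would follow the mean-shift strategy of Proposition \ref{prop:problemII_lowerbound}: introduce the group-indexed shift $\nu^*_k := -\frac{c\rho\sqrt{p/R}}{s}\mathbf{1}_{B_k}$, apply the triangle inequality for $d_{TV}$ to split the distance between $P_{0,\gamma,R}$ and the mixture into a $d_{TV}(P_{0,\gamma,R},P_{\nu^*_k,\gamma,R})$ term (bounded uniformly in $k$ by Lemma \ref{lemma:dtv_null_bound}, since $\rho^2 \leq (1-\gamma+\gamma p/R)\log(eR)$) and a shifted-alternative term, and then carry out the Ingster--Suslina computation on the shifted prior, where the $|S\cap\widetilde{S}|$ analysis now corresponds to detecting a $(\frac{p}{R}-s)$-sparse signal inside $B_k$. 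Lemma \ref{lemma:NL_bound} applied with $L$ proportional to $c^2$ then caps the resulting chi-square by $4\eta^2$ for $c$ small enough, and Lemma \ref{lemma:general_lower_bound} concludes.

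The main obstacle will be the shift step: because $\nu^*_k$ depends on the random group label, both the shifted null and the shifted alternative mix over $k$, and one must check that the triangle inequality decomposition still cleanly separates the "near-null" contribution from the chi-square of a modified prior without introducing cross-group interactions. A secondary nuisance is tracking constants uniformly across the four combinations of sparsity sub-regime and active term in the minimum so that a single $c_\eta$ suffices; the Problem~II calculations in Lemma \ref{lemma:problemII_lowerbound} provide a clear template, but several constants shift because the effective ambient dimension is $p/R$ rather than $p$ and the exponent tolerances must accommodate the extra $\log(eR)$.
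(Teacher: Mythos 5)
Your overall framework is correct: prior concentrated on a single uniformly random group, Ingster--Suslina via Lemma \ref{lemma:block_precision}, hypergeometric MGF on a block of dimension $p/R$, and Lemma \ref{lemma:NL_bound} to convert the $1/R$ prefactor into the $\log(eR)$ inflation. But the obstacle you flag at the end is not a nuisance to be checked --- it is a genuine breakdown of your plan. The $d_{TV}$ triangle-inequality mean shift of Proposition \ref{prop:problemII_lowerbound} relies on the shift $\nu^*$ being deterministic so that $X \mapsto X - \nu^*$ is an invertible data transformation applied uniformly to both hypotheses. In the multi-group setting the only useful shift $\nu^*_K$ depends on the latent group label $K$, which is part of the prior's randomness, not an observable; you cannot form the transformed observation $X - \nu^*_K$. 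The naive repair of writing $P_\pi = \frac{1}{R}\sum_k P_{\pi_k}$ and applying the triangle inequality termwise gives $d_{TV}(P_0,P_\pi) \le \frac{1}{R}\sum_k d_{TV}(P_0,P_{\pi_k})$, which destroys exactly the $1/R$ factor that you (correctly) say Lemma \ref{lemma:NL_bound} is supposed to exploit; you would recover only the $R=1$ rate with no $\log(eR)$ savings.

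The paper's resolution is to perform the mean shift \emph{inside} the Ingster--Suslina exponent rather than at the level of distributions. Because $\langle\theta,\Sigma^{-1}\widetilde\theta\rangle = \langle\theta,\Omega_K\widetilde\theta\rangle\,\mathbbm{1}_{\{K=\widetilde K\}}$, the only nonzero contribution occurs when $\theta$ and $\widetilde\theta$ live in the same block, and there one can write, with $\eta := \frac{c\rho\sqrt{p/R}}{s}\mathbf{1}_{B_K}$ and $\widetilde\eta := \frac{c\rho\sqrt{p/R}}{s}\mathbf{1}_{B_{\widetilde K}}$,
\begin{equation*}
\langle\theta,\Omega_K\widetilde\theta\rangle = \langle\theta-\eta,\Omega_K(\widetilde\theta-\widetilde\eta)\rangle + \langle\eta,\Omega_K\widetilde\theta\rangle + \langle\widetilde\eta,\Omega_K\theta\rangle - \langle\eta,\Omega_K\widetilde\eta\rangle,
\end{equation*}
bound the last three terms uniformly by $O(c^2\log(eR))$ using $\rho^2 \le (1-\gamma+\gamma p/R)\log(eR)$, and then observe that $\theta-\eta$ is a $(p/R-s)$-sparse vector supported on $B_K \setminus S$, reducing the overlap to that of the \emph{complement} sets. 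This is not merely a technical variant: the shift is needed in \emph{all} four sub-cases (the paper applies it throughout), since $s > p/(4R)$ makes the within-block signal dense and the unshifted hypergeometric MGF would not stay bounded --- not only when the eigenvalue cap in $\rho^2$ is active as you suggest. Without this algebraic-shift device your proof does not go through.
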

    \begin{proof}
        Fix \(\eta\in (0, 1)\) and set \(c_\eta = \frac{1}{\sqrt{32}} \wedge c_{\eta, 1} \wedge c_{\eta, 2} \wedge c_{\eta, 3}\) where \(c_{\eta, 1}, c_{\eta, 2}, c_{\eta, 3}\) will be set later. Let \(0 < c < c_\eta\). We will use Lemma \ref{lemma:general_lower_bound} to prove the desired result.
        
        \textbf{Case 1:} Suppose \(\frac{p}{4R} < s \leq \frac{p}{R} - \sqrt{\frac{p}{R}\log(eR)}\) and further suppose \(\sqrt{\frac{p}{R} \log(eR)} \geq \log(R)\). 
        Let \(\pi\) denote the prior supported on \(\Theta(p, s, c\rho)\) defined as follows. To sample \(\mu \sim \pi\), draw \(K \in [R]\) uniformly at random followed by drawing the subset \(S \subset B_K\) of size \(s\) uniformly at random from the collection of all size \(s\) subsets of \(B_K\). Then set 
        \begin{equation*}
            \mu_i = 
            \begin{cases}
                \frac{c\rho \sqrt{\frac{p}{R}}}{s} &\text{if }i \in S, \\
                0 &\text{if } i \in S^c.
            \end{cases}
        \end{equation*}
        Note that \(||\mu||_0 = s\) and \(||\mu||^2 = \frac{c^2\rho^2\left(\frac{p}{R}\right)}{s^2} \cdot s \geq 4c^2 \rho^2\). Thus, \(\pi\) is indeed supported on \(\Theta(p, s, c\rho)\). By Lemma \ref{lemma:Ingster_Suslina} and Lemma \ref{lemma:block_precision} we have 
        \begin{align*}
            \chi^2(P_{\pi, \gamma, R} || P_{0, \gamma, R}) + 1 = E\left[ \exp\left( \left\langle \theta, \left[\sum_{k=1}^{R} \Omega_k \right] \widetilde{\theta}  \right \rangle \right) \right]
        \end{align*}
        where \(\theta, \widetilde{\theta} \overset{iid}{\sim} \pi\) and \(\Omega_k := \frac{1}{1-\gamma}I_{B_k} - \frac{\gamma}{(1-\gamma)\left(1-\gamma+\gamma \frac{p}{R}\right)} \mathbf{1}_{B_k}\mathbf{1}_{B_k}^\intercal\) for \(1 \leq k \leq R\). Let \(K\) denote the group index and \(S\) the size \(s\) subset of \(B_K\) associated to \(\theta\). Likewise, let \(\widetilde{K}\) and \(\widetilde{S}\) denote the corresponding objects associated to \(\widetilde{\theta}\). A direct calculation shows 
        \begin{align*}
            \left\langle \theta, \left[\sum_{k=1}^{R} \Omega_k \right] \widetilde{\theta}  \right \rangle = \left\langle \theta , \Omega_K \widetilde{\theta}\right\rangle \cdot \mathbbm{1}_{\left\{K = \widetilde{K}\right\}}.
        \end{align*}
        Define \(\eta := \frac{c\rho\sqrt{\frac{p}{R}}}{s}\mathbf{1}_{B_K}\) and \(\widetilde{\eta} := \frac{c\rho\sqrt{\frac{p}{R}}}{s}\mathbf{1}_{B_{\widetilde{K}}}\). On the event \(\{K = \widetilde{K}\}\) we have
        \begin{align*}
            \left\langle \theta , \Omega_K \widetilde{\theta}\right\rangle &= \left\langle \theta - \eta, \Omega_K (\widetilde{\theta} -\widetilde{\eta})\right\rangle  + \langle \eta, \Omega_K \widetilde{\theta}\rangle + \langle \widetilde{\eta}, \Omega_K \theta \rangle - \langle \eta, \Omega_K \widetilde{\eta}\rangle\\
            &\leq \left\langle \theta - \eta, \Omega_K (\widetilde{\theta} -\widetilde{\eta})\right\rangle + \frac{2c^2\rho^2 \frac{p}{R}}{s\left(1 - \gamma + \gamma \frac{p}{R}\right)} - \frac{c^2\rho^2 \left(\frac{p}{R}\right)^2}{s^2 \left(1-\gamma+\gamma \frac{p}{R}\right)} \\
            &\leq \left\langle \theta - \eta, \Omega_K (\widetilde{\theta} -\widetilde{\eta})\right\rangle + \frac{7c^2\rho^2}{\left(1 - \gamma + \gamma \frac{p}{R}\right)} \\
            &\leq \left\langle \theta - \eta, \Omega_K (\widetilde{\theta} -\widetilde{\eta})\right\rangle + 7c^2 \log(eR)
        \end{align*}
        where we have used \(\frac{p}{4R} < s < \frac{p}{R}\) and the fact that \(\Omega_K = \frac{1}{1-\gamma}\left(I_{B_K} - \frac{R}{p}\mathbf{1}_{B_K}\mathbf{1}_{B_K}^\intercal\right) + \frac{1}{1-\gamma+\gamma \frac{p}{R}} \cdot \frac{R}{p} \mathbf{1}_{B_K}\mathbf{1}_{B_K}^\intercal\). We have also used \(\rho^2 \leq \left(1 -\gamma + \gamma \frac{p}{R}\right) \log(eR)\). To summarize our calculation so far, we have shown 
        \begin{align*}
            \chi^2(P_{\pi, \gamma, R} || P_{0, \gamma, R}) + 1 &\leq E\left[ \exp\left( \left[\left\langle \theta - \eta, \Omega_K (\widetilde{\theta} -\widetilde{\eta})\right\rangle + 7c^2\log(eR) \right] \cdot \mathbbm{1}_{\left\{ K = \widetilde{K} \right\}} \right) \right] \\
            &= 1 - \frac{1}{R} + \frac{1}{R} \cdot \left( \frac{1}{R}\sum_{k=1}^{R} E\left[ \exp\left( \frac{c^2\rho^2 \frac{p}{R}}{s}\left\langle \mathbf{1}_{T_k}, \Omega_k  \mathbf{1}_{\widetilde{T}_k}\right\rangle + 7c^2\log(eR)\right) \right]\right) \\
            &= 1 - \frac{1}{R} + \frac{e^{7c^2 \log(eR)}}{R^2}\sum_{k=1}^{R}  E\left[ \exp\left( \frac{c^2\rho^2 \frac{p}{R}}{s^2}\left\langle \mathbf{1}_{T_k}, \Omega_k  \mathbf{1}_{\widetilde{T}_k}\right\rangle \right) \right]
        \end{align*}
        where \(T_k, \widetilde{T}_k\) are iid uniformly drawn size \(\frac{p}{R} - s\) subsets of \(B_k\) for \(1 \leq k \leq R\). The expectation is the same no matter the value of \(k\) so we can further reduce 
        \begin{equation}\label{eqn:chi_square_div_bound}
            \chi^2(P_{\pi, \gamma, R} || P_{0, \gamma, R}) \leq - \frac{1}{R} + \frac{e^{7c^2\log(eR)}}{R} E\left[ \exp\left( \frac{c^2\rho^2 \frac{p}{R}}{s^2}\left\langle \mathbf{1}_{T_1}, \Omega_1  \mathbf{1}_{\widetilde{T}_1}\right\rangle \right) \right].
        \end{equation}
        Observe that 
        \begin{align*}
            \frac{c^2\rho^2 \frac{p}{R}}{s^2}\left\langle \mathbf{1}_{T_1}, \Omega_1  \mathbf{1}_{\widetilde{T}_1}\right\rangle &= \frac{c^2 \rho^2 \frac{p}{R}}{s^2 (1-\gamma)} \left(|T_1 \cap \widetilde{T}_1| - \frac{\left(\frac{p}{R} - s\right)^2}{\frac{p}{R}} \right) + \frac{c^2 \rho^2}{1-\gamma+\gamma \frac{p}{R}} \cdot \frac{\left(\frac{p}{R} - s\right)^2}{s^2}.
        \end{align*}
        Note that \(|T_1 \cap \widetilde{T}_1|\) is distributed according to the hypergeometric distribution 
        \begin{equation}\label{eqn:T1_hyp}
            P\left\{ |T_1 \cap \widetilde{T}_1| = k \right\} = \frac{ \binom{\frac{p}{R} - s}{k} \binom{s}{\frac{p}{R} - s - k} }{ \binom{\frac{p}{R}}{\frac{p}{R} - s} }
        \end{equation}
        for \(0 \leq k \leq \frac{p}{R} - s\). Note also that since \(\rho^2 \leq (1-\gamma+\gamma \frac{p}{R})\log(eR)\) and since \(\frac{p}{R} - s \leq 3s\), we have \(\frac{c^2 \rho^2}{1-\gamma+\gamma \frac{p}{R}}\cdot \frac{\left(\frac{p}{R}-s\right)^2}{s^2} \leq 9c^2\log(eR)\). Therefore, 
        \begin{equation*}
            \frac{c^2 \rho^2 \frac{p}{R}}{s^2 (1-\gamma)} \left(|T_1 \cap \widetilde{T}_1| - \frac{\left(\frac{p}{R} - s\right)^2}{\frac{p}{R}} \right) + \frac{c^2 \rho^2}{1-\gamma+\gamma \frac{p}{R}} \cdot \frac{\left(\frac{p}{R} - s\right)^2}{s^2} \leq \frac{c^2 \rho^2 \frac{p}{R}}{s^2 (1-\gamma)} \left(|T_1 \cap \widetilde{T}_1| - \frac{\left(\frac{p}{R} - s\right)^2}{\frac{p}{R}} \right) + 9c^2\log(eR).
        \end{equation*}
        By Lemma \ref{lemma:hypergeometric}, 
        \begin{align*}
            E\left[ \exp\left( \frac{c^2\rho^2 \frac{p}{R}}{s^2}\left\langle \mathbf{1}_{T_1}, \Omega_1  \mathbf{1}_{\widetilde{T}_1}\right\rangle \right) \right] &\leq \left(1 - \frac{\frac{p}{R} - s}{\frac{p}{R}} + \frac{\frac{p}{R} - s}{\frac{p}{R}} \exp\left(\frac{c^2\rho^2 \frac{p}{R}}{(1-\gamma)s^2} \right)\right)^{\frac{p}{R} - s} \exp\left( - \frac{c^2 \rho^2 \left(\frac{p}{R} - s\right)^2}{s^2(1-\gamma)} + 9c^2\log(eR)\right) \\
            &\leq \exp\left(\frac{\left(\frac{p}{R} - s\right)^2}{\frac{p}{R}} \left[\exp\left( \frac{c^2\rho^2 \frac{p}{R}}{s^2(1-\gamma)} \right) - 1\right] \right) \exp\left( - \frac{c^2 \rho^2 \left(\frac{p}{R} - s\right)^2}{s^2(1-\gamma)} + 9c^2\log(eR)\right).
        \end{align*}
        Since \(\sqrt{\frac{p}{R} \log(eR)} \leq \frac{p}{R} - s \leq \frac{3p}{4R}\) and \(\rho^2 \leq 2\frac{(1-\gamma)p}{p-Rs}\sqrt{\frac{p}{R} \log(eR)}\) because \(\sqrt{\frac{p}{R}\log(eR)} \geq \log(R)\), we have 
        \begin{equation*}
            \frac{c^2\rho^2 \frac{p}{R}}{s^2(1-\gamma)} \leq \frac{2c^2\left(\frac{p}{R}\right)^2\sqrt{\frac{p}{R}\log(eR)}}{s^2\left(\frac{p}{R}-s\right)} = 32c^2 < 1
        \end{equation*}
        where we have used \(c^2 < c_\eta^2 \leq \frac{1}{32}\). Applying the inequality \(e^t - 1 \leq t + \frac{et^2}{2}\) for \(t \in (0, 1)\) yields 
        \begin{equation*}
            \exp\left(\frac{\left(\frac{p}{R} - s\right)^2}{\frac{p}{R}} \left[\exp\left( \frac{c^2\rho^2 \frac{p}{R}}{s^2(1-\gamma)} \right) - 1\right] \right) \leq \exp\left(\frac{c^2\rho^2 \left(\frac{p}{R} - s\right)^2}{(1-\gamma)s^2} + \frac{ec^4 \rho^4 \frac{p}{R}\left(\frac{p}{R} - s\right)^2}{2s^4(1-\gamma)^2} \right).
        \end{equation*}
        Therefore, 
        \begin{align*}
            E\left[ \exp\left( \frac{c^2\rho^2 \frac{p}{R}}{s^2}\left\langle \mathbf{1}_{T_1}, \Omega_1  \mathbf{1}_{\widetilde{T}_1}\right\rangle \right) \right] &\leq \exp\left(\frac{c^2\rho^2 \left(\frac{p}{R} - s\right)^2}{(1-\gamma)s^2} + \frac{ec^4 \rho^4 \frac{p}{R}\left(\frac{p}{R} - s\right)^2}{2s^4(1-\gamma)^2} \right) \exp\left( - \frac{c^2 \rho^2 \left(\frac{p}{R} - s\right)^2}{s^2(1-\gamma)} + 9c^2\log(eR)\right) \\
            &= \exp\left(\frac{ec^4 \rho^4 \frac{p}{R}\left(\frac{p}{R} - s\right)^2}{2s^4(1-\gamma)^2} + 9c^2\log(eR)\right) \\
            &= \exp\left(\frac{4ec^4 (1-\gamma)^2 \left(\frac{p}{R}\right)^4 \log(eR) \cdot \left(\frac{p}{R}-s\right)^2}{2s^4 (1-\gamma)^2 \left(\frac{p}{R} - s\right)^2} + 9c^2\log(eR) \right) \\
            &\leq \exp\left(\left(512ec^4 + 9c^2\right) \log(eR)\right). 
        \end{align*}
        Plugging this bound into (\ref{eqn:chi_square_div_bound}) yields 
        \begin{align*}
            \chi^2(P_{\pi, \gamma, R} || P_{0, \gamma, R}) &\leq -\frac{1}{R} + \frac{1}{R}\exp\left(\left(512ec^4 + 16c^2\right) \log(eR)\right) \\
            &\leq -\frac{1}{R} + \exp\left(\left(512ec^4 + 16c^2 - 1\right) \log(R) + 512ec^4 + 16c^2\right). 
        \end{align*}
        Let \(L_{\eta}\) be the constant from Lemma \ref{lemma:NL_bound} at level \(\eta\) and let \(c_{\eta, 1}\) denote the largest constant such that \(0 < d < c_{\eta, 1}\) implies \(512ed^4 + 16d^2 \leq L_\eta\). Since \(c < c_\eta \leq c_{\eta, 1}\), we have that 
        \begin{equation*}
            \chi^2(P_{\pi, \gamma, R} || P_{0, \gamma, R}) \leq -\frac{1}{R} + \exp\left(\left(512ec^4 + 16c^2 - 1\right) \log(R) + 512ec^4 + 16c^2\right) \leq 4\eta^2. 
        \end{equation*}
        By Lemma \ref{lemma:general_lower_bound} it follows that 
        \begin{equation*}
            \mathcal{R}(c\rho) \geq 1 - \frac{1}{2}\sqrt{\chi^2(P_{\pi, \gamma, R} || P_{0, \gamma, R})} \geq 1-\eta.
        \end{equation*}
        Since \(0 < c < c_\eta\) was arbitrary and \(\eta \in (0, 1)\) was arbitrary, we have proved the desired result.

        \textbf{Case 2:} Suppose \(\frac{p}{4R} < s \leq \frac{p}{R} - \sqrt{\frac{p}{R}\log(eR)}\) and further suppose \(\sqrt{\frac{p}{R} \log(eR)} < \log(R)\). Consider the prior distribution \(\pi\) on \(\Theta(p, s, c\rho)\) defined as follows. A draw \(\mu \sim \pi\) is constructed by drawing \(K \sim \text{Uniform}([R])\) and setting the first \(s\) coordinates in \(B_K\) of \(\mu\) equal to \(\frac{c\rho}{\sqrt{s}}\). Note that \(||\mu||_0 = s\) and \(||\mu||^2 = c^2\rho^2\), so \(\pi\) is indeed supported on \(\Theta(p, s, c\rho)\). The calculation in Case 1 can be repeated (with the modification \(\eta := \frac{c \rho}{\sqrt{s}} \mathbf{1}_{B_K}\) and \(\widetilde{\eta} := \frac{c\rho}{\sqrt{s}}\mathbf{1}_{B_{\widetilde{K}}}\)) to obtain 
        \begin{align*}
            \chi^2\left(P_{\pi, \gamma, R} || P_{0, \gamma, R} \right) &\leq - \frac{1}{R} + \frac{e^{c^2\log(eR)}}{R} \exp\left(\frac{c^2\rho^2}{s(1-\gamma)} \left( s - \frac{R}{p} s^2 \right)  + \frac{c^2\rho^2}{s(1-\gamma+\gamma \frac{p}{R})} \frac{R}{p} s^2\right)
        \end{align*}
        Since \(\sqrt{\frac{p}{R}\log(eR)} < \log(R)\), we have
        \begin{align*}
            \frac{c^2\rho^2}{s(1-\gamma)} \left( s - \frac{R}{p} s^2 \right) &\leq \frac{2c^2\log(R)p}{p-Rs} \left(1 - \frac{R}{p}s \right) = 2c^2\log(R).
        \end{align*}
        Using also the fact that \(\rho^2 \leq \left(1-\gamma+\gamma \frac{p}{R}\right)\log(eR)\) and \(s < \frac{p}{R}\), we have 
        \begin{align*}
            \chi^2\left(P_{\pi, \gamma, R} || P_{0, \gamma, R} \right) &\leq -\frac{1}{R} + \frac{e^{c^2\log(eR)}}{R} \exp\left(2c^2\log(R) + c^2\log(eR)\right) \\
            &= -\frac{1}{R} + \exp\left(\left(4c^2 - 1\right) \log(R) + 2c^2\right) \\
            &\leq -\frac{1}{R} + \exp\left(\left(4c^2 - 1\right) \log(R) + 4c^2\right)
        \end{align*}
        Let \(L_{\eta}\) be the constant from Lemma \ref{lemma:NL_bound} at level \(\eta\) and let \(c_{\eta, 2}\) denote the largest constant such that \(0 < d < c_{\eta, 2}\) implies \(4c^2 \leq L_\eta\). Since \(c < c_\eta \leq c_{\eta, 2}\), we have that 
        \begin{equation*}
            \chi^2(P_{\pi, \gamma, R} || P_{0, \gamma, R}) \leq -\frac{1}{R} + \exp\left(\left(4c^2 - 1\right) \log(R) + 4c^2\right) \leq 4\eta^2. 
        \end{equation*}
        By Lemma \ref{lemma:general_lower_bound} it follows that 
        \begin{equation*}
            \mathcal{R}(c\rho) \geq 1 - \frac{1}{2}\sqrt{\chi^2(P_{\pi, \gamma, R} || P_{0, \gamma, R})} \geq 1-\eta.
        \end{equation*}
        Since \(0 < c < c_\eta\) was arbitrary and \(\eta \in (0, 1)\) was arbitrary, we have proved the desired result.

        \textbf{Case 3:} Suppose \(\frac{p}{R} - \sqrt{\frac{p}{R} \log(eR)} < s < \frac{p}{R}\) and \((\frac{p}{R} - s) \log\left(1 + \frac{Rp\log(eR)}{\left(p - Rs\right)^2}\right) \geq \log(R)\). Let \(\pi\) denote exactly the same prior as in Case 1. Note the supposition that \(s > \frac{p}{4R}\) in the statement of the lemma. The exact same analysis can be repeated to yield (\ref{eqn:chi_square_div_bound}). Observe that 
        \begin{align*}
            \frac{c^2\rho^2 \frac{p}{R}}{s^2}\left\langle \mathbf{1}_{T_1}, \Omega_1  \mathbf{1}_{\widetilde{T}_1}\right\rangle &= \frac{c^2 \rho^2 \frac{p}{R}}{s^2 (1-\gamma)} \left(|T_1 \cap \widetilde{T}_1| - \frac{\left(\frac{p}{R} - s\right)^2}{\frac{p}{R}} \right) + \frac{c^2 \rho^2}{1-\gamma+\gamma \frac{p}{R}} \cdot \frac{\left(\frac{p}{R} - s\right)^2}{s^2}.
        \end{align*}
        Since \(\rho^2 \leq (1-\gamma+\gamma \frac{p}{R})\log(eR)\) and \(\frac{p}{R} - s \leq 3s\) we have \(\frac{c^2\rho^2}{1-\gamma+\gamma \frac{p}{R}} \cdot \frac{\left(\frac{p}{R} - s\right)^2}{s^2} \leq 9c^2\log(eR)\). Additionally since \(\left(\frac{p}{R} - s\right)\log\left(1 + \frac{Rp \log(eR)}{(p-Rs)^2}\right) \geq \log(R)\) we have \(\frac{c^2 \rho^2 \frac{p}{R}}{s^2 (1-\gamma)} \leq 32c^2\log\left(1 + \frac{Rp\log(eR)}{(p-Rs)^2}\right)\). Therefore
        \begin{equation*}
            \frac{c^2\rho^2 \frac{p}{R}}{s^2}\left\langle \mathbf{1}_{T_1}, \Omega_1  \mathbf{1}_{\widetilde{T}_1}\right\rangle \leq 32c^2\log\left(1 +\frac{Rp \log(eR)}{(p-Rs)^2}\right)|T_1 \cap \widetilde{T}_1| + 9c^2\log(eR).
        \end{equation*}
        Using (\ref{eqn:chi_square_div_bound}), we have 
        \begin{align*}
            \chi^2(P_{\pi, \gamma, R} || P_{0, \gamma, R}) &\leq -\frac{1}{R} + \frac{e^{7c^2\log(eR)}}{R} E\left[ \exp\left(32c^2\log\left(1 +\frac{Rp\log(eR)}{(p-Rs)^2}\right)|T_1 \cap \widetilde{T}_1| + 9c^2\log(eR)\right) \right] \\
            &= -\frac{1}{R} + \exp\left(16c^2 + \left(16c^2 - 1\right) \log(R) \right) E\left[ \exp\left(32c^2\log\left(1 + \frac{Rp \log(eR)}{(p-Rs)^2}\right) |T_1 \cap \widetilde{T}_1| \right) \right].
        \end{align*}
        Note that \(|T_1 \cap \widetilde{T}_1|\) is distributed according to a hypergeometric distribution with probability mass function given by (\ref{eqn:T1_hyp}). By Lemma \ref{lemma:hypergeometric}, we have 
        \begin{align*}
            E\left[ \exp\left(32c^2\log\left(1 + \frac{Rp\log(eR)}{(p-Rs)^2}\right) |T_1 \cap \widetilde{T}_1| \right) \right] &\leq \left(1 - \frac{\frac{p}{R} - s}{\frac{p}{R}} + \frac{\frac{p}{R} - s}{\frac{p}{R}} \exp\left( 32c^2 \log\left(1 + \frac{Rp \log(eR)}{(p-Rs)^2}\right) \right) \right)^{\frac{p}{R} - s} \\
            &= \left(1 - \frac{\frac{p}{R} - s}{\frac{p}{R}} + \frac{\frac{p}{R} - s}{\frac{p}{R}} \left(1 + \frac{\frac{p}{R} \log(eR)}{\left(\frac{p}{R} - s\right)^2}\right)^{32c^2} \right)^{\frac{p}{R} - s} \\
            &\leq \left(1 + \frac{32c^2 \log(eR)}{\frac{p}{R} - s}\right)^{\frac{p}{R} - s} \\
            &\leq e^{32c^2 \log(eR)}
        \end{align*}
        where the penultimate inequality follows from the fact that \(32c^2 < 32c_\eta^2 \leq 1\) along with the inequality \((1+t)^\delta - 1 \leq \delta t\) for all \(0 < \delta < 1\) and \(t > 0\). Therefore we have 
        \begin{equation*}
            \chi^2(P_{\pi, \gamma, R} || P_{0, \gamma, R}) \leq -\frac{1}{R} + \exp\left(48c^2 + \left(48c^2 - 1\right)\log(R) \right). 
        \end{equation*}
        Let \(L_{\eta}\) be the constant from Lemma \ref{lemma:NL_bound} at level \(\eta\) and let \(c_{\eta, 3}\) denote the largest constant such that \(0 < d < c_{\eta, 3}\) implies \(48d^2 \leq L_\eta\). Since \(c < c_\eta \leq c_{\eta, 3}\), we have that 
        \begin{equation*}
            \chi^2(P_{\pi, \gamma, R} || P_{0, \gamma, R}) \leq -\frac{1}{R} + \exp\left(\left(48c^2 - 1\right) \log(R) + 48c^2\right) \leq 4\eta^2. 
        \end{equation*}
        By Lemma \ref{lemma:general_lower_bound} it follows that 
        \begin{equation*}
            \mathcal{R}(c\rho) \geq 1 - \frac{1}{2}\sqrt{\chi^2(P_{\pi, \gamma, R} || P_{0, \gamma, R})} \geq 1-\eta.
        \end{equation*}
        Since \(0 < c < c_\eta\) was arbitrary and \(\eta \in (0, 1)\) was arbitrary, we have proved the desired result.
        
        \textbf{Case 4:} Suppose \(\frac{p}{R} - \sqrt{\frac{p}{R} \log(eR)} < s < \frac{p}{R}\) and \((\frac{p}{R} - s) \log\left(1 + \frac{Rp \log(eR)}{\left(p - Rs\right)^2}\right) < \log(R)\). Note the supposition that \(s > \frac{p}{4R}\) in the statement of the lemma. The analysis of Case 2 can be exactly repeated. 
    \end{proof}

    Proposition \ref{prop:lbound_R_sparse} and Lemma \ref{lemma:lbound_pR_dense} are combined to give Propositions \ref{prop:lbound_pR_dense} and \ref{prop:lbound_pR_very_dense}.
    
    \begin{proof}[Proof of Proposition \ref{prop:lbound_pR_dense}]
        Fix \(\eta \in (0, 1)\). Let \(c_\eta := c_{\eta, 1} \wedge c_{\eta, 2}\) where \(c_{\eta, 1}\) and \(c_{\eta, 2}\) are the constants at level \(\eta\) from Proposition \ref{prop:lbound_R_sparse} and Lemma \ref{lemma:lbound_pR_dense} respectively. For all \(0 < c < c_\eta\), we have \(\mathcal{R}(c(\psi_1 \vee \upsilon)) \geq \mathcal{R}(c\psi_1) \vee \mathcal{R}(c\upsilon) \geq (1 - \eta) \vee (1-\eta) = 1-\eta\) since we have both \(c < c_{\eta, 1}\) and \(c < c_{\eta, 2}\). Since \(c < c_\eta\) was arbitrary and \(\eta \in (0, 1)\) was arbitrary, we have the desired result.
    \end{proof}

    \begin{proof}[Proof of Proposition \ref{prop:lbound_pR_very_dense}]
        The same proof for Proposition \ref{prop:lbound_pR_dense} applies here.  
    \end{proof}

    \subsubsection{\texorpdfstring{Regime \(\frac{p}{R} \leq s \leq p\)}{Regime p/R <= s <= p}}
    \begin{lemma}\label{lemma:lbound_R_avg_problem}
        Suppose \(\frac{p}{R} \leq s \leq p\) and \(\gamma \in [0, 1)\). Let 
        \begin{equation*}
            \rho^2 = 
            \begin{cases}
                \left(1-\gamma+\gamma \frac{p}{R}\right) \frac{Rs}{p} \log\left(1 + \frac{p^2}{Rs^2}\right) &\text{if } \frac{p}{R} \leq s < \frac{p}{\sqrt{R}}, \\
                \left(1 - \gamma + \gamma \frac{p}{R}\right) \sqrt{R} &\text{if } s \geq \frac{p}{\sqrt{R}}. 
            \end{cases}
        \end{equation*}
        If \(\eta\in (0, 1)\), then there exists a constant \(c_\eta\) depending only on \(\eta\) such that for all \(0 < c < c_\eta\) we have \(\mathcal{R}(c\rho) \geq 1-\eta\). 
    \end{lemma}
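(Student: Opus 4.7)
The plan is to reduce the problem to a classical sparse signal detection problem in $R$ dimensions with scalar noise $\sigma_R^2 := 1-\gamma + \gamma \tfrac{p}{R}$, after which the lower bound argument of Proposition \ref{prop:problemI_lowerbound} transfers almost verbatim. The key observation is that if one restricts attention to signals that are constant on each group, the sufficient statistic becomes the $R$-vector of group averages $(\bar X_{B_1},\ldots,\bar X_{B_R})$, which are independent Gaussians with variance $\sigma_R^2$. The effective dimension of the reduced problem is $R$ and the effective sparsity $s'$ equals the number of groups on which the signal is nonzero; with this identification, the two cases in $\rho^2$ match exactly the Collier--Comminges--Tsybakov rate $\sigma_R^2\,s'\log(1 + R/(s')^2)$ (for $s' < \sqrt{R}$) and $\sigma_R^2\sqrt{R}$ (for $s' \geq \sqrt{R}$).

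Concretely, for the moderate sub-regime $\tfrac{p}{R} \leq s < \tfrac{p}{\sqrt{R}}$ I would take $s' := \lfloor Rs/p\rfloor$, and for the dense sub-regime $s \geq \tfrac{p}{\sqrt{R}}$ I would take $s' := \lfloor \sqrt{R}\rfloor$. In both cases set $\mu := c\rho/\sqrt{s'\cdot p/R}$ and define $\pi$ as the law of $\theta = \mu \sum_{k \in K}\mathbf{1}_{B_k}$, where $K \subset [R]$ is drawn uniformly from the size-$s'$ subsets. Then $\|\theta\|_0 = s'\cdot p/R \leq s$ and $\|\theta\|^2 = c^2\rho^2$, so $\pi$ is supported on $\Theta(p,s,c\rho)$.

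The $\chi^2$-divergence is then computed via Lemma \ref{lemma:Ingster_Suslina} and Lemma \ref{lemma:block_precision}. Because $\mathbf{1}_{B_k}$ is the top eigenvector of the $k$-th covariance block with eigenvalue $\sigma_R^2$, a direct calculation yields $\mathbf{1}_{B_k}^\intercal \Omega_k \mathbf{1}_{B_k} = (p/R)/\sigma_R^2$ (where $\Omega_k$ is as in the proof of Lemma \ref{lemma:lbound_pR_dense}), and consequently
\begin{equation*}
\langle \theta, \Omega \widetilde{\theta}\rangle \;=\; \frac{\mu^2(p/R)}{\sigma_R^2}\,|K\cap\widetilde{K}| \;=\; \frac{c^2\rho^2}{s'\sigma_R^2}\,|K\cap\widetilde{K}|,
\end{equation*}
where $\widetilde{\theta}$ is an independent draw from $\pi$. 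Substituting the target $\rho^2$ yields a prefactor of order $c^2\log(1+R/(s')^2)$ in the moderate case and of order $c^2$ in the dense case. Since $|K\cap\widetilde{K}|$ is hypergeometric with parameters $(R,s',s')$, Lemma \ref{lemma:hypergeometric} bounds its moment generating function by $(1 - s'/R + (s'/R)e^{t})^{s'}$, and the standard estimate used in the proof of Proposition \ref{prop:problemI_lowerbound} (applying $(1+x)^\delta - 1 \leq \delta x$ for $\delta \in (0,1)$) gives $\chi^2(P_{\pi,\gamma,R}\|P_{0,\gamma,R}) \leq e^{Kc^2} - 1 \leq 4\eta^2$ for a universal constant $K$ and $c$ chosen sufficiently small depending only on $\eta$. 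Lemma \ref{lemma:general_lower_bound} then concludes $\mathcal{R}(c\rho) \geq 1-\eta$.

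The only real obstacle is bookkeeping around the floor $s' = \lfloor\cdot\rfloor$: since $s \geq p/R$ forces $s' \geq 1$ and since $s' \asymp Rs/p$ or $\sqrt{R}$ in their respective sub-regimes, the floor is absorbed into absolute constants and leaves the rate $\rho^2$ unchanged. The substantive content is simply the reduction: once signals are forced to be group-constant, the covariance acts as the scalar $\sigma_R^2$ block by block, so the $p$-dimensional problem genuinely collapses to an $R$-dimensional sparse detection problem with i.i.d.\ noise, and the Collier--Comminges--Tsybakov prior applied to this reduced problem delivers the matching lower bound.
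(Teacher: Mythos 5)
Your proposal is correct and follows essentially the same route as the paper: the same group-constant prior on uniformly drawn size-$m=\lfloor Rs/p\rfloor$ (or $\lfloor\sqrt{R}\rfloor$) subsets of groups, the same $\chi^2$-divergence computation via Lemma~\ref{lemma:Ingster_Suslina} and Lemma~\ref{lemma:block_precision}, the same observation that the block precision acts as the scalar $1/(1-\gamma+\gamma p/R)$ on the $\mathbf{1}_{B_k}$ direction, and the same hypergeometric MGF bound from Lemma~\ref{lemma:hypergeometric}. The sufficiency-of-group-averages remark is a nice conceptual framing but it is not invoked as a lemma; the actual mechanism you use (computing $\langle\theta,\Omega\widetilde\theta\rangle = \frac{c^2\rho^2}{m\sigma_R^2}|K\cap\widetilde K|$ and controlling the moment generating function) is precisely the paper's.
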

    \begin{proof}
        Fix \(\eta \in (0, 1)\). Set \(c_\eta := \frac{1}{\sqrt{2}} \wedge \sqrt{\frac{1}{2}\log\left(1 + 4\eta^2\right)} \wedge \sqrt{\frac{1}{2}\log(1 + \log(1 + 4\eta^2))}\). Let \(0 < c < c_\eta\). We deal with the two cases separately. 
        
        \textbf{Case 1:} Suppose \(\frac{p}{R} \leq s < \frac{p}{\sqrt{R}}\). Note that \(\rho^2 = \left(1 - \gamma + \gamma \frac{p}{R}\right) \frac{Rs}{p}\log\left(1 + \frac{p^2}{Rs^2}\right)\). Let \(m := \left\lfloor \frac{s}{p/R} \right\rfloor\) and note that \(1 \leq m < \sqrt{R}\). Let \(\pi\) be the prior on \(\Theta(p, s, c\rho)\) in which a draw \(\mu \sim \pi\) is given by the following construction. Draw \(K \subset [R]\) uniformly from the collection of all size \(m\) subsets of \([R]\) and set 
        \begin{equation*}
            \mu := \sum_{k \in K} \frac{c\rho}{\sqrt{m\frac{p}{R}}} \mathbf{1}_{B_k}. 
        \end{equation*}
        Observe that \(||\mu||_0 = m \cdot \frac{p}{R} \leq s\) and \(||\mu||^2 = c^2\rho^2\) almost surely. Thus, \(\pi\) is indeed supported on \(\Theta(p, s, c\rho)\). Let \(P_{\pi, \gamma, R} = \int P_{\theta, \gamma, R} \, \pi(d\theta)\) denote the Gaussian mixture induced by \(\pi\). 

        By Lemma \ref{lemma:Ingster_Suslina} and Lemma \ref{lemma:block_precision} we have 
        \begin{equation*}
            \chi^2\left( P_{\pi, \gamma, R} || P_{0, \gamma, R} \right) = E\left[ \exp\left( \left\langle \theta, \left[\sum_{k=1}^{R} \left(\frac{1}{1-\gamma} I_{B_k} - \frac{\gamma}{(1-\gamma)\left(1 - \gamma + \gamma \frac{p}{R}\right)}\mathbf{1}_{B_k}\mathbf{1}_{B_k}^\intercal \right)  \right] \widetilde{\theta} \right\rangle \right) \right] - 1
        \end{equation*}
        where \(\theta, \widetilde{\theta} \overset{iid}{\sim} \pi\). Write \(\theta = \sum_{k \in K} \frac{c\rho}{\sqrt{m \frac{p}{R}}} \mathbf{1}_{B_k}\) and \(\widetilde{\theta} = \sum_{k \in \widetilde{K}} \frac{c\rho}{\sqrt{m\frac{p}{R}}} \mathbf{1}_{B_k}\) where \(K, \widetilde{K}\) are iid uniformly drawn subsets of \([R]\) of size \(m\). Then, 
        \begin{align*}
            &\left\langle \theta, \left[\sum_{k=1}^{R} \left(\frac{1}{1-\gamma} I_{B_k} - \frac{\gamma}{(1-\gamma)\left(1 - \gamma + \gamma \frac{p}{R}\right)}\mathbf{1}_{B_k}\mathbf{1}_{B_k}^\intercal \right)\right] \widetilde{\theta} \right\rangle \\
            &= \left\langle \theta, \left[\sum_{k=1}^{R} \frac{1}{1-\gamma}\left(I_{B_k} - \frac{R}{p}\mathbf{1}_{B_k}\mathbf{1}_{B_k}^\intercal\right) + \frac{1}{1-\gamma+\gamma \frac{p}{R}} \cdot \frac{R}{p}\mathbf{1}_{B_k}\mathbf{1}_{B_k}^\intercal \right] \widetilde{\theta} \right\rangle \\
            &= \frac{c^2\rho^2}{m \frac{p}{R}} \left\langle \sum_{k \in K} \mathbf{1}_{B_k}, \left[\sum_{k=1}^{R} \frac{1}{1-\gamma}\left(I_{B_k} - \frac{R}{p}\mathbf{1}_{B_k}\mathbf{1}_{B_k}^\intercal\right) + \frac{1}{1-\gamma+\gamma \frac{p}{R}} \cdot \frac{R}{p}\mathbf{1}_{B_k}\mathbf{1}_{B_k}^\intercal  \right] \sum_{k \in \widetilde{K}} \mathbf{1}_{B_k} \right\rangle \\
            &= \frac{c^2\rho^2}{m \frac{p}{R}} \left\langle \sum_{k \in K} \mathbf{1}_{B_k}, \frac{1}{1-\gamma+\gamma\frac{p}{R}} \sum_{k \in \widetilde{K}} \mathbf{1}_{B_k}\right\rangle \\
            &= \frac{c^2\rho^2}{m \left(1-\gamma + \gamma \frac{p}{R}\right)} |K \cap \widetilde{K}| \\
            &= \frac{c^2 \frac{s}{p/R} \log\left(1 + \frac{p^2}{Rs^2}\right)}{m} |K \cap \widetilde{K}| \\
            &\leq 2c^2 \log\left(1 + \frac{p^2}{Rs^2}\right) |K \cap \widetilde{K}|
        \end{align*}
        where the final inequality follows from the fact that \(m = \left\lfloor \frac{s}{p/R}\right\rfloor\) and so \(\frac{\frac{s}{p/R}}{m} \leq 2\). Now, consider that \(|K \cap \widetilde{K}|\) is distributed according to a hypergeometric distribution with probability mass function 
        \begin{equation*}
            P\{|K \cap \widetilde{K}| = k \} = \frac{ \binom{m}{k} \binom{R-m}{m-k}}{ \binom{R}{m}}
        \end{equation*}
        for \(0 \leq k \leq m\). Consequently, we have by Lemma \ref{lemma:hypergeometric}
        \begin{align*}
            \chi^2\left( P_{\pi, \gamma, R} || P_{0, \gamma, R} \right) &= E\left[ \exp\left(2c^2 \log\left(1 + \frac{p^2}{Rs^2}\right) |K \cap \widetilde{K}| \right) \right] - 1 \\
            &= E\left[ \exp\left(2c^2 \log\left(1 + \frac{R}{\left(\frac{s}{p/R}\right)^2}\right) |K \cap \widetilde{K}| \right) \right] - 1 \\
            &\leq E\left[ \exp\left(2c^2 \log\left(1 + \frac{R}{m^2}\right) |K \cap \widetilde{K}| \right) \right] - 1 \\
            &\leq \left(1 - \frac{m}{R} + \frac{m}{R}\exp\left( 2c^2\log\left(1 + \frac{R}{m^2} \right) \right) \right)^m - 1 \\
            &= \left( 1 - \frac{m}{R} + \frac{m}{R} \left(1 + \frac{R}{m^2}\right)^{2c^2}\right)^m - 1 \\
            &\leq \left(1 + \frac{2c^2}{m} \right)^m - 1 \\
            &\leq e^{2c^2} - 1
        \end{align*}
        where in the penultimate inequality we have used that \(2c^2 < 2c_\eta^2 \leq 1\) and the inequality \((1+x)^\delta - 1 \leq \delta x\) for all \(0 < \delta < 1\) and \(x > 0\). Therefore, it follows that \(\chi^2\left( P_{\pi, \gamma, R} || P_{0, \gamma, R} \right) \leq e^{2c^2} - 1 \leq e^{2c_\eta^2} - 1 \leq 4\eta^2\). An application of Lemma \ref{lemma:general_lower_bound} yields \(\mathcal{R}(c\rho) \geq 1 - \frac{1}{2}\sqrt{\chi^2(P_{\pi, \gamma, R} || P_{0, \gamma, R})} \geq 1-\eta\). Since \(0 < c < c_\eta\) was arbitrary and \(\eta \in (0, 1)\) was arbitrary, we have proved the desired result for the case \(\frac{p}{R} \leq s < \frac{p}{\sqrt{R}}\). 

        \textbf{Case 2:} Suppose \(s \geq \frac{p}{\sqrt{R}}\). Note that \(\rho^2 = \left(1 - \gamma + \gamma \frac{p}{R}\right)\sqrt{R}\). Without loss of generality, assume \(\frac{p}{\sqrt{R}}\) is an integer. Repeating exactly the argument presented in Case 1 except now replacing every instance of \(s\) with \(\frac{p}{\sqrt{R}}\) yields 
        \begin{align*}
            \chi^2(P_{\pi, \gamma, R} || P_{0, \gamma, R}) &\leq \left(1 - \frac{m}{R} + \frac{m}{R} \exp\left(\frac{c^2\rho^2}{m\left(1 - \gamma + \gamma \frac{p}{R}\right)}\right)\right)^m - 1\\
            &\leq \left(1 - \frac{m}{R} + \frac{m}{R} \exp\left( \frac{c^2 \sqrt{R}}{\lfloor \sqrt{R}\rfloor} \right) \right)^m - 1 \\
            &\leq \left(1 - \frac{m}{R} + \frac{m}{R}\exp\left(2c^2\right)\right)^m - 1 \\
            &= \left(1 + \frac{m}{R}\left(e^{2c^2} - 1\right)\right)^m - 1 \\
            &= \left( 1 + \frac{\lfloor \sqrt{R} \rfloor}{R} \left(e^{2c^2} - 1\right)\right)^{\lfloor \sqrt{R}\rfloor} - 1 \\
            &\leq \left(1 + \frac{1}{\sqrt{R}} \left(e^{2c^2} - 1\right)\right)^{\sqrt{R}} - 1 \\
            &\leq \exp\left(e^{2c^2} - 1\right) - 1 \\
            &\leq \exp\left(e^{2c_\eta^2} - 1\right) - 1 \\
            &\leq 4\eta^2. 
        \end{align*}
        An application of Lemma \ref{lemma:general_lower_bound} yields \(\mathcal{R}(c\rho) \geq 1 - \frac{1}{2}\sqrt{\chi^2(P_{\pi, \gamma, R} || P_{0, \gamma, R})} \geq 1-\eta\). Since \(0 < c < c_\eta\) was arbitrary and \(\eta \in (0, 1)\) was arbitrary, we have proved the desired result for the case \(s \geq \frac{p}{\sqrt{R}}\).
    \end{proof}

    Proposition \ref{prop:lbound_R_sparse} and Lemma \ref{lemma:lbound_R_avg_problem} are combined to give Proposition \ref{prop:R_avg_lbound}.
    
    \begin{proof}[Proof of Proposition \ref{prop:R_avg_lbound}]
        The same proof for Proposition \ref{prop:lbound_pR_dense} applies here with the choice \(c_{\eta, 1}\) and \(c_{\eta, 2}\) given by the constants at level \(\eta\) from Proposition \ref{prop:lbound_R_sparse} and Lemma \ref{lemma:lbound_R_avg_problem}. 
    \end{proof}

    \subsection{Proofs of upper bounds in Section \ref{section:multiple_random_effects}}
    Recall the notation which was set up in Section \ref{section:R_prelim}.

    \subsubsection{Proof of Proposition \ref{prop:theta_upsilon}}
    \begin{proof}[Proof of Proposition \ref{prop:theta_upsilon}]
        Fix \(\theta \in \Theta(p, s, \varepsilon)\). Consider that \(||\theta||^2 = \sum_{k=1}^{R} ||\theta_{B_k}||^2\). The Pythagorean identity yields 
        \begin{equation*}
            \sum_{k=1}^{R} ||\theta_{B_k}||^2 = \sum_{k=1}^{R} ||\theta_{B_k} - \bar{\theta}_{B_k}\mathbf{1}_{B_k}||^2 + \sum_{k=1}^{R} ||\bar{\theta}_{B_k}\mathbf{1}_{B_k}||^2.
        \end{equation*}
        Since \(||\theta||^2 \geq \varepsilon^2\), it follows that either 
        \begin{equation}\label{eqn:condition_1}
            \sum_{k=1}^{R} ||\theta_{B_k} - \bar{\theta}_{B_k}\mathbf{1}_{B_k}||^2 \geq \frac{\varepsilon^2}{2}
        \end{equation}
        or 
        \begin{equation}\label{eqn:condition_2}
            \sum_{k=1}^{R} ||\bar{\theta}_{B_k}\mathbf{1}_{B_k}||^2 \geq \frac{\varepsilon^2}{2}
        \end{equation}
        holds. 

        Let us first examine condition (\ref{eqn:condition_1}). Notice that we can write
        \begin{equation*}
            \sum_{k=1}^{R} ||\theta_{B_k} - \bar{\theta}_{B_k}\mathbf{1}_{B_k}||^2 = \sum_{\substack{1\leq k \leq R: \\ |B_k \cap \supp(\theta)| \leq \frac{p}{4R}}} ||\theta_{B_k} - \bar{\theta}_{B_k}\mathbf{1}_{B_k}||^2 + \sum_{\substack{1\leq k \leq R: \\ |B_k \cap \supp(\theta)| > \frac{p}{4R}}} ||\theta_{B_k} - \bar{\theta}_{B_k}\mathbf{1}_{B_k}||^2.
        \end{equation*}
        If (\ref{eqn:condition_1}) holds, then it must be that either 
        \begin{equation}\label{eqn:condition_1.1}
            \sum_{\substack{1\leq k \leq R: \\ |B_k \cap \supp(\theta)| \leq \frac{p}{4R}}} ||\theta_{B_k} - \bar{\theta}_{B_k}\mathbf{1}_{B_k}||^2 \geq \frac{\varepsilon^2}{4}
        \end{equation}
        or 
        \begin{equation}\label{eqn:condition_1.2}
            \sum_{\substack{1\leq k \leq R: \\ |B_k \cap \supp(\theta)| > \frac{p}{4R}}} ||\theta_{B_k} - \bar{\theta}_{B_k}\mathbf{1}_{B_k}||^2 \geq \frac{\varepsilon^2}{4}
        \end{equation}
        holds. If (\ref{eqn:condition_1.1}) holds, then Lemma \ref{lemma:v_supp_approximation} gives
        \begin{align*}
            \sum_{\substack{1\leq k \leq R: \\ |B_k \cap \supp(\theta)| \leq \frac{p}{4R}}} ||\theta_{B_k} - \bar{\theta}_{B_k}\mathbf{1}_{B_k \cap \supp(\theta)}||^2 &\geq \sum_{\substack{1\leq k \leq R: \\ |B_k \cap \supp(\theta)| \leq \frac{p}{4R}}} ||\theta_{B_k}||^2 \cdot \frac{\frac{p}{R} - 2 \cdot \frac{p}{4R}}{\frac{p}{R}} \\
            &\geq \frac{1}{2} \cdot \sum_{\substack{1\leq k \leq R: \\ |B_k \cap \supp(\theta)| \leq \frac{p}{4R}}} ||\theta_{B_k} - \bar{\theta}_{B_k}\mathbf{1}_{B_k}||^2 \\
            &\geq \frac{\varepsilon^2}{8}.
        \end{align*}
        In other words, it has been shown that if (\ref{eqn:condition_1}) and (\ref{eqn:condition_1.1}) hold, then \(\theta \in \Upsilon_{\mathcal{I}}(p, s, \varepsilon)\). 

        Now let us suppose (\ref{eqn:condition_1}) and (\ref{eqn:condition_1.2}) hold. Consider that we can write 
        \begin{align*}
            \sum_{\substack{1\leq k \leq R: \\ |B_k \cap \supp(\theta)| > \frac{p}{4R}}} ||\theta_{B_k} - \bar{\theta}_{B_k}\mathbf{1}_{B_k}||^2 &= \sum_{\substack{1\leq k \leq R: \\ |B_k \cap \supp(\theta)| > \frac{p}{4R}}} ||\theta_{B_k} - \bar{\theta}_{B_k}\mathbf{1}_{B_k \cap \supp(\theta)}||^2 + \sum_{\substack{1\leq k \leq R: \\ |B_k \cap \supp(\theta)| > \frac{p}{4R}}} ||-\bar{\theta}_{B_k}\mathbf{1}_{B_k \cap \supp(\theta)^c}||^2 \\
            &= \sum_{\substack{1\leq k \leq R: \\ |B_k \cap \supp(\theta)| > \frac{p}{4R}}} ||\theta_{B_k} - \bar{\theta}_{B_k}\mathbf{1}_{B_k \cap \supp(\theta)}||^2 + \sum_{\substack{1\leq k \leq R: \\ |B_k \cap \supp(\theta)| > \frac{p}{4R}}} \bar{\theta}_{B_k}^2 |B_k \cap \supp(\theta)^c|.
        \end{align*}
        Since (\ref{eqn:condition_1.2}) holds, it follows that either 
        \begin{equation*}
            \sum_{\substack{1\leq k \leq R: \\ |B_k \cap \supp(\theta)| > \frac{p}{4R}}} ||\theta_{B_k} - \bar{\theta}_{B_k}\mathbf{1}_{B_k \cap \supp(\theta)}||^2 \geq \frac{\varepsilon^2}{8}
        \end{equation*}
        or 
        \begin{equation*}
            \sum_{\substack{1\leq k \leq R: \\ |B_k \cap \supp(\theta)| > \frac{p}{4R}}} \bar{\theta}_{B_k}^2 |B_k \cap \supp(\theta)^c| \geq \frac{\varepsilon^2}{8}.
        \end{equation*}
        If the first condition of these holds, then it clearly follows \(\theta \in \Upsilon_{\mathcal{I}}(p, s, \varepsilon)\). If the second condition holds, then we have
        \begin{align*}
            \sum_{\substack{1\leq k \leq R: \\ |B_k \cap \supp(\theta)| > \frac{p}{4R}}} ||\bar{\theta}_{B_k}\mathbf{1}_{B_k}||^2 &= \sum_{\substack{1\leq k \leq R: \\ |B_k \cap \supp(\theta)| > \frac{p}{4R}}} \bar{\theta}_{B_k}^2 \frac{p}{R} \\
            &\geq \sum_{\substack{1\leq k \leq R: \\ |B_k \cap \supp(\theta)| > \frac{p}{4R}}} \bar{\theta}_{B_k}^2 |B_k \cap \supp(\theta)^c| \\
            &\geq \frac{\varepsilon^2}{8}
        \end{align*}
        i.e. we have \(\theta \in \Upsilon_{\mathcal{II}}(p, s, \varepsilon)\). In other words, if (\ref{eqn:condition_1}) and (\ref{eqn:condition_1.2}) holds, then \(\theta \in \Upsilon_{\mathcal{I}}(p, s, \varepsilon) \cup \Upsilon_{\mathcal{II}}(p, s, \varepsilon)\). To sum up, we have showed that if (\ref{eqn:condition_1}) holds, then \(\theta \in \Upsilon_{\mathcal{I}}(p, s, \varepsilon) \cup \Upsilon_{\mathcal{II}}(p, s, \varepsilon)\). 

        We now examine condition (\ref{eqn:condition_2}). Observe that we can write 
        \begin{equation*}
            \sum_{k=1}^{R} ||\bar{\theta}_{B_k}\mathbf{1}_{B_k}||^2 = \sum_{\substack{1 \leq k \leq R: \\ |B_k \cap \supp(\theta)| \leq \frac{p}{4R}}} ||\bar{\theta}_{B_k}\mathbf{1}_{B_k}||^2 + \sum_{\substack{1 \leq k \leq R: \\ |B_k \cap \supp(\theta)| > \frac{p}{4R}}} ||\bar{\theta}_{B_k}\mathbf{1}_{B_k}||^2. 
        \end{equation*}
        Consequently, if (\ref{eqn:condition_2}) holds then it must be that either 
        \begin{equation}\label{eqn:condition_2.1}
            \sum_{\substack{1 \leq k \leq R: \\ |B_k \cap \supp(\theta)| \leq \frac{p}{4R}}} ||\bar{\theta}_{B_k}\mathbf{1}_{B_k}||^2 \geq \frac{\varepsilon^2}{4}
        \end{equation}
        or 
        \begin{equation}\label{eqn:condition_2.2}
            \sum_{\substack{1 \leq k \leq R: \\ |B_k \cap \supp(\theta)| > \frac{p}{4R}}} ||\bar{\theta}_{B_k}\mathbf{1}_{B_k}||^2 \geq \frac{\varepsilon^2}{4}.
        \end{equation}
        If (\ref{eqn:condition_2.1}) holds, then Lemma \ref{lemma:v_orthog_approximation} implies 
        \begin{align*}
            \sum_{\substack{1 \leq k \leq R: \\ |B_k \cap \supp(\theta)| \leq \frac{p}{4R}}} ||\theta_{B_k} - \bar{\theta}_{B_k} \mathbf{1}_{B_k \cap \supp(\theta)}||^2 &\geq \sum_{\substack{1 \leq k \leq R: \\ |B_k \cap \supp(\theta)| \leq \frac{p}{4R}}} ||\theta_{B_k}||^2 \cdot \frac{\frac{p}{R} - 2 \cdot \frac{p}{4R}}{\frac{p}{R}} \\
            &= \frac{1}{2} \sum_{\substack{1 \leq k \leq R: \\ |B_k \cap \supp(\theta)| \leq \frac{p}{4R}}} ||\theta_{B_k}||^2 \\
            &\geq \frac{1}{2} \sum_{\substack{1 \leq k \leq R: \\ |B_k \cap \supp(\theta)| \leq \frac{p}{4R}}} ||\bar{\theta}_{B_k} \mathbf{1}_{B_k}||^2 \\
            &\geq \frac{\varepsilon^2}{8}
        \end{align*}
        and so it immediately follows that \(\theta \in \Upsilon_{\mathcal{I}}(p, s, \varepsilon)\). On the other hand, if (\ref{eqn:condition_2.2}) holds then we clearly have \(\theta \in \Upsilon_{\mathcal{II}}(p,s,\varepsilon)\). Thus, we have proved that if (\ref{eqn:condition_2}) holds then \(\theta \in \Upsilon_{\mathcal{I}}(p, s, \varepsilon) \cup \Upsilon_{\mathcal{II}}(p, s, \varepsilon)\). 
    \end{proof}

    \subsubsection{\texorpdfstring{Regime \(1 \leq s \leq \frac{p}{4R}\)}{Regime 1 <= s <= p/4R}}

    We first establish the sensitivity of the test \(\varphi_{t, r}\) given by (\ref{test:orthogonal_tsybakov}) to the space \(\Upsilon_{\mathcal{I}}(p, s, \varepsilon)\) in the regime \(s < \sqrt{p}\).
    \begin{lemma}\label{lemma:test_upsilon1_sparse}
        Suppose \(1 \leq s < \sqrt{p}\) and \(\gamma \in [0, 1)\). Let \(\psi_1^2\) be given by (\ref{rate:problemI}). If \(\eta \in (0, 1)\), then there exists a constant \(C_\eta > 0\) depending only on \(\eta\) such that for all \(C > C_\eta\) the testing procedure \(\varphi_{t^*, r^*}\) given by (\ref{test:orthogonal_tsybakov}) with \(t^* = \sqrt{2 \log\left(1 + \frac{p}{s^2}\right)}\) and \(r^* = \frac{C^2}{64}s\log\left(1 + \frac{p}{s^2}\right)\) satisfies 
        \begin{equation*}
            P_{0, \gamma, R}\{\varphi_{t^*, r^*} = 1\} + \sup_{\theta \in \Upsilon_{\mathcal{I}}(p, s, C\psi_1)} P_{\theta, \gamma, R}\{\varphi_{t^*, r^*} = 0\} \leq \eta
        \end{equation*}
        where \(\Upsilon_{\mathcal{I}}(p, s, C\psi_1)\) is given by (\ref{space:Upsilon1}).
    \end{lemma}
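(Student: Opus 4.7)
The plan is to reduce the sensitivity claim for $\varphi_{t^*,r^*}$ to a direct application of Proposition \ref{prop:supp_tsybakov}. The key observation is that the decorrelating transformation produces a clean Gaussian vector with identity covariance: concatenating the $\widetilde{X}_{B_k}$'s yields $\widetilde{X} \sim N(\mu, I_p)$ where $\mu \in \R^p$ has $i$th entry $(\theta_i - \bar{\theta}_{B(i)})/\sqrt{1-\gamma}$. Thus the statistic $Y_{t^*}$ in the definition (\ref{test:orthogonal_tsybakov}) coincides with the Collier--Comminges--Tsybakov statistic applied to a Gaussian vector with identity covariance in $\R^p$, which is exactly the setting of Proposition \ref{prop:supp_tsybakov}.

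The heart of the argument is to identify the sparse subvector of $\mu$ with large norm. Let $S := \supp(\theta)$, so $|S| \leq s$. I would compute, block by block, that
\begin{equation*}
    \|\theta_{B_k} - \bar{\theta}_{B_k}\mathbf{1}_{B_k \cap \supp(\theta)}\|^2 = \sum_{i \in S \cap B_k} (\theta_i - \bar{\theta}_{B_k})^2,
\end{equation*}
since the coordinates outside $S \cap B_k$ are zero by construction (either because $\theta_i = 0$ or because the indicator $\mathbf{1}_{B_k \cap S}$ vanishes there). Summing over $k$ and rewriting yields $\|\mu_S\|^2 = (1-\gamma)^{-1} \sum_{k=1}^R \|\theta_{B_k} - \bar{\theta}_{B_k}\mathbf{1}_{B_k \cap \supp(\theta)}\|^2$. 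For $\theta \in \Upsilon_{\mathcal{I}}(p, s, C\psi_1)$ this gives $\|\mu_S\|^2 \geq \frac{C^2\psi_1^2}{8(1-\gamma)} = \frac{C^2}{8} s\log(1 + p/s^2)$, so $\mu$ lies in the parameter space $\mathscr{M}(p, s, (C/\sqrt{8})\sqrt{s\log(1+p/s^2)})$ defined in (\ref{space:supp_space}).

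Now the hypotheses of Proposition \ref{prop:supp_tsybakov} are satisfied with $\sigma = 1$ and effective constant $C' := C/\sqrt{8}$: the sparsity condition $s < 4\sqrt{p}$ holds since $s < \sqrt{p}$, and the cutoff $r^* = \frac{C^2}{64} s\log(1+p/s^2) = \frac{(C')^2}{8} s\log(1+p/s^2)$ matches the one specified in Proposition \ref{prop:supp_tsybakov}. Setting $C_\eta := \sqrt{8}\,C_\eta^\star$ with $C_\eta^\star$ the constant from Proposition \ref{prop:supp_tsybakov}, for any $C > C_\eta$ one has $C' > C_\eta^\star$, so the proposition immediately bounds the sum of type I and type II errors by $\eta$. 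Since $C > C_\eta$ and $\eta \in (0,1)$ are arbitrary, this finishes the proof.

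There is no real obstacle in this argument; the only step that requires care is the block-by-block identity in the second paragraph, which explains how the geometric quantity defining $\Upsilon_{\mathcal{I}}$ is precisely the squared norm of the size-$s$ subvector of $\mu$ indexed by $\supp(\theta)$. Once this identification is made, the upper bound follows mechanically from the already-established Proposition \ref{prop:supp_tsybakov}.
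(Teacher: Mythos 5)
Your proposal is correct and follows exactly the same route as the paper's proof: concatenate the blocks to get a standard Gaussian vector, identify the norm of the subvector $\mu_{\supp(\theta)}$ with the geometric quantity in the definition of $\Upsilon_{\mathcal{I}}$, and then invoke Proposition \ref{prop:supp_tsybakov} with $\sigma = 1$ and rescaled constant $C' = C/\sqrt{8}$, taking $C_\eta = \sqrt{8}\,C_\eta^*$. The block-by-block identity and the check that $r^* = (C')^2 s\log(1+p/s^2)/8$ are exactly what the paper verifies.
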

    \begin{proof}
        Fix \(\eta \in (0, 1)\) and set \(C_\eta := \sqrt{8}C_\eta^*\) where \(C_\eta^*\) is given in Proposition \ref{prop:supp_tsybakov}. Let \(C > C_\eta\). Note that under the data-generating process \(P_{\theta, \gamma, R}\) we have 
        \begin{equation*}
            \widetilde{X}_{B_k} \sim N\left( \frac{\theta_{B_k} - \bar{\theta}_{B_k}\mathbf{1}_{B_k}}{\sqrt{1-\gamma}}, I_{\frac{p}{R}} \right)
        \end{equation*}
        and \(\{\widetilde{X}_{B_k}\}_{k=1}^{R}\) are independent. Let \(Y \in \R^p\) denote the vector by concatenating \(Y := (\widetilde{X}_{B_1},...,\widetilde{X}_{B_R})\). Let \(\mu \in \R^p\) denote the expectation of \(Y\). Note that \(Y \sim N(\mu, I_p)\) and \(\mu\) depends on \(\theta, \gamma,\) and \(R\). Further note 
        \begin{equation*}
            \sum_{k=1}^{R} \sum_{j=1}^{p/R} \left( (\widetilde{X}_{B_k})_j^2 - \alpha_{t^*} \right) \mathbf{1}_{\{|(\widetilde{X}_{B_k})_j| \geq t^*\}} = \sum_{i=1}^{p} (Y_i^2 - \alpha_{t^*})\mathbf{1}_{\{|Y_i| \geq t^*\}}.    
        \end{equation*}
        Now, consider that for \(\theta \in \Upsilon_{\mathcal{I}}(p, s, C\psi_1)\) we have \(|\supp(\theta)| \leq s\) and 
        \begin{equation*}
            ||\mu_{\supp(\theta)}||^2 = \sum_{k=1}^{R} \frac{||\theta_{B_k} - \bar{\theta}_{B_k}\mathbf{1}_{B_k \cap \supp(\theta)}||^2}{1-\gamma} \geq \frac{C^2}{8} s \log\left(1 + \frac{p}{s^2}\right). 
        \end{equation*}
        In particular, we have shown \(\theta \in \Upsilon_{\mathcal{I}}(p, s, C\psi_1)\) implies \(\mu \in \mathscr{M}\left(p, s, \frac{C}{\sqrt{8}}\sqrt{s \log\left(1 + \frac{p}{s^2}\right)}\right)\). Here, the latter parameter space is defined in the statement of Proposition \ref{prop:supp_tsybakov}. Consequently, since \(s < \sqrt{p}\), \(\frac{C}{\sqrt{8}} > \frac{C_\eta}{\sqrt{8}} \geq C_\eta^*\), and \(r^* = \frac{C^2/8}{8} s \log\left(1 + \frac{p}{s^2}\right)\), an application of Proposition \ref{prop:supp_tsybakov} yields
        \begin{align*}
            P_{0, \gamma, R} \left\{\varphi_{t^*, r^*} = 1 \right\} + \sup_{\theta \in \Upsilon_{\mathcal{I}}(p, s, C\psi_1)} P_{\theta, \gamma, R}\left\{ \varphi_{t^*, r^*} = 0 \right\} \leq \eta. 
        \end{align*}
        Since \(C > C_\eta\) was arbitrary and \(\eta \in (0, 1)\) was arbitrary, the desired result has been proved.
    \end{proof}

    We now establish the sensitivity of the test \(\varphi_{r}^{\chi^2}\) given by (\ref{test:block_orthogonal_chisquare}) to the space \(\Upsilon_{\mathcal{I}}(p , s, \varepsilon)\) when \(s \geq \sqrt{p}\). 
    \begin{lemma}\label{lemma:test_upsilon1_dense}
        Suppose \(s \geq \sqrt{p}\) and \(\gamma \in [0, 1)\). Let \(\psi_1^2\) be given by (\ref{rate:problemI}). If \(\eta \in (0, 1)\), then there exists a constant \(C_\eta > 0\) depending only on \(\eta\) such that for all \(C > C_\eta\) the testing procedure \(\varphi_{\frac{C^2}{16}}^{\chi^2}\) given by (\ref{test:block_orthogonal_chisquare}) satisfies 
        \begin{equation*}
            P_{0, \gamma, R}\left\{\varphi_{\frac{C^2}{16}}^{\chi^2} = 1 \right\} + \sup_{\theta \in \Upsilon_{\mathcal{I}}(p, s, C\psi_1)} P_{\theta, \gamma, R}\left\{\varphi_{\frac{C^2}{16}}^{\chi^2} = 0\right\} \leq \eta. 
        \end{equation*}
    \end{lemma}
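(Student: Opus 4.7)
The plan is to reduce the analysis to the familiar \(\chi^2\) calculation already used for Problem I, relying on the geometric fact embedded in the definition of \(\Upsilon_{\mathcal{I}}(p,s,\varepsilon)\) to get a quantitative lower bound on the noncentrality parameter. First I would observe that since the blocks \(\{\widetilde{X}_{B_k}\}_{k=1}^R\) are independent \(N\left(\frac{\theta_{B_k}-\bar{\theta}_{B_k}\mathbf{1}_{B_k}}{\sqrt{1-\gamma}}, I_{p/R}\right)\) under \(P_{\theta,\gamma,R}\), concatenating them gives a vector \(Y \in \R^p\) with \(Y \sim N(\mu, I_p)\), and the statistic of the test is exactly \(\|Y\|^2 \sim \chi^2_p(\|\mu\|^2)\) where
\[
\|\mu\|^2 = \frac{1}{1-\gamma} \sum_{k=1}^R \|\theta_{B_k}-\bar{\theta}_{B_k}\mathbf{1}_{B_k}\|^2.
\]

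Next, for \(\theta \in \Upsilon_{\mathcal{I}}(p,s,C\psi_1)\), I would translate the guarantee into a lower bound on \(\|\mu\|^2\). A direct Pythagorean decomposition on each block yields
\[
\|\theta_{B_k}-\bar{\theta}_{B_k}\mathbf{1}_{B_k}\|^2 = \|\theta_{B_k}-\bar{\theta}_{B_k}\mathbf{1}_{B_k\cap\supp(\theta)}\|^2 + |B_k\cap\supp(\theta)^c|\cdot\bar{\theta}_{B_k}^2 \geq \|\theta_{B_k}-\bar{\theta}_{B_k}\mathbf{1}_{B_k\cap\supp(\theta)}\|^2,
\]
so summing over \(k\) and invoking the definition (\ref{space:Upsilon1}) gives
\[
\|\mu\|^2 \geq \frac{C^2\psi_1^2}{8(1-\gamma)} = \frac{C^2 \sqrt{p}}{8}.
\]

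With the noncentrality controlled, I would close the argument by a straightforward Chebyshev bound, essentially repeating the calculation in Lemma \ref{lemma:problem1_chisquare}. Using \(E_{\theta,\gamma,R}\|Y\|^2 = p + \|\mu\|^2\) and \(\Var_{\theta,\gamma,R}\|Y\|^2 = 2p + 4\|\mu\|^2\), the type I error is at most \(2/(C^2/16)^2 = 512/C^4\); and since \(\frac{C^2}{16}\sqrt{p} \leq \|\mu\|^2/2\), the type II error is at most
\[
\frac{2p+4\|\mu\|^2}{(\|\mu\|^2/2)^2} \leq \frac{512}{C^4} + \frac{128}{C^2}
\]
after substituting the lower bound on \(\|\mu\|^2\). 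Taking \(C_\eta\) sufficiently large (depending only on \(\eta\)) to force the sum of these two bounds below \(\eta\) completes the proof.

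There is no real obstacle here; the heart of the result is the geometric step that passes from \(\|\theta_{B_k}-\bar{\theta}_{B_k}\mathbf{1}_{B_k\cap\supp(\theta)}\|^2\) to \(\|\theta_{B_k}-\bar{\theta}_{B_k}\mathbf{1}_{B_k}\|^2\), and this is immediate from the Pythagorean identity because the removed coordinates are precisely those outside the support of \(\theta\). The rest is bookkeeping with Chebyshev's inequality.
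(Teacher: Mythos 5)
Your proof is correct and follows essentially the same route as the paper: concatenate the blocks into $Y\sim N(\mu,I_p)$, observe $\|Y\|^2\sim\chi^2_p(\|\mu\|^2)$, derive the noncentrality lower bound $\|\mu\|^2\geq \tfrac{C^2}{8}\sqrt{p}$ from the definition of $\Upsilon_{\mathcal I}$, and finish with Chebyshev on each error. The one thing you spell out more explicitly than the paper is the Pythagorean step passing from $\|\theta_{B_k}-\bar\theta_{B_k}\mathbf 1_{B_k\cap\supp(\theta)}\|^2$ up to $\|\theta_{B_k}-\bar\theta_{B_k}\mathbf 1_{B_k}\|^2$ (the two pieces live on disjoint index sets, so dropping the off-support part only loses), which the paper uses implicitly when it asserts the noncentrality bound; your version also tightens the incidental constant in the type-II term from the paper's $256/C^2$ to $128/C^2$, which is immaterial. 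No gaps.
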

    \begin{proof}
        Fix \(\eta \in (0, 1)\) and set \(C_\eta := \left( \frac{512 \cdot 2}{\eta} \right)^{1/4} \vee \left(\frac{512 \cdot 4}{\eta}\right)^{1/4} \vee \left( \frac{256 \cdot 4}{\eta}\right)^{1/2}\). Let \(C > C_\eta\). Note that under the data-generating process \(P_{\theta, \gamma, R}\) we have 
        \begin{equation*}
            \widetilde{X}_{B_k} \sim N\left( \frac{\theta_{B_k} - \bar{\theta}_{B_k}\mathbf{1}_{B_k}}{\sqrt{1-\gamma}}, I_{\frac{p}{R}} \right)
        \end{equation*}
        and \(\{\widetilde{X}_{B_k}\}_{k=1}^{R}\) are independent. Let \(Y \in \R^p\) denote the vector by concatenating \(Y := (\widetilde{X}_{B_1},...,\widetilde{X}_{B_R})\). Let \(\mu \in \R^p\) denote the expectation of \(Y\). Note that \(Y \sim N(\mu, I_p)\) and \(\mu\) depends on \(\theta, \gamma,\) and \(R\). Further note 
        \begin{equation*}
            ||Y||^2 = \sum_{k=1}^{R} \sum_{j=1}^{p/R} (\widetilde{X}_{B_k})_j^2.    
        \end{equation*}
        Likewise, consider that \(||\mu||^2 = \sum_{k=1}^{R} \frac{||\theta_{B_k} - \bar{\theta}_{B_k}\mathbf{1}_{B_k}||^2}{1-\gamma}\). Note that \(||Y||^2 \sim \chi^2_{p}(||\mu||^2)\). Therefore \(E_{\theta, \gamma, R}(||Y||^2) = p + ||\mu||^2\) and \(\Var_{\theta, \gamma, R} (||Y||^2) = 2p + 4||\mu||^2\). Therefore, the type I error is bounded as Chebyshev's inequality yields
        \begin{align*}
            P_{0, \gamma, R}\left\{ \varphi_{\frac{C^2}{16}}^{\chi^2} = 1 \right\} = P_{0, \gamma, R} \left\{ ||Y||^2 > p + \frac{C^2}{16}\sqrt{p} \right\} \leq \frac{2p}{\frac{C^4}{16^2} p} \leq \frac{512}{C_\eta^4} \leq \frac{\eta}{2}.
        \end{align*}
        Turning our attention to the type II error, consider that for \(\theta \in \Upsilon_{\mathcal{I}}(p, s, C\psi_1)\), we have \(||\mu||^2 \geq \frac{C^2}{8} \sqrt{p}\). Therefore, 
        \begin{align*}
            \sup_{\theta \in \Upsilon_{\mathcal{I}}(p, s, C\psi_1)} P_{\theta, \gamma, R}\left\{ \varphi_{\frac{C^2}{16}}^{\chi^2} = 0 \right\} &= \sup_{\theta \in \Upsilon_{\mathcal{I}}(p, s, C\psi_1)} P_{\theta, \gamma, R} \left\{ ||Y||^2 \leq p + \frac{C^2}{16} \sqrt{p} \right\} \\
            &= \sup_{\theta \in \Upsilon_{\mathcal{I}}(p, s, C\psi_1)} P_{\theta, \gamma, R} \left\{ ||\mu||^2 - \frac{C^2}{16}\sqrt{p} \leq p + ||\mu||^2 - ||Y||^2 \right\} \\
            &\leq \sup_{\theta \in \Upsilon_{\mathcal{I}}(p, s, C\psi_1)} \frac{\Var_{\theta, \gamma, R}(||Y||^2)}{\left( E_{\theta, \gamma, R}(||Y||^2) - \frac{C^2}{16}\sqrt{p} \right)^2} \\
            &\leq \frac{2p}{\frac{C^4}{16^2} p} + \sup_{\theta \in \Upsilon_{\mathcal{I}}(p, s, C\psi_1)} \frac{4||\mu||^2}{\frac{1}{4} ||\mu||^4} \\
            &\leq \frac{512}{C^4} + \frac{16^2}{C^2\sqrt{p}} \\
            &\leq \frac{512}{C_\eta^4} + \frac{256}{C_\eta^2} \\
            &\leq \frac{\eta}{2}.
        \end{align*}
        Hence, we have that the sum of the type I and type II errors are bounded as 
        \begin{equation*}
            P_{0, \gamma, R}\left\{ \varphi_{\frac{C^2}{16}}^{\chi^2} = 1 \right\} + \sup_{\theta \in \Upsilon_{\mathcal{I}}(p, s, C\psi_1)} P_{\theta, \gamma, R}\left\{ \varphi_{\frac{C^2}{16}}^{\chi^2} = 0 \right\} \leq \eta.
        \end{equation*}
        Since \(C > C_\eta\) was arbitrary and \(\eta \in (0, 1)\) was arbitrary, the desired result has been proved.
    \end{proof}

    Lemmas \ref{lemma:test_upsilon1_sparse} and \ref{lemma:test_upsilon1_dense} are combined to prove Proposition \ref{prop:ubound_R_sparse}.
    \begin{proof}[Proof of Proposition \ref{prop:ubound_R_sparse}]
        Fix \(\eta \in (0, 1)\) and set \(C_\eta := C_{\eta, 1} \vee C_{\eta, 2}\) where \(C_{\eta, 1}, C_{\eta, 2}\) are the constants at level \(\eta\) from Lemmas \ref{lemma:test_upsilon1_sparse} and \ref{lemma:test_upsilon1_dense} respectively. Note that since \(s \leq \frac{p}{4R}\), we immediately have \(\Theta(p, s, \varepsilon) \subset \Upsilon_{\mathcal{I}}(p, s, \varepsilon)\) for all \(\varepsilon > 0\). Consequently,
        \begin{align*}
            P_{0, \gamma, R}\left\{ \varphi_1 = 1 \right\} + \sup_{\theta \in \Theta(p, s, C\psi_1)} P_{\theta, \gamma, R}\left\{ \varphi_1 = 0 \right\}
            \leq  P_{0, \gamma, R}\left\{ \varphi_1 = 1 \right\} + \sup_{\theta \in \Upsilon_{\mathcal{I}}(p, s, C\psi_1)} P_{\theta, \gamma, R}\left\{ \varphi_1 = 0 \right\} \leq \eta
        \end{align*}
        where the second inequality follows from an application of Lemmas \ref{lemma:test_upsilon1_sparse} and \ref{lemma:test_upsilon1_dense} since we have \(C > C_{\eta, 1}\) and \(C > C_{\eta, 2}\). As \(C > C_\eta\) was arbitrary and \(\eta \in (0, 1)\) was arbitrary, the desired result has been proved. 
    \end{proof}

    \subsubsection{\texorpdfstring{Regime \(\frac{p}{4R} < s \leq \frac{p}{R} - \sqrt{\frac{p}{R}\log(eR)}\)}{Regime p/4R < s <= p/R - sqrt(p log(eR)/R) }}

    \begin{lemma}\label{lemma:linear_scan}
        Suppose \(1 \leq s \leq \frac{p}{R}\) and \(\gamma \in [0, 1)\). Set \(\rho^2 = \left(1 - \gamma + \gamma \frac{p}{R} \right)\log(eR)\). If \(\eta \in (0, 1)\), then there exists a constant \(C_\eta > 0\) depending only on \(\eta\) such that for all \(C > C_\eta\), the testing procedure \(\varphi_{\frac{C^2}{64}\log(eR)}^{\mathbf{1}-\text{scan}}\) given by (\ref{test:linear_scan}) satisfies 
        \begin{equation*}
            P_{0, \gamma, R}\left\{ \varphi_{\frac{C^2}{64} \log(eR)}^{\mathbf{1}-\text{scan}} = 1 \right\} + \sup_{\theta \in \Upsilon_{\mathcal{II}}(p, s, C\rho)} P_{\theta, \gamma, R}\left\{\varphi_{\frac{C^2}{64}\log(eR)}^{\mathbf{1}-\text{scan}} = 0\right\} \leq \eta. 
        \end{equation*}
    \end{lemma}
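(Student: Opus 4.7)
The plan is to treat the type I and type II errors separately and exploit the independence of the observations across the $R$ blocks. Under $P_{0,\gamma,R}$, the linear statistics $\bigl\{\langle \sqrt{R/p}\,\mathbf{1}_{B_k}, X\rangle\bigr\}_{k=1}^{R}$ are mutually independent centered Gaussians with common variance $1-\gamma+\gamma \tfrac{p}{R}$, so after rescaling they are i.i.d.\ $N(0,1)$ random variables and their squares are i.i.d.\ $\chi^2_1$. A union bound combined with the Gaussian tail bound $P\{\chi^2_1 > 1+r\} \leq 2e^{-r/2}$ and the choice $r = \tfrac{C^2}{64}\log(eR)$ gives
\begin{equation*}
P_{0,\gamma,R}\bigl\{\varphi_{\frac{C^2}{64}\log(eR)}^{\mathbf{1}-\text{scan}} = 1\bigr\} \;\leq\; 2R\cdot(eR)^{-C^2/128},
\end{equation*}
which is below $\eta/2$ as soon as $C$ is large enough depending only on $\eta$.

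For the type II error, fix $\theta \in \Upsilon_{\mathcal{II}}(p,s,C\rho)$ and let $\mathcal{K} := \{k \in [R] : |B_k \cap \supp(\theta)| > \tfrac{p}{4R}\}$. Since $\|\theta\|_0 \leq s \leq \tfrac{p}{R}$, a simple cardinality count yields $|\mathcal{K}| \leq \tfrac{s}{p/(4R)} = \tfrac{4Rs}{p} \leq 4$. Combined with the defining inequality $\sum_{k \in \mathcal{K}} \|\bar{\theta}_{B_k}\mathbf{1}_{B_k}\|^2 \geq C^2\rho^2/8$, there must exist some $k^* \in \mathcal{K}$ with $\|\bar{\theta}_{B_{k^*}}\mathbf{1}_{B_{k^*}}\|^2 \geq C^2\rho^2/32$. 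For this block, the rescaled statistic $\langle \sqrt{R/p}\,\mathbf{1}_{B_{k^*}}, X\rangle^2 / (1-\gamma+\gamma\tfrac{p}{R})$ is distributed as $\chi^2_1(\lambda)$ with noncentrality
\begin{equation*}
\lambda \;=\; \frac{\|\bar{\theta}_{B_{k^*}}\mathbf{1}_{B_{k^*}}\|^2}{1-\gamma+\gamma\frac{p}{R}} \;\geq\; \frac{C^2\rho^2/32}{1-\gamma+\gamma\frac{p}{R}} \;=\; \frac{C^2\log(eR)}{32} \;=\; 2r.
\end{equation*}

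Because $\varphi = 0$ implies $\varphi_{k^*} = 0$, Chebyshev's inequality (using $\operatorname{Var}(\chi^2_1(\lambda)) = 2 + 4\lambda$ and the gap $\lambda - r \geq r \vee \lambda/2$) gives
\begin{equation*}
\sup_{\theta \in \Upsilon_{\mathcal{II}}(p,s,C\rho)} P_{\theta,\gamma,R}\bigl\{\varphi = 0\bigr\} \;\leq\; \frac{2 + 4\lambda}{(\lambda - r)^2} \;\leq\; \frac{2}{r^2} + \frac{16}{\lambda},
\end{equation*}
which is at most $\eta/2$ for $C$ sufficiently large. Summing the two risk bounds completes the argument. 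The only genuinely nontrivial step is the pigeonhole at the start of the type II analysis; both tail bounds are routine. No appeal to Proposition~\ref{prop:supp_tsybakov} is needed here since the scan is over one-dimensional linear statistics rather than sparse subvectors, which is precisely why $\varphi^{\mathbf{1}-\text{scan}}$ attains the $(1-\gamma+\gamma\tfrac{p}{R})\log(eR)$ rate regardless of the particular sparsity pattern within $\mathcal{K}$.
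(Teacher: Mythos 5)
Your proposal is correct and follows the same structure as the paper's proof: a union bound over blocks for the type I error, and for the type II error the pigeonhole observation that $|\mathcal{K}| \leq 4$ (from $s \leq p/R$) yields a single block $k^*$ carrying a constant fraction of the squared signal, followed by a Chebyshev bound on the noncentral $\chi^2_1$ statistic at that block. The only cosmetic difference is in the type I analysis, where you use the crude Gaussian tail bound $P\{\chi^2_1 > 1+r\} \leq 2e^{-r/2}$ rather than the paper's invocation of the Laurent--Massart inequality with a threshold of the form $1 + 2\sqrt{x} + 2x$; both are valid and give the same conclusion up to the choice of $C_\eta$.
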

    \begin{proof}
        Fix \(\eta \in (0, 1)\) and set \(C_\eta := \left(128 \cdot 4 \log\left(\frac{2}{\eta}\right) \right)^{1/2} \vee \left(128 \cdot 4 \right)^{1/2} \vee \left(\frac{64^2 \cdot 2 \cdot 4}{\eta} \right)^{1/4} \vee \left( \frac{32 \cdot 16 \cdot 4}{\eta}\right)^{1/2}\). Let \(C > C_\eta\). For ease of notation, set \(\sigma^2 := 1 - \gamma + \gamma \frac{p}{R}\). Notice that under the data-generating process \(P_{\theta, \gamma, R}\), we have for \(k \in [R]\) that
        \begin{equation*}
            \left\langle \sqrt{\frac{R}{p}} \mathbf{1}_{B_k}, X \right\rangle^2 \sim \sigma^2 \chi^2_1\left(\frac{||\bar{\theta}_{B_k}\mathbf{1}_{B_k}||^2}{\sigma^2}\right).   
        \end{equation*}
        Thus \(E_{\theta, \gamma, R}\left(\left\langle \sqrt{\frac{R}{p}} \mathbf{1}_{B_k}, X \right\rangle^2\right) = \sigma^2 + ||\bar{\theta}_{B_k}\mathbf{1}_{B_k}||^2\) and \(\Var_{\theta, \gamma, R}\left(\left\langle \sqrt{\frac{R}{p}} \mathbf{1}_{B_k}, X \right\rangle^2\right) = \sigma^2\left(2\sigma^2 + 4||\bar{\theta}_{B_k}\mathbf{1}_{B_k}||^2 \right)\). We now bound the type I error. Consider that 
        \begin{equation*}
            \sigma^2\left(2\sqrt{\log(eR)- \log\left(\frac{\eta}{2}\right)} + 2\left(\log(eR) - \log\left(\frac{\eta}{2}\right) \right)\right) \leq \frac{C^2}{64} \sigma^2 \log(eR)
        \end{equation*}
        since \(\log(eR) \geq 1\) and \(\frac{C^2}{128} > \frac{C_\eta^2}{128} \geq 4 \log\left(\frac{2}{\eta}\right) \vee 4\). Consequently, 
        \begin{align*}
            P_{0, \gamma, R}\left\{\varphi_{\frac{C^2}{64} \log(eR)}^{\mathbf{1}-\text{scan}} = 1  \right\} &\leq \sum_{k = 1}^{R} P_{\theta, \gamma, R}\left\{ \left\langle \sqrt{\frac{R}{p}} \mathbf{1}_{B_k}, X \right\rangle^2 > \sigma^2\left( 1 + \frac{C^2}{64}\log(eR)\right) \right\} \\
            &\leq \sum_{k = 1}^{R} P_{\theta, \gamma, R}\left\{ \left\langle \sqrt{\frac{R}{p}} \mathbf{1}_{B_k}, X \right\rangle^2 > \sigma^2\left( 1 + 2\sqrt{\log(eR)- \log\left(\frac{\eta}{2}\right)} + 2\left(\log(eR) - \log\left(\frac{\eta}{2}\right) \right)\right) \right\} \\
            &\leq R \exp\left(-\log(eR) + \log\left(\frac{\eta}{2}\right)\right)  \\
            &\leq \frac{\eta}{2}
        \end{align*}
        where we have used Lemma \ref{lemma:laurent_massart} to obtain the penultimate inequality. We now turn to bounding the type II error. For \(\theta \in \Upsilon_{\mathcal{II}}(p, s, C\rho)\), let \(\mathcal{K}(\theta) := \left\{k \subset [R] : |B_k \cap \supp(\theta)| > \frac{p}{4R}\right\}\). Note that \(\mathcal{K}(\theta) \neq \emptyset\) by definition of \(\Upsilon_{\mathcal{II}}\), and consider that \(|\mathcal{K}(\theta)| \leq \frac{s}{\frac{p}{4R}} \leq \frac{4Rs}{p} \leq 4\) since \(s \leq \frac{p}{R}\). Consequently, by triangle inequality there exists \(1 \leq k^*(\theta) \leq R\) such that \(||\bar{\theta}_{B_{k^*(\theta)}}\mathbf{1}_{B_{k^*(\theta)}}||^2 \geq \frac{C^2\rho^2}{32}\). Then the type II error is bounded as 
        \begin{align*}
            &\sup_{\theta \in \Upsilon_{\mathcal{II}}(p, s, C\rho)} P_{\theta, \gamma, R}\left\{\varphi_{\frac{C^2}{64}\log(eR)}^{\mathbf{1}-\text{scan}} = 0 \right\} \\
            &\leq \sup_{\theta \in \Upsilon_{\mathcal{II}}(p, s, C\rho)} P_{\theta, \gamma, R}\left\{\left\langle \sqrt{\frac{R}{p}} \mathbf{1}_{B_{k^*(\theta)}}, X \right\rangle^2 \leq \sigma^2\left( 1 + \frac{C^2}{64}\log(eR)\right)\right\} \\
            &= \sup_{\theta \in \Upsilon_{\mathcal{II}}(p, s, C\rho)} P_{\theta, \gamma, R}\left\{||\bar{\theta}_{B_{k^*(\theta)}}\mathbf{1}_{B_{k^*(\theta)}}||^2 - \frac{C^2}{64}\sigma^2 \log(eR) \leq \sigma^2 + ||\bar{\theta}_{B_{k^*(\theta)}}\mathbf{1}_{B_{k^*(\theta)}}||^2 - \left\langle \sqrt{\frac{R}{p}} \mathbf{1}_{B_{k^*(\theta)}}, X \right\rangle^2\right\} \\
            &\leq \sup_{\theta \in \Upsilon_{\mathcal{II}}(p, s, C\rho)} \frac{\Var_{\theta, \gamma, R}\left( \left\langle \sqrt{\frac{R}{p}} \mathbf{1}_{B_{k^*(\theta)}}, X \right\rangle^2 \right)}{\left(||\bar{\theta}_{B_{k^*(\theta)}}\mathbf{1}_{B_{k^*(\theta)}}||^2 - \frac{C^2}{64}\sigma^2\log(eR) \right)^2} \\
            &\leq \frac{2\sigma^4}{\frac{C^4}{64^2} \sigma^4 \log^2(eR)} + \sup_{\theta \in \Upsilon_{\mathcal{II}}(p, s, C\rho)} \frac{4\sigma^2 ||\bar{\theta}_{B_{k^*(\theta)}}\mathbf{1}_{B_{k^*(\theta)}}||^2}{\frac{1}{4}||\bar{\theta}_{B_{k^*(\theta)}}\mathbf{1}_{B_{k^*(\theta)}}||^4} \\
            &\leq \frac{64^2 \cdot 2}{C^4 \log^2(eR)} + \sup_{\theta \in \Upsilon_{\mathcal{II}}(p, s, C\rho)} \frac{16\sigma^2}{||\bar{\theta}_{B_{k^*(\theta)}}\mathbf{1}_{B_{k^*(\theta)}}||^2} \\
            &\leq \frac{64^2 \cdot 2}{C^4 \log^2(eR)} + \frac{32 \cdot 16}{C^2\log(eR)} \\
            &\leq \frac{64^2 \cdot 2}{C_\eta^4} + \frac{32 \cdot 16}{C_\eta^2} \\
            &\leq \frac{\eta}{2}. 
        \end{align*}
        Thus, the sum of the type I and type II errors is bounded as 
        \begin{equation*}
            P_{0, \gamma, R}\left\{ \varphi_{\frac{C^2}{64} \log(eR)}^{\mathbf{1}-\text{scan}} = 1 \right\} + \sup_{\theta \in \Upsilon_{\mathcal{II}}(p, s, C\rho)} P_{\theta, \gamma, R}\left\{\varphi_{\frac{C^2}{64}\log(eR)}^{\mathbf{1}-\text{scan}} = 0 \right\} \leq \eta.
        \end{equation*}
        Since \(C > C_\eta\) was arbitrary and \(\eta \in (0, 1)\) was arbitrary, the desired result has been proved. 
    \end{proof}

    \begin{lemma}\label{lemma:chisquare_scan}
        Suppose \(\frac{p}{4R} < s \leq \frac{p}{R} - \sqrt{\frac{p}{R}\log(eR)}\) and \(\gamma \in [0, 1)\). Set 
        \begin{equation*}
            \rho^2 := \frac{(1-\gamma)p}{p-Rs}\left(\sqrt{\frac{p}{R} \log(eR)} + \log(R) \right).    
        \end{equation*}
        If \(\eta \in (0, 1)\), then there exists a constant \(C_\eta > 0\) depending only on \(\eta\) such that for all \(C > C_\eta\) the testing procedure \(\varphi_{\frac{C^2}{128}\log(eR)}^{\chi^2-\text{scan}}\) given by (\ref{test:pR_dense_chisquare}) satisfies 
        \begin{equation*}
            P_{0, \gamma, R}\left\{\varphi_{\frac{C^2}{128} \log(eR)}^{\chi^2-\text{scan}} = 1 \right\} + \sup_{\theta \in \Upsilon_{\mathcal{II}}(p, s, C\rho)} P_{\theta, \gamma, R}\left\{\varphi_{\frac{C^2}{128}\log(eR)}^{\chi^2-\text{scan}} = 0 \right\} \leq \eta.
        \end{equation*}
    \end{lemma}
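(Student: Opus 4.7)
The plan is to mirror the structure of Lemma \ref{lemma:linear_scan} but with the central/noncentral $\chi^2$-tail bounds in place of the one-dimensional Gaussian quadratic bounds. Under $P_{0,\gamma,R}$ the random vectors $\widetilde{X}_{B_1},\ldots,\widetilde{X}_{B_R}$ are i.i.d.\ $N(0,I_{p/R})$, so $\|\widetilde{X}_{B_k}\|^2 \sim \chi^2_{p/R}$ independently, and the Laurent--Massart tail bound gives $P\{\|\widetilde{X}_{B_k}\|^2 > \tfrac{p}{R}+2\sqrt{(p/R)r}+2r\} \leq e^{-r}$. A union bound across the $R$ groups with $r = \tfrac{C^2}{128}\log(eR)$ produces type I error at most $R\cdot(eR)^{-C^2/128}$, which is $\leq \eta/2$ once $C_\eta$ is chosen large enough (e.g.\ $C_\eta \gtrsim \sqrt{\log(2/\eta)}$).

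For the type II error I would extract, exactly as in the proof of Lemma \ref{lemma:linear_scan}, a distinguished group index $k^\ast = k^\ast(\theta)$. Since $s < \tfrac{p}{R}$ the set $\mathcal{K}(\theta) := \{k : |B_k\cap\supp(\theta)| > \tfrac{p}{4R}\}$ has at most four elements, so the triangle inequality applied to the definition of $\Upsilon_{\mathcal{II}}(p,s,C\rho)$ yields a $k^\ast$ with $\|\bar{\theta}_{B_{k^\ast}}\mathbf{1}_{B_{k^\ast}}\|^2 \geq C^2\rho^2/32$. The critical step is to transfer this into a lower bound on the noncentrality parameter $\lambda_{k^\ast} := (1-\gamma)^{-1}\|\theta_{B_{k^\ast}}-\bar{\theta}_{B_{k^\ast}}\mathbf{1}_{B_{k^\ast}}\|^2$. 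Viewing $\theta_{B_{k^\ast}}$ as a vector in $\R^{p/R}$ of sparsity at most $s$, Corollary \ref{corollary:orthog_approximation} (applied in dimension $p/R$) gives
\begin{equation*}
\|\theta_{B_{k^\ast}}-\bar{\theta}_{B_{k^\ast}}\mathbf{1}_{B_{k^\ast}}\|^2 \;\geq\; \|\theta_{B_{k^\ast}}\|^2 \cdot \frac{p-Rs}{p} \;\geq\; \|\bar{\theta}_{B_{k^\ast}}\mathbf{1}_{B_{k^\ast}}\|^2 \cdot \frac{p-Rs}{p},
\end{equation*}
so after plugging in the definition of $\rho^2$,
\begin{equation*}
\lambda_{k^\ast} \;\geq\; \frac{C^2}{32}\left(\sqrt{\tfrac{p}{R}\log(eR)} + \log(R)\right).
\end{equation*}

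Under $P_{\theta,\gamma,R}$ the variable $\|\widetilde{X}_{B_{k^\ast}}\|^2 \sim \chi^2_{p/R}(\lambda_{k^\ast})$ has mean $\tfrac{p}{R}+\lambda_{k^\ast}$ and variance $2\tfrac{p}{R}+4\lambda_{k^\ast}$. The threshold is $\tfrac{p}{R} + 2\sqrt{(p/R)r}+2r$ with $r=\tfrac{C^2}{128}\log(eR)$; since $2\sqrt{(p/R)r}+2r \asymp \tfrac{C}{\sqrt{128}}\sqrt{(p/R)\log(eR)} + \tfrac{C^2}{64}\log(eR)$ and our lower bound on $\lambda_{k^\ast}$ scales as $C^2$ times the same quantities, for $C$ sufficiently large (depending only on $\eta$) we have $\lambda_{k^\ast} - (2\sqrt{(p/R)r}+2r) \geq \lambda_{k^\ast}/2$. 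Chebyshev's inequality then yields
\begin{equation*}
P_{\theta,\gamma,R}\!\left\{\|\widetilde{X}_{B_{k^\ast}}\|^2 \leq \tfrac{p}{R}+2\sqrt{(p/R)r}+2r\right\} \;\leq\; \frac{2\tfrac{p}{R}+4\lambda_{k^\ast}}{(\lambda_{k^\ast}/2)^2} \;\leq\; \frac{8(p/R)}{\lambda_{k^\ast}^2} + \frac{16}{\lambda_{k^\ast}},
\end{equation*}
and plugging in the lower bound on $\lambda_{k^\ast}$ makes both terms $\leq \eta/2$ for $C_\eta$ large.

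The main obstacle, and really the only subtle point, is verifying that the two summands $\sqrt{(p/R)\log(eR)}$ and $\log(R)$ inside $\rho^2$ simultaneously dominate the two summands $\sqrt{(p/R)r}$ and $r$ of the threshold, with the right multiplicative slack so that Chebyshev closes. This is a direct numeric check, but it is the place where the exact form of $\rho^2$ (as opposed to a simpler one-term expression) is essential: the $\log(R)$ piece is needed precisely to absorb the $r=\Theta(\log(eR))$ additive part of the $\chi^2$-threshold after the union bound, while the $\sqrt{(p/R)\log(eR)}$ piece is needed to dominate the $\sqrt{(p/R)r}$ part. Once this calibration is made the bound $\eta/2+\eta/2 \leq \eta$ follows, and since $C>C_\eta$ and $\eta\in(0,1)$ were arbitrary the proof is complete.
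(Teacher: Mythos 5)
Your proposal follows essentially the same route as the paper's proof: union bound plus Laurent--Massart for the type I error, extraction of a distinguished group $k^\ast$ via the $|\mathcal{K}(\theta)| \leq 4$ argument, Corollary~\ref{corollary:orthog_approximation} applied within the group of dimension $p/R$ to pass from $\|\bar{\theta}_{B_{k^\ast}}\mathbf{1}_{B_{k^\ast}}\|^2$ to the noncentrality parameter $\lambda_{k^\ast}$, and then Chebyshev. You have also correctly pinpointed the one place that requires a genuine calculation: the additive piece of the test threshold is $\Theta(C^2\log(eR))$ while the matching piece of $\rho^2$ contributes $\Theta(C^2\log(R))$, so one must use the fact that $\sqrt{\tfrac{p}{R}\log(eR)} \geq 1$ (available because $\frac{p}{R} - s \geq \sqrt{\tfrac{p}{R}\log(eR)} \geq 1$ in this sparsity regime) to absorb the extra constant $\log(eR) - \log(R) = 1$ into the $\sqrt{\cdot}$ term, at the cost of a slightly larger $C_\eta$. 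This is the same fix implicit in the paper's chain of inequalities, which quietly swaps $\log(eR)$ for $\log(R)$ in the threshold display; making that substitution explicit, as you flag but do not fully write out, is the only thing your sketch leaves to the reader.
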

    \begin{proof}
        Fix \(\eta \in (0,1)\) and set 
        \begin{equation*}
            C_\eta := \sqrt{128 \left(1 + \log\left(\frac{2}{\eta}\right)\right)} \vee \left( \frac{64^2 \cdot 2 \cdot 4}{\eta} \right)^{1/4} \vee \left(\frac{32 \cdot 16 \cdot 4}{\eta} \right)^{1/2}.
        \end{equation*}
        Let \(C > C_\eta\). We first bound the type I error. An application of union bound and Lemma \ref{lemma:laurent_massart} yields 
        \begin{align*}
            P_{0, \gamma, R}\left\{\varphi_{\frac{C^2}{128}\log(eR)}^{\chi^2-\text{scan}} = 1 \right\} &\leq \sum_{k=1}^{R} P_{0, \gamma, R}\left\{||\widetilde{X}_{B_k}||^2 > \frac{p}{R} + \frac{2C}{\sqrt{128}}\sqrt{\frac{p}{R}\log(eR)} + \frac{C^2}{64}\log(eR) \right\} \\
            &= R \cdot P\left\{ \chi^2_{\frac{p}{R}} > \frac{p}{R} + \frac{2C}{\sqrt{128}}\sqrt{\frac{p}{R}\log(eR)} + \frac{C^2}{64}\log(eR)\right\}\\
            &\leq Re^{-\frac{C^2}{128}\log(eR)} \\
            &\leq e^{\left(1-\frac{C^2}{128}\right)\log(eR)} \\
            &\leq e^{1 - \frac{C^2}{128}} \\
            &\leq \frac{\eta}{2}
        \end{align*}
        where the penultimate inequality uses the fact that \(\frac{C^2}{128} > \frac{C_\eta^2}{128} > 1\) and the final inequality uses \(C^2 > C_\eta^2 \geq 128\left(\log\left(\frac{2}{\eta}\right) + 1\right)\). The type I error is thus bounded. We now turn our attention to the type II error. For \(\theta \in \Upsilon_{\mathcal{II}}(p, s, C\rho)\), let \(\mathcal{K}(\theta) := \left\{k \subset [R] : |B_k \cap \supp(\theta)| > \frac{p}{4R}\right\}\). Note that \(\mathcal{K}(\theta) \neq \emptyset\) by definition of \(\Upsilon_{\mathcal{II}}\), and consider that \(|\mathcal{K}(\theta)| \leq \frac{s}{\frac{p}{4R}} \leq \frac{4Rs}{p} \leq 4\) since \(s < \frac{p}{R}\). Consequently, by triangle inequality there exists \(1 \leq k^*(\theta) \leq R\) such that \(||\bar{\theta}_{B_{k^*(\theta)}}\mathbf{1}_{B_{k^*(\theta)}}||^2 \geq \frac{C^2\rho^2}{32}\). Consider then that the type II error is bounded as 
        \begin{align*}
            &\sup_{\theta \in \Upsilon_{\mathcal{II}}(p, s, C\rho)} P_{\theta, \gamma, R}\left\{ \varphi_{\frac{C^2}{128}\log(eR)}^{\chi^2-\text{scan}} = 0 \right\} \\
            &\leq \sup_{\theta \in \Upsilon_{\mathcal{II}}(p, s, C\rho)} P_{\theta, \gamma, R}\left\{ ||\widetilde{X}_{B_{k^*(\theta)}}||^2 \leq \frac{p}{R} + \frac{2C}{\sqrt{128}}\sqrt{\frac{p}{R} \log(eR)} + \frac{C^2}{64} \log(eR) \right\}.
        \end{align*}
        Consider that under the data-generating process \(P_{\theta, \gamma, R}\), we have \(||\widetilde{X}_{B_{k^*(\theta)}}||^2 \sim \chi^2_{p/R}\left(\frac{||\theta_{B_{k^*(\theta)}} - \bar{\theta}_{B_{k^*(\theta)}} \mathbf{1}_{B_{k^*(\theta)}}||^2}{1-\gamma} \right)\). Consequently, 
        \begin{align*}
            E_{\theta, \gamma, R}\left(||\widetilde{X}_{B_{k^*(\theta)}}||^2\right) &= \frac{p}{R} + \frac{||\theta_{B_{k^*(\theta)}} - \bar{\theta}_{B_{k^*(\theta)}} \mathbf{1}_{B_{k^*(\theta)}}||^2}{1-\gamma}, \\
            \Var_{\theta, \gamma, R}\left(||\widetilde{X}_{B_{k^*(\theta)}}||^2\right) &= 2\frac{p}{R} + 4\frac{||\theta_{B_{k^*(\theta)}} - \bar{\theta}_{B_{k^*(\theta)}} \mathbf{1}_{B_{k^*(\theta)}}||^2}{1-\gamma}. 
        \end{align*}
        Since \(||\bar{\theta}_{B_{k^*(\theta)}}\mathbf{1}_{B_{k^*(\theta)}}||^2 \geq \frac{C^2\rho^2}{32}\) and \(||\theta_{B_{k^*(\theta)}}||_0 \leq s\), we have by Corollary \ref{corollary:orthog_approximation}
        \begin{align*}
            \frac{||\theta_{B_{k^*(\theta)}} - \bar{\theta}_{B_{k^*(\theta)}} \mathbf{1}_{B_{k^*}(\theta)}||^2}{1-\gamma} \geq \frac{||\bar{\theta}_{B_{k^*(\theta)}}\mathbf{1}_{B_{k^*(\theta)}}||^2}{1-\gamma} \cdot \frac{\frac{p}{R} - s}{\frac{p}{R}} \geq \frac{C^2 \rho^2}{32(1-\gamma)} \cdot \frac{\frac{p}{R} - s}{\frac{p}{R}}.
        \end{align*}
        Plugging \(\rho^2 = \frac{(1-\gamma)\frac{p}{R}}{\frac{p}{R}-s}\left( \sqrt{\frac{p}{R}\log(eR)} + \log(R)\right)\) into the bound yields
        \begin{equation}\label{eqn:chisquare_scan_expectation}
            \frac{||\theta_{B_{k^*(\theta)}} - \bar{\theta}_{B_{k^*(\theta)}} \mathbf{1}_{B_{k^*(\theta)}}||^2}{1-\gamma} \geq \frac{C^2}{32} \left(\sqrt{\frac{p}{R}\log(eR)} + \log(R) \right). 
        \end{equation}

        Therefore, using that \(C > C_\eta \geq \sqrt{128}\) we have 
        \begin{align*}
            &\sup_{\theta \in \Upsilon_{\mathcal{II}}(p, s, C\rho)} P_{\theta, \gamma, R}\left\{ ||\widetilde{X}_{B_{k^*(\theta)}}||^2 \leq \frac{p}{R} + \frac{2C}{\sqrt{128}}\sqrt{\frac{p}{R} \log(eR)} + \frac{C^2}{64}\log(R) \right\} \\
            &\leq \sup_{\theta \in \Upsilon_{\mathcal{II}}(p, s, C\rho)} P_{\theta, \gamma, R}\left\{ ||\widetilde{X}_{B_{k^*(\theta)}}||^2 \leq \frac{p}{R} + \frac{C^2}{64}\left(\sqrt{\frac{p}{R} \log(eR)} + \log(R)\right) \right\} \\
            &= \sup_{\theta \in \Upsilon_{\mathcal{II}}(p, s, C\rho)} P_{\theta, \gamma, R}\left\{ \frac{||\theta_{B_{k^*(\theta)}} - \bar{\theta}_{B_{k^*(\theta)}} \mathbf{1}_{B_{k^*(\theta)}}||^2}{1-\gamma} - \frac{C^2}{64}\left(\sqrt{\frac{p}{R} \log(eR)} + \log(R)\right) \right. \\
            &\;\;\;\;\;\; \left. \leq \frac{p}{R} + \frac{||\theta_{B_{k^*(\theta)}} - \bar{\theta}_{B_{k^*(\theta)}} \mathbf{1}_{B_{k^*(\theta)}}||^2}{1-\gamma} - ||\widetilde{X}_{B_{k^*(\theta)}}||^2 \right\} \\
            &\leq \sup_{\theta \in \Upsilon_{\mathcal{II}}(p, s, C\rho)} \frac{\Var_{\theta, \gamma, R}\left(||\widetilde{X}_{B_{k^*(\theta)}}||^2\right)}{\left(\frac{||\theta_{B_{k^*(\theta)}} - \bar{\theta}_{B_{k^*(\theta)}} \mathbf{1}_{B_{k^*(\theta)}}||^2}{1-\gamma} - \frac{C^2}{64}\left(\sqrt{\frac{p}{R} \log(eR)} + \log(R)\right) \right)^2}
        \end{align*}
        Now applying the bound (\ref{eqn:chisquare_scan_expectation}) enables us to continue the calculation
        \begin{align*}
            &\sup_{\theta \in \Upsilon_{\mathcal{II}}(p, s, C\rho)} \frac{\Var_{\theta, \gamma, R}\left(||\widetilde{X}_{B_{k^*(\theta)}}||^2\right)}{\left(\frac{||\theta_{B_{k^*(\theta)}} - \bar{\theta}_{B_{k^*(\theta)}} \mathbf{1}_{B_{k^*(\theta)}}||^2}{1-\gamma} - \frac{C^2}{64}\left(\sqrt{\frac{p}{R} \log(eR)} + \log(R)\right) \right)^2} \\
            &\leq \frac{2\frac{p}{R}}{\frac{C^4}{64^2} \left(\sqrt{\frac{p}{R} \log(eR)} + \log(R)\right)^2} + \sup_{\theta \in \Upsilon_{\mathcal{II}}(p, s, C\rho)} \frac{4\frac{||\theta_{B_{k^*(\theta)}} - \bar{\theta}_{B_{k^*(\theta)}} \mathbf{1}_{B_{k^*(\theta)}}||^2}{1-\gamma}}{\frac{1}{4}\left(\frac{||\theta_{B_{k^*(\theta)}} - \bar{\theta}_{B_{k^*(\theta)}} \mathbf{1}_{B_{k^*(\theta)}}||^2}{1-\gamma}\right)^2} \\
            &\leq \frac{64^2 \cdot 2}{C_\eta^4} + \sup_{\theta \in \Upsilon_{\mathcal{II}}(p, s, C\rho)} \frac{16}{\frac{||\theta_{B_{k^*(\theta)}} - \bar{\theta}_{B_{k^*(\theta)}} \mathbf{1}_{B_{k^*(\theta)}}||^2}{1-\gamma}} \\
            &\leq \frac{64^2 \cdot 2}{C_\eta^4} + \frac{32 \cdot 16}{C^2 \left(\sqrt{\frac{p}{R} \log(eR)} + \log(R)\right)} \\
            &\leq \frac{64^2 \cdot 2}{C_\eta^4} + \frac{32 \cdot 16}{C_\eta^2} \\
            &\leq \frac{\eta}{2}
        \end{align*}
        where we have applied (\ref{eqn:chisquare_scan_expectation}) again to obtain the third-to-last inequality. Hence, we have shown that the sum of the type I and type II errors is bounded as 
        \begin{equation*}
            P_{0, \gamma, R}\left\{\varphi_{\frac{C^2}{128}\log(eR)}^{\chi^2-\text{scan}} = 1 \right\} + \sup_{\theta \in \Upsilon_{\mathcal{II}}(p, s, C\rho)} P_{\theta, \gamma, R}\left\{ \varphi_{\frac{C^2}{128}\log(eR)}^{\chi^2-\text{scan}} = 0 \right\} \leq \eta. 
        \end{equation*}
        Since \(C > C_\eta\) was arbitrary and \(\eta \in (0, 1)\) was arbitrary, the desired result has been proved. 
    \end{proof}
    
    \begin{proof}[Proof of Proposition \ref{prop:ubound_pR_dense}]
        Fix \(\eta \in (0, 1)\) and set \(C_{\eta} := C_{\eta, 1} \vee C_{\eta, 2} \vee C_{\eta, 3} \vee C_{\eta, 4}\) where \(C_{\eta, 1}, C_{\eta, 2}, C_{\eta, 3},\) and \(C_{\eta, 4}\) are the constants at level \(\eta/2\) from Lemmas \ref{lemma:test_upsilon1_sparse}, \ref{lemma:test_upsilon1_dense}, \ref{lemma:chisquare_scan}, and \ref{lemma:linear_scan} respectively. By Proposition \ref{prop:theta_upsilon}, we have 
        \begin{equation*}
            \Theta(p, s, C(\psi_1 \vee \upsilon)) \subset \Upsilon_{\mathcal{I}}(p, s, C(\psi_1 \vee \upsilon)) \cup \Upsilon_{\mathcal{II}}(p, s, C(\psi_1 \vee \upsilon)) \subset \Upsilon_{\mathcal{I}}(p, s, C\psi_1) \cup \Upsilon_{\mathcal{II}}(p, s, C\upsilon). 
        \end{equation*}
        Therefore, 
        \begin{align*}
            &P_{0, \gamma, R}\{\varphi^* = 1\} + \sup_{\theta \in \Theta(p, s, C(\psi_1 \vee \upsilon))} P_{\theta, \gamma, R}\{\varphi^* = 0\} \\
            &\leq P_{0, \gamma, R}\{\varphi^* = 1\} + \sup_{\theta \in \Upsilon_{\mathcal{I}}(p, s, C\psi_1) \cup \Upsilon_{\mathcal{II}}(p, s, C\upsilon)} P_{\theta, \gamma, R}\{\varphi_1 \vee \varphi_2 = 0\} \\
            &= P_{0, \gamma, R}\{\varphi^* = 1\} + \left[\sup_{\theta \in \Upsilon_{\mathcal{I}}(p, s, C\psi_1)} P_{\theta, \gamma, R}\{\varphi_1 \vee \varphi_2 = 0\}\right] + \left[\sup_{\theta \in \Upsilon_{\mathcal{II}}(p, s, C\upsilon)} P_{\theta, \gamma, R}\{\varphi_1 \vee \varphi_2 = 0\}\right] \\
            &\leq \left[P_{0, \gamma, R}\{\varphi_1 = 1\} +  \sup_{\theta \in \Upsilon_{\mathcal{I}}(p, s, C\psi_1)} P_{\theta, \gamma, R}\{\varphi_1 = 0\} \right] + \left[P_{0, \gamma, R}\{\varphi_2 = 1\} + \sup_{\theta \in \Upsilon_{\mathcal{II}}(p, s, C\upsilon)} P_{\theta, \gamma, R}\{\varphi_2 = 0\}\right] \\
            &\leq \eta.
        \end{align*}
        where the final inequality follows from \(C > C_\eta\) combined with Lemmas \ref{lemma:test_upsilon1_sparse}, \ref{lemma:test_upsilon1_dense}, \ref{lemma:chisquare_scan}, and \ref{lemma:linear_scan}. Since \(C > C_\eta\) was arbitrary and \(\eta \in (0, 1)\) was arbitrary, the desired result has been proved. 
    \end{proof}

    \subsubsection{\texorpdfstring{Regime \(\frac{p}{R} - \sqrt{\frac{p}{R}\log(eR)} < s < \frac{p}{R}\)}{Regime p/R - sqrt(p log(eR)/R) < s < p/R}}

    We begin by establishing the sensitivity of \(\varphi_{t, r}^{\text{scan}}\) given by (\ref{test:pR_very_dense_tsybakov}) to \(\Upsilon_{\mathcal{II}}(p, s, \varepsilon)\).

    \begin{lemma}\label{lemma:tsybakov_scan}
        Suppose \(\frac{p}{R} - \sqrt{\frac{p}{R}\log(eR)} < s < \frac{p}{R}\) and \(\gamma \in [0, 1)\). Set
        \begin{equation*}
            \rho^2 = \frac{(1-\gamma)p}{p-Rs} \left(\left(\frac{p}{R} - s\right) \log\left(1 + \frac{Rp \log(eR)}{(p-Rs)^2}\right) + \log(R)\right).
        \end{equation*}
        If \(\eta \in (0, 1)\), then there exists a constant \(C_\eta > 0\) depending only on \(\eta\) such that for all \(C > C_\eta\), the testing procedure \(\varphi_{\widetilde{t}, \widetilde{r}}^{\text{scan}}\) given by (\ref{test:pR_very_dense_tsybakov}) with \(\widetilde{t} = \sqrt{2 \log\left( 1 + \frac{Rp \log(eR)}{(p-Rs)^2}\right)}\) and \(\widetilde{r} = \frac{C^2}{64} \left(\left(\frac{p}{R}-s\right) \log\left(1 + \frac{Rp \log(eR)}{(p-Rs)^2}\right) + \log(R)\right)\) satisfies 
        \begin{equation*}
            P_{0, \gamma, R} \left\{ \varphi_{\widetilde{t}, \widetilde{r}}^{\text{scan}} = 1 \right\} + \sup_{\theta \in \Upsilon_{\mathcal{II}}(p, s, C\rho)} P_{0, \gamma, R} \left\{ \varphi_{\widetilde{t}, \widetilde{r}}^{\text{scan}} = 0 \right\} \leq \eta.
        \end{equation*}
    \end{lemma}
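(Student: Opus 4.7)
The plan is to argue the type~I and type~II error bounds separately, both leveraging the block-wise independence of the transformed data $\{\widetilde{X}_{B_k}\}_{k \in [R]}$ and the fact that within each block we have $\widetilde{X}_{B_k} \sim N\!\left(\frac{\theta_{B_k} - \bar{\theta}_{B_k}\mathbf{1}_{B_k}}{\sqrt{1-\gamma}}, I_{p/R}\right)$, so the per-block statistic $Y^{(k)}_{\widetilde{t}}$ is exactly the Collier--Comminges--Tsybakov statistic in ambient dimension $p/R$.

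For the type~I error, I would apply a union bound over the $R$ groups and then, on each group, invoke the single-block tail estimate used in the proof of Proposition~\ref{prop:supp_tsybakov} (the lemma referenced there as \texttt{tsybakov\_typeI\_error}). The tail estimate says $P_{0,\gamma,R}\{Y_{\widetilde{t}}^{(k)} > 9\sqrt{(p/R)\,e^{-\widetilde{t}^2/2}\delta} + \delta\} \leq e^{-\delta/144}$. Choosing $\delta \asymp C^2(\log R + \log(2/\eta))$ and using $e^{-\widetilde{t}^2/2} = \bigl(1 + \tfrac{Rp\log(eR)}{(p-Rs)^2}\bigr)^{-1}$, the extra $\log(eR)$ inside the logarithm in $\widetilde{t}$ exactly compensates the factor $R$ introduced by the union bound, while the additive $\log(R)$ term in $\widetilde{r}$ absorbs the deterministic part of the tail bound. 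Thus the total type~I probability is at most $\eta/2$ once $C$ is large enough.

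For the type~II error, I would follow the block-selection trick from the proofs of Lemmas~\ref{lemma:linear_scan} and~\ref{lemma:chisquare_scan}. Given $\theta \in \Upsilon_{\mathcal{II}}(p,s,C\rho)$, at most $4$ groups can have $|B_k \cap \mathrm{supp}(\theta)| > p/(4R)$ (since $s < p/R$), so by triangle inequality there exists $k^*(\theta)$ with $\|\bar{\theta}_{B_{k^*(\theta)}}\mathbf{1}_{B_{k^*(\theta)}}\|^2 \geq C^2\rho^2/32$. Restricting the block-mean $\tfrac{\theta_{B_{k^*}} - \bar{\theta}_{B_{k^*}}\mathbf{1}_{B_{k^*}}}{\sqrt{1-\gamma}}$ to any $T \subset B_{k^*}\cap \mathrm{supp}(\theta)^c$ with $|T| = p/R - s$ (which exists since $|B_{k^*}\cap \mathrm{supp}(\theta)| \leq s$), the resulting subvector equals $-\bar{\theta}_{B_{k^*}}\mathbf{1}_T/\sqrt{1-\gamma}$, whose squared norm is bounded below by
\[
    \frac{\bar{\theta}_{B_{k^*}}^2 (p/R - s)}{1-\gamma} \geq \frac{C^2 R \rho^2 (p/R-s)}{32p(1-\gamma)} = \frac{C^2}{32}\Bigl((p/R-s)\log\Bigl(1 + \tfrac{Rp\log(eR)}{(p-Rs)^2}\Bigr) + \log(R)\Bigr)
\]
after substituting the definition of $\rho^2$. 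Hence the block $B_{k^*}$ satisfies a sparse-signal condition analogous to the one in Proposition~\ref{prop:supp_tsybakov}, with ambient dimension $p/R$, sparsity $p/R - s$, and signal strength exceeding the scaled threshold $\widetilde{r}$ by a constant factor. Since $p/R - s \leq \sqrt{(p/R)\log(eR)}$, i.e.\ the block sparsity is well below $\sqrt{p/R\log(eR)}$, one verifies that the expectation and variance computations of Proposition~\ref{prop:supp_tsybakov} carry over: the expectation of $Y_{\widetilde{t}}^{(k^*)}$ is at least $\tfrac{1}{4}$ times the sparse-subvector squared norm (for $C$ large), while its variance is controlled by $C_1\bigl[(p/R)(\widetilde{t})^4 e^{-\widetilde{t}^2/2} + \|\mu_{B_{k^*}}\|^2\bigr]$ as before. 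A Chebyshev bound on the single block $k^*$ then shows the failure probability is at most $\eta/2$.

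The main obstacle is adapting the expectation/variance bookkeeping of Proposition~\ref{prop:supp_tsybakov} to the \emph{inflated} threshold $\widetilde{t}$, which is $\sqrt{\log(eR)}$ times larger than the naive per-block threshold $\sqrt{2\log(1+(p/R)/(p/R-s)^2)}$. Concretely, one must verify that the truncation at level $\widetilde{t}$ does not destroy too much of the signal on the $(p/R-s)$-sparse subvector; this requires separating support coordinates with $|\mu_i'| \geq 8\widetilde{t}$ from those below, and using the $\log(R)$ additive slack in $\widetilde{r}$ to absorb the truncated-support contribution, in exact analogy with the proof of Proposition~\ref{prop:supp_tsybakov} but with the book-kept factor $\log(eR)$ propagated throughout. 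Once this bookkeeping is verified, combining the type~I and type~II bounds yields the claim for all $C$ exceeding an appropriate $C_\eta$.
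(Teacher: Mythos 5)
Your proposal follows essentially the same route as the paper's proof: a union bound plus Lemma~\ref{lemma:tsybakov_typeI_error} for the type I error, and the block-selection argument (at most four groups with large support overlap, then triangle inequality to isolate $k^*(\theta)$) combined with the sparse-subvector reduction to the machinery of Proposition~\ref{prop:supp_tsybakov} for the type II error. Your extraction of a $(p/R-s)$-sparse subvector $T \subset B_{k^*} \cap \supp(\theta)^c$ with $\mu'_T = -\bar{\theta}_{B_{k^*}}\mathbf{1}_T/\sqrt{1-\gamma}$ is in fact the detail the paper elides when it says the Proposition~\ref{prop:supp_tsybakov} type II analysis ``can be repeated with appropriate and slight modifications,'' and your observation about the inflated threshold $\widetilde{t}$ and the role of the additive $\log(R)$ slack in $\widetilde{r}$ correctly identifies the bookkeeping to be verified.
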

    \begin{proof}
        Fix \(\eta \in (0, 1)\) and set 
        \begin{equation*}
            C_\eta := 24 \vee \sqrt{64 \cdot 9} \vee C^*_{\eta}
        \end{equation*}
        where \(C^*_\eta\) is a constant depending only on \(\eta\) to be defined later. Let \(C > C_\eta\). We first bound the type I error. Consider that 
        \begin{align*}
            9 \cdot \left(\sqrt{\frac{p}{R} e^{-(t^*)^2/2} \log(eR) \cdot \frac{C^2}{64 \cdot 9}} + \frac{C^2}{64 \cdot 9} \log(eR) \right) &= \frac{3C}{8} \sqrt{\frac{p}{R} \frac{\left(\frac{p}{R}-s\right)^2}{\frac{p}{R}\log(eR) + \left(\frac{p}{R}-s\right)^2} \cdot \log(eR)} + \frac{C^2}{64} \log(eR) \\
            &\leq \frac{3C}{8} \left(\frac{p}{R} - s\right) + \frac{C^2}{64} \log(eR) \\
            &\leq \widetilde{r}
        \end{align*}
        where we have used that \(\frac{3C}{8} \leq \frac{C^2}{64}\) since \(C > C_\eta \geq 24\). The type I error is bounded via an application of Lemma \ref{lemma:tsybakov_typeI_error}
        \begin{align*}
            P_{0, \gamma, R}\left\{\varphi_{\widetilde{t}, \widetilde{r}} = 1 \right\} &\leq \sum_{k=1}^{R} P_{0, \gamma, R}\left\{Y_t^{(k)} > \widetilde{r} \right\} \\
            &\leq \sum_{k=1}^{R} P_{0, \gamma, R}\left\{Y_t^{(k)} > 9 \cdot \left(\sqrt{\frac{p}{R} e^{-(t^*)^2/2} \log(eR) \cdot \frac{C^2}{64 \cdot 9}} + \frac{C^2}{64 \cdot 9} \log(eR) \right) \right\} \\
            &\leq R \exp\left(-\frac{C^2}{64 \cdot 9}\log(eR)\right) \\
            &\leq \exp\left(\log(R)\left(1 - \frac{C^2}{64 \cdot 9}\right) - \frac{C^2}{64 \cdot 9}\right) \\
            &\leq \exp\left(-\frac{C_\eta^2}{64 \cdot 9}\right) \\
            &\leq \frac{\eta}{2}
        \end{align*}
        where we have used \(\frac{C^2}{64 \cdot 9} > \frac{C_\eta^2}{64 \cdot 9} \geq 1\). 
        
        We now turn our attention to bounding the type II error. For \(\theta \in \Upsilon_{\mathcal{II}}(p, s, C\rho)\), let \(\mathcal{K}(\theta) := \left\{k \subset [R] : |B_k \cap \supp(\theta)| > \frac{p}{4R}\right\}\). Note that \(\mathcal{K}(\theta) \neq \emptyset\) by definition of \(\Upsilon_{\mathcal{II}}\), and consider that \(|\mathcal{K}(\theta)| \leq \frac{s}{\frac{p}{4R}} \leq \frac{4Rs}{p} \leq 4\) since \(s < \frac{p}{R}\). Consequently, by triangle inequality there exists \(1 \leq k^*(\theta) \leq R\) such that \(||\bar{\theta}_{B_{k^*(\theta)}}\mathbf{1}_{B_{k^*(\theta)}}||^2 \geq \frac{C^2\rho^2}{32}\). Since \(||\theta||_0 \leq s\) implies \(||\theta_{B_{k^*(\theta)}}||_0 \leq s\), Lemma \ref{lemma:v_orthog_approximation} yields 
        \begin{equation}\label{eqn:pR_very_dense_orthog_bound}
            \frac{||\theta_{B_{k^*(\theta)}} - \bar{\theta}_{B_{k^*(\theta)}}\mathbf{1}_{B_{k^*(\theta)}}||^2}{1-\gamma} \geq \frac{C^2}{32} \cdot \left(\left(\frac{p}{R}-s\right) \log\left(1 + \frac{Rp \log(eR)}{\left(p-Rs\right)^2}\right) + \log(R)\right) = 2\widetilde{r}.
        \end{equation}
        With (\ref{eqn:pR_very_dense_orthog_bound}) in hand, the type II error analysis in the proof of Proposition \ref{prop:supp_tsybakov} can be repeated with appropriate and slight modifications to obtain the type II error bound \(\sup_{\theta \in \Upsilon_{\mathcal{II}}(p, s, C\rho)} P_{\theta, \gamma, R}\left\{\varphi_{\widetilde{t}, \widetilde{r}} = 0 \right\} \leq \frac{\eta}{2}\). The constant \(C_\eta^*\) is the appropriate constant depending only on \(\eta\) such that \(C > C_\eta\) implies this type II error bound. Thus, the sum of the type I and type II errors is bounded as 
        \begin{equation*}
            P_{0, \gamma, R}\left\{\varphi_{\widetilde{t}, \widetilde{r}} = 1 \right\} + \sup_{\theta \in \Upsilon_{\mathcal{II}}(p, s, C\rho)} P_{\theta, \gamma, R}\left\{\varphi_{\widetilde{t}, \widetilde{r}} = 0 \right\} \leq \eta.
        \end{equation*}
        Since \(C > C_\eta\) was arbitrary and \(\eta \in (0, 1)\) was arbitrary, the desired result has been proved.
    \end{proof}

    \begin{proof}[Proof of Proposition \ref{prop:ubound_pR_very_dense}]
        Fix \(\eta \in (0, 1)\) and set \(C_{\eta} := C_{\eta, 1} \vee C_{\eta, 2} \vee C_{\eta, 3} \vee C_{\eta, 4}\) where \(C_{\eta, 1}, C_{\eta, 2}, C_{\eta, 3},\) and \(C_{\eta, 4}\) are the constants at level \(\eta/2\) from Lemmas \ref{lemma:test_upsilon1_sparse}, \ref{lemma:test_upsilon1_dense}, \ref{lemma:tsybakov_scan}, and \ref{lemma:linear_scan} respectively. Repeating the argument in the proof of Proposition \ref{prop:ubound_pR_dense} yields the desired result.
    \end{proof} 

    \subsubsection{\texorpdfstring{Regime \(\frac{p}{R} \leq s \leq p\)}{Regime p/R <= s <= p}} 
    
    We first establish the sensitivity of \(\bar{\varphi}_{t, r}\) given by (\ref{test:average_tsybakov}) to the space \(\Upsilon_{\mathcal{II}}(p, s, \varepsilon)\) in the regime \(\frac{p}{R} \leq s < \frac{p}{\sqrt{R}}\).

    \begin{lemma}\label{lemma:test_upsilon2_avg_sparse}
        Suppose \(\frac{p}{R} \leq s < \frac{p}{\sqrt{R}}\) and \(\gamma \in [0, 1]\). Let \(\rho^2 = \left(1-\gamma+\gamma \frac{p}{R}\right)\frac{4Rs}{p}\log\left(1 + \frac{p^2}{16Rs^2}\right)\). If \(\eta \in (0, 1)\), then there exists a constant \(C_\eta > 0\) depending only on \(\eta\) such that for all \(C > C_\eta\), the testing procedure \(\bar{\varphi}_{\bar{t}, \bar{r}}\) given by (\ref{test:average_tsybakov}) with \(\bar{t} = \sqrt{2\log\left(1 + \frac{p^2}{Rs^2}\right)}\) and \(\bar{r} = \frac{C^2}{64} \frac{4Rs}{p} \log\left(1 + \frac{p^2}{16Rs^2}\right)\) satisfies 
        \begin{equation*}
            P_{0, \gamma, R}\{\bar{\varphi}_{\bar{t}, \bar{r}} = 1\} + \sup_{\theta \in \Upsilon_{\mathcal{II}}(p, s, C\rho)}  P_{\theta, \gamma, R}\{\bar{\varphi}_{\bar{t}, \bar{r}} = 0\} \leq \eta
        \end{equation*}
        where \(\Upsilon_{\mathcal{II}}(p, s, C\rho)\) is given by (\ref{space:Upsilon2}).
    \end{lemma}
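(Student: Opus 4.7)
The plan is to reduce the $p$-dimensional testing problem to an $R$-dimensional sparse signal detection problem with independent Gaussian observations, and then invoke Proposition \ref{prop:supp_tsybakov}. Set $\sigma^2 := 1 - \gamma + \gamma \frac{p}{R}$. The vector $\left(\sqrt{\tfrac{p}{R}}\bar X_{B_1}, \ldots, \sqrt{\tfrac{p}{R}}\bar X_{B_R}\right)$ has independent $N\!\left(\sqrt{\tfrac{p}{R}}\bar\theta_{B_k}, \sigma^2\right)$ coordinates, so after dividing by $\sigma$ we obtain an $R$-dimensional $N(\mu, I_R)$ observation with $\mu_k := \sigma^{-1}\sqrt{\tfrac{p}{R}}\bar\theta_{B_k}$. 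The statistic $\bar Y_t$ appearing in $\bar\varphi_{\bar t,\bar r}$ is precisely the Collier--Comminges--Tsybakov statistic applied to this $R$-dimensional standardized Gaussian sequence.

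Next, I would verify that for $\theta \in \Upsilon_{\mathcal{II}}(p,s,C\rho)$ the mean vector $\mu$ lies in the parameter space $\mathscr{M}$ of Proposition \ref{prop:supp_tsybakov}. Let $S^\ast := \{k \in [R] : |B_k \cap \supp(\theta)| > \tfrac{p}{4R}\}$. Because each group in $S^\ast$ consumes more than $\tfrac{p}{4R}$ of the sparsity budget,
\begin{equation*}
|S^\ast| \;\leq\; \frac{s}{p/(4R)} \;=\; \frac{4Rs}{p} \;=:\; s',
\end{equation*}
and the regime $\tfrac{p}{R}\le s<\tfrac{p}{\sqrt{R}}$ yields $4 \leq s' < 4\sqrt{R}$. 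From the definition of $\Upsilon_{\mathcal{II}}$ and $\bar\theta_{B_k}^2 \cdot \tfrac{p}{R} = \|\bar\theta_{B_k}\mathbf{1}_{B_k}\|^2$, I would deduce
\begin{equation*}
\sum_{k \in S^\ast} \mu_k^2 \;=\; \frac{1}{\sigma^2}\sum_{k \in S^\ast} \|\bar\theta_{B_k}\mathbf{1}_{B_k}\|^2 \;\geq\; \frac{C^2 \rho^2}{8\sigma^2} \;=\; \frac{C^2}{8}\cdot s' \log\!\left(1 + \frac{R}{(s')^2}\right),
\end{equation*}
using $R/(s')^2 = p^2/(16Rs^2)$. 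Thus $\mu \in \mathscr{M}\!\left(R, \lceil s'\rceil, \tfrac{C}{\sqrt{8}}\sqrt{\lceil s'\rceil \log(1 + R/\lceil s'\rceil^2)}\right)$ up to harmless constants from the rounding.

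The final step is to invoke Proposition \ref{prop:supp_tsybakov} with ambient dimension $R$, sparsity $\lceil s'\rceil$, noise level $1$, and scaling $C/\sqrt{8}$. The cutoff $\bar r = \tfrac{C^2}{64}\, s' \log(1 + R/(s')^2)$ matches the proposition's $\tfrac{(C/\sqrt{8})^2}{8}\lceil s'\rceil \log(1+R/\lceil s'\rceil^2)$ up to constants, so sensitivity follows provided $C$ exceeds the proposition's threshold enlarged by these absolute factors.

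The main obstacle is the mismatch between the threshold $\bar t = \sqrt{2\log(1+p^2/(Rs^2))}$ used in the test and the threshold $\sqrt{2\log(1+R/(s')^2)}=\sqrt{2\log(1+p^2/(16Rs^2))}$ suggested by the proposition: these differ by $\log 16$ inside the logarithm, so $\bar t$ and the canonical threshold are within a fixed multiplicative factor of one another. Because Proposition \ref{prop:supp_tsybakov}'s proof depends on the threshold only through its value (used to bound tail probabilities and to isolate coordinates with $|\mu_i| \geq 8t$), this discrepancy only perturbs the constants in the type I variance bound (via $e^{-\bar t^2/2}$) and the type II expectation bound (via the $32 s (t^*)^2$ error term), both of which can be re-derived with $\bar t$ in place of $t^\ast$ at the cost of enlarging $C_\eta$. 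Once those constants are tracked, the sum of type I and type II errors is bounded by $\eta$ as required.
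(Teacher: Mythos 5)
Your proposal follows essentially the same route as the paper's proof: pass to the $R$-dimensional observation $\mu_k = \sigma^{-1}\sqrt{p/R}\,\bar\theta_{B_k}$, identify the effective support $S^\ast = \{k : |B_k\cap\supp(\theta)| > p/(4R)\}$ with $|S^\ast| \leq 4Rs/p < 4\sqrt{R}$, lower bound $\|\mu_{S^\ast}\|^2 \geq C^2\rho^2/(8\sigma^2)$ from the definition of $\Upsilon_{\mathcal{II}}$, and invoke Proposition \ref{prop:supp_tsybakov} with ambient dimension $R$ and sparsity $4Rs/p$. The one thing you do that the paper does not is explicitly flag the discrepancy between the test threshold $\bar t = \sqrt{2\log(1 + p^2/(Rs^2))}$ and the threshold $\sqrt{2\log(1 + p^2/(16Rs^2))}$ that Proposition \ref{prop:supp_tsybakov} would naturally prescribe with sparsity $4Rs/p$; the paper silently applies the proposition without commenting on this factor-of-$16$ mismatch inside the logarithm, whereas you correctly observe that it only perturbs constants in the type I tail bound and the $s(t^\ast)^2$ error term in the type II expectation estimate and can be absorbed into $C_\eta$. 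That is a genuine improvement in rigor over the paper's write-up, not a deviation in method.
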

    \begin{proof}
        Fix \(\eta \in (0, 1)\) and set \(C_\eta := \sqrt{8}C_{\eta}^*\) where \(C_{\eta}^*\) is given in Proposition \ref{prop:supp_tsybakov}. Let \(C > C_\eta\). For ease of notation, set \(\sigma^2 := 1-\gamma+\gamma \frac{p}{R}\).  Notice that under the data-generating process \(P_{\theta, \gamma, R}\), we have 
        \begin{equation*}
            \sqrt{\frac{p}{R}} \bar{X}_{B_k} \sim N\left( \sqrt{\frac{p}{R}}\bar{\theta}_{B_k}, \sigma^2\right)
        \end{equation*}
        and \(\{\bar{X}_{B_k}\}_{k=1}^{R}\) are independent. For \(\theta \in \Upsilon_{\mathcal{II}}(p, s, C\rho)\), set \(\mathcal{K}(\theta) := \left\{k \in [R] : |B_k \cap \supp(\theta)| > \frac{p}{4R}\right\}\). Note that 
        \begin{equation}\label{eqn:effective_sparsity}
            |\mathcal{K}(\theta)| \leq \frac{s}{\frac{p}{4R}} = \frac{4Rs}{p}.
        \end{equation} 
        Let \(Y \in \R^{R}\) denote the vector obtained by concatenation \(Y = \left(\sigma^{-1}\sqrt{\frac{p}{R}}\bar{X}_{B_1},...,\sigma^{-1}\sqrt{\frac{p}{R}}\bar{X}_{B_R} \right)\). Let \(\mu\) denote the mean of \(Y\) and note \(Y \sim N(\mu, I_R)\). Note that (\ref{eqn:effective_sparsity}) and \(s < \frac{p}{\sqrt{R}}\) gives \(|\mathcal{K}(\theta)| \leq \frac{4Rs}{p} < 4\sqrt{R}\). Note also that since \(\theta \in \Upsilon_{\mathcal{II}}(p, s, C\rho)\), we have by definition of \(\Upsilon_{\mathcal{II}}(p, s, C\rho)\)
        \begin{align*}
            \left|\left|\mu_{\mathcal{K}(\theta)}\right|\right|^2 &= \sum_{k \in \mathcal{K}(\theta)} \frac{||\bar{\theta}_{B_k}\mathbf{1}_{B_k}||^2}{\sigma^2} \\
            &\geq \frac{C^2\rho^2}{8 \sigma^2} \\
            &= \frac{C^2}{8} \frac{4Rs}{p}\log\left(1 + \frac{p^2}{16Rs^2}\right) \\
            &= \frac{C^2}{8} \frac{4Rs}{p} \log\left(1 + \frac{R}{\left(\frac{4Rs}{p}\right)^2}\right).
        \end{align*}
        In other words, we have shown that \(\theta \in \Upsilon_{\mathcal{II}}(p, s, C\rho)\) implies \(\mu \in \mathscr{M}\left(R, \frac{4Rs}{p}, \frac{C}{\sqrt{8}} \sqrt{\frac{4Rs}{p}\log\left(1 + \frac{R}{\left(\frac{4Rs}{p}\right)^2} \right)}\right)\) where the latter parameter space is given by (\ref{space:supp_space}). Since \(\frac{4Rs}{p} < 4\sqrt{R}\), \(\frac{C}{\sqrt{8}} > \frac{C_\eta}{\sqrt{8}} \geq C_{\eta}^*\), and \(\bar{r} = \frac{C^2/8}{8} \frac{4Rs}{p} \log\left(1 + \frac{p^2}{16Rs^2}\right)\), we can apply Proposition \ref{prop:supp_tsybakov} to obtain 
        \begin{equation*}
            P_{0, \gamma, R}\{ \bar{\varphi}_{\bar{t}, \bar{r}} = 1\} + \sup_{\theta \in \Upsilon_{\mathcal{II}}(p, s, C\rho)} P_{\theta, \gamma, R}\left\{ \bar{\varphi}_{\bar{t}, \bar{r}} = 0\right\} \leq \eta. 
        \end{equation*}
        Since \(C > C_\eta\) was arbitrary and \(\eta \in (0, 1)\) was arbitrary, the desired result has been proved.
    \end{proof}

    We now establish the sensitivity of \(\bar{\varphi}_{r}^{\chi^2}\) to \(\Upsilon_{\mathcal{II}}(p, s, \varepsilon)\). 
    \begin{lemma}\label{lemma:test_upsilon2_avg_dense}
        Suppose \(1 \leq s \leq p\) and \(\gamma \in [0, 1]\). Let \(\rho^2 := \left(1 - \gamma +\gamma \frac{p}{R}\right) \sqrt{R}\). If \(\eta \in (0, 1)\), then there exists a constant \(C_\eta > 0\) depending only on \(\eta\) such that for all \(C > C_\eta\) the testing procedure \(\bar{\varphi}_{C^2/16}^{\chi^2}\) given by (\ref{test:average_chisquare}) satisfies 
        \begin{equation*}
            P_{0, \gamma, R}\left\{ \bar{\varphi}_{C^2/16}^{\chi^2} = 1 \right\} + \sup_{\theta \in \Upsilon_{\mathcal{II}}(p, s, C\rho)} P_{\theta, \gamma, R}\left\{\bar{\varphi}_{C^2/16}^{\chi^2} = 0\right\} \leq \eta. 
        \end{equation*}
    \end{lemma}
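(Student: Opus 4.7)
The plan is to mimic the Chebyshev-inequality argument used in Lemma \ref{lemma:test_upsilon1_dense}, adapted to the $R$-dimensional averaged observations. Under $P_{\theta, \gamma, R}$, the collection $\{\bar{X}_{B_k}\}_{k=1}^R$ is independent with $\sqrt{p/R}\,\bar{X}_{B_k} \sim N(\sqrt{p/R}\,\bar{\theta}_{B_k}, \sigma^2)$ where $\sigma^2 = 1 - \gamma + \gamma \frac{p}{R}$, so the statistic
\[
T := \sum_{k=1}^{R} \|\bar{X}_{B_k}\mathbf{1}_{B_k}\|^2 = \sigma^2 \sum_{k=1}^R \frac{(p/R)\bar{X}_{B_k}^2}{\sigma^2}
\]
satisfies $T/\sigma^2 \sim \chi^2_R(\lambda)$ with noncentrality $\lambda = \sigma^{-2}\sum_k \|\bar{\theta}_{B_k}\mathbf{1}_{B_k}\|^2$. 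Hence $E_{\theta,\gamma,R}(T) = \sigma^2 R + L$ and $\Var_{\theta,\gamma,R}(T) = 2\sigma^4 R + 4\sigma^2 L$, where $L := \sum_k \|\bar{\theta}_{B_k}\mathbf{1}_{B_k}\|^2$.

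For the type I error, I would fix $\eta \in (0, 1)$, choose $C_\eta$ sufficiently large (to be pinned down by two inequalities of the form $a/C_\eta^4 \leq \eta/4$ and $b/C_\eta^2 \leq \eta/4$), and apply Chebyshev to get
\[
P_{0,\gamma,R}\bigl\{T > \sigma^2(R + (C^2/16)\sqrt{R})\bigr\} \leq \frac{2\sigma^4 R}{(\sigma^2 C^2 \sqrt{R}/16)^2} = \frac{512}{C^4} \leq \frac{\eta}{2}.
\]

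For the type II error, the key observation is that $\theta \in \Upsilon_{\mathcal{II}}(p, s, C\rho)$ forces $L \geq C^2\rho^2/8 = (C^2/8)\sigma^2\sqrt{R}$, which is twice the threshold increment $(C^2/16)\sigma^2\sqrt{R}$. Consequently $L - (C^2/16)\sigma^2\sqrt{R} \geq L/2$, and Chebyshev yields
\[
P_{\theta,\gamma,R}\bigl\{T \leq \sigma^2(R + (C^2/16)\sqrt{R})\bigr\} \leq \frac{2\sigma^4 R + 4\sigma^2 L}{(L/2)^2} \leq \frac{8\sigma^4 R}{L^2} + \frac{16\sigma^2}{L} \leq \frac{512}{C^4} + \frac{128}{C^2},
\]
taking the supremum over $\Upsilon_{\mathcal{II}}(p, s, C\rho)$ after substituting $L \geq (C^2/8)\sigma^2\sqrt{R}$. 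Choosing $C_\eta$ large enough that each term is at most $\eta/4$ completes the bound, and arbitrariness of $C > C_\eta$ and $\eta$ finishes the proof.

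There is essentially no obstacle here: the argument is a direct transcription of the Chebyshev analysis already carried out for the orthogonal $\chi^2$-test in Lemma \ref{lemma:test_upsilon1_dense}, with the single structural input being the lower bound $L \geq C^2\rho^2/8$ that comes for free from the definition of $\Upsilon_{\mathcal{II}}$ in \eqref{space:Upsilon2}. The only bookkeeping point worth flagging is that, unlike the $\chi^2$-scan test of Lemma \ref{lemma:chisquare_scan}, no union bound over groups is needed because the statistic already aggregates across all $R$ blocks, which is why the cutoff involves $\sqrt{R}$ rather than $\log(eR)$.
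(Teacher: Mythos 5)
Your proof is correct and follows essentially the same route as the paper: compute the noncentral $\chi^2$ mean and variance of $\sum_k \|\bar{X}_{B_k}\mathbf{1}_{B_k}\|^2$, apply Chebyshev for both error types, and feed in the lower bound $L \geq C^2\rho^2/8$ that the definition of $\Upsilon_{\mathcal{II}}$ supplies. The only cosmetic difference is that you bound the whole denominator by $(L/2)^2$ before splitting, while the paper splits the fraction first using different denominator lower bounds; the resulting constants $512/C^4 + 128/C^2$ agree.
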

    \begin{proof}
        Fix \(\eta \in (0, 1)\) and set \(C_\eta := \left(\frac{512 \cdot 4}{\eta}\right)^{1/4} \vee \left( \frac{128 \cdot 4}{\eta} \right)^{1/2}\). Let \(C > C_\eta\). For ease of notation, set \(\sigma^2 = 1-\gamma+\gamma \frac{p}{R}\). Under the data-generating process \(P_{\theta, \gamma, R}\), we have \(\bar{X}_{B_k}\mathbf{1}_{B_k} \sim N\left(\bar{\theta}_{B_k}\mathbf{1}_{B_k}, \sigma^2 \frac{R}{p} \mathbf{1}_{B_k}\mathbf{1}_{B_k}^\intercal \right)\). Since \(\{\bar{X}_{B_k}\}_{k=1}^{R}\) are independent, it follows that 
        \begin{equation*}
            \sum_{k=1}^{R} ||\bar{X}_{B_k}\mathbf{1}_{B_k}||^2 \sim \sigma^2 \chi^2_{R}\left(\frac{\sum_{k=1}^{R} ||\bar{\theta}_{B_k}\mathbf{1}_{B_k}||^2}{\sigma^2} \right).
        \end{equation*}
        Thus \(E_{\theta, \gamma, R}\left( \sum_{k=1}^{R} ||\bar{X}_{B_k}\mathbf{1}_{B_k}||^2 \right) = R\sigma^2 + \sum_{k=1}^{R} ||\bar{\theta}_{B_k}\mathbf{1}_{B_k}||^2\) and \(\Var_{\theta, \gamma, R}\left( \sum_{k=1}^{R} ||\bar{X}_{B_k}\mathbf{1}_{B_k}||^2 \right) = 2R\sigma^4 + 4\sigma^2 \sum_{k=1}^{R} ||\bar{\theta}_{B_k}\mathbf{1}_{B_k}||^2\). The type I error is thus bounded as 
    \begin{align*}
        P_{0, \gamma, R}\left\{\bar{\varphi}_{C^2/16}^{\chi^2} = 1 \right\} &= P_{0, \gamma, R}\left\{\sum_{k=1}^{R} ||\bar{X}_{B_k}\mathbf{1}_{B_k}||^2 > \sigma^2\left(R + \frac{C^2}{16}\sqrt{R}\right) \right\} \\
        &\leq \frac{\Var_{0, \gamma, R}\left( \sum_{k=1}^{R} ||\bar{X}_{B_k}\mathbf{1}_{B_k}||^2 \right)}{\frac{C^4}{16^2} R\sigma^4 } \\
        &\leq \frac{512}{C^4} \\
        &\leq \frac{512}{C_\eta^4} \\
        &\leq \frac{\eta}{2}. 
    \end{align*}
    Turning our attention to the type II error, consider 
    \begin{align*}
        &\sup_{\theta \in \Upsilon_{\mathcal{II}}(p, s, C\rho)} P_{\theta, \gamma, R} \left\{ \bar{\varphi}_{C^2/16}^{\chi^2} = 0 \right\} \\
        &= \sup_{\theta \in \Upsilon_{\mathcal{II}}(p, s, C\rho)} P_{\theta, \gamma, R} \left\{ \sum_{k=1}^{R} ||\bar{X}_{B_k}\mathbf{1}_{B_k}||^2 \leq \sigma^2\left(R + \frac{C^2}{16}\sqrt{R}\right) \right\} \\
        &= \sup_{\theta \in \Upsilon_{\mathcal{II}}(p, s, C\rho)} P_{\theta, \gamma, R} \left\{\sum_{k=1}^{R} ||\bar{\theta}_{B_k}\mathbf{1}_{B_k}||^2 - \frac{C^2}{16}\sigma^2 \sqrt{R} \leq R\sigma^2 + \sum_{k=1}^{R} ||\bar{\theta}_{B_k}\mathbf{1}_{B_k}||^2 - \sum_{k=1}^{R} ||\bar{X}_{B_k}\mathbf{1}_{B_k}||^2 \right\} \\
        &\leq \sup_{\theta \in \Upsilon_{\mathcal{II}}(p, s, C\rho)} \frac{\Var_{\theta, \gamma, R}\left(\sum_{k=1}^{R} ||\bar{X}_{B_k}\mathbf{1}_{B_k}||^2 \right)}{\left(\sum_{k=1}^{R} ||\bar{\theta}_{B_k}\mathbf{1}_{B_k}||^2 - \frac{C^2}{16}\sigma^2 \sqrt{R} \right)^2} \\
        &\leq \frac{2R\sigma^4}{\frac{C^4}{16^2}\rho^2} + \sup_{\theta \in \Upsilon_{\mathcal{II}}(p, s, C\rho)} \frac{4\sigma^2 \sum_{k=1}^{R}||\bar{\theta}_{B_k}\mathbf{1}_{B_k}||^2}{\frac{1}{4}\left(\sum_{k=1}^{R}||\bar{\theta}_{B_k}\mathbf{1}_{B_k}||^2\right)^2} \\
        &= \frac{512}{C^4} + \sup_{\theta \in \Upsilon_{\mathcal{II}}(p, s, C\rho)} \frac{16\sigma^2}{\sum_{k=1}^{R} ||\bar{\theta}_{B_k}\mathbf{1}_{B_k}||^2} \\
        &\leq \frac{512}{C^4} + \frac{16 \cdot 8}{C^2\sqrt{R}} \\
        &\leq \frac{512}{C_\eta^4} + \frac{128}{C_\eta^2} \\
        &\leq \frac{\eta}{2}.
    \end{align*}
    Hence, we have shown that the sum of type I and type II errors is bounded as 
    \begin{equation*}
        P_{0, \gamma, R}\left\{\bar{\varphi}_{C^2/16}^{\chi^2} = 1 \right\} + \sup_{\theta \in \Upsilon_{\mathcal{II}}(p, s, C\rho)} P_{\theta, \gamma, R} \left\{ \bar{\varphi}_{C^2/16}^{\chi^2} = 0 \right\} \leq \eta. 
    \end{equation*}
    Since \(C > C_\eta\) was arbitrary and \(\eta \in (0, 1)\) was arbitrary, the desired result has been proved.
\end{proof}

\begin{proof}[Proof of Proposition \ref{prop:R_avg_ubound}]
    Fix \(\eta \in (0, 1)\) and set \(C_{\eta} = C_{\eta, 1} \vee C_{\eta, 2} \vee C_{\eta, 3} \vee C_{\eta, 4}\) where \(C_{\eta, 1}, C_{\eta, 2}, C_{\eta, 3}, C_{\eta, 4}\) are the constants at level \(\eta/2\) from Lemmas \ref{lemma:test_upsilon1_dense}, \ref{lemma:test_upsilon1_sparse}, \ref{lemma:test_upsilon2_avg_sparse} and \ref{lemma:test_upsilon2_avg_dense} respectively. Repeating the argument in the proof of Proposition \ref{prop:ubound_pR_dense} yields the desired result.
\end{proof}

    \subsection{Proof of result in Section \ref{section:R_perfect_correlation}}
    
    \begin{lemma}\label{lemma:nontrivial_projection}
        For \(1 \leq s \leq p\) define the space 
        \begin{equation}\label{defn:theta_dag}
            \Theta^\dag(p, s, R) := \left\{ \theta \in \R^p\setminus\{0\} : ||\theta||_0 \leq s \text{ and } \theta_{B_k} - \bar{\theta}_{B_k}\mathbf{1}_{B_k} \neq 0 \text{ for some } k \in [R] \right\}.
        \end{equation}
        If \(1 \leq s < \frac{p}{R}\) and \(\varepsilon > 0\), then \(\Theta(p, s, \varepsilon) \subset \Theta^\dag(p, s, R)\). 
    \end{lemma}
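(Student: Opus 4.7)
The plan is to fix an arbitrary $\theta \in \Theta(p, s, \varepsilon)$ and show it lies in $\Theta^\dag(p, s, R)$. The first observation is the easy one: since $||\theta|| \geq \varepsilon > 0$, we immediately have $\theta \neq 0$, and by hypothesis $||\theta||_0 \leq s$. So only the ``nontrivial projection'' condition --- the existence of some $k \in [R]$ with $\theta_{B_k} - \bar{\theta}_{B_k}\mathbf{1}_{B_k} \neq 0$ --- requires work.

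I would proceed by contradiction. Suppose that $\theta_{B_k} - \bar{\theta}_{B_k}\mathbf{1}_{B_k} = 0$ for every $k \in [R]$. Then on each block $B_k$ the vector $\theta_{B_k}$ is the constant vector $\bar{\theta}_{B_k}\mathbf{1}_{B_k}$. Since $\theta \neq 0$, there must exist at least one index $k^* \in [R]$ with $\bar{\theta}_{B_{k^*}} \neq 0$. But then every coordinate of $\theta$ indexed by $B_{k^*}$ is equal to the nonzero constant $\bar{\theta}_{B_{k^*}}$, so $|\supp(\theta) \cap B_{k^*}| = |B_{k^*}| = \frac{p}{R}$. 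Hence
\begin{equation*}
    ||\theta||_0 \geq |\supp(\theta) \cap B_{k^*}| = \frac{p}{R}.
\end{equation*}
This contradicts the hypothesis $||\theta||_0 \leq s < \frac{p}{R}$. Therefore such a $k$ must exist, showing $\theta \in \Theta^\dag(p, s, R)$.

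The argument is essentially a pigeonhole on block sizes: the strict inequality $s < p/R$ guarantees the sparsity budget cannot fill an entire block, so the only way $\theta$ could be constant on every block is to be identically zero. There is no real obstacle here --- the role of the lemma is to justify, in the $\gamma = 1$ setting of Section \ref{section:R_perfect_correlation}, that the test rejecting when $\sum_{k=1}^{R} ||X_{B_k} - \bar{X}_{B_k}\mathbf{1}_{B_k}||^2 \neq 0$ has vanishing testing risk whenever $s < p/R$, because under the alternative the noiseless vector $\sum_{k=1}^{R} ||\theta_{B_k} - \bar{\theta}_{B_k}\mathbf{1}_{B_k}||^2$ is strictly positive.
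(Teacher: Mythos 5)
Your proof is correct and is essentially the paper's argument, just phrased by contradiction rather than directly: the paper picks a block $B_k$ with $\|\theta_{B_k}\| > 0$, notes $\|\theta_{B_k}\|_0 \leq s < p/R$ forces $\theta_{B_k} \notin \spn\{\mathbf{1}_{B_k}\}$, and concludes the residual is nonzero, which is the same pigeonhole-on-block-size observation you use.
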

    \begin{proof}
        Suppose \(\theta \in \Theta(p, s, \varepsilon)\). Since \(||\theta|| > \varepsilon\) and \(\{B_k\}_{k=1}^{R}\) are a partition of \([p]\), it follows that \(||\theta_{B_k}|| > 0\) for some \(k \in [R]\). Since \(s < \frac{p}{R}\), it immediately follows \(\theta_{B_k} \not \in \spn\left\{\mathbf{1}_{B_k}\right\}\). Thus we must have \(\theta_{B_k} - \bar{\theta}_{B_k}\mathbf{1}_{B_k} \neq 0\), and so \(\theta \in \Theta^\dag(p, s, R)\). 
    \end{proof}

    \begin{proposition}\label{prop:R_perfect_correlation_sparse}
        If \(1 \leq s < \frac{p}{R}\) and \(\gamma = 1\), then the testing procedure
        \begin{equation}\label{test:R_noiseless}
            \varphi^\dag := \mathbf{1}_{\left\{\sum_{k=1}^{R} ||X_{B_k} - \bar{X}_{B_k}\mathbf{1}_{B_k}||^2 > 0 \right\}}
        \end{equation}
        satisfies \(P_{0, 1, R}\left\{ \varphi^\dag = 1 \right\} + \sup_{\theta \in \Theta^\dag(p, s, R)} P_{\theta, 1, R}\left\{\varphi^\dag = 0\right\} = 0\).
    \end{proposition}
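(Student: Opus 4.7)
The plan is to exploit the degenerate structure of the model when $\gamma = 1$: the independent noise term $\sqrt{1-\gamma}\,Z_i$ vanishes, so all randomness comes from the block-constant random effect $W_{B(i)}$. This means that within each block $B_k$ we can write $X_{B_k} = \theta_{B_k} + W_k \mathbf{1}_{B_k}$ almost surely, and projecting onto $\spn\{\mathbf{1}_{B_k}\}^\perp$ annihilates the noise entirely.

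First, I would record this precisely: averaging gives $\bar{X}_{B_k} = \bar{\theta}_{B_k} + W_k$, and subtracting yields the almost sure identity
\begin{equation*}
X_{B_k} - \bar{X}_{B_k} \mathbf{1}_{B_k} = \theta_{B_k} - \bar{\theta}_{B_k} \mathbf{1}_{B_k}
\end{equation*}
for every $k \in [R]$. Summing the squared norms across $k$ produces $\sum_{k=1}^R \|X_{B_k} - \bar{X}_{B_k} \mathbf{1}_{B_k}\|^2 = \sum_{k=1}^R \|\theta_{B_k} - \bar{\theta}_{B_k} \mathbf{1}_{B_k}\|^2$ almost surely.

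Next, I would compute the two error terms directly from this identity. Under $P_{0,1,R}$, all $\theta_{B_k}$ are zero, so the test statistic is almost surely zero, giving $P_{0,1,R}\{\varphi^\dag = 1\} = 0$. Under $P_{\theta, 1, R}$ with $\theta \in \Theta^\dag(p, s, R)$, the definition (\ref{defn:theta_dag}) guarantees that $\theta_{B_k} - \bar{\theta}_{B_k}\mathbf{1}_{B_k} \neq 0$ for at least one $k$, so the right-hand side is strictly positive almost surely, and consequently $P_{\theta, 1, R}\{\varphi^\dag = 0\} = 0$. Taking the supremum over $\Theta^\dag(p, s, R)$ preserves this bound, and summing the two errors yields zero.

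There is essentially no obstacle here; the only point requiring care is verifying the almost sure identity from the additive form (\ref{model:multiple_random_effects_additive}) with $\gamma = 1$, which is purely algebraic. The sparsity condition $s < p/R$ does not enter the proof of this proposition directly (it is used only to deduce, via Lemma \ref{lemma:nontrivial_projection}, that the original parameter space $\Theta(p, s, \varepsilon)$ is contained in $\Theta^\dag(p, s, R)$, which in turn yields the degeneracy of $\varepsilon^*(p, s, 1, R)$ for $s < p/R$ claimed in Theorem \ref{thm:R_perfect_correlation}).
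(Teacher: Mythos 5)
Your proof is correct and follows the same argument as the paper: write $X_{B_k} = \theta_{B_k} + W_k\mathbf{1}_{B_k}$ under $\gamma = 1$, observe the almost-sure identity $X_{B_k} - \bar{X}_{B_k}\mathbf{1}_{B_k} = \theta_{B_k} - \bar{\theta}_{B_k}\mathbf{1}_{B_k}$, and conclude both error terms vanish. Your remark that the condition $s < p/R$ plays no role in the proof itself (only in relating $\Theta^\dag$ to $\Theta$ via Lemma~\ref{lemma:nontrivial_projection}) is also accurate.
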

    \begin{proof}
        Let us examine the data-generating process \(P_{\theta, 1, R}\) for \(\theta \in \R^p\). Since \(\gamma = 1\), consider from (\ref{model:multiple_random_effects_additive}) we can write for \(k \in [R]\)
        \begin{equation*}
            X_{B_k} = \theta_{B_k} + W_k\mathbf{1}_{B_k}
        \end{equation*}
        where \(W_1,...,W_R \overset{iid}{\sim} N(0, 1)\). Therefore, we have \(X_{B_{k}} - \bar{X}_{B_{k}} \mathbf{1}_{B_{k}} = 0\) for all \(k \in [R]\) almost surely. Hence, the type I error satisfies \(P_{0, 1, R}\left\{ \varphi^\dag = 1 \right\} = 0\). Now, focusing on \(\theta \in \Theta^\dag(p, s, \varepsilon)\), by definition there exists some \(k(\theta) \in [R]\) such that 
        \begin{equation*}
            \theta_{B_{k(\theta)}} - \bar{\theta}_{B_{k(\theta)}}\mathbf{1}_{B_{k(\theta)}} \neq 0.
        \end{equation*}
        Therefore, \(\sum_{k=1}^{R} ||X_{B_k} - \bar{X}_{B_k}\mathbf{1}_{B_k}||^2 > 0\) almost surely under the data-generating process \(P_{\theta, 1, R}\) when \(\theta \in \Theta^\dag(p, s, R)\). Consequently, the type II error satisfies \(\sup_{\theta \in \Theta^\dag(p, s, R)} P_{\theta, 1, R}\{\varphi^\dag = 0\} = 0\). Thus the desired result has been proved.
    \end{proof}

    \begin{proposition}\label{prop:R_perfect_correlation_ubound}
        Suppose \(1 \leq s \leq p\) and \(\gamma = 1\). Set
        \begin{equation*}
            \rho^2 :=
            \begin{cases}
                0 &\text{if } s < \frac{p}{R}, \\
                4s\log\left(1 + \frac{p^2}{16Rs^2}\right) &\text{if } \frac{p}{R} \leq s < \frac{p}{\sqrt{R}}, \\
                \frac{p}{\sqrt{R}} &\text{if } \frac{p}{\sqrt{R}} \leq s \leq p. 
            \end{cases}
        \end{equation*}
        If \(\eta \in (0, 1)\), then there exists a constant \(C_\eta > 0\) depending only on \(\eta\) such that for all \(C > C_\eta\) the testing procedure \(\varphi^* = \varphi^\dag \vee \varphi_2\) satisfies 
        \begin{equation*}
            P_{0, 1, R}\left\{ \varphi^* = 1\right\} + \sup_{\theta \in \Theta(p, s, C\rho)} P_{\theta, 1, R}\left\{ \varphi^* = 0 \right\} \leq \eta.
        \end{equation*}
        Here \(\varphi^\dag\) and \(\varphi_2\) are given by (\ref{test:R_noiseless}) and (\ref{test:R_avg}) respectively.
    \end{proposition}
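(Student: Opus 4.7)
The plan is to decompose the alternative $\Theta(p,s,C\rho)$ into pieces that are handled by the two constituent tests. Under $P_{\theta,1,R}$ the model (\ref{model:multiple_random_effects_additive}) with $\gamma=1$ gives $X_{B_k}-\bar{X}_{B_k}\mathbf{1}_{B_k}=\theta_{B_k}-\bar{\theta}_{B_k}\mathbf{1}_{B_k}$ almost surely, so by Proposition \ref{prop:R_perfect_correlation_sparse} the test $\varphi^\dag$ deterministically rejects every $\theta$ with $\theta_{B_k}\notin\spn\{\mathbf{1}_{B_k}\}$ for some $k$, and has exactly zero type I error. The residual signals — those that are constant on every group — leave $R$ effective degrees of freedom, which is precisely where $\varphi_2$ (operating on the averages $\bar{X}_{B_k}$) takes over.

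For the type I error of $\varphi^*$, I would use $P_{0,1,R}\{\varphi^*=1\}\leq P_{0,1,R}\{\varphi^\dag=1\}+P_{0,1,R}\{\varphi_2=1\}=P_{0,1,R}\{\varphi_2=1\}$, and bound the latter by $\eta/2$ for large $C$ via the type I halves of Lemma \ref{lemma:test_upsilon2_avg_sparse} (when $s<p/\sqrt{R}$) or Lemma \ref{lemma:test_upsilon2_avg_dense} (when $s\geq p/\sqrt{R}$); both lemmas are stated for $\gamma\in[0,1]$ and so apply at $\gamma=1$. Case 1 ($s<p/R$, $\rho=0$) is then handled entirely by $\varphi^\dag$: Lemma \ref{lemma:nontrivial_projection} gives $\Theta(p,s,\varepsilon)\subset\Theta^\dag(p,s,R)$ for every $\varepsilon>0$, and Proposition \ref{prop:R_perfect_correlation_sparse} yields zero type II error on $\Theta^\dag$, so the conclusion holds for every $\varepsilon>0$ and reflects the degeneracy of the separation rate.

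For Cases 2 and 3 ($s\geq p/R$), the key step is to invoke Proposition \ref{prop:theta_upsilon} to write $\Theta(p,s,C\rho)\subset\Upsilon_{\mathcal{I}}(p,s,C\rho)\cup\Upsilon_{\mathcal{II}}(p,s,C\rho)$. On the $\Upsilon_{\mathcal{II}}$ piece, plugging $\gamma=1$ into Lemmas \ref{lemma:test_upsilon2_avg_sparse} and \ref{lemma:test_upsilon2_avg_dense} yields exactly the target $\rho^2=4s\log(1+p^2/(16Rs^2))$ and $\rho^2=p/\sqrt{R}$ respectively, so $\varphi_2$ controls the type II error up to $\eta/2$ for $C>C_\eta$. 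On the $\Upsilon_{\mathcal{I}}$ piece I would show that any $\theta\in\Upsilon_{\mathcal{I}}(p,s,C\rho)$ with $C\rho>0$ must lie in $\Theta^\dag(p,s,R)$, at which point Proposition \ref{prop:R_perfect_correlation_sparse} forces $\varphi^\dag$ (and therefore $\varphi^*$) to reject with probability one.

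The only real obstacle is this $\Upsilon_{\mathcal{I}}\subset\Theta^\dag$ inclusion, which I would establish by contrapositive: if $\theta_{B_k}-\bar{\theta}_{B_k}\mathbf{1}_{B_k}=0$ then either $\bar{\theta}_{B_k}=0$ (in which case $\theta_{B_k}=0$ and $\bar{\theta}_{B_k}\mathbf{1}_{B_k\cap\supp(\theta)}=0$), or $\bar{\theta}_{B_k}\neq 0$ (in which case every coordinate of $\theta_{B_k}$ is nonzero, so $B_k\cap\supp(\theta)=B_k$ and $\bar{\theta}_{B_k}\mathbf{1}_{B_k\cap\supp(\theta)}=\bar{\theta}_{B_k}\mathbf{1}_{B_k}=\theta_{B_k}$); either way $\theta_{B_k}-\bar{\theta}_{B_k}\mathbf{1}_{B_k\cap\supp(\theta)}=0$. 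Summing across $k$ and contraposing shows that any $\theta\in\Upsilon_{\mathcal{I}}$ with $C\rho>0$ has some group on which it is nonconstant and satisfies $\theta\neq 0$, placing it in $\Theta^\dag(p,s,R)$. Combining the two pieces yields the claim with $C_\eta$ taken as the maximum of the constants from the two applicable $\varphi_2$ lemmas at level $\eta/2$.
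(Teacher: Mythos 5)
Your proposal is correct and follows essentially the same route as the paper: decompose $\Theta(p,s,C\rho)\subset\Upsilon_{\mathcal{I}}\cup\Upsilon_{\mathcal{II}}$ via Proposition \ref{prop:theta_upsilon}, absorb $\Upsilon_{\mathcal{I}}$ into $\Theta^\dag(p,s,R)$, handle $\Theta^\dag$ with $\varphi^\dag$ (zero risk) and $\Upsilon_{\mathcal{II}}$ with $\varphi_2$ via Lemmas \ref{lemma:test_upsilon2_avg_sparse} and \ref{lemma:test_upsilon2_avg_dense} at $\gamma=1$. The paper asserts the inclusion $\Upsilon_{\mathcal{I}}(p,s,C\rho)\subset\Theta^\dag(p,s,R)$ with a bare ``clearly''; your two-case argument (splitting on $\bar{\theta}_{B_k}=0$ versus $\bar{\theta}_{B_k}\neq0$) correctly supplies the missing detail.
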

    \begin{proof}
        Fix \(\eta \in (0, 1)\) and set \(C_{\eta} = C_{\eta, 1} \vee C_{\eta, 2}\) where \(C_{\eta, 1}\) and \(C_{\eta, 2}\) are the constants at level \(\eta/2\) from Lemmas \ref{lemma:test_upsilon2_avg_sparse} and \ref{lemma:test_upsilon2_avg_dense}. Let \(C > C_\eta\). By Proposition \ref{prop:theta_upsilon} we have \(\Theta(p, s, C\rho) \subset \Upsilon_{\mathcal{I}}(p, s, C\rho) \cup \Upsilon_{\mathcal{II}}(p, s, C\rho)\). Clearly we also have \(\Upsilon_{\mathcal{I}}(p, s, C\rho) \subset \Theta^\dag(p, s, R)\), and so it follows that \(\Theta(p, s, C\rho) \subset \Theta^\dag(p, s, R) \cup \Upsilon_{\mathcal{II}}(p, s, C\rho)\). Therefore, 
        \begin{align*}
            &P_{0, 1, R}\{\varphi^* = 1\} + \sup_{\theta \in \Theta(p, s, C\rho)} P_{\theta, 1, R}\{\varphi^* = 0\} \\
            &\leq P_{0, 1, R}\{\varphi^* = 1\} + \sup_{\theta \in \Theta^\dag(p, s, R) \cup \Upsilon_{\mathcal{II}}(p, s, C\rho)} P_{\theta, 1, R}\{\varphi^\dag \vee \varphi_2 = 0\} \\
            &= P_{0, 1, R}\{\varphi^* = 1\} + \left[\sup_{\theta \in \Theta^\dag(p, s, R)} P_{\theta, 1, R}\{\varphi^\dag \vee \varphi_2 = 0\}\right] + \left[\sup_{\theta \in \Upsilon_{\mathcal{II}}(p, s, C\rho)} P_{\theta, 1, R}\{\varphi^\dag \vee \varphi_2 = 0\}\right] \\
            &\leq \left[P_{0, 1, R}\{\varphi^\dag = 1\} +  \sup_{\theta \in \Theta^\dag(p, s, R)} P_{\theta, 1, R}\{\varphi^\dag = 0\} \right] + \left[P_{0, 1, R}\{\varphi_2 = 1\} + \sup_{\theta \in \Upsilon_{\mathcal{II}}(p, s, C\rho)} P_{\theta, 1, R}\{\varphi_2 = 0\}\right] \\
            &\leq \eta
        \end{align*}
        where the last inequality follows from Proposition \ref{prop:R_perfect_correlation_sparse} as well as Lemmas \ref{lemma:test_upsilon2_avg_sparse} and \ref{lemma:test_upsilon2_avg_dense}. 
    \end{proof}

    We now proceed to proving the lower bound. In order to do so, some preliminary lemmas are needed. 

    \begin{lemma}\label{lemma:TV_reduction}
        For \(\nu \in \R^R\), denote \(Q_{\nu} = N(\nu, I_R)\). If \(\mathfrak{p}\) is a distribution on \(\R^R\), let \(Q_{\mathfrak{p}} = \int Q_{\nu} \, \mathfrak{p}(d\nu)\) denote the induced Gaussian mixture. Suppose \(\pi\) is a distribution on \(\R^p\) in which a draw \(\mu \sim \pi\) satisfies \(\mu_{B_k} \in \spn\left\{\mathbf{1}_{B_k}\right\}\) for all \(1 \leq k \leq R\) almost surely. Then 
        \begin{equation*}
            d_{TV}(P_{0, 1, R}, P_{\pi, 1, R}) = d_{TV}(Q_{0}, Q_{\widetilde{\pi}})
        \end{equation*}
        where \(\widetilde{\pi}\) is the distribution on \(\R^R\) in which a draw \(\nu \sim \widetilde{\pi}\) is obtained by drawing \(\mu \sim \pi\) and setting \(\nu_k := \bar{\mu}_{B_k}\) for all \(1 \leq k \leq R\). 
    \end{lemma}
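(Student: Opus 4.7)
The plan is to show that under both $P_{0,1,R}$ and $P_{\pi,1,R}$, the observation $X$ is almost surely determined by its block averages $(\bar{X}_{B_1},\ldots,\bar{X}_{B_R})$, so that the two testing problems on $\R^p$ and $\R^R$ are equivalent up to a measure-preserving bijection on full-measure sets. Then total variation is preserved.

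First I would unpack the observation model for $\gamma=1$. From (\ref{model:multiple_random_effects_additive}), for every $k\in[R]$ we can write $X_{B_k} = \theta_{B_k} + W_k \mathbf{1}_{B_k}$ with $W_1,\ldots,W_R \overset{iid}{\sim} N(0,1)$. When $\theta=0$, this gives $X_{B_k} = W_k\mathbf{1}_{B_k}$ so $X$ lies in $\bigoplus_k \spn\{\mathbf{1}_{B_k}\}$ almost surely. When $\theta \sim \pi$, the hypothesis on $\pi$ guarantees $\theta_{B_k}\in\spn\{\mathbf{1}_{B_k}\}$ for every $k$, $\pi$-a.s., so again $X_{B_k}\in\spn\{\mathbf{1}_{B_k}\}$ almost surely. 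In both cases, $X_{B_k} = \bar{X}_{B_k}\mathbf{1}_{B_k}$ almost surely.

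Next I would introduce the deterministic maps $T:\R^p\to\R^R$, $T(x) := (\bar{x}_{B_1},\ldots,\bar{x}_{B_R})$, and $S:\R^R\to\R^p$, $S(\nu) := \sum_{k=1}^R \nu_k\mathbf{1}_{B_k}$. A direct computation gives $T_\# P_{0,1,R} = Q_0$ (since $\bar{X}_{B_k}=W_k$ under the null) and $T_\# P_{\pi,1,R} = Q_{\widetilde\pi}$ (since $\bar{X}_{B_k}=\bar{\mu}_{B_k}+W_k$ under $\mu\sim\pi$, matching the definition of $\widetilde\pi$). Conversely, by the preceding paragraph, $X = S(T(X))$ holds $P_{0,1,R}$- and $P_{\pi,1,R}$-almost surely, so $P_{0,1,R} = S_\# Q_0$ and $P_{\pi,1,R} = S_\# Q_{\widetilde\pi}$ as well.

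The conclusion then follows from the fact that total variation distance is monotone under measurable maps: $T$ gives $d_{TV}(Q_0,Q_{\widetilde\pi}) \leq d_{TV}(P_{0,1,R},P_{\pi,1,R})$, while $S$ gives the reverse inequality, so the two quantities coincide. There is no real obstacle here; the only subtlety is bookkeeping the null-set exceptions so that the identity $X=S(T(X))$ indeed holds under both measures, which is ensured by the support hypothesis placed on $\pi$.
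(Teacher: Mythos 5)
Your proof is correct and follows essentially the same approach as the paper: both arguments hinge on the observation that under $\gamma=1$ the observation $X$ lies almost surely (under either hypothesis) in the subspace $\bigoplus_k \spn\{\mathbf{1}_{B_k}\}$, so that passing to block averages is a bijection between the supports of the two pairs of measures. The paper phrases this as injectivity of the averaging map $T$ restricted to that subspace and then invokes that injective pushforwards preserve total variation; you instead make the inverse map $S$ explicit and apply the data-processing inequality for $d_{TV}$ in both directions, which is a slightly more hands-on way of reaching the same conclusion and arguably avoids invoking any measurable-inverse machinery.
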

    \begin{proof}
        Let \(\Gamma := \left\{ \mu \in \R^p : \mu_{B_k} \in \spn\left\{ \mathbf{1}_{B_k} \right\} \text{ for all } 1 \leq k \leq R\right\}\). Note that \(\pi\) is supported on \(\Gamma\). Consider the transformation \(T : \Gamma \to \R^R\) with \(T(\mu) = (\bar{\mu}_{B_1},...,\bar{\mu}_{B_R})\). We claim \(T\) is an injective map. Suppose \(T(\mu) = T(\mu')\) for \(\mu, \mu' \in \Gamma\). Since \(\mu \in \Gamma\), it immediately follows that \(\mu_{B_k} = \left(I_{B_k} - \frac{R}{p}\mathbf{1}_{B_k}\mathbf{1}_{B_k}^\intercal \right)\mu + \frac{R}{p}\mathbf{1}_{B_k}\mathbf{1}_{B_k}^\intercal \mu = \bar{\mu}_{B_k}\mathbf{1}_{B_k}\). Likewise, \(\mu'_{B_k} = \bar{\mu'}_{B_k}\mathbf{1}_{B_k}\). Now since \(T(\mu) = T(\mu')\), we have \(\bar{\mu}_{B_k} = \bar{\mu'}_{B_k}\) for all \(1 \leq k \leq R\). Consequently, we have \(\mu_{B_k} = \mu'_{B_k}\) for all \(k\), which immediately gives \(\mu = \mu'\). Thus \(T\) is injective. Since \(T\) is injective and \(Q_0, Q_{\widetilde{\pi}}\) are the pushforward measures of \(P_{0, 1, R}, P_{\pi, 1, R}\) through \(T\) respectively, we immediately have the desired result.
    \end{proof}

    \begin{proposition}\label{prop:R_perfect_correlation_lbound}
        Suppose \(1 \leq s \leq p\) and \(\gamma = 1\). Set
        \begin{equation*}
            \rho^2 :=
            \begin{cases}
                0 &\text{if } s < \frac{p}{R}, \\
                s\log\left(1 + \frac{p^2}{Rs^2}\right) &\text{if } \frac{p}{R} \leq s < \frac{p}{\sqrt{R}}, \\
                \frac{p}{\sqrt{R}} &\text{if } \frac{p}{\sqrt{R}} \leq s \leq p. 
            \end{cases}
        \end{equation*}
        If \(\eta \in (0,1)\), then there exists a constant \(c_\eta > 0\) depending only on \(\eta\) such that for all \(0 < c < c_\eta\) we have \(\mathcal{R}(c\rho) \geq 1-\eta\). 
    \end{proposition}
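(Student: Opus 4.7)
The case $s < p/R$ is immediate from $\rho = 0$. For the remaining regimes $s \geq p/R$, the plan is to construct a prior $\pi$ on $\Theta(p, s, c\rho)$ that is supported on signals constant within each group, i.e., $\mu_{B_k} \in \spn\{\mathbf{1}_{B_k}\}$ for all $k$, so that Lemma \ref{lemma:TV_reduction} reduces the original (degenerate) $p$-dimensional testing problem to a nondegenerate $R$-dimensional sparse detection problem with identity covariance.

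Concretely, take $m = \lfloor Rs/p\rfloor$ when $p/R \leq s < p/\sqrt{R}$ and $m = \lfloor \sqrt{R}\rfloor$ when $s \geq p/\sqrt{R}$. Draw $K \subset [R]$ uniformly among size-$m$ subsets and set
\begin{equation*}
\mu = \frac{c\rho}{\sqrt{m \cdot p/R}} \sum_{k \in K} \mathbf{1}_{B_k}.
\end{equation*}
Then $\|\mu\|_0 = m(p/R) \leq s$ and $\|\mu\|^2 = c^2\rho^2$, so $\pi$ is supported on $\Theta(p, s, c\rho)$. Lemma \ref{lemma:TV_reduction} will yield $d_{TV}(P_{0, 1, R}, P_{\pi, 1, R}) = d_{TV}(Q_0, Q_{\widetilde\pi})$, where $\widetilde\pi$ is the law of the sparse vector $\nu = (c\rho)\sqrt{R/(pm)}\,\mathbf{1}_K \in \mathbb{R}^R$.

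From here, Lemma \ref{lemma:general_lower_bound} together with $d_{TV} \leq \frac{1}{2}\sqrt{\chi^2}$ reduces the task to bounding $\chi^2(Q_{\widetilde\pi}\|Q_0)$. The Ingster-Suslina identity (Lemma \ref{lemma:Ingster_Suslina}) applied with identity covariance gives
\begin{equation*}
\chi^2(Q_{\widetilde\pi}\|Q_0) + 1 = E\left[\exp\left(\frac{c^2\rho^2 R}{pm}|K \cap \widetilde K|\right)\right]
\end{equation*}
with $|K\cap \widetilde K|$ hypergeometric. Plugging in the two forms of $\rho^2$ shows the exponent is bounded by $2c^2\log(1 + R/m^2)$ in the middle regime and by a universal constant times $c^2$ in the dense regime. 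Bounding the hypergeometric MGF via Lemma \ref{lemma:hypergeometric} and applying $(1+x)^{\delta} - 1 \leq \delta x$ for $\delta = 2c^2 \in (0, 1)$ gives $\chi^2 \leq e^{2c^2} - 1$ in the middle regime and $\chi^2 \leq \exp(e^{2c^2} - 1) - 1$ in the dense regime, both of which can be made $\leq 4\eta^2$ by choosing $c_\eta$ small enough depending only on $\eta$.

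The principal obstacle I anticipate is handling the singular covariance at $\gamma = 1$: the precision matrix does not exist and Lemma \ref{lemma:block_precision} is unavailable, so the Ingster-Suslina method cannot be applied directly to $P_{\pi, 1, R}$. The reduction afforded by Lemma \ref{lemma:TV_reduction} is the essential workaround; it exploits that our prior is group-constant to collapse the $p$-dimensional degenerate model onto a nondegenerate $R$-dimensional one, after which the calculation is closely parallel to Cases 1 and 2 in the proof of Lemma \ref{lemma:lbound_R_avg_problem}.
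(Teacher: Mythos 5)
Your proposal is correct and follows essentially the same route as the paper's proof: a group-constant prior with $m = \lfloor Rs/p\rfloor$ (resp.\ $\lfloor\sqrt R\rfloor$) selected blocks, Lemma~\ref{lemma:TV_reduction} to collapse the degenerate $p$-dimensional model to the nondegenerate $R$-dimensional Gaussian $Q_{\widetilde\pi}$, and then Ingster--Suslina plus the hypergeometric MGF bound of Lemma~\ref{lemma:hypergeometric} to control $\chi^2(Q_{\widetilde\pi}\|Q_0)$. You also correctly identify the role of the reduction step -- it is precisely what circumvents the singular covariance at $\gamma=1$ -- and the paper's Case~2 calculation is exactly the one from Lemma~\ref{lemma:lbound_R_avg_problem} that you invoke.
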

    \begin{proof}
        Fix \(\eta \in (0,1)\) and set \(c_\eta := \frac{1}{\sqrt{2}} \wedge \sqrt{\frac{1}{2}\log(1 + 4\eta^2)} \wedge \sqrt{\frac{1}{2}\log(1 + \log(1 + 4\eta^2))}\). Let \(0 < c < c_\eta\). Note that the case \(s < \frac{p}{R}\) is trivial and so we need only focus on the remaining two cases. 

        \textbf{Case 1:} Suppose \(\frac{p}{R} \leq s < \frac{p}{\sqrt{R}}\). Note \(\rho^2 = s\log\left(1 + \frac{p^2}{Rs^2}\right)\). Let \(m := \left\lfloor \frac{s}{p/R}\right\rfloor\) and note that \(1 \leq m < \sqrt{R}\). Let \(\pi\) be the prior on \(\Theta(p, s, c\rho)\) in which a draw \(\mu \sim \pi\) is given by the following construction. Draw \(K \subset [R]\) uniformly from the collection of all size \(m\) subsets of \([R]\) and set 
        \begin{equation*}
            \mu := \sum_{k \in K} \frac{c\rho}{\sqrt{m \frac{p}{R}}} \mathbf{1}_{B_k}.
        \end{equation*}
        Observe that \(||\mu||_0 = m \frac{p}{R} \leq s\) and \(||\mu||^2 = c^2\rho^2\) almost surely. Thus \(\pi\) is indeed supported on \(\Theta(p, s, c\rho)\). Consequently,
        \begin{align*}
            \mathcal{R}(c\rho) \geq \inf_{\varphi}\left\{ P_{0, 1, R}\{\varphi = 1\} + P_{\pi, 1, R}\{\varphi = 0\}\right\} = 1-d_{TV}(P_{0, 1, R}, P_{\pi, 1, R}).
        \end{align*}
        Note further that \(\mu \sim \pi\) satisfies \(\mu_{B_k} \in \spn\left\{\mathbf{1}_{B_k}\right\}\) for all \(1 \leq k \leq R\) almost surely. Consequently, Lemma \ref{lemma:TV_reduction} yields \(d_{TV}(P_{0, 1, R}, P_{\pi, 1, R}) = d_{TV}(Q_0, Q_{\widetilde{\pi}})\). Notice that a draw \(\nu \sim \widetilde{\pi}\) is generated by drawing a subset \(S \subset [R]\) of size \(m\) uniformly at random and setting \(\nu_i = \frac{c\rho}{\sqrt{m\frac{p}{R}}}\) if \(i \in S\) and to zero otherwise. Letting \(\nu = \frac{c\rho}{\sqrt{m\frac{p}{R}}}\mathbf{1}_S\) and \(\nu' = \frac{c\rho}{\sqrt{m\frac{p}{R}}}\mathbf{1}_{S'}\) for \(S, S'\) iid uniformly at random size \(m\) subsets \([R]\), Lemma \ref{lemma:Ingster_Suslina} and Lemma \ref{lemma:hypergeometric} give 
        \begin{align*}
            \chi^2\left(Q_{\widetilde{\pi}}|| Q_0\right) &= E\left[ \exp\left(\langle \nu, I_R \nu'\rangle\right) \right] - 1\\
            &= E\left[ \exp\left(\frac{c^2\rho^2}{m\frac{p}{R}} |S \cap S'|\right) \right] - 1 \\
            &\leq E\left[ \exp\left(2c^2 \log\left(1 + \frac{p^2}{Rs^2}\right) |S \cap S'|\right) \right] - 1 \\
            &\leq \left(1 - \frac{m}{R} + \frac{m}{R}\exp\left(2c^2\log\left(1 + \frac{p^2}{Rs^2}\right)\right)\right)^m - 1\\
            &\leq \left(1 - \frac{m}{R} + \frac{m}{R}\exp\left(2c^2\log\left(1 + \frac{R}{m^2}\right)\right)\right)^m - 1 \\
            &= \left(1 - \frac{m}{R} + \frac{m}{R} \left(1 + \frac{R}{m^2}\right)^{2c^2}\right)^m - 1 \\
            &\leq \left(1 + \frac{2c^2}{m}\right)^m - 1 \\
            &\leq e^{2c^2} - 1
        \end{align*}
        where in the penultimate inequality we have used \(2c^2 < 2c_\eta^2 \leq 1\) and \((1+x)^\delta - 1 \leq \delta x\) for all \(0 < \delta < 1\) and \(x > 0\). Thus, \(\chi^2\left(Q_{\widetilde{\pi}} || Q_0\right) \leq e^{2c^2} - 1 \leq 4\eta^2\). Therefore, \(\mathcal{R}(c\rho) \geq 1 - d_{TV}(Q_{0}, Q_{\widetilde{\pi}}) \geq 1-\frac{1}{2}\sqrt{\chi^2(Q_{\widetilde{\pi}}||Q_0)} \geq 1-\eta\). Since \(0 < c <c_\eta\) was arbitrary and \(\eta \in (0, 1)\) was arbitrary, we have proved the desired result for the case \(\frac{p}{R} \leq s < \frac{p}{\sqrt{R}}\). 

        \textbf{Case 2:} Suppose \(s \geq \frac{p}{\sqrt{R}}\). Note \(\rho^2 = \frac{p}{\sqrt{R}}\). Without loss of generality, assume \(\frac{p}{\sqrt{R}}\) is an integer. Repeating exactly the argument presented in Case 1 except now replacing every instance of \(s\) with \(\frac{p}{\sqrt{R}}\) yields 
        \begin{equation*}
            \chi^2(Q_{\widetilde{\pi}} || Q_{0}) \leq \left( 1 - \frac{m}{R} + \frac{m}{R}\exp\left(\frac{c^2\rho^2}{m\frac{p}{R}}\right)\right)^m - 1 \leq \left(1 - \frac{m}{R} + \frac{m}{R} \exp\left(\frac{c^2\sqrt{R}}{\lfloor \sqrt{R}\rfloor}\right) \right)^m - 1.
        \end{equation*}
        Continuing the calculation exactly as in the analysis of Case 2 in the proof of Lemma \ref{lemma:lbound_R_avg_problem} yields the desired result.
    \end{proof}

    \begin{proof}[Proof of Theorem \ref{thm:R_perfect_correlation}]
        Theorem \ref{thm:R_perfect_correlation} follows immediately from Propositions \ref{prop:R_perfect_correlation_ubound} and \ref{prop:R_perfect_correlation_lbound}.
    \end{proof}

    \subsection{Proofs of results in Section \ref{section:sparsity_adaptive}}

    \begin{proof}[Proof of Theorem \ref{thm:sparsity_adaptive}]
        Fix \(\eta \in (0, 1)\) and set
        \begin{equation*}
            C_\eta := \frac{16}{\eta} \sqrt{64 \cdot 9 \cdot 2\pi} \vee \sqrt{64 \cdot 9 \cdot \log\left(\frac{64}{\eta} \right)}\vee \sqrt{64 \cdot 9 \left(\frac{\log(9)}{3} + \log\left(\frac{32\pi^2}{6\eta}\right) \right)}\vee \sqrt{64 \cdot 9} \vee C_{\frac{\eta}{8}}^*
        \end{equation*}
        where \(C_{\frac{\eta}{8}}^*\) is the constant at level \(\frac{\eta}{8}\) from Proposition \ref{prop:full_test}. Let \(C > C_\eta\). We first bound the type I error. By an application of union bound,
        \begin{align*}
            &P_{0, \gamma}\left\{ \varphi_{\text{adaptive}} = 1\right\} \\
            &\leq P_{0, \gamma} \left\{\max_{1 \leq s < \sqrt{p}} \varphi_{t(s), r(s)} = 1 \right\} + P_{0, \gamma} \left\{ \max_{p-\sqrt{p} < s < p} \varphi_{\widetilde{t}(s), \widetilde{r}(s)} = 1 \right\} + P_{0, \gamma} \left\{ \varphi_{\frac{C^2}{2}}^{\chi^2} = 1 \right\} + P_{0, \gamma} \left\{ \varphi_{\frac{C^2}{2}}^{\mathbf{1}_p} = 1 \right\} \\
            &\leq P_{0, \gamma} \left\{\max_{1 \leq s < \sqrt{p}} \varphi_{t(s), r(s)} = 1 \right\} + P_{0, \gamma} \left\{ \max_{p-\sqrt{p} < s < p} \varphi_{\widetilde{t}(s), \widetilde{r}(s)} = 1 \right\} + \frac{\eta}{4}
        \end{align*}
        since \(C > C_\eta \geq C_{\frac{\eta}{8}}^*\). We separately examine the first two terms on the right hand side in the above display. Firstly for \(1 \leq s < \sqrt{p}\), letting \(u(s) := \frac{C^2}{64 \cdot 9} \left[ \log^2\left(1 + \frac{p}{s^2}\right) \wedge s \log\left(1 + \frac{p}{s^2}\right) \right]\) we have that
        \begin{align*}
            9\sqrt{pe^{-t(s)^2/2} u(s)} + 9u(s) &\leq 9\sqrt{p \frac{s^2}{s^2+p} u(s)} + 9u(s) \\
            &\leq 9 s\sqrt{u(s)} + \frac{C^2}{64} s\log\left(1 + \frac{p}{s^2}\right) \\
            &\leq \frac{C^2}{64}s \sqrt{\log^2\left(1 + \frac{p}{s^2}\right)} + \frac{C^2}{64} s\log\left(1 + \frac{p}{s^2}\right) \\
            &= r(s)
        \end{align*}
        where we have used \(\frac{C^2}{64 \cdot 9} > \frac{C_\eta^2}{64 \cdot 9} \geq 1\). Applying union bound and Lemma \ref{lemma:tsybakov_typeI_error} yields
        \begin{align*}
            P_{0, \gamma}\left\{\max_{1 \leq s < \sqrt{p}} \varphi_{t(s), r(s)} = 1 \right\} &\leq \sum_{1 \leq s < \sqrt{p}} P_{0, \gamma}\{Y_{t(s)} > r(s)\} \\
            &\leq \sum_{1 \leq s < \sqrt{p}} P_{0, \gamma} \left\{ Y_{t(s)} > 9 \cdot \sqrt{pe^{-t(s)^2/2}} + 9u(s) \right\} \\
            &\leq \sum_{1 \leq s < \sqrt{p}} e^{-u(s)} \\
            &\leq \sum_{1 \leq s < \sqrt{p}} \exp\left(- \frac{C^2}{64 \cdot 9} \log^2\left(1 + \frac{p}{s^2}\right) \right) + \sum_{1 \leq s < \sqrt{p}} \exp\left(-\frac{C^2}{64 \cdot 9} s \log\left(1 + \frac{p}{s^2}\right) \right). 
        \end{align*}
        Examining the second sum, observe that since \(\frac{C^2}{64 \cdot 9} \geq \frac{\log(9)}{3} + \log\left(\frac{32 \pi^2}{6\eta}\right)\) it follows that 
        \begin{equation*}
            \frac{C^2}{64 \cdot 9} \geq \frac{\log(s^2)}{s} + \log\left(\frac{32\pi^2}{6\eta}\right)
        \end{equation*}
        for \(s \geq 3\). Since we also have \(\frac{C^2}{64 \cdot 9} \geq \log\left(\frac{64}{\eta}\right)\), it follows that 
        \begin{align*}
            \sum_{1 \leq s < \sqrt{p}} \exp\left(-\frac{C^2}{64 \cdot 9} s \log\left(1 + \frac{p}{s^2}\right) \right) &\leq \frac{\eta}{64} + \frac{\eta}{64} + \sum_{3 \leq s < \sqrt{p}} \exp\left(-\log(s^2) - \log\left(\frac{32\pi^2}{6\eta} \right)\right) \\
            &\leq \frac{\eta}{32} + \frac{6\eta}{32\pi^2} \sum_{s=1}^{\infty} \frac{1}{s^2} \\
            &\leq \frac{\eta}{32} + \frac{6\eta}{32 \pi^2} \cdot \frac{\pi^2}{6} \\
            &\leq \frac{\eta}{16}. 
        \end{align*}
        Now turning our attention to the first sum, observe that 
        \begin{align*}
            \sum_{1 \leq s < \sqrt{p}}\exp\left(- \frac{C^2}{64 \cdot 9} \log^2\left(1 + \frac{p}{s^2}\right) \right) &\leq \int_{-\infty}^{\infty} \exp\left(-\frac{C^2}{64 \cdot 9} x^2\right) \, dx \leq \sqrt{2\pi \frac{64 \cdot 9}{2C^2}} \leq \frac{\eta}{16}
        \end{align*}
        where we have used \(C > C_\eta \geq \frac{16}{\eta} \sqrt{64 \cdot 9 \cdot 2\pi}\). Therefore, 
        \begin{equation}\label{eqn:adaptive_sparse_typeI}
            P_{0, \gamma}\left\{\max_{1 \leq s < \sqrt{p}} \varphi_{t(s), r(s)} = 1 \right\} \leq \frac{\eta}{8}. 
        \end{equation}

        We now turn our attention to bounding \(P_{0, \gamma} \left\{ \max_{p-\sqrt{p} < s < p} \varphi_{\widetilde{t}(s), \widetilde{r}(s)} = 1 \right\}\) and we pursue the same strategy. For \(p-\sqrt{p} < s < p\), let \(v(s) = \frac{C^2}{32 \cdot 9}\left[\log^2\left(1 + \frac{p}{(p-s)^2}\right) \wedge (p-s)\log\left(1 + \frac{p}{(p-s)^2}\right) \right]\). Repeating the above analysis with \(v(s)\) in place of \(u(s)\) as well as \(\widetilde{t}(s), \widetilde{r}(s)\) in place of \(t(s), r(s)\) respectively yields the bound 
        \begin{equation}\label{eqn:adaptive_very_dense_typeI}
            P_{0, \gamma}\left\{\max_{p-\sqrt{p} < s < p} \varphi_{\widetilde{t}(s), \widetilde{r}(s)} = 1 \right\} \leq \frac{\eta}{8}. 
        \end{equation}
        With the bounds (\ref{eqn:adaptive_sparse_typeI}) and (\ref{eqn:adaptive_very_dense_typeI}) in hand, it follows that \(P_{0, \gamma}\left\{\varphi_{\text{adaptive}} = 1\right\} \leq \frac{\eta}{2}\) and so the type I error is bounded. 

        We now work to bound the type II error. Note that 
        \begin{align*}
            \sup_{\theta \in \Theta(p, s^*, C\psi)} P_{\theta, \gamma}\left\{ \varphi_{\text{adaptive}} = 0\right\} &\leq 
            \begin{cases}
                \sup_{\theta \in \Theta(p, s^*, C\psi)} P_{\theta, \gamma} \left\{ \varphi_{t(s^*), r(s^*)} = 0 \right\} &\text{if } 1 \leq s^* < \sqrt{p}, \\
                \sup_{\theta \in \Theta(p, s^*, C\psi)} P_{\theta, \gamma} \left\{ \varphi_{\frac{C^2}{2}}^{\chi^2} \vee \varphi_{\frac{C^2}{2}}^{\mathbf{1}_p} = 0 \right\} &\text{if } \sqrt{p} \leq s^* \leq p-\sqrt{p}, \\
                \sup_{\theta \in \Theta(p, s^*, C\psi)} P_{\theta, \gamma} \left\{ \varphi_{\frac{C^2}{2}}^{\chi^2} \vee \varphi_{\widetilde{t}(s^*), \widetilde{r}(s^*)} \vee \varphi_{\frac{C^2}{2}}^{\mathbf{1}_p} = 0 \right\} &\text{if } p-\sqrt{p} < s^* < p, \\
                \sup_{\theta \in \Theta(p, s^*, C\psi)} P_{\theta, \gamma} \left\{ \varphi_{\frac{C^2}{2}}^{\chi^2} \vee \varphi_{\frac{C^2}{2}}^{\mathbf{1}_p} = 0 \right\} &\text{if } s^* = p.
            \end{cases}
        \end{align*}
        Since \(C > C_{\eta/8}^*\), Proposition \ref{prop:full_test} together with the above display yields \(\sup_{\theta \in \Theta(p, s^*, C\psi)} P_{\theta, \gamma}\left\{ \varphi_{\text{adaptive}} = 0\right\} \leq \frac{\eta}{8} \leq \frac{\eta}{2}\). Thus, we have shown that the sum of the type I and type II errors is bounded as 
        \begin{equation*}
            P_{0, \gamma}\left\{ \varphi_{\text{adaptive}} = 1\right\} + \sup_{\theta \in \Theta(p, s^*, C\psi)} P_{\theta, \gamma} \left\{ \varphi_{\text{adaptive}} = 0 \right\} \leq \eta. 
        \end{equation*}
        Since \(C > C_\eta\) was arbitrary and \(\eta \in (0, 1)\) was arbitrary, the desired result has been proved. 
    \end{proof}

    \subsection{Proofs of results in Section \ref{section:rank_one_correlation}}
    
    In this section, we present the proofs for the results presented in Section \ref{section:rank_one_correlation}. 
    \begin{proof}[Proof of Proposition \ref{prop:known_v_perfect_correlation}]
        We consider the two cases \(s < ||v||_0\) and \(s \geq ||v||_0\) separately. 

        \textbf{Case 1:} Suppose \( 1 \leq s < ||v||_0\). It trivially holds \(\varepsilon^*(p, s, 1, v)^2 \geq 0\) and so the lower bound is proved. Define the test \(\varphi^* = \mathbbm{1}_{\{X - p^{-1} \langle v, X \rangle v \neq 0\}}\). Observe that under the data-generating process \(P_{\theta, 1, v}\), we have \(X = \theta + W v\) where \(W \sim N(0, 1)\). Consequently \(X - p^{-1}\langle v, X \rangle v = \theta - p^{-1} \langle v, \theta \rangle v\) almost surely. Since \(||\theta|| \geq \varepsilon\) and \(||\theta||_0 \leq s < ||v||_0\), it follows immediately that \(\theta \not \in \spn\{v\} \setminus \{0\}\). Consequently, \(\theta \neq 0\) if and only if \(\theta - p^{-1}\langle v, \theta \rangle v \neq 0\). Examining the type I error, observe that \(P_{0, 1, v}\{\varphi^* = 1\} = P_{0, 1, v}\{X - p^{-1}\langle v, X \rangle v \neq 0\} = 0\) since \(X - p^{-1}\langle v, X \rangle v = 0\) almost surely under \(P_{0, 1, v}\). Examining the type II error, observe that for any \(\varepsilon > 0\) we have \(\sup_{\theta \in \Theta(p, s, \varepsilon)} P_{\theta, 1, v} \{\varphi^* = 0\} = \sup_{\theta \in \Theta(p, s, \varepsilon)} P_{\theta, 1, v}\{ X - p^{-1}\langle v, X \rangle v = 0\} = 0\) since \(X-p^{-1}\langle v, X\rangle v = \theta - p^{-1}\langle v, \theta \rangle v\) almost surely under \(P_{\theta, 1, v}\) and \(\theta - p^{-1}\langle v, \theta \rangle v \neq 0\) for all \(\theta \in \Theta(p, s, \varepsilon)\). Therefore, \(\mathcal{R}(\varepsilon) = 0\). Since \(\varepsilon > 0\) was arbitrary, we have proved the upper bound. 
        
        \textbf{Case 2:} Suppose \(||v||_0 \leq s \leq p\). We first prove the upper bound. Let \(\eta \in (0, 1)\) and set \(C_\eta\) to be any value satisfying \(\frac{16}{C_\eta^4} + \frac{16}{C_\eta^2} \leq \eta\). Such a \(C_\eta\) clearly exists by taking \(C_\eta\) sufficiently large depending only on \(\eta\). Let \(C > C_\eta\). Define the test \(\varphi^* = \mathbbm{1}_{\left\{ ||X||^2 > p + \frac{C^2}{2} p \right\}}\). Note that 
        \begin{equation*}
            ||X||^2 = ||X - p^{-1}\langle v, X\rangle v||^2 + ||p^{-1}\langle v, X\rangle v||^2 \sim ||\theta - p^{-1}\langle v, \theta \rangle v||^2 + p\chi^2_1\left(\frac{||p^{-1}\langle v, \theta\rangle v||^2}{p}\right).
        \end{equation*}
        Consequently \(E_{\theta, 1, v}(||X||^2) = ||\theta||^2 + p\) and \(\Var_{\theta, 1, v}(||X||^2) = 2p^2 + 4p ||p^{-1} \langle v, \theta \rangle v||^2\). Examining the type I error, consider by Chebyshev's inequality 
        \begin{align*}
            P_{0, 1, v}\{\varphi^* = 1\} = P_{0, 1, v} \left\{ ||X||^2 > p + \frac{C^2}{2} p\right\} \leq \frac{2p^2}{\frac{C^4}{4} p^2} \leq \frac{8}{C^4} \leq \frac{8}{C_\eta^4}. 
        \end{align*}
        Examining the type II error, observe that 
        \begin{align*}
            \sup_{\theta \in \Theta(p, s, C\sqrt{p})} P_{\theta, 1, v}\{\varphi^* = 0\} &= \sup_{\theta \in \Theta(p, s, C\sqrt{p})} P_{\theta, 1, v}\left\{ ||X||^2 \leq p + \frac{C^2}{2}p \right\} \\
            &= \sup_{\theta \in \Theta(p, s, C\sqrt{p})} P_{\theta, 1, v}\left\{ ||\theta||^2 - \frac{C^2}{2}p \leq p + ||\theta||^2 - ||X||^2 \right\} \\
            &\leq \sup_{\theta \in \Theta(p, s, C\sqrt{p})} \frac{\Var_{\theta, 1, v}\left(||X||^2 \right)}{\left( ||\theta||^2 - \frac{C^2}{2}p \right)^2} \\
            &\leq \frac{2p^2}{\frac{C^4}{4}p^2} + \sup_{\theta \in \Theta(p, s, C\sqrt{p})} \frac{4p ||p^{-1}\langle v, \theta\rangle v||^2}{\frac{1}{4}||\theta||^4} \\
            &\leq \frac{8}{C^4} + \sup_{\theta \in \Theta(p, s, C\sqrt{p})} \frac{16p}{||\theta||^2} \\
            &\leq \frac{8}{C^4} + \frac{16}{C^2} \\
            &\leq \frac{8}{C_\eta^4} + \frac{16}{C_\eta^2}.
        \end{align*}
        Therefore \(P_{0, 1, v}\{\varphi^* = 1\} + \sup_{\theta \in \Theta(p, s, C\sqrt{p})} P_{\theta, 1, v}\{\varphi^* = 0\} \leq \frac{16}{C_\eta^4} + \frac{16}{C_\eta^2} \leq \eta\). Since \(C > C_\eta\) was arbitrary and \(\eta \in (0, 1)\) was arbitrary, we have proved \(\varepsilon^*(p, s, 1, v)^2 \lesssim p\) for \(s \geq ||v||_0\). 

        We now prove the lower bound. Let \(\eta \in (0, 1)\) and set \(c_\eta := \sqrt{\log\left(1 + 4\eta^2\right)}\). Let \(0 < c < c_\eta\). Lemma \ref{lemma:general_lower_bound} will be used to prove the lower bound. Let \(\pi\) be the prior on \(\Theta(p, s, c\sqrt{p})\) which is a point mass at \(cv\). Note that \(||cv|| = c\sqrt{p}\) and \(||cv||_0 = ||v||_0 \leq s\). Thus, \(\pi\) is indeed supported on \(\Theta(p, s, c\sqrt{p})\). A direct calculation shows 
        \begin{align*}
            \chi^2(P_{cv, 1, v} || P_{0, 1, v}) = \exp\left( \left\langle cv, \frac{1}{p^2}vv^\intercal \cdot cv \right\rangle \right) - 1 = e^{c^2} - 1 \leq e^{c_\eta^2} - 1 \leq 4\eta^2.
        \end{align*}
        Therefore, \(1 - \frac{1}{2}\sqrt{\chi^2(P_{cv, 1, v}||P_{0, 1, v})} \geq 1-\eta\). Lemma \ref{lemma:general_lower_bound} thus implies \(\mathcal{R}(c\sqrt{p}, v) \geq 1-\eta\). Since \(0 < c < c_\eta\) was arbitrary and \(\eta \in (0, 1)\) was arbitrary, it follows that \(\varepsilon^*(p, s, 1, v)^2 \gtrsim p\) for \(s \geq ||v||_0\). 
    \end{proof}

    The proof of Theorem \ref{thm:known_v_rate} consists of finding matching upper and lower bounds for the minimax separation rate. Focusing on the upper bound, the core strategy is very similar to the strategy adopted in Section \ref{section:ProblemI}. Specifically, we transform the data to decorrelate the observations. Recall that \(X \sim N(\theta, (1-\gamma)I_p + \gamma vv^\intercal)\). Drawing \(\xi \sim N(0, 1)\) independently from \(X\), consider the transformation 
    \begin{equation*}
        \widetilde{X} := \frac{1}{\sqrt{1-\gamma}}\left(I_p - \frac{1}{p} vv^\intercal\right) X + \frac{\xi}{\sqrt{p}} v
    \end{equation*}
    when \(\gamma \in [0, 1)\). It is readily seen that 
    \begin{equation*}
        \widetilde{X} \sim N\left( \frac{\theta - p^{-1}\langle v, \theta \rangle v}{\sqrt{1-\gamma}}, I_p \right). 
    \end{equation*}
    It turns out the statistic formulated by Collier et al. \cite{collierMinimaxEstimationLinear2017} can still be used for signal detection in the sparse regime. As in Section \ref{section:ProblemI}, define for \(t \geq 0\) the statistic
    \begin{equation*}
        Y_t := \sum_{i=1}^{p}(\widetilde{X}_i^2 - \alpha_{t})\mathbbm{1}_{\{|\widetilde{X}_i| \geq t\}}
    \end{equation*}
    where \(\alpha_t\) is given by (\ref{eqn:alpha_t}). Define the test 
    \begin{equation}\label{test:known_v_tsybakov}
        \varphi_{t, r} := \mathbbm{1}_{\{Y_t > r\}}
    \end{equation}
    where \(r \in \R\) is set according to the desired testing risk. A \(\chi^2\)-type test will be used in the dense regime. For \(r \in \R\) used to set the desired testing risk, define the test 
    \begin{equation}\label{test:known_v_chisquare}
        \varphi_{r}^{\chi^2} = \mathbbm{1}_{\{||\widetilde{X}||^2 > p + r\sqrt{p}\}}.
    \end{equation}

    These two tests are combined to obtain a testing procedure with the following guarantee. Recall the definition of \(\omega(v)\) given by (\ref{eqn:omega_v}). In preparation for the statement of the result, set 
    \begin{equation}\label{rate:known_v}
        \psi^2 := 
        \begin{cases}
            (1-\gamma) s \log \left(1 + \frac{p}{s^2} \right) &\text{if } s \leq \sqrt{p} \wedge \omega(v), \\
            (1-\gamma) \sqrt{p} &\text{if } \sqrt{p} < s \leq \omega(v).
        \end{cases}
    \end{equation}

    \begin{proposition}\label{prop:known_v_upperbound}
        Let \(1 \leq s \leq p\), \(\gamma \in [0, 1)\), and \(t^* = \sqrt{2 \log\left(1 + \frac{p}{s^2}\right)}\). If \(\eta \in (0, 1)\), then there exists a constant \(C_\eta > 0\) depending only on \(\eta\) such that for all \(C > C_\eta\) the testing procedure 
        \begin{equation}\label{test:known_v}
            \varphi^* = 
            \begin{cases}
                \varphi_{t^*, r^*} &\text{if } s \leq \sqrt{p} \wedge \omega(v), \\
                \varphi_{C^2/2}^{\chi^2} &\text{if } \sqrt{p} < s \leq \omega(v)
            \end{cases}
        \end{equation}
        with \(t^* = \sqrt{2 \log\left(1 + \frac{p}{s^2}\right)}\) and \(r^* = \frac{C^2}{16} s \log\left(1 + \frac{p}{s^2}\right)\) satisfies 
        \begin{equation*}
            P_{0, \gamma, v}\left\{\varphi^* = 1\right\} + \sup_{\theta \in \Theta(p, s, C\psi)} P_{\theta, \gamma, v}\left\{\varphi^* = 0 \right\} \leq \eta.
        \end{equation*}
        Here, \(\psi^2\) is given by (\ref{rate:known_v}).
    \end{proposition}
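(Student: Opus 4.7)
The plan is to mirror the analysis of Proposition \ref{prop:problemI_upperbound}, replacing the role of the direction $\mathbf{1}_p$ with the general direction $v$ and invoking the geometric inequalities in Lemmas \ref{lemma:v_orthog_approximation} and \ref{lemma:v_supp_approximation} (rather than Corollaries \ref{corollary:orthog_approximation} and \ref{corollary:supp_approximation}) to control the sparsity-preserving projections. Under $P_{\theta, \gamma, v}$ the transformed data satisfies $\widetilde{X} \sim N(\mu, I_p)$ with $\mu := (\theta - p^{-1}\langle v, \theta \rangle v)/\sqrt{1-\gamma}$, so the whole problem is reduced to a sparse signal detection problem with spherical Gaussian noise whose mean $\mu$ is not sparse but contains a sparse subvector of controlled norm.

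The key geometric step, which uses the hypothesis $s \leq \omega(v)$, is to observe that by definition of $\omega(v)$ we have $\max_{|S|\leq s} \|v_S\|^2 \leq p/4$. Consequently Lemma \ref{lemma:v_orthog_approximation} yields
\begin{equation*}
\|\mu\|^2 \geq \frac{\|\theta\|^2}{1-\gamma} \cdot \frac{3}{4},
\end{equation*}
and Lemma \ref{lemma:v_supp_approximation} yields that the subvector of $\mu$ indexed by $\supp(\theta)$ (which is $s$-sparse) satisfies
\begin{equation*}
\|\mu_{\supp(\theta)}\|^2 = \frac{1}{1-\gamma}\left\|\theta - p^{-1}\langle v, \theta\rangle v_{\supp(\theta)}\right\|^2 \geq \frac{\|\theta\|^2}{1-\gamma} \cdot \frac{1}{2}.
\end{equation*}
Thus whenever $\theta \in \Theta(p, s, C\psi)$ we get $\|\mu_{\supp(\theta)}\|^2 \geq C^2 s \log(1 + p/s^2)/2$ in the sparse regime and $\|\mu\|^2 \geq 3C^2\sqrt{p}/4$ in the dense regime.

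In the sparse regime $s \leq \sqrt{p} \wedge \omega(v)$, the preceding display shows $\mu \in \mathscr{M}(p, s, (C/\sqrt{2})\sqrt{s\log(1+p/s^2)})$, the parameter space of Proposition \ref{prop:supp_tsybakov}. Applying that proposition with $\sigma = 1$ and rescaled constant $C/\sqrt{2}$, the critical value becomes $\tfrac{(C/\sqrt{2})^2}{8}s\log(1+p/s^2) = \tfrac{C^2}{16}s\log(1+p/s^2) = r^*$, which matches the threshold in the statement; choosing $C_\eta = \sqrt{2}\cdot C_\eta^{\mathrm{prop}}$ with $C_\eta^{\mathrm{prop}}$ the constant from Proposition \ref{prop:supp_tsybakov} then gives the desired bound on the sum of the two errors.

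In the dense regime $\sqrt{p} < s \leq \omega(v)$, I would just copy the Chebyshev analysis of Lemma \ref{lemma:problem1_chisquare}: under $P_{0,\gamma,v}$, $\|\widetilde{X}\|^2 \sim \chi^2_p$ gives the type I bound $P_{0,\gamma,v}\{\varphi_{C^2/2}^{\chi^2} = 1\} \leq 8/C^4$ by Chebyshev, while under $P_{\theta,\gamma,v}$ one has $E\|\widetilde{X}\|^2 = p + \|\mu\|^2$ and $\operatorname{Var}\|\widetilde{X}\|^2 = 2p + 4\|\mu\|^2$, so using $\|\mu\|^2 \geq 3C^2\sqrt{p}/4$ a Chebyshev estimate yields a type II bound of the form $8/C^4 + O(1/C^2)$. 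Choosing $C_\eta$ large enough depending only on $\eta$ gives the desired $\eta$ bound; the overall $C_\eta$ for the proposition is the maximum of the constants from the two regimes. There is no substantive obstacle — the only work is checking that the factor $(p - 2\max_S\|v_S\|^2)/p \geq 1/2$ inherited from $s \leq \omega(v)$ propagates through the constants exactly as $(p-2s)/p \geq 1/2$ did in the $v = \mathbf{1}_p$ case.
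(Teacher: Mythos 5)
Your proposal is correct and takes essentially the same approach as the paper's proof: both decorrelate via $\widetilde{X}$, use $s\leq\omega(v)$ to get $\max_{|S|\leq s}\|v_S\|^2\leq p/4$, invoke Lemma~\ref{lemma:v_supp_approximation} with Proposition~\ref{prop:supp_tsybakov} at scale $C/\sqrt{2}$ in the sparse regime, and invoke Lemma~\ref{lemma:v_orthog_approximation} with a Chebyshev argument in the dense regime. The only minor slip is a bookkeeping constant in the dense-regime type II bound (the factor $3/4$ rather than the $1-s/p\geq 1/2$ of the $v=\mathbf{1}_p$ case yields $32/C^4 + 48/C^2$ rather than $8/C^4 + O(1/C^2)$), but this does not affect the conclusion.
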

    
    To prove Proposition \ref{prop:known_v_upperbound}, two lemmas are needed. 
    
    \begin{lemma}\label{lemma:known_v_chisquare}
        Suppose \(1 \leq s \leq \omega(v)\) and \(\gamma \in [0, 1)\). If \(\eta \in (0, 1)\), then there exists a constant \(C_\eta > 0\) depending only on \(\eta\) such that for all \(C > C_\eta\), the test \(\varphi_{C^2/2}^{\chi^2}\) given by (\ref{test:known_v_chisquare}) satisfies 
        \begin{equation*}
            P_{0, \gamma, v}\left\{ \varphi_{C^2/2}^{\chi^2} = 1\right\} + \sup_{\theta \in \Theta\left(p, s, C\sqrt{(1-\gamma)\sqrt{p}}\right)} P_{\theta, \gamma, v}\left\{ \varphi_{C^2/2}^{\chi^2} = 0\right\} \leq \eta. 
        \end{equation*}
        Here, \(\psi^2\) is given by (\ref{rate:known_v}).
    \end{lemma}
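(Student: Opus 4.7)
The plan is to mimic the proof of Lemma \ref{lemma:problem1_chisquare} almost verbatim, with the key modification being the use of the definition of $\omega(v)$ together with Lemma \ref{lemma:v_orthog_approximation} to control the non-centrality parameter of the transformed data from below by a constant multiple of $||\theta||^2 / (1-\gamma)$.

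First, I would record the distributional fact that under $P_{\theta, \gamma, v}$, we have $\widetilde{X} \sim N\bigl(\mu_\theta, I_p\bigr)$ where $\mu_\theta := (\theta - p^{-1}\langle v, \theta\rangle v)/\sqrt{1-\gamma}$, so that $||\widetilde{X}||^2 \sim \chi^2_p(||\mu_\theta||^2)$ with mean $p + ||\mu_\theta||^2$ and variance $2p + 4||\mu_\theta||^2$. Fix $\eta \in (0,1)$ and pick $C_\eta$ sufficiently large (depending only on $\eta$) that $16/C_\eta^4 + 16/C_\eta^2 \leq \eta$, then take $C > C_\eta$. The Type I bound $P_{0,\gamma,v}\{\varphi^{\chi^2}_{C^2/2} = 1\} \leq 8/C_\eta^4 \leq \eta/2$ follows exactly as in Lemma \ref{lemma:problem1_chisquare} via Chebyshev applied to $||\widetilde{X}||^2$ under the null, since the transformed data has the same null distribution $N(0, I_p)$ regardless of $v$.

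The main step, and the only place the hypothesis $s \leq \omega(v)$ enters, is the Type II analysis. Here I would invoke Lemma \ref{lemma:v_orthog_approximation}: for any $\theta$ with $||\theta||_0 \leq s$,
\begin{equation*}
||\mu_\theta||^2 \;=\; \frac{1}{1-\gamma}\left|\left|\theta - \frac{1}{p}\langle v, \theta\rangle v\right|\right|^2 \;\geq\; \frac{||\theta||^2}{1-\gamma} \cdot \frac{p - \max_{|S|\leq s} ||v_S||^2}{p}.
\end{equation*}
By the definition of $\omega(v)$ in \eqref{eqn:omega_v} and the hypothesis $s \leq \omega(v)$, the quantity $\max_{|S|\leq s}||v_S||^2$ is at most $p/4$, so $||\mu_\theta||^2 \geq \tfrac{3}{4}||\theta||^2/(1-\gamma)$. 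In particular, for $\theta \in \Theta(p, s, C\sqrt{(1-\gamma)\sqrt{p}})$ we obtain the lower bound $||\mu_\theta||^2 \geq \tfrac{3C^2}{4}\sqrt{p}$, which (up to the harmless factor $3/4$ that can be absorbed into $C_\eta$) is exactly the same lower bound on the non-centrality parameter that drives the Type II analysis in Lemma \ref{lemma:problem1_chisquare}.

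With this bound in hand, the Chebyshev computation proceeds identically: for $\theta$ in the alternative,
\begin{equation*}
P_{\theta,\gamma,v}\bigl\{||\widetilde{X}||^2 \leq p + \tfrac{C^2}{2}\sqrt{p}\bigr\} \;\leq\; \frac{2p + 4||\mu_\theta||^2}{(||\mu_\theta||^2 - \tfrac{C^2}{2}\sqrt{p})^2} \;\leq\; \frac{8}{C^4} + \frac{16(1-\gamma)}{||\theta||^2},
\end{equation*}
which is at most $\eta/2$ once $C_\eta$ is enlarged if necessary to absorb the factor $3/4$. Adding the two bounds completes the proof. I do not anticipate any real obstacle: the only substantive content beyond the template of Lemma \ref{lemma:problem1_chisquare} is the observation that the role played there by the Pythagorean identity $||\theta - \bar{\theta}\mathbf{1}_p||^2 \geq ||\theta||^2(p-s)/p$ (Corollary \ref{corollary:orthog_approximation}) is now played by its generalization Lemma \ref{lemma:v_orthog_approximation}, with the cutoff at $\omega(v)$ ensuring the comparison constant is bounded away from zero.
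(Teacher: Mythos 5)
Your proposal is correct and follows essentially the same route as the paper's proof: both use Lemma~\ref{lemma:v_orthog_approximation} together with the hypothesis $s \leq \omega(v)$ to get $\|\mu_\theta\|^2 \geq \tfrac{3}{4}\|\theta\|^2/(1-\gamma) \geq \tfrac{3C^2}{4}\sqrt{p}$, and then run the same Chebyshev computation as in Lemma~\ref{lemma:problem1_chisquare}. The paper carries the $3/4$ factor through explicitly (yielding the slightly different constants $32/C^4 + 48/C^2$ in the type II bound rather than your $8/C^4 + 16/C^2$), but as you note this only changes the numerical choice of $C_\eta$ and not the argument.
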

    \begin{proof}
        Fix \(\eta \in (0, 1)\) and let \(C_\eta > 0\) be any value satisfying \(\frac{40}{C_\eta^4} + \frac{48}{C_\eta^2} \leq \eta\). Note that such a value of \(C_\eta\) clearly exists by taking \(C_\eta\) sufficiently large. Let \(C > C_\eta\). For ease of notation, let \(M(v, s) := \max_{S \subset [p], |S| \leq s} ||v_S||^2\). Under the data-generating process \(P_{\theta, \gamma, v}\), recall that \(\widetilde{X} \sim N\left(\frac{\theta - p^{-1}\langle v, \theta\rangle v}{\sqrt{1-\gamma}}, I_p\right)\), where \(\widetilde{X}\) is the transformed data used in the definition of the test \(\varphi_{C^2/2}^{\chi^2}\). Therefore, \(||\widetilde{X}||^2 \sim \chi^2_p\left(\frac{||\theta - p^{-1}\langle v, \theta \rangle v||^2}{1-\gamma} \right)\). Consequently \(E_{\theta, \gamma, v}(||\widetilde{X}||^2) = p + \frac{||\theta - p^{-1}\langle v, \theta \rangle v||^2}{1-\gamma}\) and \(\Var_{\theta, \gamma, v}(||\widetilde{X}||^2) = 2p + 4 \frac{||\theta - p^{-1}\langle v, \theta\rangle v||^2}{1-\gamma}\). A direct calculation using Chebyshev's inequality bounds the type I error
        \begin{align*}
            P_{0, \gamma, v}\left\{ \varphi_{C^2/2}^{\chi^2} = 1 \right\} = P_{0, \gamma, v}\left\{ ||\widetilde{X}||^2 > p + \frac{C^2}{2}\sqrt{p} \right\} \leq \frac{\Var_{0, \gamma, v}\left( ||\widetilde{X}||^2 \right)}{\frac{C^4}{4} p} = \frac{2p}{\frac{C^4}{4} p} = \frac{8}{C^4} \leq \frac{8}{C_\eta^4}. 
        \end{align*}
        We now turn our attention to the type II error. Before explicitly obtaining a bound, first note that \(s \leq \omega(v)\) implies \(M(v, s) \leq \frac{p}{4}\). Therefore, \(p-M(v, s)\geq \frac{3}{4}p\), and so we have by Lemma \ref{lemma:v_orthog_approximation}
        \begin{equation}\label{eqn:signal_projection_lbound}
            \frac{||\theta - p^{-1}\langle v, \theta \rangle v||^2}{1-\gamma} \geq \frac{3C^2}{4} \sqrt{p}
        \end{equation}
        for all \(\theta \in \Theta\left(p, s, C\sqrt{(1-\gamma)\sqrt{p}}\right)\). Examining the type II error, consider
        \begin{align*}
            &\sup_{\theta \in \Theta\left(p, s, C\sqrt{(1-\gamma)\sqrt{p}}\right)} P_{\theta, \gamma, v}\left\{ \varphi_{C^2/2}^{\chi^2} = 0 \right\} \\
            &= \sup_{\theta \in \Theta\left(p, s, C\sqrt{(1-\gamma)\sqrt{p}}\right)} P_{\theta, \gamma, v}\left\{ ||\widetilde{X}||^2 \leq p + \frac{C^2}{2}\sqrt{p} \right\} \\
            &= \sup_{\theta \in \Theta\left(p, s, C\sqrt{(1-\gamma)\sqrt{p}}\right)} P_{\theta, \gamma, v}\left\{ \frac{||\theta - p^{-1}\langle v, \theta\rangle v||^2}{1-\gamma} - \frac{C^2}{2} \sqrt{p} \leq p + \frac{||\theta - p^{-1}\langle v, \theta \rangle v||^2}{1-\gamma} - ||\widetilde{X}||^2 \right\} \\
            &\leq \sup_{\theta \in \Theta\left(p, s, C\sqrt{(1-\gamma)\sqrt{p}}\right)} \frac{\Var_{\theta, \gamma, v}(||\widetilde{X}||^2)}{\left(\frac{||\theta - p^{-1}\langle v, \theta\rangle v||^2}{1-\gamma} - \frac{C^2}{2}\sqrt{p} \right)^2} \\
            &\leq \frac{2p}{\frac{C^4}{16}p} + \sup_{\theta \in \Theta\left(p, s, C\sqrt{(1-\gamma)\sqrt{p}}\right)} \frac{4 \frac{||\theta - p^{-1}\langle v, \theta \rangle v||^2}{1-\gamma}}{\frac{1}{9} \frac{||\theta - p^{-1}\langle v, \theta \rangle v||^4}{(1-\gamma)^2}} \\
            &= \frac{32}{C^2} + \sup_{\theta \in \Theta\left(p, s, C\sqrt{(1-\gamma)\sqrt{p}}\right)} \frac{36 (1-\gamma)}{||\theta - p^{-1}\langle v, \theta \rangle v||^2}
        \end{align*}
        where the penultimate inequality follows by (\ref{eqn:signal_projection_lbound}). Continuing the calculation with another application of (\ref{eqn:signal_projection_lbound}) yields 
        \begin{align*}
            \frac{32}{C^4} + \sup_{\theta \in \Theta\left(p, s, C\sqrt{(1-\gamma)\sqrt{p}}\right)} \frac{36 (1-\gamma)}{||\theta - p^{-1}\langle v, \theta \rangle v||^2} \leq \frac{32}{C^4} + \frac{48}{C^2\sqrt{p}} \leq \frac{32}{C_\eta^4} + \frac{48}{C_\eta^2}. 
        \end{align*}
        Combining the bounds for the type I and type II errors shows 
        \begin{align*}
            P_{0, \gamma, v}\left\{ \varphi_{C^2/2}^{\chi^2} = 1 \right\} + \sup_{\theta \in \Theta\left(p, s, C\sqrt{(1-\gamma)\sqrt{p}}\right)} P_{\theta, \gamma, v}\left\{ \varphi_{C^2/2}^{\chi^2} = 0 \right\} \leq \frac{40}{C_\eta^4} + \frac{48}{C_\eta^2} \leq \eta. 
        \end{align*}
        Since \(C > C_\eta\) was arbitrary and \(\eta \in (0, 1)\) was arbitrary, we have shown the desired result. 
    \end{proof}

    \begin{lemma}\label{lemma:known_v_tsybakov}
        Suppose \(1 \leq s \leq \sqrt{p} \wedge \omega(v)\) and \(\gamma \in [0, 1)\). If \(\eta \in (0, 1)\), then there exists a constant \(C_\eta > 0\) depending only on \(\eta\) such that for all \(C > C_\eta\), the test \(\varphi_{t^*, r^*}\) given by (\ref{test:known_v_tsybakov}) with \(t^* = \sqrt{2 \log\left(1 + \frac{p}{s^2}\right)}\) and \(r^* = \frac{C^2}{16} s\log\left(1 + \frac{p}{s^2}\right)\) satisfies 
        \begin{equation*}
            P_{0, \gamma, v}\left\{ \varphi_{t^*, r^*} = 1 \right\} + \sup_{\theta \in \Theta\left(p, s, C\sqrt{(1-\gamma)s\log\left(1 + \frac{p}{s^2}\right)}\right)} P_{\theta, \gamma, v}\left\{ \varphi_{t^*, r^*} = 0 \right\} \leq \eta. 
        \end{equation*}
    \end{lemma}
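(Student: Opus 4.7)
The plan is to reduce the claim to Proposition \ref{prop:supp_tsybakov} by the same strategy employed in Lemma \ref{lemma:problem1_tsybakov}, with Lemma \ref{lemma:v_supp_approximation} supplying the geometric input that previously came from Corollary \ref{corollary:supp_approximation}. Specifically, fix $\eta \in (0,1)$ and set $C_\eta := \sqrt{2}\, C_\eta^*$, where $C_\eta^*$ is the constant from Proposition \ref{prop:supp_tsybakov}. Let $C > C_\eta$ and recall that under $P_{\theta,\gamma,v}$ we have $\widetilde{X} \sim N(\mu, I_p)$ with $\mu := (\theta - p^{-1}\langle v,\theta\rangle v)/\sqrt{1-\gamma}$.

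The key step is to show that for every $\theta \in \Theta\bigl(p, s, C\sqrt{(1-\gamma)s\log(1+p/s^2)}\bigr)$, the mean vector $\mu$ exhibits an $s$-sparse subvector of sufficiently large norm, i.e. $\mu \in \mathscr{M}\bigl(p, s, (C/\sqrt{2})\sqrt{s\log(1+p/s^2)}\bigr)$. Take $S = \supp(\theta)$, which has $|S|\le s$. Since $\theta$ is supported on $S$, one has $\mu_S = (\theta - p^{-1}\langle v,\theta\rangle v_{\supp(\theta)})/\sqrt{1-\gamma}$, so Lemma \ref{lemma:v_supp_approximation} gives
\[
\|\mu_S\|^2 \;\geq\; \frac{\|\theta\|^2}{1-\gamma}\cdot \frac{p - 2\max_{|T|\le s}\|v_T\|^2}{p}.
\]
The hypothesis $s \leq \omega(v)$ ensures $\max_{|T|\le s}\|v_T\|^2 \leq p/4$ by definition of $\omega(v)$, whence the fraction on the right is at least $1/2$. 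Therefore $\|\mu_S\|^2 \geq \|\theta\|^2/(2(1-\gamma)) \geq (C^2/2)\, s\log(1+p/s^2)$, which is exactly the membership condition required.

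Now invoke Proposition \ref{prop:supp_tsybakov} with constant $C/\sqrt{2} > C_\eta^*$ and noise variance $\sigma^2 = 1$, applied to the observation $\widetilde{X}$, sparsity $s$ (which satisfies $s < \sqrt{p} < 4\sqrt{p}$), and threshold $t^* = \sqrt{2\log(1+p/s^2)}$. Since $r^* = (C^2/16)\, s\log(1+p/s^2) = \tfrac{(C/\sqrt{2})^2}{8}\, s\log(1+p/s^2)$, this matches the cutoff in Proposition \ref{prop:supp_tsybakov} exactly, so
\[
P_{0,\gamma,v}\{\varphi_{t^*,r^*} = 1\} + \sup_{\theta \in \Theta(p,s,C\sqrt{(1-\gamma)s\log(1+p/s^2)})} P_{\theta,\gamma,v}\{\varphi_{t^*,r^*} = 0\} \;\le\; \eta.
\]
Since $C > C_\eta$ and $\eta \in (0,1)$ were arbitrary, the conclusion follows. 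There is no genuine obstacle here: the argument is essentially a transcription of Lemma \ref{lemma:problem1_tsybakov} in which the decorrelating projection onto $\spn\{\mathbf{1}_p\}^\perp$ is replaced by the projection onto $\spn\{v\}^\perp$, with the constraint $s \leq \omega(v)$ playing the role previously played by $s < \sqrt{p}$ in controlling the geometric defect from Lemma \ref{lemma:v_supp_approximation}.
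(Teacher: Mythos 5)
Your proof is correct and follows essentially the same approach as the paper: both reduce to Proposition \ref{prop:supp_tsybakov} via Lemma \ref{lemma:v_supp_approximation}, using $s\le\omega(v)$ to bound $\max_{|T|\le s}\|v_T\|^2\le p/4$, and then matching the constant $C/\sqrt2$ and cutoff $r^*=\frac{(C/\sqrt2)^2}{8}s\log(1+p/s^2)$. The only cosmetic difference is that you make the membership $\mu\in\mathscr M(p,s,(C/\sqrt2)\sqrt{s\log(1+p/s^2)})$ fully explicit via the identity $\mu_{\supp(\theta)}=(\theta-p^{-1}\langle v,\theta\rangle v_{\supp(\theta)})/\sqrt{1-\gamma}$, which the paper leaves slightly more implicit.
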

    \begin{proof}
        Fix \(\eta \in (0, 1)\) and set \(C_\eta := \sqrt{2}C_\eta^*\) where \(C_\eta^*\) is given by Proposition \ref{prop:supp_tsybakov}. For ease of notation, set \(\kappa := (1-\gamma) s\log\left(1 + \frac{p}{s^2}\right)\). Also, let \(M(v, s) := \max_{S \subset [p], |S| \leq s} ||v_S||^2\). Consider that since \(s < \omega(v)\), we have \(M(v, s) \leq \frac{p}{4}\). Therefore, for all \(\theta \in \Theta\left(p, s, C\sqrt{\kappa}\right)\) we have by Lemma \ref{lemma:v_supp_approximation}
        \begin{equation*}
            \frac{||\theta - p^{-1}\langle v, \theta \rangle v_{\supp(\theta)}||^2}{1-\gamma} \geq \frac{||\theta||^2}{1-\gamma} \cdot \frac{p - 2 M(v, s)}{p} \geq \frac{C^2\kappa}{2(1-\gamma)} = \frac{C^2}{2} s \log\left(1 + \frac{p}{s^2}\right).  
        \end{equation*}
        In other words, we have shown that \(\theta \in \Theta\left(p, s, C\sqrt{\kappa}\right)\) implies \(\frac{\theta - p^{-1}\langle v, \theta \rangle v}{\sqrt{1-\gamma}} \in \mathscr{M}\left(p, s, \frac{C^2}{2}s\log\left(1 + \frac{p}{s^2}\right)\right)\) where the latter parameter space is given by (\ref{space:supp_space}). Recall that, under the data-generating process \(P_{\theta, \gamma, v}\), we have \(\widetilde{X} \sim N\left(\frac{\theta - p^{-1}\langle v, \theta \rangle v}{\sqrt{1-\gamma}}, I_p \right)\) where \(\widetilde{X}\) is the transformed data used in the definition of the test \(\varphi_{t^*, r^*}\). Since \(s \leq \sqrt{p}\), \(\frac{C}{\sqrt{2}} > \frac{C_\eta}{\sqrt{2}} \geq C_\eta^*\), and \(r^* = \frac{C^2/2}{8}s \log\left(1 + \frac{p}{s^2}\right)\), it follows from Proposition \ref{prop:supp_tsybakov}
        \begin{equation*}
            P_{0, \gamma, v}\left\{ \varphi_{t^*, r^*} = 1 \right\} + \sup_{\theta \in \Theta\left(p, s, C\sqrt{\kappa}\right)} P_{\theta, \gamma, v}\left\{ \varphi_{t^*, r^*} = 0 \right\} \leq \eta. 
        \end{equation*}
        Since \(C > C_\eta\) was arbitrary and \(\eta \in (0, 1)\) was arbitrary, we have proved the desired result.
    \end{proof}

    \begin{proof}[Proof of Proposition \ref{prop:known_v_upperbound}]
        Fix \(\eta \in (0, 1)\) and let \(C_{\eta} := C_{\eta, 1} \vee C_{\eta, 2}\) where \(C_{\eta, 1}\) and \(C_{\eta, 2}\) are the constants depending on \(\eta\) from the proofs of Lemmas \ref{lemma:known_v_tsybakov} and \ref{lemma:known_v_chisquare} respectively. Let \(C > C_\eta\). We now consider the two sparsity regimes separately. 

        \textbf{Case 1:} Suppose \(1 \leq s \leq \sqrt{p} \wedge \omega(v)\). Then \(\varphi^* = \varphi_{t^*, r^*}\) and \(\psi^2 = (1-\gamma) s\log\left(1 + \frac{p}{s^2}\right)\). Since \(C > C_{\eta, 1}\), it follows from Lemma \ref{lemma:known_v_tsybakov} that \(P_{0, \gamma, v}\left\{ \varphi^* = 1 \right\} + \sup_{\theta \in \Theta(p, s, C\psi)} P_{\theta, \gamma, v}\left\{ \varphi^* = 0 \right\} \leq \eta\).

        \textbf{Case 2:} Suppose \(\sqrt{p} < s \leq \omega(v)\). Then \(\varphi^* = \varphi_{C^2/2}^{\chi^2}\) and \(\psi^2 = (1-\gamma)\sqrt{p}\). Since \(C > C_{\eta, 2}\), it follows from Lemma \ref{lemma:known_v_chisquare} that \(P_{0, \gamma, v}\{\varphi^* = 1\} + \sup_{\theta \in \Theta(p, s, C\psi)} P_{\theta, \gamma, v}\{ \varphi^* = 0 \} \leq \eta\).

        Having dealt with the two cases, consider that since \(C > C_\eta\) was arbitrary and \(\eta \in (0, 1)\) was arbitrary, the desired result has been proved.
    \end{proof}

    The testing procedure (\ref{test:known_v}) is rate optimal as the following proposition establishes a matching lower bound.
    \begin{proposition}\label{prop:known_v_lowerbound}
        Suppose \(1 \leq s \leq \omega(v)\) and \(\gamma \in [0, 1)\). If \(\eta \in (0, 1)\), then there exists a constant \(c_\eta > 0\) depending only on \(\eta\) such that \(\mathcal{R}(c\psi) \geq 1-\eta\) for all \(0 < c < c_\eta\). Here, \(\psi^2\) is given by (\ref{rate:known_v}).
    \end{proposition}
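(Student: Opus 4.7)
The plan is to mirror the lower bound proof of Problem I (Proposition \ref{prop:problemI_lowerbound}), introducing one modification to the prior that defuses the extra cross term arising in $\Sigma^{-1}$. The guiding observation is that by Lemma \ref{lemma:v_inverse},
\[
\theta^\intercal \Sigma^{-1}\widetilde\theta = \frac{\langle \theta, \widetilde\theta\rangle}{1-\gamma} - \frac{\gamma \langle v, \theta\rangle\langle v, \widetilde\theta\rangle}{(1-\gamma)(1-\gamma+\gamma p)},
\]
so provided the prior is arranged to make $\langle v, \theta\rangle \langle v, \widetilde\theta\rangle \geq 0$ almost surely, the negative cross term may be discarded and the divergence computation reduces to the hypergeometric moment-generating function bound already carried out for Problem I.

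First I would construct a sign-aligned prior. Define $\tilde v \in \{-1, +1\}^p$ by $\tilde v_i = \sgn(v_i)$ if $v_i \neq 0$ and $\tilde v_i = 1$ otherwise, so that $\tilde v_i v_i = |v_i|$ and $\tilde v_i^2 = 1$. Set $s' := s$ when $s \leq \sqrt{p} \wedge \omega(v)$ and $s' := \lceil \sqrt{p}\rceil$ when $\sqrt{p} < s \leq \omega(v)$, and let $\pi$ be the prior in which a draw $\theta \sim \pi$ is obtained by sampling a uniform size-$s'$ subset $S \subset [p]$ and putting $\theta = \frac{c\psi}{\sqrt{s'}} \sum_{i \in S} \tilde v_i e_i$. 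Then $\|\theta\| = c\psi$ and $\|\theta\|_0 = s' \leq s$, so $\pi$ is supported on $\Theta(p, s, c\psi)$; moreover $\langle v, \theta\rangle = \frac{c\psi}{\sqrt{s'}}\sum_{i \in S}|v_i| \geq 0$ deterministically, with the same statement holding for $\widetilde\theta$.

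Next I would apply the Ingster-Suslina formula (Lemma \ref{lemma:Ingster_Suslina}) together with the sign bound and the identity $\langle \theta, \widetilde\theta\rangle = \frac{c^2\psi^2}{s'}|S \cap \widetilde S|$ (using $\tilde v_i^2 = 1$) to get
\[
\chi^2(P_{\pi, \gamma, v} \| P_{0, \gamma, v}) + 1 \leq E\left[\exp\left(\frac{c^2\psi^2}{s'(1-\gamma)}|S\cap \widetilde S|\right)\right].
\]
Since $|S \cap \widetilde S|$ is hypergeometric, Lemma \ref{lemma:hypergeometric} together with the inequality $(1+x)^\delta - 1 \leq \delta x$ for $0 < \delta < 1$ and $x > 0$ reproduces exactly the two divergence bounds appearing in the cases of Proposition \ref{prop:problemI_lowerbound}: $\chi^2 \leq e^{c^2}-1$ when $s' = s \leq \sqrt{p}$, and $\chi^2 \leq \exp(e^{c^2}-1)-1$ (up to a harmless factor absorbing $\lceil\sqrt{p}\rceil/\sqrt{p}$) when $s' = \lceil\sqrt{p}\rceil$. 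Picking $c < c_\eta$ small enough to force $\chi^2 \leq 4\eta^2$ and invoking Lemma \ref{lemma:general_lower_bound} then yields $\mathcal{R}(c\psi) \geq 1-\eta$.

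The main obstacle is the cross term $\gamma \langle v, \theta\rangle \langle v, \widetilde\theta\rangle$, which for a general correlation pattern $v$ could be of either sign and, left unchecked, would inflate the $\chi^2$-divergence; a naive prior built from positively signed $\mathbf{1}_S$-indicators (as in the equicorrelated proof) no longer forces this product to be nonnegative. The sign-alignment $\theta_i \propto \tilde v_i$ is a crude but essential device: it makes $\langle v, \theta\rangle \geq 0$ deterministically, so the negative cross term is safely discardable and the remaining computation collapses onto the one already established for the equicorrelated case.
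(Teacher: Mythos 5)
Your proposal is correct and takes essentially the same route as the paper: the paper's proof likewise builds a sign-aligned prior with entries $\sgn(v_i)\,c\psi/\sqrt{s}$ on a uniformly drawn support, applies Lemma~\ref{lemma:Ingster_Suslina} and Lemma~\ref{lemma:v_inverse}, discards the nonpositive cross term (which involves $\|v_S\|_1\|v_{\widetilde S}\|_1\geq 0$), and then reduces to the hypergeometric moment bound from the Problem I lower bound, handling the $s>\sqrt{p}$ regime by substituting $\sqrt{p}$ for $s$. Your $\tilde v$ with $\tilde v_i=1$ when $v_i=0$ is a small cleanup ensuring $\|\theta\|^2=c^2\psi^2$ exactly (the paper's $\sgn(v_i)$ could zero out a coordinate if $v_i=0$), but it does not change the essential argument.
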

    \begin{proof}
        Fix \(\eta \in (0, 1)\) and set \(c_\eta := 1 \wedge \sqrt{\log(1 + 4\eta^2)} \wedge \sqrt{\log\left(1 + \log\left(1 + 4\eta^2\right)\right)}\). Let \(0 < c < c_\eta\). We deal with the two sparsity regimes separately. 

        \textbf{Case 1:} Suppose \(1 \leq s \leq \sqrt{p} \wedge \omega(v)\). Then \(\psi^2 = (1-\gamma)s \log\left(1 + \frac{p}{s^2}\right)\). Let \(\pi\) be the prior on \(\Theta(p, s, c\psi)\) such that a draw \(\mu \sim \pi\) is obtained by drawing a subset \(S \subset [p]\) of size \(s\) uniformly at random from the set of all subsets of \([p]\) of size \(s\) and setting 
        \begin{equation*}
            \mu_i :=
            \begin{cases}
                \sgn(v_i) \cdot \frac{c\psi}{\sqrt{s}} &\text{if } i \in S, \\
                0 &\text{if } i \in S^c.
            \end{cases}
        \end{equation*}
        Note that \(||\mu||_0 = s\) almost surely since \(|S| = s\) and \(||\mu||^2 = c^2\psi^2\). Therefore, \(\pi\) is indeed supported on \(\Theta(p, s, c\psi)\). Let \(P_{\pi, \gamma, v} = \int P_{\theta, \gamma, v} \pi(d\theta)\) denote the Gaussian mixture induced by \(\pi\). By Lemma \ref{lemma:Ingster_Suslina} and Lemma \ref{lemma:v_inverse}, 
        \begin{align*}
            \chi^2(P_{\pi, \gamma, v} || P_{0, \gamma, v}) = E\left[ \exp\left( \left\langle \theta, \left[ \frac{1}{1-\gamma} I_p - \frac{\gamma}{(1-\gamma)^2 + (1-\gamma)\gamma p} vv^\intercal \right] \widetilde{\theta}\right\rangle \right)  \right] - 1
        \end{align*}
        where \(\theta, \widetilde{\theta} \overset{iid}{\sim} \pi\). Letting \(S\) and \(\widetilde{S}\) denote \(\supp(\theta)\) and \(\supp(\widetilde{\theta})\), observe that 
        \begin{align*}
            \left\langle \theta, \left[ \frac{1}{1-\gamma} I_p - \frac{\gamma}{(1-\gamma)^2 + (1-\gamma)\gamma p} vv^\intercal \right] \widetilde{\theta} \right\rangle &= \frac{c^2\psi^2}{s} \left[\frac{1}{1-\gamma} |S \cap \widetilde{S}| - \frac{\gamma}{(1-\gamma)^2 + \gamma(1-\gamma)p} \cdot \frac{||v_S||_1 \cdot ||v_{\widetilde{S}}||_1}{p} \right] \\
            &\leq \frac{c^2\psi^2}{s(1-\gamma)} |S \cap \widetilde{S}|.
        \end{align*}
        We now use the fact that \(|S \cap \widetilde{S}|\) is distributed according to the hypergeometric distribution with probability mass function given in Lemma \ref{lemma:hypergeometric}. Following a calculation similar to the one in the Case 1 analysis of the proof of Proposition \ref{prop:problemI_lowerbound}, it follows that \(\chi^2(P_{\pi, \gamma, v}||P_{0, \gamma, v}) \leq e^{c^2}-1 \leq e^{c_\eta^2}-1 \leq 4\eta^2\). Then, Lemma \ref{lemma:general_lower_bound} implies \(\mathcal{R}(c\psi, v) \geq 1- \frac{1}{2}\sqrt{\chi^2(P_{\pi, \gamma, v} || P_{0, \gamma, v})} \geq 1-\eta\). Since \(0 < c < c_\eta\) was arbitrary and \(\eta \in (0, 1)\) was arbitrary, we have proved the desired result for the case \(1 \leq s \leq \sqrt{p} \wedge \omega(v)\).
        
        \textbf{Case 2:} Suppose \(\sqrt{p} < s \leq \omega(v)\). Note \(\psi^2 = (1-\gamma)\sqrt{p}\). Without loss of generality, assume \(\sqrt{p}\) is an integer. Repeating exactly the argument presented in Case 1 except now replacing every instance of \(s\) with \(\sqrt{p}\) yields \(\chi^2(P_{\pi, \gamma, v} || P_{0, \gamma, v}) \leq 4\eta^2\). This bound is obtained via a calculation similar to the one in the Case 2 analysis of the proof of Proposition \ref{prop:problemI_lowerbound}. Lemma \ref{lemma:general_lower_bound} then implies \(\mathcal{R}(c\psi, v) \geq 1-\eta\). Since \(0 < c < c_\eta\) was arbitrary and \(\eta \in (0, 1)\) was arbitrary, we have proved the desired result for the case \(\sqrt{p} < s \leq \omega(v)\).
    \end{proof}

    A combination of Propositions \ref{prop:known_v_perfect_correlation}, \ref{prop:known_v_upperbound}, and \ref{prop:known_v_lowerbound} yields Theorem \ref{thm:known_v_rate}.

    \subsection{Technical Lemmas}
    \begin{lemma}[Lemma 1 of \cite{laurentAdaptiveEstimationQuadratic2000}]\label{lemma:laurent_massart}
        Let \(Y_1,...,Y_p \overset{iid}{\sim} N(0, 1)\) and let \(a_1,...,a_p \in \R^p\) be nonnegative. Then, for any \(x > 0\), we have 
        \begin{equation*}
            P\left\{\sum_{j=1}^{p} a_j Y_j^2 \geq \sum_{j=1}^{p} a_j + 2\sqrt{x \sum_{j=1}^{p}a_j^2} + 2x\max_{1\leq j\leq p} a_j \right\} \leq e^{-x},
        \end{equation*}
        and 
        \begin{equation*}
            P\left\{ \sum_{j=1}^{p} a_j Y_j^2 \leq \sum_{j=1}^{p} a_j - 2\sqrt{x\sum_{j=1}^{p}a_j^2} \right\} \leq e^{-x}.
        \end{equation*}
    \end{lemma}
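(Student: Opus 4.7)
The plan is to prove both tail bounds by the standard Cram\'er--Chernoff route applied to the Laplace transform of \(W := \sum_{j=1}^p a_j Y_j^2\). Since \(Y_j \overset{iid}{\sim} N(0,1)\), we have \(E(e^{tY_j^2}) = (1-2t)^{-1/2}\) for any \(t < 1/2\). By independence,
\begin{equation*}
\log E\!\left( e^{tW} \right) = -\frac{1}{2}\sum_{j=1}^{p} \log\!\left(1 - 2t a_j\right)
\end{equation*}
for every \(t\) with \(2t \max_j a_j < 1\). The aim is to turn Markov's inequality \(P\{W \geq \sum a_j + u\} \leq \exp(-tu - t\sum a_j + \log E(e^{tW}))\) into the desired deviation inequality by controlling the cumulant \(\log E(e^{tW}) - t\sum a_j\).

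The key analytic step is the inequality \(-\log(1-u) - u \leq \tfrac{u^2/2}{1-u}\) for \(u \in [0,1)\), which I would apply coordinatewise with \(u = 2t a_j\). This gives
\begin{equation*}
\log E(e^{tW}) - t\sum_{j=1}^{p} a_j \;\leq\; \sum_{j=1}^{p}\frac{(ta_j)^2}{1 - 2t a_j} \;\leq\; \frac{t^2 \sum_{j=1}^{p} a_j^2}{1 - 2t \max_j a_j},
\end{equation*}
valid whenever \(2t \max_j a_j < 1\). Feeding this into the Chernoff bound yields, for any \(u \geq 0\),
\begin{equation*}
P\!\left\{ W \geq \sum_{j=1}^{p} a_j + u \right\} \;\leq\; \exp\!\left( -tu + \frac{t^2 \sum a_j^2}{1 - 2t\max a_j}\right).
\end{equation*}
Now I would make the explicit choice \(t = \tfrac{u}{2(\sum a_j^2 + u \max_j a_j)}\), which automatically satisfies \(2t \max_j a_j < 1\), and a routine algebraic simplification shows the right-hand side is at most \(\exp\!\left(-\tfrac{u^2}{4(\sum a_j^2 + u \max a_j)}\right)\). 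Finally, setting \(u = 2\sqrt{x \sum a_j^2} + 2x \max a_j\) and using the elementary bound \(u^2 \geq 4x(\sum a_j^2 + u \max a_j)\) (which follows from \((a+b)^2 \geq 2ab\) applied to \(a = 2\sqrt{x \sum a_j^2},\, b = 2x\max a_j\)) delivers the upper-tail estimate.

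For the lower tail, the analogue of the key inequality is \(-\log(1+u) + u \leq u^2/2\) for \(u \geq 0\). Applied with \(u = 2|t| a_j\) and \(t < 0\), this produces
\begin{equation*}
\log E(e^{tW}) - t\sum_{j=1}^{p} a_j \leq \frac{t^2}{2}\sum_{j=1}^{p}(2a_j)^2 = 2t^2\sum_{j=1}^{p} a_j^2,
\end{equation*}
with no denominator appearing because \(1 + 2|t|a_j \geq 1\). The Chernoff bound then gives \(P\{W \leq \sum a_j - u\} \leq \exp(-tu + 2t^2\sum a_j^2)\) for all \(t < 0\); optimizing in \(t\) yields \(\exp(-u^2/(8\sum a_j^2))\), and plugging in \(u = 2\sqrt{x \sum a_j^2}\) gives the desired \(e^{-x}\). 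The main obstacle, such as it is, lies entirely in the choice of the free Laplace parameter \(t\) in the upper-tail optimization: a suboptimal choice produces constants that do not match the clean form in the statement. The rest is bookkeeping.
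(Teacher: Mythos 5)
The paper cites this result directly from Laurent and Massart without reproducing a proof, so there is no in-paper argument to compare against; your Cram\'er--Chernoff outline is the standard route and your bound on the log-moment-generating function is correct. However, the execution has two concrete defects that prevent it from delivering the stated constants, and both lie exactly in the spot you yourself flagged as "bookkeeping."

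For the upper tail, your choice \(t = u/\bigl[2(\sum a_j^2 + u\max_j a_j)\bigr]\) does give the intermediate estimate \(\exp\bigl(-u^2/\bigl[4(\sum a_j^2 + u\max_j a_j)\bigr]\bigr)\), but the inequality \(u^2 \geq 4x\bigl(\sum a_j^2 + u\max_j a_j\bigr)\) you then invoke is \emph{false} for \(u = 2\sqrt{x\sum a_j^2} + 2x\max_j a_j\). Writing \(A = 2\sqrt{x\sum a_j^2}\) and \(B = 2x\max_j a_j\), one has \(u^2 = A^2 + 2AB + B^2\), while \(4x\sum a_j^2 = A^2\) and \(4xu\max_j a_j = 2uB = 2AB + 2B^2\), so
\begin{equation*}
u^2 - 4x\Bigl(\sum_j a_j^2 + u\max_j a_j\Bigr) = -B^2 \leq 0,
\end{equation*}
with strict inequality whenever \(\max_j a_j > 0\). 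The appeal to \((a+b)^2 \geq 2ab\) is a non sequitur and does not rescue the step. The underlying issue is that \(t\) proportional to \(u\) is not the optimizer when \(u\) already contains the extra term \(2x\max_j a_j\). A choice that does work is
\begin{equation*}
t = \frac{\sqrt{x\sum_j a_j^2}}{\sum_j a_j^2 + 2\,(\max_j a_j)\sqrt{x\sum_j a_j^2}},
\end{equation*}
for which a direct substitution gives \(-tu + t^2\sum_j a_j^2/(1 - 2t\max_j a_j) = -x\) exactly.

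For the lower tail you have lost a factor of two. With \(t < 0\) and \(u_j := 2|t|a_j\), the correct computation is
\begin{equation*}
\log E\bigl(e^{tW}\bigr) - t\sum_j a_j = \frac{1}{2}\sum_j\bigl(u_j - \log(1+u_j)\bigr) \leq \frac{1}{2}\sum_j \frac{u_j^2}{2} = t^2\sum_j a_j^2,
\end{equation*}
not \(2t^2\sum_j a_j^2\); you have dropped the leading \(\tfrac12\). With your looser bound, optimizing in \(t\) gives \(\exp(-u^2/(8\sum_j a_j^2))\), which at \(u = 2\sqrt{x\sum_j a_j^2}\) yields only \(e^{-x/2}\). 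With the correct prefactor one gets \(\exp(-u^2/(4\sum_j a_j^2)) = e^{-x}\) as required. Both fixes are local and preserve your overall strategy.
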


    The following lemmas are used extensively in our arguments proving the main results of the paper. In preparation for the statements, recall the notation \(\alpha_t\) given by (\ref{eqn:alpha_t}).

    \begin{lemma}[Lemma 18 of \cite{liuMinimaxRatesSparse2021}]\label{lemma:tsybakov_typeI_error}
        Let \(Z_1,...,Z_p \overset{iid}{\sim}N(0, 1)\). Then there exists a universal constant \(C^* > 0\) such that for any \(t > 0\) and \(x > 0\), we have 
        \begin{equation*}
            P\left\{ \sum_{j=1}^{p}(Z_j^2 - \alpha_t) \mathbbm{1}_{\{|Z_j| \geq t\}} \geq C^* \left( \sqrt{pe^{-t^2/2}x} + x \right) \right\} \leq e^{-x}.
        \end{equation*}
        In fact, we may take \(C^* = 9\).
    \end{lemma}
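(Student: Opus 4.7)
The plan is to view the sum $S := \sum_{j=1}^p (Z_j^2 - \alpha_t)\mathbbm{1}_{\{|Z_j|\geq t\}}$ as a sum of i.i.d.\ centered sub-exponential random variables and apply a Bernstein-type concentration inequality. Centering is automatic: the definition of $\alpha_t$ in (\ref{eqn:alpha_t}) gives $E[Z_j^2 \mathbbm{1}_{\{|Z_j|\geq t\}}] = \alpha_t P\{|Z_j|\geq t\}$, so each summand has mean zero. The form of the tail bound, with one term of order $\sqrt{v\,x}$ and one of order $\kappa x$, is the signature of Bernstein's inequality applied with variance proxy $v \asymp pe^{-t^2/2}$ and sub-exponential parameter $\kappa = O(1)$ uniform in $t$.

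The concrete steps I would carry out are as follows. First, estimate the per-coordinate variance: by direct integration against the Gaussian density, $\operatorname{Var}\!\left((Z_j^2-\alpha_t)\mathbbm{1}_{\{|Z_j|\geq t\}}\right) \leq E[Z_j^4 \mathbbm{1}_{\{|Z_j|\geq t\}}] \lesssim (1+t^3)\,\varphi(t)$ via the standard Mills-ratio tail bounds, where $\varphi$ is the Gaussian density; summing over $j$ gives total variance $O(pe^{-t^2/2})$ after absorbing polynomial-in-$t$ factors into $e^{-t^2/4}$ (or handling small $t$ as a separate case where the bound is trivial). Second, estimate the moment generating function: since $Z_j^2$ has MGF $(1-2\lambda)^{-1/2}$ for $\lambda < 1/2$, the truncated centered variable $(Z_j^2-\alpha_t)\mathbbm{1}_{\{|Z_j|\geq t\}}$ satisfies a sub-exponential MGF bound $E[\exp(\lambda(Z_j^2-\alpha_t)\mathbbm{1}_{\{|Z_j|\geq t\}})] \leq \exp(c_1 \lambda^2 e^{-t^2/2} / 2)$ for $|\lambda| \leq c_2$, where $c_1, c_2$ are absolute constants. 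Third, multiply across the $p$ independent coordinates, optimize the Chernoff bound in $\lambda$, and read off the two regimes (Gaussian for small deviations, exponential for large), producing the threshold $C^*(\sqrt{pe^{-t^2/2}\,x} + x)$.

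The main technical obstacle will be obtaining the MGF bound with the correct $e^{-t^2/2}$ scaling in the quadratic term while preserving a $t$-independent sub-exponential width. The natural expansion $e^{\lambda Y} \leq 1 + \lambda Y + \tfrac{\lambda^2}{2} Y^2 e^{|\lambda| |Y|}$ (applied with $Y = (Z_j^2-\alpha_t)\mathbbm{1}_{\{|Z_j|\geq t\}}$) reduces this to estimating $E[Y^2 e^{|\lambda| Y}]$, which is a Gaussian integral over $\{|Z_j|\geq t\}$ and can be bounded by a product of a polynomial-in-$t$ factor and $e^{-t^2/2}$ provided $|\lambda|$ stays below a fixed fraction of $1/2$. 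Choosing the explicit constant $C^* = 9$ then amounts to tracking the constants through the Chernoff optimization; this is a routine but careful bookkeeping exercise, for which I would mirror the argument in Lemma~18 of \cite{liuMinimaxRatesSparse2021} rather than re-derive it from scratch.
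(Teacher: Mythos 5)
Your framework is the right one---Bernstein-type concentration via an MGF bound, exploiting that each summand is centered (precisely because $\alpha_t$ is a conditional second moment) and sub-exponential with $t$-independent parameter. That is essentially what the cited proof in Liu et al.\ does, so the high-level plan is sound.

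However, there is a genuine gap in the variance accounting that your proposed patch does not close. Bounding
\begin{equation*}
\operatorname{Var}\!\bigl((Z^2-\alpha_t)\mathbbm{1}_{\{|Z|\geq t\}}\bigr) \leq E\bigl[Z^4\mathbbm{1}_{\{|Z|\geq t\}}\bigr]
\end{equation*}
is correct but far too lossy: integration by parts gives $E[Z^4\mathbbm{1}_{\{|Z|\geq t\}}] \asymp t^3\varphi(t)$ for large $t$, so your variance proxy is $\asymp p\,t^3 e^{-t^2/2}$. This is \emph{not} $O(pe^{-t^2/2})$; the ratio $t^3$ grows without bound. The suggestion to ``absorb polynomial-in-$t$ factors into $e^{-t^2/4}$'' replaces $e^{-t^2/2}$ by the strictly larger $e^{-t^2/4}$, which yields a weaker tail bound than the lemma asserts, not the one you need. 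The statement really does have the clean $e^{-t^2/2}$, and any bound of the form $p\,\mathrm{poly}(t)\,e^{-t^2/2}$ or $p\,e^{-ct^2}$ with $c<1/2$ falls short.

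What gets lost is the cancellation produced by centering at $\alpha_t$. Expanding,
\begin{equation*}
E\bigl[(Z^2-\alpha_t)^2\mathbbm{1}_{\{|Z|\geq t\}}\bigr] \;=\; E[Z^4\mathbbm{1}] - \alpha_t^2\, P\{|Z|\geq t\},
\end{equation*}
and both terms are $\asymp t^3\varphi(t)$, while their difference is only $\asymp \varphi(t)/t \leq e^{-t^2/2}$. Equivalently (and more cleanly for the MGF step): condition on the truncation event. Writing $q_t := P\{|Z|\geq t\}$ and $W := (Z^2-\alpha_t)\mid |Z|\geq t$, one has $E W = 0$, $W$ is sub-exponential with $O(1)$ Orlicz norm uniformly in $t$, and $\operatorname{Var}(W) = O(1)$. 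Then
\begin{equation*}
E\bigl[e^{\lambda (Z^2-\alpha_t)\mathbbm{1}_{\{|Z|\geq t\}}}\bigr] \;=\; 1 - q_t + q_t\, E\bigl[e^{\lambda W}\bigr] \;\leq\; 1 + q_t\bigl(e^{c_1\lambda^2} - 1\bigr) \;\leq\; e^{c_2 q_t \lambda^2}
\end{equation*}
for $|\lambda| \leq c_0$, and since $q_t \leq e^{-t^2/2}$ for all $t>0$, multiplying across $p$ coordinates and optimizing Chernoff gives precisely $C^*\bigl(\sqrt{p e^{-t^2/2}\,x} + x\bigr)$. Your crude MGF route via $E[Y^2 e^{|\lambda||Y|}]$ and the fourth-moment bound on $E[Y^2]$ would inherit the spurious $t^3$; the conditioning decomposition is what isolates the small prefactor $q_t$ in the quadratic term. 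Fix the variance/MGF step accordingly and the rest of your plan goes through.
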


    \begin{lemma}[Lemma 19 of \cite{liuMinimaxRatesSparse2021}]\label{lemma:tsybakov_expectation}
        Suppose \(Y \sim N(\theta, 1)\) for some \(\theta \in \R\). Then there exists a universal constant \(C > 0\) such that for every \(t \geq 1\), 
        \begin{equation*}
            E\left((Y^2 - \alpha_t)\mathbbm{1}_{\{|Y| \geq t\}}\right) 
            \begin{cases}
                = 0 &\text{if } \theta = 0, \\
                \in [0, C^2t^2 + 1] &\text{if } |\theta| < Ct, \\
                \geq \frac{\theta^2}{2} &\text{if } |\theta| \geq Ct.
            \end{cases}
        \end{equation*}
        In fact, we may take \(C = 8\). Moreover, for any \(\delta > 0\), there exist constants \(c_1^*, C_1^* > 0\), such that as long as \(|\theta| \geq (1+\delta) t\) and \(t > C_1^*\), we have \(E\left( (Y^2 - \alpha_t)\mathbbm{1}_{\{|Y| \geq t\}} \right) \geq c_1^*\theta^2\).
    \end{lemma}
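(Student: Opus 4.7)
The plan is to handle the three regimes separately, reducing everything to explicit Gaussian calculations with $Z := Y - \theta \sim N(0,1)$ and the Mills' ratio bound $\alpha_t = E(Z^2 \mid |Z| \geq t) \in [t^2, t^2 + c_0]$ for a universal constant $c_0$ (this follows from integration by parts on the standard Gaussian density: $E(Z^2 \mathbbm{1}_{\{|Z|\geq t\}}) = 2t\phi(t) + 2(1-\Phi(t))$, then divide).

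First, the case $\theta = 0$ is immediate from the very definition of $\alpha_t$: $E(Y^2 \mathbbm{1}_{\{|Y|\geq t\}}) = \alpha_t P\{|Y| \geq t\}$ when $Y\sim N(0,1)$, so the expectation is exactly zero. For the nonnegativity in the middle regime $|\theta| < Ct$, I would invoke the stochastic ordering noted in the paper (before Proposition \ref{prop:supp_tsybakov}): the random variable $(Y^2 - \alpha_t)\mathbbm{1}_{\{|Y|\geq t\}}$ is stochastically increasing in $|\theta|$, and its expectation vanishes at $\theta = 0$, so it is nonnegative for every $\theta$. The crude upper bound $C^2t^2 + 1$ then follows from $E((Y^2-\alpha_t)\mathbbm{1}_{\{|Y|\geq t\}}) \leq E(Y^2) = \theta^2 + 1 < C^2t^2 + 1$.

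For the tail regime $|\theta| \geq Ct$, I would write
\[
E((Y^2 - \alpha_t)\mathbbm{1}_{\{|Y|\geq t\}}) = (\theta^2 + 1) - E(Y^2 \mathbbm{1}_{\{|Y|<t\}}) - \alpha_t\, P\{|Y|\geq t\},
\]
then bound $E(Y^2 \mathbbm{1}_{\{|Y|<t\}}) \leq t^2 P\{|Y|<t\}$ and $\alpha_t \leq t^2 + c_0$. This reduces the claim to
$\theta^2 + 1 - t^2 - c_0 - t^2 P\{|Y|<t\} \geq \theta^2/2$, i.e.\ $\theta^2/2 \geq t^2(1 + P\{|Y|<t\}) + c_0 - 1$, which for $|\theta| \geq Ct$ reduces to picking $C$ so that $C^2/2 \geq 1 + P\{|Y|<t\} + (c_0-1)/t^2$ uniformly in $t \geq 1$; since the right-hand side is bounded by a small absolute constant, $C = 8$ is more than enough. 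The moreover part, valid only for $|\theta| \geq (1+\delta)t$ and $t > C_1^*$, uses a sharper Mills'-ratio expansion $\alpha_t = t^2(1 + o(1))$ as $t \to \infty$, together with a Gaussian tail bound showing $P\{|Y| < t\} = o(1)$ in the same regime; this yields $E((Y^2 - \alpha_t)\mathbbm{1}_{\{|Y|\geq t\}}) \geq \theta^2 - t^2(1+o(1)) \geq \theta^2(1 - (1+\delta)^{-2} - o(1))$, so one may take $c_1^* = \tfrac{1}{2}(1 - (1+\delta)^{-2})$ once $t$ exceeds a threshold $C_1^*$ depending only on $\delta$.

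The main obstacle is tracking the explicit constant $C = 8$ in the tail lower bound, which requires a careful (non-asymptotic) Mills'-ratio estimate of the form $\alpha_t \leq t^2 + c_0$ holding uniformly over $t \geq 1$ with a specific small $c_0$, and combining it with a uniform tail bound on $P\{|Y| < t\}$ for $|\theta| \geq Ct$ that survives multiplication by $t^2$. Everything else is bookkeeping.
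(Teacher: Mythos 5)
The paper does not supply a proof of this statement; it imports it verbatim as Lemma 19 of \cite{liuMinimaxRatesSparse2021}, so there is no in-document proof to compare against. Your Mills'-ratio plan is the natural one and most of the bookkeeping is correct: the $\theta = 0$ case is exact by definition of $\alpha_t$; the upper bound $E\bigl((Y^2-\alpha_t)\mathbbm{1}_{\{|Y|\geq t\}}\bigr) \leq E(Y^2) = \theta^2 + 1$ works since $\alpha_t \geq 0$; the tail regime $|\theta|\geq Ct$ via $\alpha_t \leq t^2+2$ and $E(Y^2\mathbbm{1}_{\{|Y|<t\}}) \leq t^2$ is correct with ample room for $C=8$; and the ``moreover'' part is fine.

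The one genuine gap is your justification of nonnegativity in the middle regime. The stochastic-ordering claim that $(g^2-\alpha_t)\mathbbm{1}_{\{|g|\geq t\}}$ with $g\sim N(m,1)$ is stochastically increasing in $|m|$ is in fact \emph{false}: since $\alpha_t > t^2+1$, the function $y\mapsto (y^2-\alpha_t)\mathbbm{1}_{\{|y|\geq t\}}$ jumps downward from $0$ to $t^2-\alpha_t<0$ as $|y|$ crosses $t$, hence is not a nondecreasing function of $|y|$, and the usual mean-shift coupling argument does not apply. A numerical check reveals a strict reversal: at $t=1$ (so $\alpha_1\approx 2.525$), comparing survival probabilities at level $v=-1.025$ gives $P\{(g^2-\alpha_1)\mathbbm{1}_{\{|g|\geq 1\}}>v\}\approx 0.9034$ at $m=0$ but $\approx 0.9013$ at $m=0.5$. (The same assertion appears in the proof of Proposition~\ref{prop:supp_tsybakov}, so you inherited rather than introduced this slip, but it is a gap in your argument nonetheless.) What you actually need is only the expectation inequality $E\bigl[(g^2-\alpha_t)\mathbbm{1}_{\{|g|\geq t\}}\bigr]\geq 0$, and this does hold: writing $\phi$ for the standard normal density and using $\phi(y-m)+\phi(y+m)=2e^{-m^2/2}\phi(y)\cosh(my)$ gives
\[
E\bigl[(g^2-\alpha_t)\mathbbm{1}_{\{|g|\geq t\}}\bigr]=2e^{-m^2/2}\int_t^\infty (y^2-\alpha_t)\phi(y)\cosh(my)\,dy.
\]
The weight $\cosh(my)$ is nondecreasing on $[t,\infty)$, the factor $y^2-\alpha_t$ changes sign from negative to positive at $y=\sqrt{\alpha_t}$, and $\int_t^\infty (y^2-\alpha_t)\phi(y)\,dy=0$ by definition of $\alpha_t$; splitting the integral at $\sqrt{\alpha_t}$ and comparing $\cosh(my)$ to its value at $\sqrt{\alpha_t}$ on each piece shows the integral is nonnegative. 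Substituting this argument for the stochastic-ordering step closes the gap.
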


    \begin{lemma}[Lemma 20 of \cite{liuMinimaxRatesSparse2021}]\label{lemma:tsybakov_variance}
        Suppose \(Y \sim N(\theta, 1)\) for some \(\theta \in \R\). Then there exists a universal constant \(C_1 \geq 1\) such that 
        \begin{equation*}
            \Var\left((Y^2 - \alpha_t)\mathbbm{1}_{\{|Y| \geq t\}}\right) \leq 
            \begin{cases}
                C_1 t^3 e^{-t^2/2} &\text{if } \theta = 0, \\
                C_1t^4 &\text{if } 0 < |\theta| < 2t, \\
                C_1\theta^2 &\text{if } |\theta| \geq 2t,
            \end{cases}
        \end{equation*}
        as long as \(t \geq 1\). 
    \end{lemma}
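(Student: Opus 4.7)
The plan is to bound $\Var(W) \leq E(W^2)$ for $W = (Y^2 - \alpha_t)\mathbbm{1}_{\{|Y| \geq t\}}$ and treat each of the three cases separately, using standard Mills ratio estimates throughout. Specifically, for $g \sim N(0,1)$ and $t \geq 1$, repeated integration by parts yields $P\{|g|\geq t\} \asymp t^{-1}e^{-t^2/2}$, $E(g^{2k}\mathbbm{1}_{\{|g|\geq t\}}) \asymp t^{2k-1}e^{-t^2/2}$ for fixed $k \geq 1$, and consequently $\alpha_t = t^2 + O(1)$, so in particular $\alpha_t \asymp t^2$.

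\textbf{Cases $\theta = 0$ and $0 < |\theta| < 2t$.} In the centered case, I would simply expand $E(W^2) = E(g^4\mathbbm{1}_{\{|g|\geq t\}}) - 2\alpha_t E(g^2\mathbbm{1}_{\{|g|\geq t\}}) + \alpha_t^2 P\{|g|\geq t\}$ and plug in the above Mills estimates; each of the three terms is of order $t^3 e^{-t^2/2}$, yielding the required bound. For the intermediate regime $0 < |\theta| < 2t$, the naive bound suffices:
\begin{equation*}
\Var(W) \leq E((Y^2 - \alpha_t)^2) = \Var(Y^2) + (E(Y^2) - \alpha_t)^2 = (4\theta^2 + 2) + (\theta^2 + 1 - \alpha_t)^2 \lesssim \theta^4 + t^4 \lesssim t^4,
\end{equation*}
using $|\theta| < 2t$ and $\alpha_t \lesssim t^2$.

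\textbf{Case $|\theta| \geq 2t$.} This is where care is required, and I expect this to be the main obstacle. The direct $E(W^2)$ bound is too weak: since $E(Y^2) - \alpha_t \asymp \theta^2$ when $|\theta| \gg t$, expanding $E((Y^2-\alpha_t)^2)$ produces a $\theta^4$ term, missing the target $\theta^2$ by a factor of $\theta^2$. The remedy is the decomposition
\begin{equation*}
W = (Y^2 - \alpha_t) - (Y^2 - \alpha_t)\mathbbm{1}_{\{|Y| < t\}},
\end{equation*}
together with $\Var(A+B) \leq 2\Var(A) + 2\Var(B)$. The first piece has variance $\Var(Y^2) = 4\theta^2 + 2 \lesssim \theta^2$, using $|\theta| \geq 2t \geq 2$. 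For the second piece, I would use the deterministic bound $|Y^2 - \alpha_t| \leq \alpha_t \lesssim t^2$ on $\{|Y| < t\}$, combined with the concentration estimate $P\{|Y| < t\} \leq P\{|Z| \geq |\theta| - t\} \leq e^{-t^2/2}$, where $Z := Y - \theta \sim N(0,1)$ and the last inequality uses $|\theta| - t \geq t$. This gives
\begin{equation*}
\Var\bigl((Y^2 - \alpha_t)\mathbbm{1}_{\{|Y| < t\}}\bigr) \lesssim t^4 e^{-t^2/2} \lesssim 1 \lesssim \theta^2,
\end{equation*}
since $\theta^2 \geq 4$. Summing the two contributions yields $\Var(W) \lesssim \theta^2$, completing the case analysis.

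\textbf{Obstacle summary.} The core difficulty is that $\alpha_t$, calibrated to be the conditional second moment under the null, is a poor centering for $Y^2$ when $|\theta|$ is large; this is why a blunt $E(W^2)$ bound fails in the third regime. The decomposition above effectively re-centers $Y^2$ by its true mean $\theta^2 + 1$, absorbing the $\theta^4$-scale bias into a truncated remainder whose mass lives on an event of exponentially small probability under the $|\theta| \geq 2t$ separation.
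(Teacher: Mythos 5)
The paper does not prove this lemma; it cites it directly as Lemma 20 of Liu, Gao, and Samworth without reproducing an argument, so there is no in-paper proof to compare against. Your self-contained argument is correct and, as far as I can tell, captures the essential points one would need. Two small things worth making explicit when you write it up. First, in the $\theta = 0$ case, when you expand
\begin{equation*}
E(W^2) = E\bigl(g^4 \mathbbm{1}_{\{|g|\geq t\}}\bigr) - 2\alpha_t E\bigl(g^2 \mathbbm{1}_{\{|g|\geq t\}}\bigr) + \alpha_t^2 P\{|g|\geq t\},
\end{equation*}
the middle term is negative, so you can simply drop it and bound the remaining two; otherwise ``each of the three terms is of order $t^3 e^{-t^2/2}$'' only gives an upper bound after you observe the sign (the three terms are not all the same sign, so the claim is harmless but could momentarily confuse a reader). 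Second, in your large-$\theta$ step you use $|Y^2-\alpha_t| \leq \alpha_t$ on $\{|Y|<t\}$; this deserves the one-line justification $\alpha_t = E(g^2 \mid |g|\geq t) \geq t^2 > Y^2$ so that the difference is a genuine cancellation, not just a triangle inequality. With those cosmetic clarifications, the argument is complete: the Mills ratio estimates give $t^2 \leq \alpha_t \leq 3t^2$ for $t\geq 1$, which is all you need, and your identification of the failure mode of the blunt $E(W^2)$ bound in the $|\theta|\geq 2t$ regime, together with the decomposition $W = (Y^2-\alpha_t) - (Y^2-\alpha_t)\mathbbm{1}_{\{|Y|<t\}}$, is exactly the right repair.
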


    \begin{lemma}[\cite{collierMinimaxEstimationLinear2017}]\label{lemma:hypergeometric}
        If \(Y\) is distributed according to the hypergeometric distribution with probability mass function \(P\left\{Y = k\right\} = \frac{\binom{s}{k} \binom{p-s}{s-k}}{ \binom{p}{s}}\) for \(0 \leq k \leq s\), then \(E[Y] = \frac{s^2}{p}\) and \(E\left[\exp\left(\lambda^2 Y\right) \right] \leq \left(1 - \frac{s}{p} + \frac{s}{p} e^{\lambda^2} \right)^s\) for \(\lambda \in \R\). 
    \end{lemma}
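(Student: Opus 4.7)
The lemma records two classical facts about the hypergeometric distribution, and the cleanest route is to realize $Y$ as $|S \cap \widetilde{S}|$ for suitable random subsets and then invoke Hoeffding's comparison between sampling without and with replacement. The plan is as follows.

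First, I would set up the combinatorial representation. Fix any subset $S \subset [p]$ of size $s$ and let $\widetilde{S}$ be drawn uniformly from all subsets of $[p]$ of size $s$; then $Y := |S \cap \widetilde{S}|$ has exactly the hypergeometric distribution stated. Writing $Y = \sum_{i \in S} \mathbbm{1}_{\{i \in \widetilde{S}\}}$ and using that $P\{i \in \widetilde{S}\} = s/p$ for each $i$, linearity of expectation immediately gives $E[Y] = s \cdot \frac{s}{p} = \frac{s^2}{p}$. This handles the first conclusion.

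For the moment generating function bound, the key observation is that $Y$ is a sum of $s$ exchangeable Bernoulli indicators obtained by sampling without replacement from the urn $\{\mathbbm{1}_{\{i \in S\}}\}_{i=1}^p$, which contains exactly $s$ ones and $p-s$ zeros. Let $\widetilde{Y} = \sum_{j=1}^s B_j$ where $B_1, \ldots, B_s \overset{iid}{\sim} \Bernoulli(s/p)$ denote the corresponding sum under sampling with replacement; then $\widetilde{Y} \sim \Binomial(s, s/p)$. Hoeffding's classical theorem (Theorem 4 of Hoeffding, 1963) asserts that $E[\phi(Y)] \leq E[\phi(\widetilde{Y})]$ for every convex function $\phi : \R \to \R$. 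Applying this to the convex function $\phi(y) = e^{\lambda^2 y}$ gives
\begin{equation*}
E[\exp(\lambda^2 Y)] \leq E[\exp(\lambda^2 \widetilde{Y})] = \prod_{j=1}^s E[\exp(\lambda^2 B_j)] = \left(1 - \frac{s}{p} + \frac{s}{p}e^{\lambda^2}\right)^s,
\end{equation*}
which is exactly the stated bound.

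There is no real obstacle here: both claims are standard and the whole proof reduces to a one-line expectation computation plus a citation of Hoeffding's convex-order comparison. The only thing worth being careful about is to state the representation of $Y$ as an intersection of uniformly drawn subsets so that the reader sees immediately why sampling-without-replacement arguments apply, and to note that the convexity of $y \mapsto e^{\lambda^2 y}$ holds for every $\lambda \in \R$ (including the trivial case $\lambda = 0$, where both sides equal $1$).
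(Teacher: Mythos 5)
The paper does not prove this lemma; it is stated in the technical appendix as a cited result from Collier, Comminges, and Tsybakov \cite{collierMinimaxEstimationLinear2017}, with no proof given. Your proposal supplies a complete and correct proof, and it is the standard argument: realizing $Y$ as $|S \cap \widetilde{S}|$, computing $E[Y]$ by linearity over the indicators $\mathbbm{1}_{\{i \in \widetilde{S}\}}$, and then invoking Hoeffding's Theorem 4 (convex ordering of sampling without replacement versus with replacement) to dominate $E[\exp(\lambda^2 Y)]$ by the binomial MGF $(1 - \tfrac{s}{p} + \tfrac{s}{p} e^{\lambda^2})^s$. Every step is sound, and this is essentially the argument one finds in the cited reference. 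No gaps to flag.
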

    
    \begin{lemma}[\cite{tsybakovLowerBoundsMinimax2009}]\label{lemma:chisquare_tv_bound}
        If \(P, Q\) are probability measures on a measurable space \((\mathcal{X}, \mathcal{A})\) with \(P \ll Q\), then \(d_{TV}(P, Q) \leq \frac{1}{2}\sqrt{\chi^2(P||Q)}\).
    \end{lemma}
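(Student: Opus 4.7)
The plan is to reduce the total variation distance to an $L^1$-type expression involving the Radon--Nikodym derivative $\frac{dP}{dQ}$, and then apply Cauchy--Schwarz to pass from an $L^1$ bound to an $L^2$ bound, with the latter being exactly the square root of the $\chi^2$-divergence.

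First, I would recall the variational characterization of total variation: since $P \ll Q$, for any $A \in \mathcal{A}$ we can write $P(A) - Q(A) = \int_A \left(\frac{dP}{dQ} - 1\right) dQ$. Taking the supremum over $A$ and exploiting the Hahn decomposition (splitting $\mathcal{X}$ into the set where $\frac{dP}{dQ} \geq 1$ and its complement) yields the identity
\begin{equation*}
d_{TV}(P, Q) = \frac{1}{2} \int_{\mathcal{X}} \left|\frac{dP}{dQ} - 1\right| dQ.
\end{equation*}

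Next, I would apply the Cauchy--Schwarz inequality to the integrand against the constant function $1$ in $L^2(Q)$:
\begin{equation*}
\int_{\mathcal{X}} \left|\frac{dP}{dQ} - 1\right| dQ \leq \left(\int_{\mathcal{X}} \left(\frac{dP}{dQ} - 1\right)^2 dQ\right)^{1/2} \left(\int_{\mathcal{X}} 1 \, dQ\right)^{1/2} = \sqrt{\chi^2(P\|Q)}
\end{equation*}
since $Q$ is a probability measure. Combining with the previous display gives $d_{TV}(P,Q) \leq \frac{1}{2}\sqrt{\chi^2(P\|Q)}$, as desired.

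There is no substantial obstacle here; the only mild subtlety is justifying the identity expressing $d_{TV}$ in terms of the density, which requires the Hahn decomposition (or, equivalently, observing that the set $A^* = \{\frac{dP}{dQ} \geq 1\}$ attains the supremum in the definition of $d_{TV}$). Everything else is a direct application of Cauchy--Schwarz.
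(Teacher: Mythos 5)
Your proof is correct and is exactly the standard argument (express $d_{TV}$ as $\tfrac12 \|dP/dQ - 1\|_{L^1(Q)}$, then apply Cauchy--Schwarz against the constant $1$), which is the route taken in the cited source; the paper itself states this lemma without proof, simply citing Tsybakov. No issues.
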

   
    \begin{lemma}[\cite{tsybakovLowerBoundsMinimax2009}]\label{lemma:general_lower_bound}
        Suppose \(\Sigma \in \R^{p \times p}\) is a positive semi-definite matrix and \(\Theta \subset \R^p\) is a parameter space. Let \(P_{\theta}\) denote the distribution \(N(\theta, \Sigma)\). If \(\pi\) is a probability distribution supported on \(\Theta\), then 
        \begin{equation*}
            \inf_{\varphi}\left\{ P_0\left\{ \varphi = 1\right\} + \sup_{\theta \in \Theta} P_{\theta}\left\{\varphi = 0\right\} \right\} \geq 1 - \frac{1}{2} \sqrt{\chi^2(P_{\pi} || P_0)}
        \end{equation*}
        where \(P_{\pi} = \int_{\theta} P_{\theta} \, \pi(d\theta)\) and \(\chi^2(\cdot || \cdot)\) denotes the \(\chi^2\)-divergence.
    \end{lemma}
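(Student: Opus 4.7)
The plan is to chain together three standard reductions: passing from a composite alternative to a Bayesian mixture, using the Neyman--Pearson lemma to identify the resulting Bayes risk with a total variation distance, and finally bounding total variation by the chi-squared divergence.

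First, for any fixed prior $\pi$ supported on $\Theta$ and any test $\varphi$, the supremum of the type II error dominates its $\pi$-average:
\begin{equation*}
\sup_{\theta \in \Theta} P_\theta\{\varphi = 0\} \geq \int P_\theta\{\varphi = 0\} \, \pi(d\theta) = P_\pi\{\varphi = 0\}.
\end{equation*}
Consequently
\begin{equation*}
\inf_\varphi \left\{P_0\{\varphi=1\} + \sup_{\theta \in \Theta} P_\theta\{\varphi=0\}\right\} \geq \inf_\varphi \left\{P_0\{\varphi=1\} + P_\pi\{\varphi=0\}\right\}.
\end{equation*}

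Next, I would invoke the standard identity for the Bayes testing risk between two probability measures: for any two distributions $P$ and $Q$ on the same measurable space,
\begin{equation*}
\inf_\varphi \left\{P\{\varphi=1\} + Q\{\varphi=0\}\right\} = 1 - d_{TV}(P, Q),
\end{equation*}
which is a direct consequence of the Neyman--Pearson lemma (the optimal test is the likelihood-ratio test thresholded at $1$, and its risk equals $1 - \int |dP - dQ|/2 = 1 - d_{TV}(P,Q)$). Applying this with $P = P_0$ and $Q = P_\pi$ yields
\begin{equation*}
\inf_\varphi \left\{P_0\{\varphi=1\} + P_\pi\{\varphi=0\}\right\} = 1 - d_{TV}(P_0, P_\pi).
\end{equation*}

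Finally, Lemma \ref{lemma:chisquare_tv_bound} gives $d_{TV}(P_0, P_\pi) \leq \tfrac{1}{2}\sqrt{\chi^2(P_\pi \| P_0)}$ provided $P_\pi \ll P_0$, and combining the three displays produces the claimed lower bound. The absolute continuity condition is automatic when $\Sigma$ is nonsingular (all Gaussian measures with the same nondegenerate covariance are mutually equivalent); if $\Sigma$ is singular and $P_\pi \not\ll P_0$ then $\chi^2(P_\pi \| P_0) = \infty$ and the bound is vacuous, so the statement holds in any case. There is no real obstacle here --- the argument is a direct assembly of three textbook reductions, and the only mild subtlety is confirming that the Neyman--Pearson identity and the chi-squared bound can both be applied irrespective of whether $\Sigma$ is degenerate.
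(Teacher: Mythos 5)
Your proof is correct and is exactly the standard textbook argument — the paper itself does not include a proof for this lemma and simply cites Tsybakov's book, where the same three-step reduction (supremum dominates average, Neyman--Pearson/Le Cam identity, total variation bounded by chi-squared) appears. Your handling of the degenerate case ($\Sigma$ singular, $P_\pi \not\ll P_0$) as vacuous is a reasonable observation and appropriately conservative.
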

    
    \begin{lemma}[\cite{ingsterNonparametricGoodnessoffitTesting2003}]\label{lemma:Ingster_Suslina}
        Suppose \(\Sigma \in \R^{p \times p}\) is a positive definite matrix and \(\Theta \subset \R^p\) is a parameter space. Let \(P_{\theta}\) denote the distribution \(N(\theta, \Sigma)\). If \(\pi\) is a probability distribution supported on \(\Theta\), then 
        \begin{equation*}
            \chi^2(P_{\pi} || P_0) \leq E\left[\exp\left( \left\langle \theta, \Sigma^{-1} \widetilde{\theta} \right\rangle \right) \right] - 1
        \end{equation*}
        where \(\theta, \widetilde{\theta} \overset{iid}{\sim} \pi\). Here, \(P_{\pi} = \int_{\theta} P_\theta \pi(d\theta)\) and \(\chi^2(\cdot || \cdot)\) denotes the \(\chi^2\)-divergence.
    \end{lemma}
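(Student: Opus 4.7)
\medskip

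\noindent\textbf{Proof proposal for Lemma \ref{lemma:Ingster_Suslina}.}

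The plan is to carry out the standard Ingster--Suslina second moment computation; in fact the inequality in the statement is an equality, and this will be transparent from the argument. First I would write down the likelihood ratio explicitly: since $\Sigma$ is positive definite, $P_\theta$ is absolutely continuous with respect to $P_0$ with
\begin{equation*}
    \frac{dP_\theta}{dP_0}(x) = \exp\!\left(\langle \theta, \Sigma^{-1} x\rangle - \tfrac{1}{2}\langle \theta, \Sigma^{-1}\theta\rangle\right).
\end{equation*}
Integrating against the prior, $\frac{dP_\pi}{dP_0}(x) = \int \exp(\langle \theta, \Sigma^{-1}x\rangle - \tfrac{1}{2}\langle\theta, \Sigma^{-1}\theta\rangle)\,\pi(d\theta)$.

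Next, by definition of the $\chi^2$-divergence,
\begin{equation*}
    \chi^2(P_\pi \,\|\, P_0) + 1 = \int \left(\frac{dP_\pi}{dP_0}\right)^{\!2} dP_0 = \int\!\!\int\!\!\int \exp\!\left(\langle \theta + \widetilde\theta,\Sigma^{-1}x\rangle - \tfrac{1}{2}\langle\theta,\Sigma^{-1}\theta\rangle - \tfrac{1}{2}\langle\widetilde\theta,\Sigma^{-1}\widetilde\theta\rangle\right) dP_0(x)\,\pi(d\theta)\pi(d\widetilde\theta),
\end{equation*}
where I have used Fubini to exchange the order of integration (justified by non-negativity of the integrand). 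The innermost integral is the moment generating function of the $N(0,\Sigma)$ distribution evaluated at $u = \Sigma^{-1}(\theta+\widetilde\theta)$, giving
\begin{equation*}
    \int \exp(\langle \Sigma^{-1}(\theta+\widetilde\theta), x\rangle)\, dP_0(x) = \exp\!\left(\tfrac{1}{2}\langle \theta+\widetilde\theta,\Sigma^{-1}(\theta+\widetilde\theta)\rangle\right).
\end{equation*}
Expanding the quadratic form using symmetry of $\Sigma^{-1}$ and cancelling the diagonal terms against $-\tfrac{1}{2}\langle\theta,\Sigma^{-1}\theta\rangle - \tfrac{1}{2}\langle\widetilde\theta,\Sigma^{-1}\widetilde\theta\rangle$ leaves precisely $\langle \theta,\Sigma^{-1}\widetilde\theta\rangle$ inside the exponential, so that
\begin{equation*}
    \chi^2(P_\pi\,\|\,P_0) + 1 = \int\!\!\int \exp\!\left(\langle \theta, \Sigma^{-1}\widetilde\theta\rangle\right)\pi(d\theta)\pi(d\widetilde\theta) = E\!\left[\exp\!\left(\langle \theta,\Sigma^{-1}\widetilde\theta\rangle\right)\right]
\end{equation*}
for $\theta,\widetilde\theta \overset{iid}{\sim}\pi$, which yields the claim (with equality).

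There is no real obstacle here; the only point that needs a little care is the Fubini step, for which non-negativity of the integrand is enough, and the evaluation of the Gaussian MGF at a general vector, which requires positive definiteness of $\Sigma$ so that $\Sigma^{-1}$ is well defined and the quadratic form $\tfrac{1}{2}\langle u,\Sigma u\rangle$ appearing in the MGF can be rewritten using $u=\Sigma^{-1}(\theta+\widetilde\theta)$. Since both hypotheses are part of the lemma's assumptions, the argument goes through directly.
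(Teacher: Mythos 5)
Your computation is the standard Ingster--Suslina second-moment argument and it is correct: writing the Gaussian likelihood ratio, squaring, exchanging the order of integration by Tonelli (non-negativity is enough here), and evaluating the $N(0,\Sigma)$ moment generating function at $\Sigma^{-1}(\theta+\widetilde\theta)$ yields exactly the cross term $\langle \theta, \Sigma^{-1}\widetilde\theta\rangle$ after cancellation, so the inequality in the lemma is in fact an equality as you observe. The paper offers no proof of its own for this lemma and simply defers to Lemma~23 of Liu et al., and the argument you give is precisely the one that source carries out, so your proposal matches the intended route.
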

    \begin{proof}
        See the proof of Lemma 23 in \cite{liuMinimaxRatesSparse2021}.
    \end{proof}

    \printbibliography
\end{document}